\documentclass[11pt]{amsart}
\usepackage{amsmath, amssymb, amsthm, bbm}
\usepackage{enumitem}
\usepackage{tikz}
\usepackage{graphicx}
\usepackage{xcolor}
\usepackage[UKenglish]{isodate}
\usepackage{graphicx}
\usepackage[font=small,labelfont=it]{caption}
\usepackage{subcaption}
\usepackage{hyperref}
\usepackage{enumitem}
\usepackage{multirow}
\usepackage{fancyvrb}
\usepackage{geometry}
\usepackage{algorithm}
\usepackage{algpseudocode}
\usepackage{algcompatible}
\usepackage{mathtools}

\mathtoolsset{showonlyrefs}

\renewcommand{\phi}{\varphi}

\newcommand{\ud}{\mathrm{d}}
\newcommand{\eqdef}{\ensuremath{\stackrel{\mbox{\upshape\tiny def.}}{=}}}

\def \ud{\mathrm{d}}

\newtheorem*{theorem*}{Theorem}
\newtheorem{theorem}{Theorem}[section]

\newtheorem{proposition}[theorem]{Proposition}
\newtheorem{corollary}[theorem]{Corollary}
\newtheorem{remark}[theorem]{Remark}
\newtheorem{lemma}[theorem]{Lemma}

\newtheorem{definition}[theorem]{Definition}
\newtheorem{assumption}[theorem]{Assumption}

\makeatletter
\@namedef{subjclassname@2020}{\textup{2020}Mathematics Subject Classification} 
\makeatother

\title[NeuralChaos]{NeuralChaos: Optimal Adapted Approximation of Square Integrable Predictable Processes}

\subjclass{Primary: 60G05, 60H07, 68T07; Secondary: 41A35, 41A50, 91G80}
\keywords{predictable stochastic processes, Wiener chaos, chaoslets, best $N$-term approximation, Malliavin--Sobolev regularity, compressibility, neural operator, neural SDEs, stochastic optimal control, hedging}

\author[Kratsios]{Anastasis Kratsios} 
\author[Livieri]{Giulia Livieri}
\author[Schmocker]{Philipp Schmocker} 
\address{A.\ Kratsios: Department of Mathematics, McMaster University and Vector Institute, 1280 Main Street West, Hamilton, Ontario, Canada L8S 4K1}
\email{\href{mailto:kratsioa@mcmaster.ca}{kratsioa@mcmaster.ca}}
\address{G.\ Livieri: The London School of Economics and Political Science, Dept.\ Statistics, Columbia House, 69 Aldwych, London WC2B 4RR.}
\email{\href{mailto:g.livieri@lse.ac.uk}{g.livieri@lse.ac.uk}}
\address{P.\ Schmocker: ETH Zurich, Department of Mathematics, R\"{a}mistrasse 101, 8092 Z\"{u}rich, Switzerland.}
\email{\href{mailto:philipp.schmocker@math.ethz.ch}{philipp.schmocker@math.ethz.ch}}
\date{\today}

\begin{document}
\begin{abstract}
We address fundamental challenges in representing and computing $\mathbb{R}^{d}$-valued predictable square-integrable processes over $[0,T]$, collected in the space $\mathcal{H}^2_T(\mathbb{R}^{d})$. These processes are central to continuous-time stochastic control, reinforcement learning, and mathematical finance. Although Wiener-chaos expansions offer strong theoretical tools, traditional computational methods are hindered by the need for large chaos dictionaries and high-order iterated integrals. To overcome these obstacles, we introduce NeuralChaos -- a neural operator architecture that produces elements of $\mathcal{H}^2_T(\mathbb{R}^{d})$ using only finitely many evaluations of the driving Brownian motion, while preserving predictability and square-integrability. We prove that NeuralChaos is dense in $\mathcal{H}^2_T(\mathbb{R}^{d})$ and achieves the best $N$-term chaoslet approximation rates for compressible and Malliavin--Sobolev regular processes. Moreover, compressibility is shown to be typical for processes from $\mathcal{H}^2_T(\mathbb{R}^{d})$ under non-degenerate sub-Gaussian sampling. In contrast, we show that finite-dimensional Markovian neural SDE models constitute a meagre and Gaussian-null subset in $\mathcal{H}^2_T(\mathbb{R}^{d})$, regardless of discretization, whereas compressible processes are generic. Numerical experiments on a stochastic optimal control problem and dynamic hedging highlight the practical effectiveness of our approach. Our results enable more efficient and expressive modeling in stochastic analysis and mathematical finance.
\end{abstract}

\maketitle
\section{Introduction}\label{sec::introduction}
Many operations in stochastic analysis, stochastic optimal control, and mathematical finance -- e.g., dynamic hedging, portfolio choice, volatility calibration, optimal execution, and stochastic control with path-dependent objectives -- require the approximation or optimization of predictable processes satisfying basic integrability requirements. It turns out that a natural ambient space for such problems is the Hilbert space of $\mathbb{R}^{d}$-valued predictable square-integrable processes considered, {\color{black} e.g.,} over the interval $[0,T]$, denoted by  $\mathcal{H}^2_T(\mathbb{R}^{d})$. Importantly, predictability encodes the causal information constraint faced by, e.g., a trading or control strategy, while square-integrability guarantees that the resulting stochastic integrals and objective functionals are well-defined. Therefore,  a computationally useful parametrization of strategies (or controls) should approximate rich classes of elements in $\mathcal{H}^2_T(\mathbb{R}^{d})$ without violating either of the just-mentioned two structural requirements.

Currently, there are effectively two approaches for bringing general theoretical solutions into practice via computationally viable methods. The first approach consists in focusing on stylized models -- e.g., one imposes Markovian or affine model structures that lead to finite-dimensional state variables -- which (often) admit analytically solvable closed-form expressions; see, e.g.,~\cite{black1973pricing,merton1973theory,heston1993closed,bertsimas1998execution,avellaneda2008limitorderbook,kallsen2010utility,gueant2013inventory,gerhold2014transaction,moreau2017trading,cartea2016closedformvwap,cartea2016orderflow,muhle2025dynamic} for a non-exhaustive list of references. Instead, the second approach uses non-parametric techniques that can address problems under more general and realistic sets of assumptions, e.g., path-dependent claims and non-Markovian volatility. At the forefront of the second approach, there is the recent boom in deep-learning approaches due to their compatibility with efficient (stochastic) optimization algorithms (e.g.,~\cite{polyak1992acceleration,kingma2014adam}); see, e.g.,~\cite{kratsios2020deep,zhang2019deeplob,yu2019barrierbsde_ssrn,horvath2021roughhedging,cao2021deephedgingrl,ning2021double,horvath2021deep,gonon2021deep,neufeld2022chaotic,gonon2023random,lillo2023analysis,benth2024pricing,
gonon2024deep,reppen2023deep,gauthier2025deep,gonon2025leveraging,wiedemann2025operator,alvarez2025neural,abi2025volatility,yang2026synchronizing,buehler2026sanos,lozano2026}, for, again, a non-exhaustive list of references. However, there is still a lack of such deep-learning models in the space $\mathcal{H}^2_T(\mathbb{R}^{d})$, and the state of the art perhaps still consists only of extensions of the Wiener chaos; e.g.~\cite[Lemma B.2]{alvarez2025neural}.  This creates a gap between the expressive Hilbert-space representation naturally suggested by stochastic analysis and the finite, trainable parameterisations used in modern numerical methods. 
\subsection{The current state of the art} 
Several existing results have established (quantitative) approximation guarantees for the solution of stochastic differential equations (SDE, henceforth). More precisely, let\footnote{cf.~Subsection \ref{subsec::notation} for the notation.} $d_{X} \in \mathbb{N}_{+}$ be the dimension of the state process, $D \in \mathbb{N}_{+}$ be the dimension of the driving Brownian motion, and $\varepsilon>0$ be the target approximation accuracy. Then, results such as \cite[Proposition 4.12 and Lemma 4.6]{gonon2023deep} imply that -- under the assumptions stated there -- a Markovian SDE solution can be approximated by a Neural-SDE type model with a number of parameters of order $O(\varepsilon^{-\max\{q,1/2\}})$, where $q>0$ depends on the dimension and regularity of the problem. In the worst-case estimates inherited from~\cite{yarotsky2018optimal} one obtains $q=d_{X}/(1+s)$, where $s>0$ is a smoothness parameter. On the other hand, on arbitrary time-horizons one may instead obtain a value of $q=1$ (which is dimension-independent); see \cite{kratsios2025generative}, and, also,~\cite{gierjatowicz2020,cohen2023}. Related deep-learning approximation results for discrete-time stochastic systems and traditional non-machine learning methods are available in~\cite{grigoryeva18,gonon2019reservoir,gonon2023approximation,gonon2021fading,arabpour2024low,galimberti2026designing} and~\cite{mullergronbach2002uniform,mullergronbach2004pointwise}, respectively. However, these results have as their natural object Markovian SDEs without a diverging state dimension and, therefore, are not typical in the space $\mathcal{H}^2_T(\mathbb{R}^{d})$. Indeed, we show, in Proposition \ref{prop:badnews}, that the class of generalized predictable Euler-Maruyama-type processes with finite, non-exploding state dimension is meagre {\color{black}{(i.e.\ membership to this class is unstable under small perturbations since the class has empty interior)}} in $\mathcal{H}^2_T(\mathbb{R}^{d})$ and this class has $\mu$-(almost surely) measure zero for every non-degenerate Gaussian measure on $\mathcal{H}^2_T(\mathbb{R}^{d})$; importantly,  this class contains the usual discrete-time approximations of Markovian SDEs (see Proposition \ref{prop:goodnews}). Moreover, Markovian (neural) SDEs are not always the natural objects for sequential decision problems arising in stochastic control, reinforcement learning, and mathematical finance. For example, linear backward SDE (BSDEs) with path-dependent terminal conditions (e.g.,~\cite[Proposition 2.2]{ElKarouiPEngQuenez1997Closedformprop2pt2}), variance-optimal hedging and recursive utility problems (e.g.,~\cite{duffie1992stochastic}), or time-consistent risk evaluation through (so-called) $g$-expectations (e.g.,~\cite{di2024fully}) lead to non-Markovian objects.\\
\noindent Finally, it is worth calling attention to the developments in kernelized continuous-time machine learning theory, which builds on (expected) path-signatures~\cite{kiraly2019kernels,morrill2021neural,salvi2021signature,chevyrev2022signature,cuchiero2025signature,philippuniversal2026,philippderivatives2026}. This theory is, however, primarily path-wise or distributional and is usually not formulated as a finite-dimensional trainable parameterizations of elements of $\mathcal{H}^2_T(\mathbb{R}^{d})$. The current paper, instead, builds on the stochastic predecessor of path-signatures, namely the Wiener chaos and its related Malliavin--Sobolev calculus, which provide an orthogonal graded description of Brownian square-integrable objects; it is, at this point, thoroughly developed (e.g., \cite{privault1994chaotic,nualart06,diNunno2009malliavin,privault2009stochastic,peccati2011wiener}).

Our result can be viewed as an \textit{operator-learning} result; i.e.\ a result on approximating non-linear operators between infinite-dimensional topological vector spaces; cf.~\cite{li2021fourier,de2022deep,benth2023neural,furuya2023globally,lanthaler2023operator,neufeld2023universal,adcock2024optimal,furuya2024can,lanthaler2025nonlocality,galimberti2026designing,lanthaler2026parametric,schwab2026deep,philippuniversal2026,philippderivatives2026} which have several applications in mathematical finance~\cite{benth2024pricing,wiedemann2025operator,neufeld2024solving,kratsios2025generative} and game-theory/control~\cite{alvarez2025neural,furuya2026polynomialscalingpossibleneural,furuya2025one}.
Indeed, for any fixed predictable process $u_{\cdot}$, the objective is to learn the non-linear operator mapping a Brownian path $W_{\cdot}(\omega)$ to the corresponding realization of $u_{\cdot}(\omega)$ with the approximation criteria being approximation minimizing the time-averaged variance across most paths; i.e.\ the $\mathcal{H}_T^2(\mathbb{R}^d)$-norm.  

\subsection{Limitations}
The goal of the current paper is not simply to prove a universal approximation theorem, which is rather easily obtained, but, instead, to focus on the more delicate property of \emph{minimax-optimal approximation rates} in the best $N$-term sense of non-linear approximation of~\cite{devore1998nonlinear}, for well-behaved non-Markovian processes in $\mathcal{H}^2_T(\mathbb{R}^{d})$, while respecting the predictable and square-integrable structure of $\mathcal{H}^2_T(\mathbb{R}^{d})$. Critically, we seek a computationally \emph{lightweight} solution that: 1) avoids costly operations, such as higher-order quadrature of iterated integrals; 2) avoids the poor rates of kernel methods identified in~\cite[Theorem 2]{yarotsky2018optimal} in the finite-dimensional case; 3) embraces the non-linear representation capacity of deep-learning technologies; and, finally, 4) remains compatible with modern stochastic optimization pipelines (e.g., \cite{duchi2009efficient,zeiler2012adadelta}). Ideally, such a solution should remain as close as possible to out-of-the-box multilayer perceptrons (MLPs, in short) since any deep-learning-based solution would then be straightforward to implement in a modern Artificial Intelligence (AI, in short) software (e.g., \texttt{Tensorflow} or \texttt{PyTorch}). Our solution has all of these properties, thereby creating new pathways for efficient and expressive modeling in mathematical finance and stochastic analysis.

\subsection{Our results}\label{subsec::our_results} In the current paper, we introduce $\operatorname{NeuralChaos}$, a universal class of deep-learning powered predictable and square-integrable processes in $\mathcal{H}^2_T(\mathbb{R}^{d})$, which preserves by construction the mandatory adaptedness to the information structure encoded by predictability, and the square-integrability required for downstream usage (e.g., when parameterising SDEs by predictable controls).

Our construction is based on three basic operations (cf.~Figure \ref{fig:model_workflow}). First, we sample the driving Brownian motion at finitely many deterministic times $0=t_0<t_1<\dots<t_M = T$. Second, these sampled values are processed through lower-triangular linear lifts and row-wise neural-network heads. The lower-triangular structure guarantees that the $m^{th}$ row of the lifted feature matrix depends only on Brownian samples available up to the previous time instant. Finally, the resulting adapted random variables are assembled into a continuous-time process through deterministic causal time masks. Consequently, the value of the output process at time $t$ depends only on Brownian information available up to that time. Importantly, no iterated stochastic integral is computed explicitly.

\begin{figure}[ht!]
    \centering
    \includegraphics[width=.75\linewidth]{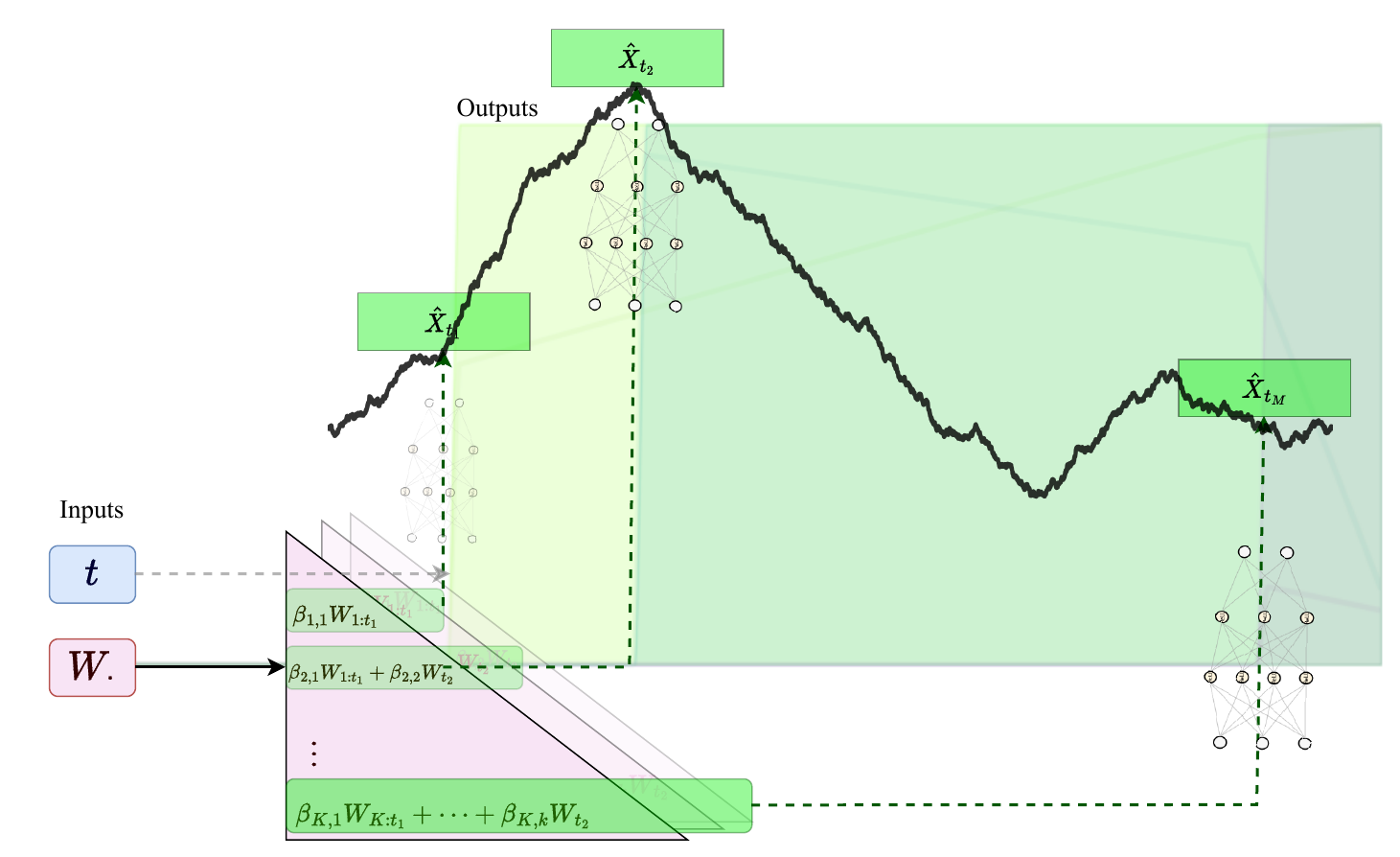}  
    \\ 
    \caption{Our \textbf{NeuralChaos} architecture (for details cf.\ Section~\ref{sec::section_3}):
    The input Brownian path is first sampled at the time-points $0=t_0<t_1<\cdots<t_M=T.$ (cf.~Figure~\ref{fig:bm_path_evals}), producing the path-wise evaluations $W_{t_0},\dots, W_{t_{M-1}}$; notice that no iterated integrals are explicitly computed, only implicitly.
    \hfill\\
    \textbf{Lifting Channels:} 
    These evaluations are mapped, via a sequence of $K$ lower-triangular matrices, into a random feature matrix whose $m^{th}$ (lifted) row is adapted to $\mathcal{F}_{t_{m-1}}$.
    \hfill\\
    \textbf{Prediction Heads:} 
    The lifted feature matrix is processed row by row by $M$ neural heads $\mathfrak{h}_m:\mathbb R^{KD}\to\mathbb R^d$. The $m$-th head acts only on the $m$-th lifted row and produces an adapted random vector $Y_m\eqdef \mathfrak{h}_m\left(L_{\mathrm{lift}}(\boldsymbol{W}^{\boldsymbol{t}})_{m:}\right) \in L^2(\mathcal F_{t_{m-1}};\mathbb R^d)$, where $\boldsymbol{W}^{\boldsymbol{t}}\eqdef[W_{t_0}^{\top},W_{t_1}^{\top},\ldots,W_{t_{M-1}}^{\top}]^{\top} \in \mathbb{R}^{M \times D}$. Thus $Y_m$ depends only on Brownian samples available up to $t_{m-1}$, and it does not contain future Brownian information.
    \hfill\\
    \textbf{Continuous-Time Assembly Via Masks:} 
    The sequence of random vectors $Y_1,\dots,Y_M$ is then assembled into a continuous-time output through the $M$ temporal masks in Figure~\ref{fig:temporal_masks}. Each mask is parameterized by a ReLU MLP, is $0$ before time $t_k$, and transitions to nearly $1$ by time $t_k+\delta$ ($\delta>0$), where the sharpness is controlled by a temperature parameter. Therefore the $m$-th row output can contribute only after the time at which the information used to compute it is available. This enforces causality by construction.}
    \label{fig:model_workflow}
\end{figure}

Our first main (positive) result is a universal approximation theorem in $\mathcal{H}^2_T(\mathbb{R}^{d})$; see Theorem~\ref{thrm:happytimes}. The proof hinges on an (adapted) chaoslet expansion of predictable processes. In particular, each chaoslet is a product of a deterministic causal time atom and a Hermite polynomial evaluated at finitely many Gaussian Haar coordinates of the Brownian path. Importantly, since these Gaussian coordinates are computable from finitely
many Brownian samples, each finite chaoslet expansion can be implemented, up to arbitrary accuracy, by our $\operatorname{NeuralChaos}$ architecture. Remarkably, our approximation theorem also has a quantitative form. If we denote by $\mathbb{D}_{T}^{s,2:d}$ the Malliavin--Sobolev space of order $s$ with $L^{2}$-integrability for processes in $\mathcal{H}^2_{T}(\mathbb{R}^d)$, Theorem~\ref{thrm:happytimes} shows that our model realizes finite chaoslet approximations and inherits the corresponding best $N$-term approximation rate, thus implying both universality and optimal approximability for sparse and smooth stochastic processes, namely, processes belonging to a Malliavin--Sobolev space and exhibiting some compressibility (cf.~Definition \ref{defn:compressibility}). To prove the previous result, we import ideas from random matrix theory (e.g.,~\cite{candes2006robust,candes2006stable,candes2006near}), non-linear (optimal) constructive approximation (e.g, ~\cite{cohen2009compressed,devore1998nonlinear}), and compressive sensing (e.g., ~\cite{iwen2010combinatorial}) into mathematical finance. Also, we demonstrate, in Proposition~\ref{prop:genericity_of_compressibility}, that our compressibility assumption is probabilistically generic: almost surely, a randomly selected predictable process in $\mathcal{H}_T^2(\mathbb{R}^{d})$ will be compressible, as long as the selection is not made in a pathological way.

Our second (although negative) main result is a comparison with finite-dimensional Markovian neural-SDE-type parametrizations and \textit{bounded} latent state dimension; i.e.\ not signature-augmented neural SDEs e.g.~\cite{kidger2021neural,issa2023non,doi:10.1137/22M1512338}. In particular, we consider generalized predictable Euler--Maruyama-type processes with non-exploding state dimension. Such processes are natural finite-dimensional surrogates for Markovian SDE models and are sufficient to approximate classical Markovian SDEs under standard Lipschitz assumptions. However, Proposition~\ref{prop:badnews} shows that this class
is meagre in $\mathcal{H}_T^2(\mathbb{R}^{d})$ and has measure zero under every centred non-singular Gaussian law. This does not imply that such SDEs are ineffective as interpretable models of stochastic phenomena. Rather, it suggests that most stochastic phenomena are unlikely to admit a representation of this form, in contrast with compressible processes, which are typical.

Finally, we illustrate the relevance of our framework in a simple stochastic optimal control problem as well as an example of mathematical finance, namely dynamic hedging. In both cases, NeuralChaos provides a class of trainable, predictable, square-integrable strategies that serve as control processes and trading strategies, respectively.
These examples are not meant to exhaust the possible applications, but they show how a finite-sampling, causal, trainable parametrization of $\mathcal{H}_T^2(\mathbb{R}^{d})$ can be used in financial optimization problems where the unknown object is itself a predictable process.

\section{Preliminaries}\label{sec::preliminaries}
This section recalls some preparatory material for the derivations of the main results of this paper.  
\subsection{Notation}\label{subsec::notation}
For the sake of the reader, we collect and define the notations and conventions we will use in the rest of the paper. 

\indent{\emph{Sets and spaces.}}\, $\mathbb{R}_{+} \subseteq \mathbb{R}$ denotes the subset of real numbers that are strictly greater than zero. Similarly, $\mathbb{N}_{+} \subseteq \mathbb{N}$ denotes the subset of natural numbers that are strictly greater than zero. For a generic $N \in \mathbb{N}_{+}$, we define $[N]_{+}\eqdef\{1,\ldots,N\}$ and $[N] \eqdef\{0,\ldots,N\}$. Moreover, for a countable index set $I$ and $0<p<\infty$, we write $\ell^p(I)\eqdef\big\{a=(a_i)_{i\in I}:\| a \|_{\ell^p}^p \eqdef \sum_{i\in I}|a_i|^p<\infty\big\}$. When $I=\mathbb N_+$, we simply write $\ell^p$. Note that $\|\cdot\|_{\ell^p}$ is for  a norm, whereas for $0<p<1$ it is a quasi-norm. For vector-valued families $a=(a_i)_{i\in I} \subseteq \mathbb{R}^d$,  we write $a\in\ell^p(I;\mathbb R^d)$ if $\Vert a \Vert_{\ell^p(I;\mathbb{R}^d)}^p \eqdef \sum_{i\in I}\vert a_i \vert_{\mathbb{R}^d}^p<\infty$. For a countable set $I$, we set $\ell_0(I;\mathbb N) \eqdef \big\{a=(a_i)_{i\in I} \subseteq \mathbb{N}:
\operatorname{supp}(a)\eqdef\{i\in I:a_i\neq0\}\text{ is finite}\big\}$.

\vspace{0.3cm}
\indent\emph{Linear algebra.}\, For a generic $d \in \mathbb{N}_{+}$, we denote by $\mathsf{1}_{d} \in \mathbb{R}^{d}$ the $d$-dimensional vector with entries all equal to one. For any generic $N \times M$ matrix $A$, we denote its $i^{th}$ row vector by $A_{i:}$ and its $j^{th}$ column vector by $A_{:j}$, where $i \in [N]_{+}$ and $j \in [M]_{+}$.

\vspace{0.3cm}
\indent \emph{Machine learning.}\, We denote by $\sigma(\cdot)$ any continuous threshold function $\sigma:\mathbb{R}\rightarrow[0,1]$ satisfying $\sigma(t)=0$ for all $t \leq 0$ and $\sigma(t)=1$ for all $t \geq 1$; e.g., $\sigma(t)\eqdef \max\{0,\min\{1,t\}\}$ or $\sigma(t)\eqdef0 \cdot \mathsf{1}_{\{t\leq 0\}}+(3 t^2-2 t^3)\cdot\mathsf{1}_{\{0<t<1\}}+1 \cdot\mathsf{1}_{\{t \geq 1\}}$. Moreover, we denote $\operatorname{ReLU}(t)\eqdef \max\{0,t\}$, for $t\in \mathbb{R}$.

\vspace{0.3cm}
\indent \emph{Stochastic Analysis.}\, Throughout the paper, we fix $T \in \mathbb{R}_{+}$ and a $D$-dimensional ($D \in \mathbb{N}_{+}$) Brownian motion on a complete probability space $(\Omega, \mathcal{F}, \mathbb{P})$. We denote by $\mathcal{F}_{\cdot}=(\mathcal{F}_{t})_{0 \leq t\leq T}$ the usual $\mathbb{P}$-augmentation of the Brownian filtration; unless otherwise stated $\mathcal{F}_{0}$ is the $\mathbb{P}$-trivial filtration. For $d \in \mathbb{N}$, we denote by $\mathcal{H}^{2}_{T}(\mathbb{R}^d)\eqdef L^2_{\rm pred}(\Omega \times [0,T], \mathcal{P}, \mathbb{P} \otimes \ud t;\mathbb{R}^d)$ the space of $\mathbb{R}^d$-valued predictable processes $u_{\cdot}$ with $\|u_{\cdot}\|^2_{\mathcal{H}^{2}_{T}(\mathbb{R}^d)}\eqdef \mathbb{E}\left[\int_{0}^{T} \vert u_t \vert_{\mathbb{R}^d}^2\,\ud t\right] < \infty$, where $\mathcal{P}$ denotes the predictable $\sigma$-field on $\Omega \times [0,T]$. Sometimes, we also use the norm $\|u_{\cdot}\|^2_{\mathcal{S}^{2}_{T}(\mathbb{R}^d)}\eqdef \mathbb{E}\left[\sup_{0 \leq t \leq T} \vert u_t \vert_{\mathbb{R}^d}^2 \right]$. Moreover, for $0 \leq t\leq T$, we denote by $L^{2}(\mathcal{F}_{t};\mathbb{R}^d)\eqdef L^2(\Omega,\mathcal{F}_t,\mathbb{P};\mathbb{R}^d)$ the space of $\mathbb{R}^d$-valued $\mathcal{F}_t$-measurable random variables with finite second moment. In addition, for $\Delta_n(t)\eqdef\{(s_1,\ldots,s_n) \in [0,t]^{n}\,:\,0<s_1<\ldots<s_n<t\}$ and $D \in \mathbb{N}_+$, we define $\mathcal{K}_n(t;D)\eqdef L^2\!\left(\Delta_n(t);\mathbb R^{D^n}\right)$. Furthermore, for $0 \leq s < \infty$, we denote by $\mathbb{D}^{s,2:d} \subseteq L^2(\Omega,\mathcal{F}_T,\mathbb{P};\mathbb{R}^d)$ the Malliavin--Sobolev space of order $s$ (see \eqref{eq:Malliavin_Sobolev_norm}), and by $\mathbb{D}_{T}^{s,2:d}$ the analogous space for $\mathbb{R}^{d}$-valued processes in $\mathcal{H}^2_{T}(\mathbb{R}^d)$. Finally, for $d=1$, we abbreviate $\mathcal{H}^{2}_{T} \eqdef \mathcal{H}^{2}_{T}(\mathbb{R})$, $L^2(\Omega,\mathcal{F}_t,\mathbb{P}) \eqdef L^2(\Omega,\mathcal{F}_t,\mathbb{P};\mathbb{R})$, $\mathbb{D}^{s,2} \eqdef \mathbb{D}^{s,2:1}$, and $\mathbb{D}_{T}^{s,2} \eqdef \mathbb{D}_{T}^{s,2:1}$.

\subsection{Patterns in the Wiener Chaos: smoothness and compressibility.}
We first work in the scalar-valued case; vector-valued processes are treated component-wise.

Multivariate ordinary polynomials on $\mathbb{R}^{m}$ are among the basic building blocks of functions on Euclidean space. For example, on a compact domain, such as the $m$-dimensional unit hypercube, every square-integrable function can be approximated by polynomials of sufficiently large degree. An analogous role over $L^2(\mathcal{F}_T)$, where $\mathcal{F}_{\cdot}=(\mathcal{F}_{t})_{0 \leq t\leq T}$ is the usual augmentation of the Brownian filtration (unless otherwise stated $\mathcal{F}_{0}$ is the $\mathbb{P}$-trivial filtration), is played by the Wiener chaos, namely an orthogonal graded decomposition of $L^2(\mathcal{F}_T)$ into ``stochastic monomials'' of increasing degree. In this correspondence, degree-$n$ monomials are replaced by iterated It\^{o} integrals of order $n \in \mathbb{N}_{+}$. More precisely, for $\ell_1,\ldots,\ell_n \in [D]_{+}$ and a family $f_n=(f_n^{\ell_1,\ldots,\ell_n})_{(\ell_1,\ldots,\ell_n)\in [D]^{n}} \in \mathcal{K}_n(t; D)$ (cf. Subsection \ref{subsec::notation}), we denote by $I_n^{t}:\mathcal{K}_n(t; D) \to L^2(\mathcal{F}_t)$ the operator returning the iterated It\^o integral of order $n$, i.e.,
\begin{equation}
\label{eq:iterated_ito_term}
I_n^{t}(f_n) \eqdef \sum_{\ell_1,\ldots,\ell_n=1}^D
\int_0^t\int_0^{s_n}\cdots\int_0^{s_2}
f_n^{\ell_1,\ldots,\ell_n}(s_1,\ldots,s_n)
\,\ud W_{s_1}^{\ell_1}\cdots \ud W_{s_n}^{\ell_n};
\end{equation}
we use the chronological convention, in the sense that the variable $s_j$ is paired with the Brownian component $W^{\ell_j}$; whence, the kernel is evaluated as $f_n^{\ell_1,\ldots,\ell_n}(s_1,\ldots,s_n)$ on $\Delta_n(t)$. Therefore,  just as ordinary polynomials are graded by degree, Wiener chaos is graded by the number of stochastic integrations: constants form degree 0, single stochastic integrals form degree 1, double iterated integrals form degree 2, and so on. As in the polynomial setting, higher degrees correspond to greater complexity and allow one to encode progressively finer features of the underlying object. 

The above analogy can be pushed one step further.  In $\mathbb{R}^m$, passing from degree $k$ to degree $k+1$ amounts to multiplying by one additional coordinate function $x_i$, for some $i\in [m]_+$.  In the Wiener chaos expansion, the corresponding operation is one additional It\^{o}-integration against one component $W^{\ell}_{\cdot}$, for some $\ell \in [D]_{+}$, of the driving Brownian motion. In this sense, It\^{o} integration plays the role of multiplication by a coordinate, and iterated stochastic integrals play the role of higher-order monomials.

However, unlike polynomials on $\mathbb{R}^{m}$, where each homogeneous degree contains only finitely many monomials, each non-zero level of the Wiener chaos contains infinitely many orthogonal ``monomials'', indexed by the infinite-dimensional kernel space $\mathcal{K}_n(T;D)$. Thus, the Wiener chaos is \textit{infinite} in \textit{two distinct directions}: first, as for ordinary polynomials, the degree grading is unbounded, since $n$ ranges over all orders of stochastic integration; second, unlike for ordinary polynomials on $\mathbb{R}^{m}$, the space spanned by the ``monomials'' of each fixed positive degree is itself infinite-dimensional. As we will see shortly, the first source of infinitude is largely controlled by Malliavin smoothness, while the second is controlled by compressibility.

Much as polynomials yield an asymptotic expansion of functions on Euclidean domains, iterated It\^{o} integrals yield an asymptotic expansion of square-integrable random variables measurable with respect to Brownian motion. Indeed, since the $\sigma$-algebra $\mathcal{F}_t$ is generated by $\{W_s\}_{0\le s\le t}$, we obtain for every $0\le t\le T$ and $u\in L^2(\mathcal{F}_t)$ the Wiener chaos expansion 
\begin{equation}
\label{eq:WienerChaos}
        u 
    = 
        \mathbb{E}[u] 
        + 
        \sum_{n=1}^{\infty}\,
        I_n^{t}(f_n),
\end{equation}
for some $f_n \in \mathcal{K}(t;D)$, $n \in \mathbb{N}_+$, where $I_n^{t}(f_n)$ is defined in Eq.~\eqref{eq:iterated_ito_term}.

The trouble with the Wiener chaos expansion in Eq.~\eqref{eq:WienerChaos}, and with similar objects such as path signatures, is not that higher-order objects are necessarily complicated. Instead, the main difficulty is that a high-order sparse structure is expensive to find if one first exhibits a large linear dictionary, and only afterwards performs model selection 
which is computationally costly, cf.~\cite{guo2025consistency} for the signature version of this idea using the LASSO of~\cite{tibshirani1996regression}; that is, if the Wiener chaos is used non-adaptively.  
For the sake of clarity, we illustrate this point via a simple example of a ``higher-order but sparse perturbation of a single Brownian motion path'', as represented in the following Figure~\ref{fig:chaos_so_much_chaos}. 

\begin{figure}[h!]
    \centering
    \includegraphics[width=0.85\linewidth]{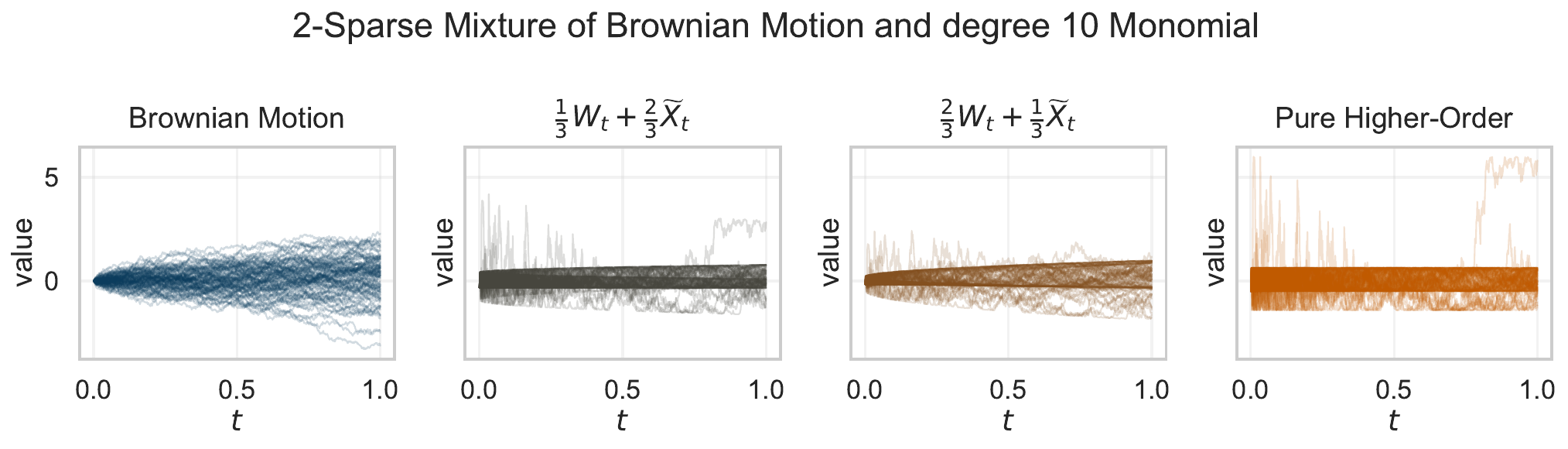}
    \caption{
    \textbf{A process which is easy for \textit{adaptive} approximation methods to approximate but hard for non-adaptive methods.}
    Evidently, at most two parameters are needed to exactly represent any of the illustrated processes: one parameter for the Brownian factor $W_{\cdot}$ and one parameter for the higher-order chaos factor $\tilde{X}*{\cdot}$; equivalently, each process is a two-term degree-$10$ polynomial in the Wiener-chaos sense.
    \hfill\\
    A \textit{non-adaptive} method first constructs a large graded dictionary of chaos monomials until the relevant factor $\tilde{X}*{\cdot}$ is included, and only then sub-selects the relevant factors. Its computational cost is bottlenecked by this first phase, which requires constructing sufficiently many factors before sub-selection can begin. 
    \hfill\\
    In contrast, an \textit{adaptive} method directly \textit{searches} for the important factors, without wasting computation on constructing the irrelevant ones. Its computational benefit arises precisely from avoiding the construction of a large dictionary of candidate factors.
    }
    \label{fig:chaos_so_much_chaos}
\end{figure}

Let $W_{\cdot}=(W_t)_{0 \leq t \leq T}$ be a one-dimensional Brownian motion and $\mathrm{He}_{10}(\cdot)$ be the probabilists' Hermite polynomial of degree ten. Then, we set $h_{10}(x)\eqdef\mathrm{He}_{10}(x)/\sqrt{10!}$ and define for $t > 0$,
\begin{equation}\label{eq:hermite}
\widetilde{X}_t \eqdef \sqrt{t}
h_{10}\left(\frac{W_t}{\sqrt t}\right),
\end{equation}
where $\widetilde X_0\eqdef 0$. The quantity $\widetilde X_t$ belongs to the tenth Wiener chaos of $L^2(\mathcal F_t)$; one has $\mathbb E[\widetilde X_t]=0$, $\mathbb E[\widetilde X_t^2]=t$, and $\mathbb E[W_t\widetilde X_t]=0$. Therefore, for $0<\eta<1$, the two-term chaos polynomial
\begin{equation}\label{eq:lin_reg__badtarget}
\eta W_t+(1-\eta)\widetilde X_t = \eta W_t+(1-\eta) \sqrt t
h_{10}\left(\frac{W_t}{\sqrt t}\right)
\end{equation}
has mean zero and variance $\operatorname{Var}\big(\eta W_t+(1-\eta)\widetilde X_t\big)=\big(\eta^2+(1-\eta)^2\big)t$. From the computational side, the issue is not that the specific scalar
example above is difficult once the active coordinate is known. Indeed, after the degree-ten coordinate has been identified, it can be evaluated from $W_t/\sqrt t$ using the Hermite polynomial of order ten. The bottleneck appears when one tries to discover such sparse high-order structure by linear selection over a pre-generated chaos dictionary. If one retains $m_{\mathrm{feat}}$ scalar Gaussian coordinates, the number of homogeneous Hermite monomials of degree ten is of order $m_{\mathrm{feat}}^{10}$. In particular, if these coordinates arise from
$J$ temporal modes and a $D$-dimensional Brownian motion, then the degree-ten dictionary has size of order $(JD)^{10}$. Hence, in an ordered iterated-integral representation, one obtains the same type of combinatorial growth in the number of degree-ten words. Thus, sparse regression methods such as LASSO, group LASSO, or elastic-net
regularization (see, e.g., \cite{guo2025consistency}) must first generate or scan a large high-order dictionary before they can select the few active chaos coordinates. In contrast, the active structure in Eq.~\eqref{eq:lin_reg__badtarget} has constant description length:
two active chaos coordinates, of degree one and ten, together with two coefficients. A nonlinear parametrization should, therefore, aim to synthesize the active high-order coordinate directly, with a cost depending on the desired approximation accuracy rather than on the size of the full degree-ten dictionary and thus take advantage of sparse non-linear approximation ideas of~\cite{devore1998nonlinear}. 

This paper provides such an approximation guarantee using a \emph{causal} deep-learning model, whose parameters are trainable and can ``cycle through'' different grades of the Wiener chaos without having to generate the entire graded structure before sub-selecting. In particular, in the next subsection, we will consider an alternative description of the Wiener chaos expansion of any random variable $u\in L^2(\mathcal{F}_t)$, for a given $0\le t \le T$, which extends more easily to multiple dimensions and does not require lengthy computations of multiple iterated stochastic integrals. The idea hinges on the replacement of deterministic kernels and iterated stochastic integrals by Hermite polynomials evaluated at finitely many Gaussian coordinates of the Brownian path. In particular, these Gaussian coordinates are chosen to be Haar coordinates, so each active coordinate can be computed from finitely many Brownian samples.

\subsubsection[%
A ``special'' orthonormal basis for the Wiener chaos: chaoslets -- sampling not integration
]{A ``special'' orthonormal basis for the Wiener chaos: chaoslets -- sampling\protect\footnote{Here, sampling is meant in the de-facto sense of compressed sensing~\cite{donoho2006compressed,candes2006robust,iwen2010combinatorial}, approximation theory~\cite{krieg2021functionvalues,siegel2025nearly,krieg2026approximation}, and harmonic analysis~\cite{landau1967necessary,fuhr2017density,BARANOV20151358}; that is, point evaluations at a given point in time $t \in [0,T]$, not sampling in the sense of statistics, i.e.\ i.i.d.\ copies of a process or drawing some $\omega \in \Omega$.} not integration}
\label{subsec::special_chaos}

For every $i\in \mathbb{N}$, we first consider the probabilists' Hermite polynomial $\mathrm{He}_{i}(x)=(-1)^{i} e^{x^2/2}\frac{\ud^i}{\ud x^{i}}e^{-x^2/2}$, and then the corresponding normalized Hermite polynomials $h_i(x) \eqdef \frac{\mathrm{He}_{i}(x)}{\sqrt{i!}}$, with $h_0(x)=1$, which are the eigenfunctions of the generator $\frac{\ud^2}{\ud x^2}-x\frac{\ud}{\ud x}$ of the Ornstein-Uhlenbeck process $\ud X_t = -X_t\,\ud t +\sqrt{2}\,\ud W_t$. For a cylindrical, finite-rank element of the $n^{th}$ homogeneous Wiener chaos, or for a finite-dimensional truncation of a general chaos element, one has the following representation (general elements are obtained as $L^2$-limits of such cylindrical truncations):
\begin{equation}
\label{eq:ith_WienerChaos}
    \sum_{|\boldsymbol\alpha|=n}
    \,
        \beta_{n,\boldsymbol\alpha}
        \,
            \prod_{r=1}^{D}
            \prod_{j=1}^{J_n}
            \,
                h_{\alpha_{j,r}}\Big(
                    \int_0^t\,
                        \psi_{n,j}(s)
                    \,
                    \ud W_s^r
                \Big)
\end{equation}
where $\boldsymbol{\alpha} \eqdef (\alpha_{j,r})_{j\in [J_n]_{+},\,r\in [D]_{+}}\in \mathbb{N}^{J_n\times D}$ is a multi-index with $|\boldsymbol{\alpha}| \eqdef \sum_{r=1}^{D}\sum_{j=1}^{J_n}\, \alpha_{j,r} = n$, and $\beta_{n,\boldsymbol\alpha}\in \mathbb{R}$ are some coefficients. In the previous formula $J_n$ denotes the number of (retained) Gaussian coordinates that are employed to represent the $n^{th}$ homogeneous chaos. In particular, the index $j$ is only an enumeration index for the finitely many deterministic coordinates $\psi_{n,j}$ retained at level $n$; below these coordinates will be chosen from a Haar system.  More precisely, we consider the \textit{Haar (wavelet) system} given for $0 \leq t \leq T$ by
\begin{equation}
    \psi_{i,k}(t) \eqdef 
 \tfrac{2^{i/2}}{\sqrt{T}}\left(
        -\,\mathbf{1}_{[T\frac{k}{2^i},T\frac{1+2k}{2^{i+1}})}(t)
        +
            \mathbf{1}_{[T\frac{1+2k}{2^{i+1}},T\frac{k+1}{2^i})}(t)
    \right).
\end{equation}
The main motivation for this choice is that the stochastic integrals of the previous quantity (cf.~Eq.~\eqref{eq:ith_WienerChaos}) can, not only be computed in closed form, but also, as remarked in~\cite[Equation 52]{alvarez2025neural}, transform computationally heavy stochastic integral computations into lightning-fast \textit{sampling operations}; whereby a process is simply \textit{sampled} at \textit{three Brownian samples} per basic wavelet. In particular, for every $r \in [D]_{+}$, we define the Gaussian Haar coordinate as  
\begin{equation}
\label{eq:stochastic_haar_coordinate}
Z_{i,k}^r\eqdef\tfrac{2^{i/2}}{\sqrt{T}}\,
    W^r_{\frac{Tk}{2^i}}
    -\tfrac{2^{i/2+1}}{\sqrt{T}}\,W^r_{\frac{T(1+2k)}{2^{i+1}}}
    +\tfrac{2^{i/2}}{\sqrt{T}}\,W^r_{\frac{T(k+1)}{2^i}}.
\end{equation}
We notice that the notation in Eq.\eqref{eq:ith_WienerChaos} is linked to the Haar (wavelet) system in the following way. For every $n \in \mathbb{N}_{+}$ one first chooses $\Lambda_n
=\{(i_1,k_1),\ldots,(i_{J_n},k_{J_n})\}$, and then set $\psi_{n,j}\eqdef\psi_{i_j,k_j}$ and $Z_{j,r} \eqdef Z^{r}_{i_j,k_j}$, $j \in [J_n]_{+}$ and $r \in [D]_{+}$. Whence, $\int_0^t\psi_{n,j}(s)\,\ud W_s^r=Z_{j,r}$, and Eq.\eqref{eq:ith_WienerChaos} can be written as a tensorized Hermite polynomial evaluated at the finite Gaussian vector $\mathbf{Z}=(Z_{j,r})_{j \in [J_n]_{+}, r \in [D]_{+}} \in \mathbb{R}^{J_n\times D}$. We now pass from random variables to predictable square-integrable processes $u_{\cdot} \in \mathcal{H}^2_T(\mathbb{R}^{d})$. First, we observe that when $p \in \mathbb{N}$ and $0 \leq q \leq 2^{p-1}$, $\psi_{p,q}(\cdot)$ is supported on $[\frac{qT}{2^p},\frac{(q+1)T}{2^p})$. To preserve adaptedness, we, therefore, define $\mathcal{I}_{p,q}\eqdef
    \Big\{
        (i,k)\in \mathbb{N}^2:
        \,
        0\le k<2^i
        \,\mbox{ and }\,
        \frac{(k+1)}{2^i}\le \frac{q}{2^{p}}
    \Big\}
$. At this point, we let $\boldsymbol\alpha= (\alpha^r_{i,k})_{(i,k)\in \mathcal \mathcal{I}_{p,q},\ r\in[D]_{+}}$ be a finitely supported family with values in $\mathbb{N}$, and define $|\boldsymbol\alpha|\eqdef \sum_{(i,k)\in \mathcal{I}_{p,q}} \sum_{r=1}^{D} \alpha^r_{i,k}$. Then, the stochastic Hermite--Haar factor associated with $(p,q,\boldsymbol\alpha)$ is $u^{(T)}_{\boldsymbol\alpha;p,q}(\omega) \eqdef \prod_{(i,k)\in \mathcal{I}_{p,q}} \prod_{r=1}^{D} h_{\alpha^r_{i,k}} \left(Z^r_{i,k}(\omega)\right) \in L^2(\mathcal{F}_\frac{q T}{2^p})$; in particular, the product is finite because $\boldsymbol\alpha$ is finitely supported, and, importantly, the stochastic integrals (slow) have been reduced to ordered time-samples (fast). The corresponding \textit{chaoslet} is 
\begin{equation}
\label{eq:reshuffled}
    \mathfrak{C}_{(p,q)}^{\boldsymbol{\alpha}}
    (t,\omega)
\eqdef
    \,
    \underbrace{
        \psi_{p,q}(t)
    }_{\text{Time-Haar wavelet}}
    \,
    \underbrace{
        \prod_{(i,k)\in \mathcal{I}_{p,q}}
        \,
        \prod_{r=1}^{D}
        \,
        h_{\alpha_r^{i,k}}\big(
            Z_{i,k}^r(\omega)
        \big)
    }_{\text{Stochastic Hermite-Haar factor:\,\,} u^{(T)}_{\boldsymbol\alpha;p,q}(\omega)}
,
\end{equation}
We emphasize that the indices $(i,k)$ in the stochastic tensorial term $\prod_{(i,k)\in \mathcal{I}_{p,q}}
        \,
        \prod_{r=1}^{D}
        \,
        h_{\alpha_r^{i,k}}\big(
            Z_{i,k}^r
        \big)
$ are tied to the second endpoint, namely $q$, of the temporal wavelet $\psi_{p,q}$. We notice that \cite[Lemma~19]{alvarez2025neural} obtains the following hybrid polynomial/wavelet basis for the space $\mathcal{H}^2_T(\mathbb{R}^{d})$, whereby every process $u_{\cdot} \in \mathcal{H}^2_T(\mathbb{R}^{d})$ admits an asymptotic orthogonal expansion of the form
\begin{equation*}
    u_t(\omega) = c + \sum_{p=0}^{\infty}\sum_{q=0}^{2^p-1}\sum_{\boldsymbol\alpha\in \ell_0(\mathcal{I}_{p,q}\times[D];\mathbb{N})}\beta_{p,q,\boldsymbol\alpha}\mathfrak{C}_{(p,q)}^{\boldsymbol{\alpha}}
    (t,\omega) 
\end{equation*}

In this way, the pair $(p,q)$ determines the deterministic time support of the chaoslet,
whereas $|\boldsymbol{\alpha}|$ is its Wiener-chaos degree. The Wiener-chaos level of $\mathfrak{C}_{(p,q)}^{\boldsymbol{\alpha}}$ is precisely $|\boldsymbol{\alpha}|$.
As shown in~\cite[Lemma 19]{alvarez2025neural}, the family 
\begin{equation}
\label{eq:chaosless_disorganized_ungraded}
\mathfrak{C}^{\rm all}  \eqdef \Big\{
                \tfrac{1}{\sqrt{T}}
        \Big\}
    \cup
        \Big\{
            \mathfrak{C}_{(p,q)}^{\boldsymbol{\alpha}}
            :
            \,
            p,q\in \mathbb{N},
            \,
            q \le 2^{p}-1,
            \,
            \boldsymbol{\alpha} \in \ell_0(\mathcal{I}_{p,q}\times[D]_{+};\mathbb N)
        \Big\}
\end{equation}
is an orthonormal basis of $\mathcal{H}_T^2$, where $1/\sqrt{T}$ denotes the constant process $(t,\omega)\mapsto 1/\sqrt{T}$. 

We review, in the next subsection, the classical notion of regularity in stochastic analysis using our chaoslets, namely Sobolev--Malliavin differentiability.  Our main tool will be precisely this chaoslet expansion, which is a very specific form of a Wiener chaos expansion.

\subsubsection{The Malliavin--Sobolev Space}\label{subsubsec::MalliavinSobolevSpace}
First, we recall the Malliavin--Sobolev space for random variables. Then, we define the corresponding process-level space used in the current paper. More precisely, for any $\mathbb{R}$-valued random variable $X \in L^2(\mathcal{F}_T)$, we abbreviate its Wiener chaos decomposition by $X=\sum_{n=0}^{\infty}\bar{J}_n X$, where $\bar{J}_n$ denotes the orthogonal projection onto the $n^{th}$ homogeneous Wiener chaos. Then, we define, on its natural domain, the Ornstein-Uhlenbeck generator $L$ by
\begin{equation*}
    L X\eqdef -\sum_{n=0}^\infty n \bar{J}_n X.
\end{equation*}
For $s \in \mathbb{R}_+$, the Malliavin--Sobolev space $\mathbb{D}^{s,2}$ is the completion of smooth cylindrical functionals with respect to the norm
\begin{equation}
\label{eq:Malliavin_Sobolev_norm}
    \|X\|_{\mathbb{D}^{s,2}}
\eqdef
    \|(I-L)^{s/2}X\|_{L^2(\mathcal{F}_T)}
=
    \left(
        \sum_{n=0}^\infty (1+n)^s\|\bar{J}_n X\|_{L^2(\mathcal{F}_T)}^2
    \right)^{1/2}.
\end{equation}
For $s \in \mathbb{N}$, $\|X\|_{\mathbb{D}^{s,2}}$ is equivalent to the norm $\Vert X \Vert_{s,2} \eqdef \big( \|X\|_{L^2(\mathcal{F}_T)}^2 + \sum_{j=1}^s \|D^j X\|_{L^2(\mathcal{F}_T)}^2 \big)^{1/2}$ introduced in \cite[Equation~1.32]{nualart06} as $\|D^j X\|_{L^2(\mathcal{F}_T)}^2 = \sum_{n=j}^\infty n (n-1) \cdots (n-j+1) \|\bar{J}_n X\|_{L^2(\mathcal{F}_T)}^2$. For an $\mathbb{R}^{d}$-valued random variable, the Malliavin--Sobolev space is denoted by $\mathbb{D}^{s,2:d}$, with norm $\|X\|^2_{\mathbb{D}^{s,2:d}}\eqdef\sum_{j=1}^{d}\|X^{j}\|_{\mathbb{D}^{s,2}}^2$ (cf.~Eq.~\eqref{eq:Malliavin_Sobolev_norm}).

We now define the corresponding space for square-integrable predictable processes. We let $\mathfrak{C} = \bigcup_{k=0}^{\infty}\,\mathfrak{C}_k$ be the adapted chaoslet orthonormal basis of $\mathcal{H}^2_T$ graded by Wiener-chaos degree; in particular, $\mathfrak{C}_k$ consists of the chaoslets whose stochastic Hermite degree is $k$. For $s \geq 0$, we define
\begin{equation*}
    \mathbb{D}_T^{s,2}\eqdef\left\{X \in \mathcal{H}^2_T\,:\, \|X\|_{\mathbb{D}_T^{s,2}}^2 \eqdef \sum_{n=0}^{\infty}(1+n)^s\|\bar{J}_n^{\mathcal{H}} X\|^2_{\mathcal{H}^2_T}<+\infty\right\}
\end{equation*}
where $\bar{J}_n^{\mathcal{H}} X$ denotes the orthogonal projection in $\mathcal{H}^2_T(\mathbb{R}^{d})$ onto the closed linear span of $\mathfrak{C}_n$. Analogously to random variables, we define $\mathbb{D}_T^{s,2:d}$ for square-integrable predictable processes component-wise. We are now ready to give the following definition.

\subsection{Processes with Few Relevant Monomial Terms - Compressible Processes}
\label{s:Compressibility}
We now introduce a second notion of regularity for stochastic processes, distinct from Malliavin smoothness. These two notions should be understood as controlling the two ``directions of infinite-dimensionality'' of the Wiener chaos; cf.\ the discussion above~\eqref{eq:WienerChaos}. Malliavin smoothness ensures that only few levels of the Wiener chaos are needed, i.e. that only low-degree monomials need to be combined for approximation. Compressibility ensures that, within each level, only few monomials are relevant, despite the fact that the Wiener chaos contains infinitely many orthogonal monomials per level. When combined, these two properties imply that the resulting process is well approximated by few low-degree monomials. Such a structure is conducive to optimal nonlinear approximation guarantees and, as we will show, can be captured by our proposed \textit{NeuralChaos} model using few factors.
\hfill\\
Our notion is a stochastic-process analogue of \textit{compressible vectors} in compressed sensing~\cite{candes2006robust,candes2006stable,candes2006near}.

\begin{figure}[htbp!]
    \centering
    \begin{subfigure}[t]{0.28\linewidth}
        \centering
        \includegraphics[width=.85\linewidth]{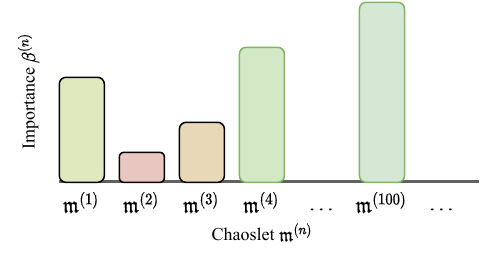}
        \caption{Chaoslet expansion of a \textbf{compressible} process, ordered according to the default ordering in $\mathfrak{C}^{\mathrm{all}}$.}
        \label{fig:Compressibility__Unsorted}
    \end{subfigure}
    \hfill
    \begin{subfigure}[t]{0.28\linewidth}
        \centering
        \includegraphics[width=.85\linewidth]{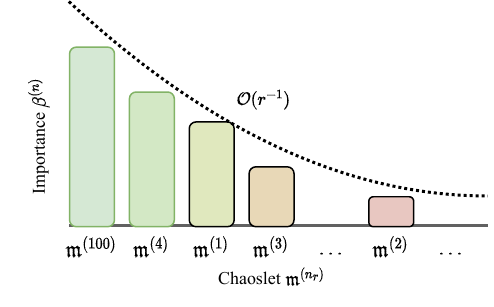}
        \caption{The same chaoslets $\mathfrak{m}^{(n)}$ sorted by \textit{decreasing} coefficient magnitude $|\beta^{(n)}|$.}
        \label{fig:Compressibility__Sorted}
    \end{subfigure}
    \hfill
    \begin{subfigure}[t]{0.28\linewidth}
        \centering
        \includegraphics[width=.85\linewidth]{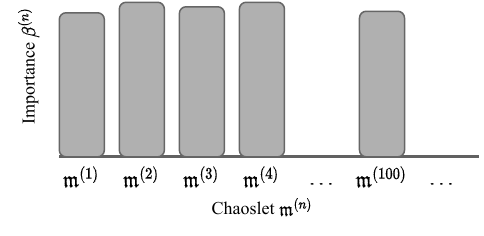}
        \caption{For an \textbf{incompressible} process, sorting the chaoslets $\mathfrak{m}^{(n)}$ by decreasing coefficient magnitude $|\beta^{(n)}|$ does not reveal advantageous decay.}
        \label{fig:Compressibility__Incompressible}
    \end{subfigure}
    \caption{\textbf{Compressibility vs. Incompressibility.} A compressible process is one whose chaoslet expansion, cf.~\eqref{eq:compressibility__expansion}, can be rearranged to exhibit at least power-law coefficient decay, cf.~\eqref{eq:compressibility}. The left subfigure illustrates the expansion in the default chaoslet ordering of $\mathfrak{C}^{\mathrm{all}}$, cf.~\eqref{eq:chaosless_disorganized_ungraded}, while the middle subfigure illustrates the same expansion after sorting the chaoslets by coefficient magnitude; i.e.\ in the spirit of greedy bases~\cite{albiac2021quasi}.
    \hfill\\
    In contrast, for an \textit{incompressible} process, illustrated in the right subfigure, no permutation of the chaoslets yields an advantageously decaying coefficient sequence. Thus, there is no gain from performing a best $N$-term approximation; i.e.\ a basis truncation after reordering, since there is no advantageous permutation.}
    \label{fig:Compressibility}
\end{figure}

\begin{definition}[Compressible predictable process]\label{defn:compressibility}
    For $S>0$, let $X \in \mathcal{H}^2_T(\mathbb{R}^{d})$ be a process with chaoslet expansion
    \begin{equation}
    \label{eq:compressibility__expansion}
        X= \sum_{k=0}^{\infty}\,\sum_{\mathfrak{m}\in \mathfrak{C}_k}\,\beta_{k,\mathfrak{m}}\,\mathfrak{m},
    \end{equation}
    where $(\beta_{k,\mathfrak{m}})_{{k,\mathfrak{m}}} \subseteq \mathbb{R}^d$. Then, $X$ is called \emph{$S$-compressible} if the non-increasing rearrangement of the norms of $(\beta_{k,\mathfrak{m}})_{{k,\mathfrak{m}}}$ decays polynomially with exponent $S$. More precisely, if $|\beta_{k_1,\mathfrak{m}_1}|_{\mathbb{R}^{d}}\ge |\beta_{k_2,\mathfrak{m}_2}|_{\mathbb{R}^{d}} \ge \dots \ge |\beta_{k_r,\mathfrak{m}_r}|_{\mathbb{R}^{d}}
    \ge |\beta_{k_{r+1},\mathfrak{m}_{r+1}}|_{\mathbb{R}^{d}}\ge \dots$ for some enumerations $(k_r)_{r \in \mathbb{N}_+}$ and $(\mathfrak{m}_r)_{r \in \mathbb{N}_+}$ of $\mathbb{N}_+$ and $\mathfrak{C}_k$, respectively, there exists some constant $C_{X}>0$ such that for every $r \in \mathbb{N}_{+}$ it holds that
\begin{equation} 
\label{eq:compressibility}
|\beta_{k_r,\mathfrak{m}_r}|_{\mathbb{R}^{d}} \le C_{X}\,r^{-S}.
\end{equation}
\end{definition}

The previous definition quantifies sparsity of the chaoslet coefficient sequence after all chaos levels have been pooled together. In particular, the previous condition is a non-linear approximation condition, in the sense that if one retains only the $N$ largest chaoslet coefficients, then an $S$-compressible process admits a best $N$-term approximation error of order $N^{-(S-1/2)}$, in the standard sense of non-linear approximation theory (e.g.,~\cite{devore1998nonlinear}). However, this condition alone does not control in which Wiener
chaos levels the largest coefficients occur. The additional Malliavin--Sobolev assumption $X \in \mathbb{D}_T^{s,2:d}$ controls the tail in the chaos degree; this fact is formalized in the next proposition.
\begin{proposition}[Compressibility plus Malliavin regularity]
\label{prop:sparse_and_smooth}
Let $S>\tfrac{1}{2}$ and $s>0$. Assume that $X \in \mathcal{H}_T^2(\mathbb{R}^{d})$ is compressible with compressibility constant $C_{X}$; see Definition \ref{defn:compressibility}. Suppose, in addition, that $X \in \mathbb{D}_T^{s,2:d}$. Then, for every $N \in \mathbb{N}_{+}$ and every $P \in \mathbb{N}$, we have
\begin{equation}
\label{eq:best_N_term_rate}
    \inf_{
        \underset{
            \mathfrak{m}^{(1)},\dots,\mathfrak{m}^{(N)}\in \bigcup_{m=0}^{P} \mathfrak{C}_m
        }{
            \beta_1,\ldots,\beta_N\in \mathbb{R}^d
        }
    }
    \Big\|
        X
        -
        \sum_{n=1}^N
            \beta_n
            \mathfrak{m}^{(n)}
    \Big\|_{\mathcal{H}_T^2(\mathbb{R}^{d})}
    \lesssim_X
    N^{-(S-\frac12)}
    +
    (1+P)^{-s/2},
\end{equation}
where the symbol $``\lesssim_X"$ indicates that the bounding constant depends on $X$, but not on the approximation parameters $N$ and $P$.
\end{proposition}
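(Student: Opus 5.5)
The plan is to exhibit one admissible competitor for the infimum and bound its error by splitting the discarded coefficients into two groups. Expand $X=\sum_{k}\sum_{\mathfrak{m}\in\mathfrak{C}_k}\beta_{k,\mathfrak{m}}\mathfrak{m}$ in the orthonormal chaoslet basis, applied component-wise. Let $\Lambda_N$ be the index set of the $N$ largest coefficients in the non-increasing rearrangement of Definition~\ref{defn:compressibility}. Take as competitor
\[
X_{N,P}\eqdef\sum_{(k,\mathfrak{m})\in\Lambda_N,\ k\le P}\beta_{k,\mathfrak{m}}\mathfrak{m}.
\]
It uses at most $N$ chaoslets, all from $\bigcup_{m=0}^{P}\mathfrak{C}_m$. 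If fewer than $N$ terms survive, pad with zero coefficients. The infimum is therefore at most $\|X-X_{N,P}\|_{\mathcal{H}^2_T(\mathbb{R}^d)}$.

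By orthonormality (Parseval in $\mathcal{H}^2_T(\mathbb{R}^d)$), $\|X-X_{N,P}\|^2$ is the sum of $|\beta_{k,\mathfrak{m}}|^2$ over the complement of the retained set. That complement lies in the union of $\Lambda_N^c$ and $\{(k,\mathfrak{m}):k>P\}$. Hence
\[
\|X-X_{N,P}\|^2\le \sum_{(k,\mathfrak{m})\notin\Lambda_N}|\beta_{k,\mathfrak{m}}|^2+\sum_{k>P}\|\bar J_k^{\mathcal H}X\|^2_{\mathcal{H}^2_T(\mathbb{R}^d)}.
\]

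The first sum is the compressibility tail. By \eqref{eq:compressibility},
\[
\sum_{r>N}C_X^2 r^{-2S}\le C_X^2\int_N^\infty x^{-2S}\,\ud x=\tfrac{C_X^2}{2S-1}N^{-(2S-1)},
\]
which is finite because $S>\tfrac12$. The second sum is the chaos-degree tail. Since $(1+k)^s\ge(1+P)^s$ for $k>P$, the definition of $\|\cdot\|_{\mathbb{D}^{s,2:d}_T}$ gives
\[
\sum_{k>P}\|\bar J_k^{\mathcal H}X\|^2\le(1+P)^{-s}\|X\|^2_{\mathbb{D}_T^{s,2:d}}.
\]
Taking square roots and using $\sqrt{a+b}\le\sqrt a+\sqrt b$ yields the claimed bound $N^{-(S-1/2)}+(1+P)^{-s/2}$. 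The implied constant is $\max\{C_X(2S-1)^{-1/2},\|X\|_{\mathbb{D}_T^{s,2:d}}\}$, which depends only on $X$ and not on $N$ or $P$.

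The one step needing care, rather than a hard obstacle, is bookkeeping. The rearrangement in Definition~\ref{defn:compressibility} pools all chaos levels, and the vector coefficients $\beta_{k,\mathfrak{m}}\in\mathbb{R}^d$ must be treated consistently: the approximant in \eqref{eq:best_N_term_rate} uses vector coefficients $\beta_n\in\mathbb{R}^d$ times scalar chaoslets, and the truncation must be compatible with this. I would make the identity
\[
\|\sum\beta_{k,\mathfrak{m}}\mathfrak{m}\|^2_{\mathcal{H}^2_T(\mathbb{R}^d)}=\sum|\beta_{k,\mathfrak{m}}|^2_{\mathbb{R}^d}
\]
explicit by working component-wise. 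I would also check that $\|\bar J^{\mathcal H}_kX\|^2=\sum_{\mathfrak{m}\in\mathfrak{C}_k}|\beta_{k,\mathfrak{m}}|^2$, so that the two tails are measured in the same coefficient sequence.
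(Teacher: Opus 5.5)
Your argument is correct and is essentially the paper's proof: the same two tail bounds, namely the compressibility tail $C_X^2\sum_{r>N}r^{-2S}\lesssim N^{1-2S}$ and the chaos-degree tail $(1+P)^{-s}\|X\|^2_{\mathbb{D}^{s,2:d}_T}$ obtained from $(1+k)^s\ge(1+P)^s$, are combined through orthonormality of the chaoslets. The only difference is the order of truncation: the paper first projects onto $\Pi_{\le P}X$ and then takes the best $N$-term approximant of that projection, using that a subfamily of coefficients inherits the rearrangement bound; you instead intersect the global top-$N$ index set with the levels $k\le P$, which avoids the inheritance step and yields a slightly suboptimal but still sufficient competitor.
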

\begin{proof}
See Appendix \ref{app::section_2}.
\end{proof}

While the process in Figure~\ref{fig:chaos_so_much_chaos} is visibly compressible, it is natural to ask which other processes are compressible as well. We provide a genericity guarantee, reminiscent of the corresponding phenomenon for expander graphs, i.e. families of sparse but highly connected graphs. In combinatorial probability, explicit expanders can be challenging to construct, yet simple random graph models often yield expanders with high probability, asymptotically as the number of vertices diverges; see, e.g.,~\cite{friedman2008proof}.
\hfill\\
Similarly, a sparse process chosen at random in a sufficiently non-degenerate fashion is almost surely compressible, as shown in Proposition~\ref{prop:genericity_of_compressibility}. Thus, compressibility should be viewed not only as a special hand-crafted structure, but also as a generic feature within natural sparse models. This should not be confused with saying that all familiar stochastic models are compressible: for instance, most classical SDE-generated processes need not have the sparse chaos structure exhibited by the processes in Figure~\ref{fig:chaos_so_much_chaos}.

We conclude this section with the following important remark.
\begin{remark}
Importantly, the two assumptions in Proposition \ref{prop:sparse_and_smooth} measure two different features of a predictable process. In particular, compressibility does not imply Malliavin--Sobolev smoothness. Indeed let us consider the one-dimensional (i.e., $d_X=1$), fix $S>\frac{1}{2}$, and for each $r \in \mathbb{N}_{+}$ choose a chaoslet $\mathfrak{m}^{(2^r)} \in \mathfrak{C}_{2^r}$ with $\|\mathfrak{m}^{(2^r)}\|_{\mathcal{H}^2_T}=1$. Define $X \eqdef \sum_{r=1}^{\infty} r^{-S}\mathfrak{m}^{(2r)}$. Then, since $\sum_{r=1}^{\infty} r^{-2 S}<\infty$ we have that $X \in \mathcal{H}^2_T$. In addition, $X$ is $S$-compressible since its non-zero chaoslet coefficients, arranged in decreasing order, are precisely $(r^{-S})_{r \geq 1}$. However, $X \notin \mathbb{D}^{1,2}_T$ since 
\begin{equation*}
    \|X\|^2_{\mathbb{D}^{1,2}_T}=\sum_{r=1}^{\infty}(1+2^r) r^{-2 S}=+\infty.
\end{equation*}
Conversely, Malliavin--Sobolev smoothness does not imply compressibility. To see this, fix $S>\frac{1}{2}$, and consider distinct chaoslets $(\mathfrak{m}_r^{(1)})_{r \in \mathbb{N}_+} \subseteq \mathfrak{C}_1$. Define $X \eqdef \sum_{r=1}^{\infty} r^{-S}\log(1+r)\mathfrak{m}_r^{(1)}$. Then, $\lim\limits_{r \rightarrow \infty}\, r^S|\beta_r|=\log(1+r) = + \infty$, so $X$ is not $S$-compressible. However, as $\sum_{r=1}^\infty r^{-2S}\log(1+r)^2<\infty$, we have $X \in \mathcal{H}_T^2$ and since $X$ lies entirely in the first Wiener chaos, we obtain $ X\in \mathbb{D}^{s,2}_T$, for all $s \geq 0$.
\end{remark}

In fact, we show that there are infinitely many smooth but incompressible processes, and conversely infinitely many compressible processes which are not smooth. Thus, smoothness and compressibility are distinct notions of regularity, controlling the two ``directions of infinite-dimensionality'' of the Wiener chaos: the former controls how many chaos levels are needed, while the latter controls how many monomials are relevant within each level. Together, they yield approximation by few low-degree monomials, the structure captured by our proposed $\operatorname{NeuralChaos}$ model using few factors. We now turn to the architecture of this neural network-based operator.

\section{The NeuralChaos architecture}\label{sec::section_3}
This section defines the $\operatorname{NeuralChaos}$ architecture, which, as said in the introduction, has three main parts: 1) a finite sampling of the driving Brownian path, 2) a causal triangular lifting of these samples, and 3) a continuous-time causal assembly through deterministic time masks. We fix $T>0$, a $D$-dimensional ($D \in \mathbb{N}_{+}$) Brownian motion $W_{\cdot}=(W_t)_{0 \leq t \leq T}$, input and output dimensions $d_{\mathrm{in}}\in\mathbb N_+$ and $d_{\mathrm{out}}\in\mathbb N_+$, a number of (time) grid intervals $M\in\mathbb N_+$, and a number of lifting channels $K\in\mathbb N_+$. 

We first define the discrete-time version of our model on a time-grid $\boldsymbol{t} \eqdef (t_m)_{m=0}^M$ where $0=t_0<t_1<\dots<t_M=T$, which we then extend to a predictable process with almost-surely continuous paths of controlled Lipschitz regularity. The stochastic input to the model is the (causally) ordered Brownian sample matrix $\mathbf{W}^{\boldsymbol{t}}=[W_{t_0}^{\top},W_{t_1}^{\top},\ldots,W_{t_{M-1}}^{\top}]^{\top} \in \mathbb R^{M\times D}$; whence, $d_{\mathrm{in}}=D$ (and, e.g.,  $d_{\rm out}=d$ when approximating $\mathcal{H}^2_T(\mathbb{R}^{d})$). Figure \ref{fig:masks_and_bm_paths} gives a schematic view of the two inputs used by $\operatorname{NeuralChaos}$: finite Brownian samples and the current evaluation time. The left panel illustrates the finite sampling of the driving Brownian path, while the right panel illustrates the causal time masks used to assemble the row-wise outputs into a predictable process. Precisely, the model defines a map
$
\Phi_{\vartriangle}
:
\mathbb{R}^{M\times d_{\mathrm{in}}}
\to
\mathbb{R}^{M \times d_{\mathrm{out}}}
$
and its causal weighted sum of discrete outputs $\mathcal{NC}^{\boldsymbol{t}}:
[0,T]\times \mathbb{R}^{M\times d_{\mathrm{in}}}
\to
\mathbb{R}^{d_{\mathrm{out}}}
$
of the form
\begin{align}
\label{eq:lift}
        \mathcal{NC}^{\boldsymbol{t}}(t,W)
    & \eqdef 
        \Phi_{\operatorname{C-Mask}}(t|\boldsymbol{t})^{\top}
        \,
        \Phi_{\vartriangle}(\mathbf{W}^{\boldsymbol{t}}),
\\
    \Phi_{\vartriangle}
& =
    L_{\mathrm{heads}}
\circ
    L_{\mathrm{lift}},
\end{align}
comprising \textit{lifting channels} $L_{\mathrm{lift}}:\mathbb{R}^{M\times d_{\mathrm{in}}}\to \mathbb{R}^{M\times (K d_{\mathrm{in}})}$ and \textit{causal heads} $L_{\mathrm{heads}}:\mathbb{R}^{M\times (K d_{\mathrm{in}})}\to \mathbb{R}^{M\times d_{\mathrm{out}}}$, which are then causally assembled via the \textit{causal mask} $\Phi_{\operatorname{C-Mask}}(\cdot|\boldsymbol{t}):\mathbb{R}\to \mathbb{R}^M$ mapping the $m^{th}$ row of $\Phi_{\vartriangle}$ to the $m^{th}$ point on the time-grid $\boldsymbol{t}$. At this point, we define the causal triangular lifting ($L_{\mathrm{lift}}$) and the causal heads ($L_{\mathrm{heads}}$). Precisely, the lifting channels $L_{\mathrm{lift}}$ ``develop'' the features of any input while preserving the causal dependence (adaptedness) of the input matrix; that is, the $i^{th}$ row produced by the lifting channel depends only on the first through $i^{th}$ rows of the input matrix, and on no later rows. Formally, 
\begin{equation}
 L_{\mathrm{lift}}(\mathbf{W}^{\boldsymbol{t}})\eqdef
\big[A^{(1)}\mathbf{W}^{\boldsymbol{t}}\,\big|\,A^{(2)}\mathbf{W}^{\boldsymbol{t}}\,\big|\,\cdots\,\big|\,A^{(K)}\mathbf{W}^{\boldsymbol{t}}\big],
\end{equation}
where, for each $k\in [K]_+$,
$
A^{(k)}\in \mathbb{R}^{M\times M}
$
is a lower-triangular matrix, and $[B^{(1)}|B^{(2)}|\dots|B^{(K)}]$ denotes the column-wise concatenation of matrices $B^{(1)},B^{(2)}, \dots,B^{(K)}$ sharing the same number of rows. Instead, the causal heads $L_{\mathrm{heads}}$ act row-wise on any input matrix $B\in \mathbb{R}^{M\times (K d_{\mathrm{in}})}$ by 
\begin{equation}
L_{\mathrm{heads}}(B) \eqdef [\varphi_1(B_{1:})^{\top},\ldots, \varphi_M(B_{M:})^{\top}]^{\top},
\end{equation}
where each $\varphi_1,\dots,\varphi_M:\mathbb{R}^{K d_{\mathrm{in}}}\to \mathbb{R}^{d_{\mathrm{out}}}$ is a MLP with ReLU activation functions. Finally, we discuss the continuous-time causal assembly through deterministic time masks. We write 
\begin{equation*}
\Delta_M^T \eqdef \left\{\boldsymbol{t}=(t_0,\ldots,t_M)\in[0,T]^{M+1}\,:\,0=t_0<t_1<\cdots<t_M=T
\right\}.
\end{equation*}
For every $\boldsymbol{t} \in \Delta_M^T$, the causal mask $\Phi_{\operatorname{C-Mask}}(\cdot|\boldsymbol{t}):[0,T]\to \mathbb{R}^M$ is defined by
\begin{equation}
\label{eq:causal_attention}
\begin{aligned}
\Phi_{\operatorname{C-Mask}}(t|\boldsymbol{t})& \eqdef
\begin{pmatrix}
            f_1(t)\,\sigma\big(
                    \tfrac{t-t_0}{\lambda}\big)
            \\
            \vdots
            \\
            f_M(t)\,\sigma\big(
                    \tfrac{t-t_{M-1}}{\lambda}
                \big)
        \end{pmatrix},
\end{aligned}
\end{equation}
where $f_1,\dots,f_M:[0,T] \to \mathbb{R}$ are MLPs with ReLU activation functions, $0<\lambda<T$ is a temperature parameter determining the model's pathwise regularity, and $\sigma:\mathbb{R}\to [0,1]$ is a continuous non-decreasing function with $\sigma(u)=0$ for $u \leq 0$ and $\sigma(1)=1$; e.g., $\sigma(t)=0 \cdot \mathsf{1}_{\{t\leq 0\}}+(3 t^2-2 t^3)\cdot\mathsf{1}_{\{0<t<1\}}+1 \cdot\mathsf{1}_{\{t \geq 1\}}$. Notice that the role of the lower-triangular lifting and the causal masks is summarized also  in
Figure~\ref{fig:masks_and_bm_paths}. The triangular matrices ensure that the $m$-th lifted feature row depends only on Brownian samples available up to the corresponding grid time, while the mask prevents that row from contributing before its information time.

\begin{figure}[h!]
    \centering
    \begin{subfigure}[t]{0.48\linewidth}
        \centering
        \includegraphics[width=\linewidth]{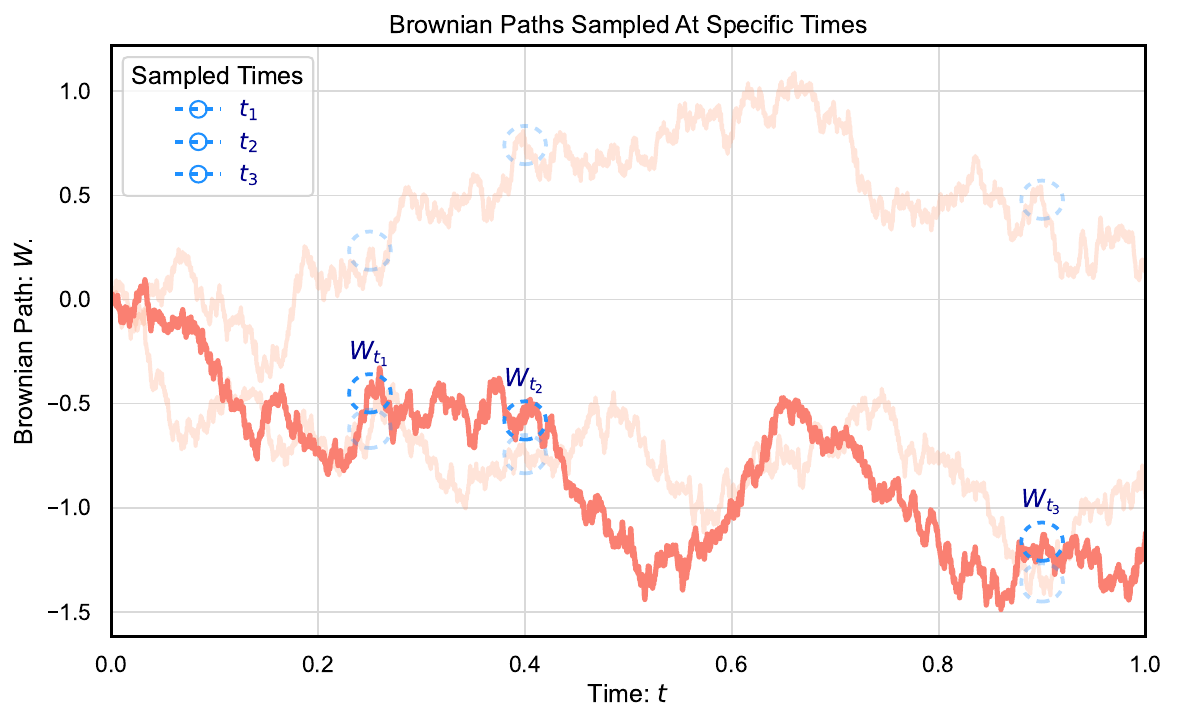}
        \caption{\textbf{Input 1 - Stochastic Signal:} \textit{Brownian motion} sample paths, evaluated at the sampled \textbf{time-points} $0=t_0<t_1<\cdots<t_M=T$, yielding the causally ordered inputs $W_{t_0},\dots,W_{t_{M-1}}$.}
        \label{fig:bm_path_evals}
    \end{subfigure}
    \hfill
    \begin{subfigure}[t]{0.48\linewidth}
        \centering
        \includegraphics[width=\linewidth]{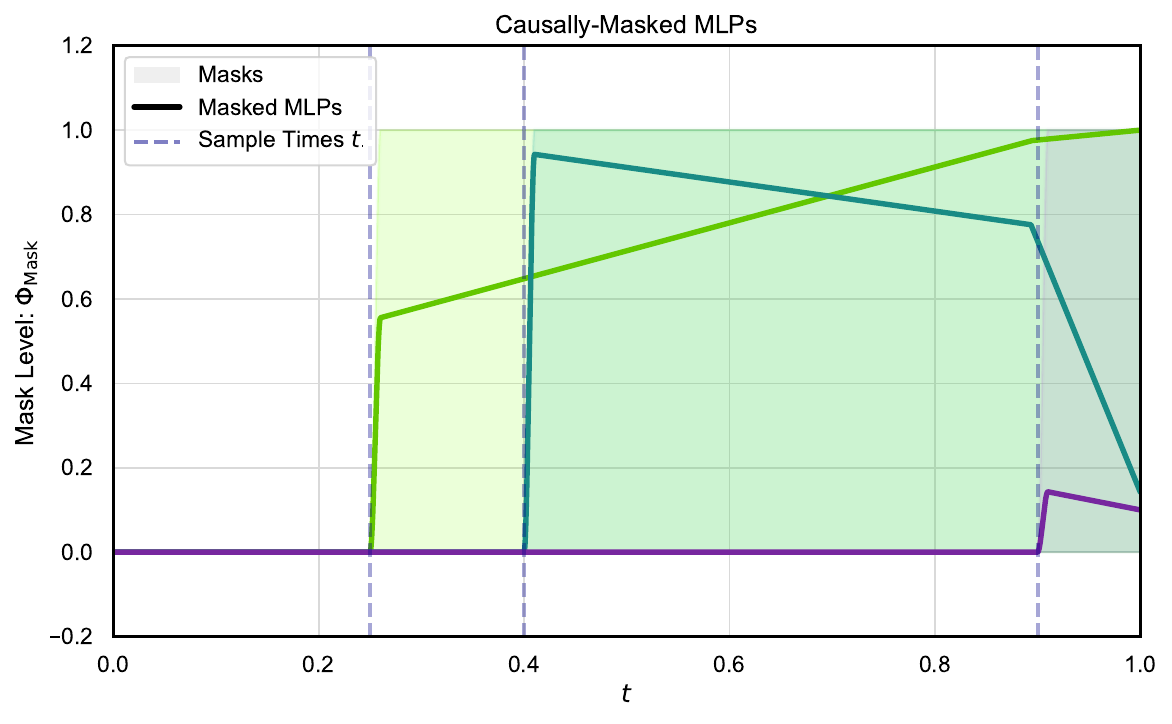}
        \caption{\textbf{Input 2 - Current Time:} The sampled Brownian path values are processed by $M$ row-wise heads. This is achieved by taking causally masked linear combinations of $M$ real-valued ReLU MLPs, each conditioned on the current time input $t\in [0,T]$, with the masks enforcing adaptedness.}
        \label{fig:temporal_masks}
    \end{subfigure}
    \caption{Inputs and causal assembly in the $\operatorname{NeuralChaos}$ architecture.  Panel \ref{fig:bm_path_evals} shows the stochastic input: the Brownian path is sampled at a finite ordered grid $0=t_0<t_1<\cdots<t_M=T$ producing the causally ordered samples These samples are processed by lower-triangular lifting matrices and row-wise neural heads, so that the $m^{th}$ random output is measurable with respect to the information available at the corresponding grid time. Panel \ref{fig:temporal_masks} illustrates the deterministic time masks used to assemble these adapted random outputs into a continuous-time predictable process. Each mask is inactive before its information time and turns on only after that time, thereby enforcing causality. The plot is schematic and is meant to illustrate the masking mechanism, not to represent a unique choice of mask functions.}
    \label{fig:masks_and_bm_paths}
\end{figure}

\subsection{Discussion of the NeuralChaos architecture}
First, we notice that the NeuralChaos architecture is \emph{causal} by construction. Indeed, define $Y\eqdef\Phi_{\vartriangle}(\mathbf{W}^{\boldsymbol{t}})=L_{\mathrm{heads}}(L_{\mathrm{lift}}(\mathbf{W}^{\boldsymbol{t}})) \in \mathbb{R}^{M \times d_{\rm out}}$  (cf.~Eq.~\eqref{eq:lift}). For each lifting channel $k=1,\ldots,K$, the matrix $A^{(k)}$ is lower triangular. Whence, the $m$-th row of $A^{(k)}\mathbf{W}^{\boldsymbol{t}}$ depends only on $W_{t_0},\ldots,W_{t_{m-1}}$. Consequently, the $m^{th}$ lifted row is $\mathcal F_{t_{m-1}}$-measurable. Since the head $\mathfrak{h}_m$ acts only on this $m^{th}$ row, the output row $Y_{m:}=\Phi_{\triangle}(\mathbf{W}^{\boldsymbol{t}})_{m:}$ is also \(\mathcal F_{t_{m-1}}\)-measurable. Moreover, the continuous-time output is $\mathcal{NC}^{\boldsymbol{t}}(t,W)=\Phi_{\mathrm{C\text{-}Mask}}(t\mid \boldsymbol{t})^{\top}
\Phi_{\vartriangle}(\mathbf{W}^{\boldsymbol{t}})=\sum_{m=1}^{M}f_m(t)\sigma\left(\frac{t-t_{m-1}}{\lambda}\right)Y_{m:}$.  The factor $\sigma\left(\frac{t-t_{m-1}}{\lambda}\right)$ vanishes for $t\le t_{m-1}$. Hence, the $m^{th}$ row output can contribute only after the time at which the information used to compute it is available. If $t>t_{m-1}$, then $Y_{m:}\in L^2(\mathcal F_{t_{m-1}};\mathbb R^{d_{\mathrm{out}}}) \subseteq L^2(\mathcal F_t;\mathbb R^{d_{\mathrm{out}}})$. Thus, $\mathcal{NC}^{\boldsymbol{t}}(t,W)$ depends only on Brownian information available up to time $t$. In particular, no future Brownian sample can enter the value of the process at time $t$. Moreover, since the masks are deterministic and continuous in $t$, each term $t\mapsto f_m(t)\sigma\left(\frac{t-t_{m-1}}{\lambda}\right)Y_{m:}$ is \emph{predictable}. Therefore, $\mathcal{NC}^{\boldsymbol{t}}(t,W)$ is predictable as a finite sum of predictable processes. Importantly, for fixed $M, K, d_{\mathrm{in}}$ and $d_{\mathrm{out}}$, and fixed grid $\boldsymbol{t}$, the $\operatorname{NeuralChaos}$ model is determined by \emph{finitely} many parameters: the entries of
the lower-triangular matrices $A^{(1)},\ldots,A^{(K)}\in\mathbb R^{M\times M}$, the weights and biases of the row-wise heads $\varphi_1,\ldots,\varphi_M$, the weights and biases of the time network $f_1,\ldots,f_M$, and the temperature parameter $\lambda>0$. Thus, for each fixed time $t$, the $\mathcal{NC}^{\boldsymbol{t}}(t,W)$ depends only on the finite vector $(\boldsymbol{t},W_{t_0},\ldots,W_{t_{M-1}})$, in addition to a finite parameter vector. Thus, no infinite-dimensional computations are being performed, which is arguably more realistic from a computational perspective. That is, no integral operator, nor any other infinite-rank operator, is used.

Finally, notice that our $\operatorname{NeuralChaos}$ model is not fully-connected. Indeed, each sub-network $\varphi_i$ determining the causal heads $L_{\mathrm{heads}}$ acts only on the corresponding row of the lifted representation $L_{\mathrm{lift}}(\mathbf{W}^{\boldsymbol{t}})$, rather than on the full matrix globally. Thus, the head stage is row-wise and relatively sparse. Moreover, the lifting stage inherits the lower-triangular structure of the matrices $A^{(1)},\dots,A^{(K)}$, rather than a fully-connected interaction across all time-indices. In particular, if the effective support-size per block is $r_i$, then the associated linear preprocessing scales like $\mathcal{O}(r_i M)$ per block, rather than $\mathcal{O}(M^2)$; and if $r_i\in \mathcal{O}(d)$, then this becomes $\mathcal{O}(dM)$. In particular, in this paper, we will consider a \textit{generative version} of the causal transformer, evaluated on a multidimensional Brownian-motion path generating the filtration underlying $\mathcal{H}_T^2(\mathbb{R}^{d})$.

\begin{remark}[Other activation functions]
\label{rem:standardization}
All $\operatorname{MLPs}$ will use the $\operatorname{ReLU}$ activation function in our analysis, due to standardization results such as~\cite{JMLR:v25:23-0912}, which reduce most $\operatorname{MLPs}$ to $\operatorname{ReLU}$ networks at a cost of $\mathcal{O}(1)$ complexity in depth, width, and number of non-zero parameters.
\end{remark}

\section{Main Results}\label{s:Main_Results}
This section presents the main results of the current paper. As said in the introduction (cf.~Subsection \ref{subsec::our_results}), we have a positive and a negative result.
\subsection{Optimal non-linear approximation results}
\label{s:Positive_Results}
The main approximation result is our optimal approximation theorem in $\mathcal{H}^2_T(\mathbb{R}^{d})$.
\begin{theorem}[Efficient (universal) approximation]
\label{thrm:happytimes}
Let $T>0$ and $W_{\cdot}=(W_t)_{0 \leq t \leq T}$ be a $D$-dimensional Brownian motion. In addition, let $\mathfrak{N}\mathfrak{C}_{T,D,d}$ be the class of $\mathbb{R}^{d}$-valued $\operatorname{NeuralChaos}$ outputs driven by $W_{\cdot}$, allowing arbitrary finite deterministic grids, finitely many lower-triangular lifting channels, row-wise $\operatorname{ReLU}$ heads, and deterministic causal masks as in Section \ref{sec::section_3}. Then, $\mathfrak{N}\mathfrak{C}_{T,D,d}$ is dense in $\mathcal{H}_T^2(\mathbb{R}^{d})$.\\
\indent Moreover, suppose $S>\frac{1}{2}$, $s>0$ and $X_{\cdot} \in \mathcal{H}^2_T(\mathbb{R}^{d})$ is $S$-compressible and in $\mathbb{D}^{s,2:d}_{T}$. Then, for every $N \in \mathbb{N}_{+}$, $P \in \mathbb{N}$, and $\varepsilon>0$, there exists $\widehat{X}_{N,P,\varepsilon} \in \mathfrak{N}\mathfrak{C}_{T,D,d}$ such that 
\begin{equation*}
    \|X-\widehat{X}_{N,P,\varepsilon}\|_{\mathcal{H}^2_T(\mathbb{R}^{d})} \lesssim_X N^{-\left(S-\frac{1}{2}\right)} + (1+P)^{-s/2} + \varepsilon.
\end{equation*}
In addition, let $D_{\mathrm H}(P,\rho_{N,X,\varepsilon})$, $W_{\mathrm H}(P,\rho_{N,X,\varepsilon})$, $S_{\mathrm H}(P,\rho_{N,X,\varepsilon})$ denote, respectively, the worst-case depth, width, and number of non-zero parameters of the stochastic Hermite head from Proposition~\ref{prop:ChaosMode}, over all Hermite factors of
total degree at most $P$, with $L^2(\Omega)$-accuracy $\rho_{N,X,\varepsilon}$. Then the
approximating $\operatorname{NeuralChaos}$ model may be chosen with grid size $M_{\mathrm{grid}} \leq C\,N (P+1)$, depth bounded by
\begin{equation*}
    \operatorname{depth}
\left(
\widehat X_{N,P,\varepsilon}
\right)
\le
C\left[
1+
D_{\mathrm H}\left(P,\rho_{N,X,\varepsilon}\right)
\right], 
\end{equation*}
width bounded by
\begin{equation*}
\operatorname{width}
\left(
\widehat X_{N,P,\varepsilon}
\right)
\le
C\left[
d+
N
\left(
1+
W_{\mathrm H}\left(P,\rho_{N,X,\varepsilon}\right)
\right)
\right],
\end{equation*}
and number of non-zero parameters bounded by
\begin{equation*}
    \operatorname{size}
\left(
\widehat X_{N,P,\varepsilon}
\right)
\le
C\,N
\left[
P+d+1+
S_{\mathrm H}\left(P,\rho_{N,X,\varepsilon}\right)
\right].
\end{equation*}
In the previous inequalities, $C>0$ is a universal constant. 
\end{theorem}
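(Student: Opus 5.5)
The plan is to reduce everything to the exact realisation, up to $\varepsilon$, of finite chaoslet sums by $\operatorname{NeuralChaos}$. Proposition~\ref{prop:sparse_and_smooth} then supplies the rate, and density follows as a corollary.

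\emph{Step 1: truncation.} Fix $N,P$. By Proposition~\ref{prop:sparse_and_smooth} there are chaoslets $\mathfrak{m}^{(1)},\dots,\mathfrak{m}^{(N)}\in\bigcup_{m\le P}\mathfrak{C}_m$ and coefficients $\beta_n\in\mathbb{R}^d$ with
\[
\Big\|X-\sum_{n=1}^N\beta_n\mathfrak{m}^{(n)}\Big\|_{\mathcal{H}^2_T(\mathbb{R}^d)}\lesssim_X N^{-(S-\frac12)}+(1+P)^{-s/2}.
\]
Density follows from the same construction, since finite linear combinations of $\mathfrak{C}^{\rm all}$ are dense in $\mathcal{H}^2_T$ (\cite[Lemma 19]{alvarez2025neural}). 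It therefore suffices to approximate a single finite sum $\sum_n\beta_n\mathfrak{m}^{(n)}$ to accuracy $\varepsilon$ in $\mathcal{H}^2_T(\mathbb{R}^d)$.

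\emph{Step 2: realising one chaoslet.} Write $\mathfrak{m}^{(n)}=\psi_{p_n,q_n}(t)\,u^{(T)}_{\boldsymbol\alpha_n;p_n,q_n}$. This factor has degree $|\boldsymbol\alpha_n|\le P$, so it involves at most $P$ Haar coordinates $Z^r_{i,k}$ with $(i,k)\in\mathcal{I}_{p_n,q_n}$. By~\eqref{eq:stochastic_haar_coordinate}, each coordinate is a linear combination of three Brownian samples at dyadic times $\le q_nT/2^{p_n}$. Take the grid $\boldsymbol t$ to be the union, over $n$, of these dyadic times together with the support endpoints of the $\psi_{p_n,q_n}$. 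This gives $M_{\rm grid}\le CN(P+1)$.

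Assign to chaoslet $n$ a row $m_n$ whose time $t_{m_n-1}$ equals the left endpoint $q_nT/2^{p_n}$. All needed samples then lie in rows $\le m_n$, so a lower-triangular $A^{(k)}$ can produce the required $Z$'s in row $m_n$. Use $K\le P$ channels, one per Haar coordinate, exploiting that the triangular constraint allows arbitrary weights on earlier rows. If several chaoslets share a row, concatenate their heads in parallel. This concatenation is the source of the width factor $N$.

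The head $\varphi_{m_n}$ is the ReLU network of Proposition~\ref{prop:ChaosMode}. It approximates the Hermite product to $L^2(\Omega)$-accuracy $\rho=\rho_{N,X,\varepsilon}\approx\varepsilon/(N\max_n|\beta_n|\sqrt{T}\,\|\psi\|_\infty)$, and is followed by multiplication by $\beta_n$ in the last linear layer. The depth, width and size bounds come from the parallelisation of $N$ such heads and the mask nets.

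\emph{Step 3: time assembly.} The remaining task is to approximate $\psi_{p_n,q_n}(t)$ by $f_{m_n}(t)\sigma((t-t_{m_n-1})/\lambda)$. Take $f_{m_n}$ to be a piecewise-linear ReLU network, $O(1)$ parameters, that equals $\psi_{p_n,q_n}$ off $\delta$-neighbourhoods of its three jump points. Because the Hermite factor is $\mathcal{F}_{t_{m_n-1}}$-measurable with bounded second moment, independently of $t$, the $L^2(\mathrm{d}t)$ error of the time part (for $\lambda,\delta\to0$) multiplies the $L^2(\Omega)$ norm. Splitting the error as
\[
\|\psi\,u-\tilde f\,\tilde u\|\le\|\psi-\tilde f\|_{L^2}\|u\|_{L^2}+\|\tilde f\|_\infty\|u-\tilde u\|_{L^2}
\]
and summing over $n$ yields total error $\le\varepsilon$. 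Predictability holds by the causality discussion in Section~\ref{sec::section_3}.

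The main obstacle is Step 2's bookkeeping. The Hermite head must be applied to unbounded Gaussian inputs, so the approximation must be in $L^2(\Omega)$ rather than uniform; this is exactly what Proposition~\ref{prop:ChaosMode} is invoked for. Beyond that, one must check that packing several chaoslets into a shared row, and the required lifting channels, keep the width and size within the stated $C[d+N(1+W_{\rm H})]$ and $CN[P+d+1+S_{\rm H}]$ bounds.
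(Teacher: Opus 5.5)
Your route is the paper's: truncate with Proposition~\ref{prop:sparse_and_smooth} (and use density of finite chaoslet sums for the first claim), produce the Haar coordinates by a sparse lower-triangular lift, approximate the Hermite--Haar factor with the head of Proposition~\ref{prop:ChaosMode}, replace $\psi_{p,q}$ by an $O(1)$-size ReLU atom, and split the one-chaoslet error as you do. Two points need repair.

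First, the step ``if several chaoslets share a row, concatenate their heads in parallel'' fails as written. In the architecture of Section~\ref{sec::section_3}, row $m$ contributes $f_m(t)\,\sigma\bigl((t-t_{m-1})/\lambda\bigr)\,Y_m$, which has a \emph{single} scalar time factor. Two chaoslets with the same information time but different time atoms therefore cannot both come out of one row; examples are $(p,q)=(0,0)$ and $(1,0)$, or $(1,1)$ and $(2,2)$. The fix is to give each retained chaoslet its own row at a grid time $a_{p,q}+\epsilon_j$, with small distinct $\epsilon_j\ge 0$. The lift still uses only samples up to $a_{p,q}$. The mask then removes the atom only on an interval of length $\epsilon_j+\lambda$, at cost $O\bigl(2^{p/2}\sqrt{(\epsilon_j+\lambda)/T}\bigr)$, which is absorbed into the tolerance. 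With this arrangement, $K\le\max\{1,P\}$ channels and $M_{\mathrm{grid}}\le CN(P+1)$ do hold. The paper glosses over the same point by asserting that the class is closed under finite parallelization with a final affine summation.

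Second, bound the head term by $\|\tilde f\|_{L^2([0,T])}\,\|u-\tilde u\|_{L^2(\Omega)}$, not by $\sqrt T\,\|\tilde f\|_\infty\|u-\tilde u\|_{L^2(\Omega)}$. Since $\sqrt T\,\|\psi_{p,q}\|_\infty=2^{p/2}$, your $\rho$ depends on the dyadic levels $p_n$ of the selected chaoslets. These levels are not bounded in terms of $N$ or $P$. Through $\log(1/\rho)$, the head complexity then acquires an extra dependence on $\max_n p_n$, so it is not a function of $\rho_{N,X,\varepsilon}$ as the statement requires. The paper instead uses $\|\Phi_{p,q;\eta_{\mathrm t}}\|_{L^2([0,T])}\le 1+\tfrac{2^{1+p/2}}{\sqrt3}\sqrt{\eta_{\mathrm t}}$. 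This pushes all $p$-dependence into the free mask parameter $\eta_{\mathrm t}$, which costs no complexity. Together with $\sum_\nu|a_\nu|\le\sqrt N A_X$, it takes $\rho_{N,X,\varepsilon}=\varepsilon/\bigl(8(1+\sqrt N A_X)\bigr)$.
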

\begin{proof}
    See Appendix \ref{app:Positive_Results}.
\end{proof}

Naturally, one may wonder how strong the compressibility assumption is, since (as far as we know) it is a new concept in stochastic analysis. Interestingly enough, compressible processes are probabilistically generic\footnote{Notice that these are two different notions of genericity, whose interrelations have motivated entire descriptive set-theoretic books on the topic; cf.~\cite{oxtoby2013measure}, and as such we will not enter into such foundational discussions. }; i.e., if one samples a process according to any reasonable ``truly infinite-dimensional'' law then it is compressible. However, constructing such a process by hand seems to always look artificial; this is a direct analogue of how expander graphs are virtually impossible to exhibit by hand but are sampled with arbitrarily high probability in most random graph models (e.g., random regular graphs). 

\subsection{Structures in $\mathcal{H}_T^2(\mathbb{R}^{d})$}
\label{subsec:negative-results}
We now contrast two classes of processes in $\mathcal{H}_T^2(\mathbb{R}^{d})$: compressible processes and the familiar time-discretized Markovian SDEs with bounded state space and an arbitrary time-discretization scheme. Interestingly, if one selects a process at random in any non-degenerate manner, then it will almost surely be compressible; by contrast, it will almost surely not be such a discretized Markovian SDE. This raises our main question: ``Are most processes compressible, even when SDEs provide interpretable models for them?''

We now impose the following assumption. 

\begin{assumption}[Random \(\ell^{p_{\mathrm{cmp}}}\)-stable chaoslet series]
\label{ass:random-lp-stable-chaoslet-series}
Fix $s \in \mathbb{N}$, and let $(\varphi_k)_{k\in\mathbb N_+}$ be a fixed enumeration of the vector-valued chaoslet orthonormal basis of $\mathcal{H}_T^2(\mathbb{R}^{d})$; $\deg(\varphi_k)\in\mathbb N$ denotes the (Wiener-chaos) degree of $\varphi_k$. In addition, let $(\xi_k)_{k\in\mathbb N_+}$ 
be centered, unit-variance, i.i.d. real-valued sub-Gaussian random variables. Suppose that there exist $0<p_{\mathrm{cmp}}<2$, $\vartheta=(\vartheta_k)_{k\in\mathbb N_+}\in \ell^{p_{\mathrm{cmp}}}$, and a unitary operator
$U:\mathcal{H}_T^2(\mathbb{R}^{d})\to \mathcal{H}_T^2(\mathbb{R}^{d})$ such that the matrix $A_U \eqdef \left(
\left\langle U\varphi_n,\varphi_m\right\rangle_H
\right)_{m,n\in\mathbb N_+}$ defines a bounded linear operator $A_U:\ell^{p_{\mathrm{cmp}}}\to\ell^{p_{\mathrm{cmp}}}$. If, in addition, the statement is required in the Malliavin--Sobolev space $D_T^{s,2:d}$, we assume that
$\vartheta\in \ell_{\varphi,s}^2$, where
\begin{equation*}
    \ell_{\varphi,s}^2 \eqdef
\left\{
a=(a_k)_{k\in\mathbb N_+}:
\sum_{k=1}^{\infty}
(1+\deg(\varphi_k))^s |a_k|^2<\infty
\right\}
\end{equation*}
and that $A_U:\ell_{\varphi,s}^2\to\ell_{\varphi,s}^2$ is bounded. We define the random predictable process $X_\cdot \eqdef \sum_{k=1}^{\infty} \vartheta_k\xi_k\, U\varphi_k$ and denote by $\mu_{\vartheta,U} \eqdef \operatorname{Law}(X_\cdot)$ the law of $X_\cdot$ on $H_T^2(\mathbb R^d)$.
\end{assumption}

We state and prove the following proposition.

\begin{proposition}[Compressible Processes are Generic]
    \label{prop:genericity_of_compressibility}
    Let $s \in \mathbb{N}_{+}$ and let $\mu_{\vartheta,U}$ be any Borel probability measure on $\mathbb{D}^{s,2:d}_{T}$, satisfying Assumption \ref{ass:random-lp-stable-chaoslet-series}. If $\operatorname{Law}(X_\cdot)=\mu_{\vartheta,U}$, then $X_{\cdot}$ is $\big(S=\frac{1}{p_{\rm cmp}}\big)$-compressible and belongs to $\mathbb{D}^{s,2:d}_T$ $\mu$-almost surely. In particular, its best $N$-term
chaoslet approximation error is $\mathcal{O}\Big(
N^{-\big(\tfrac{1}{p_{\mathrm{cmp}}}-\tfrac{1}{2}\big)}
\Big)$.
\end{proposition}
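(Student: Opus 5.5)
The plan is to work entirely with coefficient sequences in the fixed chaoslet basis $(\varphi_k)$. Write $X=\sum_k\vartheta_k\xi_k U\varphi_k$ and expand $U\varphi_n=\sum_m (A_U)_{mn}\varphi_m$. Then the chaoslet coefficients of $X$ are
\[
\beta_m=\sum_n (A_U)_{mn}\vartheta_n\xi_n,
\]
that is, $\beta=A_U(\vartheta\xi)$, where $\vartheta\xi\eqdef(\vartheta_n\xi_n)_n$. Compressibility will follow from $\beta\in\ell^{p_{\mathrm{cmp}}}$ almost surely, and Malliavin--Sobolev membership from $\beta\in\ell^2_{\varphi,s}$ almost surely. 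Both properties should transfer from $\vartheta\xi$ to $\beta$ through the assumed boundedness of $A_U$ on the respective spaces.

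\textbf{Step 1: $\vartheta\xi\in\ell^{p_{\mathrm{cmp}}}$ almost surely.} Sub-Gaussianity gives $\mathbb E|\xi_1|^{p_{\mathrm{cmp}}}\le (\mathbb E\xi_1^2)^{p_{\mathrm{cmp}}/2}=1$. Tonelli then yields
\[
\mathbb E\sum_n|\vartheta_n\xi_n|^{p_{\mathrm{cmp}}}\le\|\vartheta\|_{\ell^{p_{\mathrm{cmp}}}}^{p_{\mathrm{cmp}}}<\infty,
\]
so the sum is finite almost surely. In the same way,
\[
\mathbb E\sum_n(1+\deg\varphi_n)^s|\vartheta_n\xi_n|^2=\|\vartheta\|^2_{\ell^2_{\varphi,s}}<\infty,
\]
so $\vartheta\xi\in\ell^2_{\varphi,s}$ almost surely. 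Since $p_{\mathrm{cmp}}<2$, $\vartheta\xi\in\ell^2$, so the series defining $X$ converges in $\mathcal H^2_T(\mathbb R^d)$ almost surely (and in $L^2(\mathbb P)$). The random coefficient vector is therefore well defined.

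\textbf{Step 2: transfer through $A_U$.} Because $U$ is unitary and the series converges in $\ell^2$, the coefficient identity $\beta=A_U(\vartheta\xi)$ holds on the full-measure event. Here I should be careful that the matrix action on $\ell^2$ agrees with the bounded extension on $\ell^{p_{\mathrm{cmp}}}\subseteq\ell^2$. I expect both to agree on finitely supported sequences, and then by continuity, since convergence in $\ell^{p_{\mathrm{cmp}}}$ implies convergence in $\ell^2$. Boundedness then gives
\[
\|\beta\|_{\ell^{p_{\mathrm{cmp}}}}\le\|A_U\|\,\|\vartheta\xi\|_{\ell^{p_{\mathrm{cmp}}}}<\infty,
\]
and likewise $\|\beta\|_{\ell^2_{\varphi,s}}<\infty$. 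The latter is exactly $\|X\|_{\mathbb D^{s,2:d}_T}^2$, because $\bar J_n^{\mathcal H}$ projects onto the span of the degree-$n$ chaoslets. So $X\in\mathbb D^{s,2:d}_T$ almost surely.

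\textbf{Step 3: from $\ell^p$ to compressibility.} This is the standard weak-$\ell^p$ bound. If $|\beta|^*_r$ denotes the non-increasing rearrangement, then
\[
r\,(|\beta|^*_r)^{p}\le\sum_{j\le r}(|\beta|^*_j)^p\le\|\beta\|_{\ell^p}^p,
\]
hence $|\beta|^*_r\le\|\beta\|_{\ell^p}\,r^{-1/p}$. This is $S=1/p_{\mathrm{cmp}}$-compressibility with random constant $C_X=\|\beta\|_{\ell^{p_{\mathrm{cmp}}}}$. Grouping vector-valued components into $\mathbb R^d$-norms only changes constants. Finally, $S>1/2$ because $p_{\mathrm{cmp}}<2$, so the tail bound
\[
\Bigl(\sum_{r>N}(|\beta|^*_r)^2\Bigr)^{1/2}\lesssim C_X N^{-(1/p_{\mathrm{cmp}}-1/2)}
\]
gives the best $N$-term rate. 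Alternatively, the rate follows from the first term of Proposition~\ref{prop:sparse_and_smooth} with $P\to\infty$.

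The main obstacle is the bookkeeping in Step 2. One must make sure that the enumeration of the vector-valued chaoslet basis used for $A_U$ matches the grouping of coefficients into $\mathbb R^d$-valued $\beta_{k,\mathfrak m}$ in Definition~\ref{defn:compressibility}. One must also justify that the $\ell^2$ coefficient map and the $\ell^{p_{\mathrm{cmp}}}$-bounded operator coincide. I would handle the first by noting that each $\mathbb R^d$-coefficient has norm at most the sum of its $d$ scalar entries, so $\ell^p$-summability is preserved up to a factor depending only on $d$.
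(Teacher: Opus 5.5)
Your proposal is correct and follows essentially the same route as the paper, whose proof invokes Proposition~\ref{prop:compressible__unitary_lp}: Tonelli gives $(\vartheta_n\xi_n)_n\in\ell^{p_{\mathrm{cmp}}}$ almost surely, the chaoslet coefficients are identified with $A_U(\vartheta\xi)$ via coordinatewise limits of finite truncations, and the weak-$\ell^{p_{\mathrm{cmp}}}$ rearrangement bound plus a tail sum yields compressibility and the best $N$-term rate. Your treatment of $\mathbb{D}^{s,2:d}_T$-membership directly through $\mathbb{E}\sum_n(1+\deg\varphi_n)^s|\vartheta_n\xi_n|^2=\|\vartheta\|^2_{\ell^2_{\varphi,s}}$ and the assumed $\ell^2_{\varphi,s}$-boundedness of $A_U$ is, if anything, a more explicit use of Assumption~\ref{ass:random-lp-stable-chaoslet-series} than the paper's one-line reapplication of that proposition in the Hilbert space $\mathbb{D}^{s,2:d}_T$.
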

\begin{proof}
    The proof directly follows from applying Proposition \ref{prop:compressible__unitary_lp} in Appendix \ref{app::section_2} to the separable Hilbert space $\mathbb{D}^{s,2:d}_T$ (resp. $\mathcal{H}^2_T(\mathbb{R}^{d})$ when $s=0$).
\end{proof}

We now show that discrete implementations of SDEs with \textit{bounded state-space dimension}, i.e. those which are implementable on computers, even under the idealization that Brownian motion is exactly computable, are not generic in $\mathcal{H}_T^2(\mathbb R^{d_X})$.  A fortiori, they are meagre therein.
{\color{black}
This is \textit{not to say} that SDEs, in their full generality, meaning with path-dependent coefficients, Markovian lifts, or signature augmentations~\cite{kidger2021neural,issa2023non,doi:10.1137/22M1512338,bank2025stochastic,philippuniversal2026}, are poor models.

Nor are we claiming that neural SDEs are incapable of approximating other SDEs of bounded state-space dimension, which is well-established; cf.~\cite{gonon2023deep,biagini2024approximation,rong2026convergence,kwossek2025universal}.  Rather, informally, we are claiming that time-discretized implementations of the class of processes of the form
\begin{equation}
\label{eq:NeuralSDEsBounded}
X_t = x + \int_0^t \hat{\mu}(s,X_s)\ud s + \int_0^t \hat{\sigma}(s,X_s) \ud W_s,
\end{equation}
where $\hat{\mu}:\mathbb{R}^{1+d}\to \mathbb{R}^d$ and $\hat{\sigma}:\mathbb{R}^{1+d}\to \mathbb{R}^{d\times d}$ are multilayer perceptrons, are not generic in $\mathcal{H}_T^2(\mathbb{R}^{d})$.  This echoes results the smallness of ODE solution in classes of continuous functions; cf.~\cite[Theorem 3]{rouhvarzi2025incremental}.

Our results do not concern \textit{path-dependent} neural SDEs, where one allows $\hat{\mu}$ and $\hat{\sigma}$ to depend on arbitrarily many historical points of the process $X_{\cdot}$ or, a fortiori, on its entire path; e.g.~\cite{arribas2020sig,hu2025deep,philippuniversal2026}.  Indeed, such processes are known to be universal in large relevant classes of processes; cf.~\cite{philippuniversal2026}.
}

Here our results concern traditional neural SDEs of the form~\eqref{eq:NeuralSDEsBounded}, common in most areas of the literature~\cite{cuchiero2020generative,pmlr-v119-kong20b,kidger2021neural}, and we ask the question if such processes are universal not in the class of SDEs but in the broad class of predictable processes.
\hfill\\
We consider the ``\textit{small}'' class $\mathfrak{B}$ consisting of all processes $X_{\cdot}\eqdef (X_t)_{0\le t\le T}$ in $\mathcal{H}_T^2(\mathbb R^{d_X})$ for which: 
there exists some $X^{(0)}\in L^2(\mathcal{F}_0;\mathbb R^{d_X})$, $0<\delta<T$, and some 
Borel
$f:[0,\infty)\times \mathbb{R}^{d_X+2D}\to \mathbb{R}^{d_X}$ such that, upon setting
$
N_{\delta}\eqdef \Big\lceil \frac{T}{\delta}\Big\rceil,
$ and $
t_i\eqdef (i\delta)\wedge T,
$ for each $i\in [N_{\delta}]$, 
one has $\mathbb{P}$-a.s.\ for all $0\le t\le T$
\begin{equation}
\label{eq:badguys_by_name}
\begin{aligned}
    X_t 
& = 
    \sum_{i=0}^{N_{\delta}-1}\,
        \mathbf{1}_{[t_i,t_{i+1})}(t)
        \,
        \mathbb{E}\big[
            X^{(i)}
        \big|
            \mathcal{F}_{t_i}
        \big],
\\
X^{(i+1)} & \eqdef 
    f(t_i, X^{(i)},W_{t_i},W_{t_{i+1}}),
\qquad \mbox{ for }i\in [N_{\delta}-1].
\end{aligned}
\end{equation}
We first argue that this is a natural generalized surrogate for the class of Markovian SDEs.
\begin{proposition}[Generalized predictable Euler-Maruyama ``SDEs'' universally approximate SDEs in $\mathcal{H}_T^2(\mathbb{R}^{d_X})$]
\label{prop:goodnews}
For measurable functions
\[
b:[0,T]\times\mathbb R^{d_X}\to \mathbb R^{d_X},
\qquad
\Sigma:[0,T]\times\mathbb R^{d_X}\to \mathbb R^{d_X\times D},
\]
that are Lipschitz and of linear growth, and $\xi\in L^2(\mathcal{F}_0;\mathbb{R}^{d_X})$, we consider the process\footnote{Since the solution process has $\mathbb{P}$-a.s.\ continuous paths and is adapted, it is predictable.} $X_{\cdot} \in \mathcal{H}_T^2(\mathbb R^{d_X})$%
solving the induced (Markovian) SDE
$$
X_t = \xi +\int_0^t\, b(s,X_s)\,ds + \int_0^t\, \Sigma(s,X_s)\,\ud W_s, \qquad \mbox{ for } t \in [0,T].
$$
For every approximation error $\varepsilon>0$, there exists some $\hat{X}_{\cdot}\in \mathfrak{B}$ satisfying
$
    \big\|
        \hat{X}-X
    \big\|_{\mathcal{H}_T^2(\mathbb R^{d_X})}
    <
    \varepsilon
$. 
\end{proposition}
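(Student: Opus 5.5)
We will realise the Euler--Maruyama scheme directly as an element of $\mathfrak{B}$ and then use standard strong convergence. Fix $\delta>0$ with grid $t_i=(i\delta)\wedge T$, and define
\begin{equation*}
f(t,x,w,w')\eqdef x+b(t,x)\,(t'-t)+\Sigma(t,x)\,(w'-w),
\end{equation*}
where $t'$ is the grid successor of $t$. The map $f$ is Borel: take $t'=\min\{t+\delta,T\}$, which is Borel in $t$, and note that $f$ is built from measurable $b,\Sigma$. Set $X^{(0)}\eqdef\xi$. Then $X^{(i)}$ is exactly the Euler--Maruyama iterate $X^{\delta}_{t_i}$, since the increment $W_{t_{i+1}}-W_{t_i}$ is computed from the two arguments $W_{t_i},W_{t_{i+1}}$.

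The recursion shows that $X^{(i)}$ is $\mathcal{F}_{t_i}$-measurable. Hence $\mathbb{E}[X^{(i)}\mid\mathcal{F}_{t_i}]=X^{(i)}$, and the element $\hat X\in\mathfrak{B}$ defined by \eqref{eq:badguys_by_name} is the piecewise-constant interpolation $\hat X_t=\sum_i\mathbf 1_{[t_i,t_{i+1})}(t)X^{\delta}_{t_i}$. The linear growth of $b,\Sigma$ together with $\xi\in L^2$ gives $\sup_i\mathbb{E}|X^{(i)}|^2<\infty$ by a discrete Gronwall argument, so $\hat X\in\mathcal{H}^2_T$. The process is left-closed/right-open piecewise constant and adapted. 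It is predictable up to modification in $\mathbb{P}\otimes\ud t$, which is all that membership in $L^2_{\rm pred}$ requires; alternatively, the indicator jumps on a null set of times do not affect the $\mathcal{H}^2_T$ class. If $\mathcal{F}_0$ is trivial, $\xi$ is deterministic and this is consistent.

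For the error, split at each $t\in[t_i,t_{i+1})$:
\begin{equation*}
|\hat X_t-X_t|\le |X^{\delta}_{t_i}-X_{t_i}|+|X_{t_i}-X_t|.
\end{equation*}
\begin{enumerate}
\item For the first term we use the classical strong convergence of Euler--Maruyama under Lipschitz and linear-growth coefficients. The coefficients are only assumed Lipschitz in $x$, with no time regularity, so we will prove $\max_i\mathbb{E}|X^\delta_{t_i}-X_{t_i}|^2\to0$ rather than an explicit rate. We do this via Gronwall applied to the continuous Euler interpolation $\bar X_t=\xi+\int_0^t b(\eta(s),\bar X_{\eta(s)})\,\ud s+\int_0^t\Sigma(\eta(s),\bar X_{\eta(s)})\,\ud W_s$, where $\eta(s)$ is the last grid point at or before $s$.
\item For the second term, $\mathbb{E}|X_t-X_{t_i}|^2\le C\delta$ by linear growth and the $\mathcal{S}^2$ bound on $X$, using It\^o's isometry.
\end{enumerate}
Integrating over $[0,T]$ then yields $\|\hat X-X\|_{\mathcal{H}^2_T}\to0$ as $\delta\to0$.

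The main obstacle is the first term, because $b(\cdot,x)$ and $\Sigma(\cdot,x)$ are merely measurable in $t$: freezing time at $\eta(s)$ produces an error $\int_0^T\mathbb{E}|b(\eta(s),X_s)-b(s,X_s)|^2\,\ud s$ that need not vanish. There are two ways to handle it.
\begin{itemize}
\item \emph{Preferred route.} Interpret the hypothesis "Lipschitz" as joint Lipschitz continuity in $(t,x)$, which is how we read the statement. Then the time-freezing error is $O(\delta)$ and standard Gronwall closes the argument.
\item \emph{Fallback.} If only Lipschitz continuity in $x$ holds, first approximate $b,\Sigma$ by coefficients that are continuous in $t$ and uniformly Lipschitz in $x$, for instance by mollifying in $t$. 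Stability of SDE solutions, via Gronwall and dominated convergence using linear growth, shows the solutions converge in $\mathcal{H}^2_T$. Then apply the first route to the mollified equation and conclude with a $2\varepsilon$ argument.
\end{itemize}
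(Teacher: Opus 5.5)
Your proof is correct and follows the paper's route. The paper also builds $\hat X^\delta\in\mathfrak{B}$ from the Euler update $f_\delta(t,x,w,u)=x+b(t,x)\bigl(((t+\delta)\wedge T)-t\bigr)+\Sigma(t,x)(u-w)$ and concludes by strong convergence of Euler--Maruyama; the only difference is that the paper compares $\hat X^\delta$ with the continuous Euler interpolation $\bar X^\delta$ (an $O(\delta)$ error in $\mathcal{H}^2_T$) and cites Maruyama's $\mathcal{S}^2_T$-convergence, whereas you split the error at the grid values of $X$. Your remark that merely measurable time-dependence breaks the time-freezing step is well taken, since the paper's argument tacitly reads ``Lipschitz'' jointly in $(t,x)$, and your mollify-in-$t$ plus Gronwall-stability fallback is a valid way to cover the weaker reading.
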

\begin{proof}
    See Appendix \ref{app::negative}.
\end{proof}
Now, for the ``bad news''; namely, this class, although rich enough to approximate every Markovian SDE with Lipschitz drift and diffusion having linear growth, {\color{black}it is somehow \textit{opposite} to \textit{compressible} processes, in the sense that they are rare one will never sample them while compressible processes are generic; one will always draw one if they pick them in a non-degenerate fashion; in other words the following result should be considered in contrast to Proposition~\ref{prop:genericity_of_compressibility}.
}
\begin{proposition}[Generalized predictable Euler-Maruyama ``SDEs'' are not representative of $\mathcal{H}_T^2(\mathbb{R}^{d_X})$]
\label{prop:badnews}
The following is true of the class $\mathfrak{B}\subseteq \mathcal{H}_T^2(\mathbb R^{d_X})$
\begin{enumerate}
    \item[(i)] \textbf{\emph{Probabilistic {\color{black}rarity}:}}
        Let $\mu_G$ be a centred Gaussian measure on $\mathcal{H}_T^2(\mathbb R^{d_X})$ whose covariance operator $Q$ is diagonal in some orthonormal basis $(\eta_n)_{n=1}^{\infty}$, with strictly positive eigenvalues $(\lambda_n)_{n=1}^{\infty}$. Then, $\mu_G(\mathfrak{B}) = 0$,
    \item[(ii)] \textbf{\emph{Topological {\color{black}rarity}:}}
        $\mathfrak{B}$ is meagre in $\mathcal{H}_T^2(\mathbb R^{d_X})$.
\end{enumerate}
\end{proposition}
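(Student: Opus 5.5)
The key observation is that the defining recursion \eqref{eq:badguys_by_name} is almost irrelevant; only its \emph{time-piecewise-constant} structure matters. If $X_\cdot\in\mathfrak{B}$ with step $\delta$, then on $[t_0,t_1)=[0,\delta)$ the process equals the single random vector $\mathbb{E}[X^{(0)}\mid\mathcal F_0]$, independently of $t$. The plan is to enclose $\mathfrak{B}$ in a \emph{countable} union of proper closed linear subspaces of $\mathcal{H}_T^2(\mathbb R^{d_X})$. Both rarity statements then follow from soft functional-analytic facts.

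\textbf{Step 1: the covering subspaces.} For rationals $0\le a<b\le T$, set $J=[a,b]$ and define the bounded linear operator
\[
\Pi_J:\mathcal{H}_T^2(\mathbb R^{d_X})\to L^2(\Omega\times[0,T];\mathbb R^{d_X}),\qquad
\Pi_J X \eqdef \mathbf 1_J(t)\Big(X_t-\tfrac{1}{b-a}\int_a^b X_s\,\ud s\Big).
\]
Boundedness follows from Cauchy--Schwarz, with $\|\Pi_JX\|\le 2\|X\|$. Let $V_J\eqdef\ker\Pi_J$. This is a closed linear subspace: it consists of the processes that are $\mathbb P\otimes\ud t$-a.e.\ constant in time on $J$. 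For every $X\in\mathfrak B$ with parameter $\delta$, choose rationals $0\le a<b<\delta$. Then $X_t=\mathbb E[X^{(0)}\mid\mathcal F_0]$ for all $t\in[a,b]$, so $X\in V_{[a,b]}$. Hence
\[
\mathfrak B\subseteq \bigcup_{J\text{ rational}} V_J,
\]
which is a countable union.

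\textbf{Step 2: properness.} Each $V_J$ is a proper subspace. For instance, the deterministic predictable process $X_t=(t-a)\,e_1$ lies in $\mathcal{H}_T^2(\mathbb R^{d_X})$ but has $\Pi_JX\neq0$. In fact $V_J$ has infinite codimension, although properness is all we need.

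\textbf{Step 3 (ii), meagreness.} A proper closed linear subspace of a Banach space has empty interior: any ball around $v\in V_J$ contains $v+\epsilon w$ with $w\notin V_J$. Being closed, $V_J$ is therefore nowhere dense. So $\mathfrak B$ is contained in a countable union of nowhere dense sets, and is meagre.

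\textbf{Step 4 (i), Gaussian nullity.} Since $V_J$ is a proper closed subspace of a Hilbert space, there is a unit vector $v\perp V_J$, and $V_J\subseteq\{x:\langle x,v\rangle=0\}$. Under $\mu_G$ the functional $x\mapsto\langle x,v\rangle$ is a centred real Gaussian with variance $\langle Qv,v\rangle=\sum_n\lambda_n\langle v,\eta_n\rangle^2$. This variance is strictly positive, because all $\lambda_n>0$ and $v\neq0$. A nondegenerate one-dimensional Gaussian charges no single point, so $\mu_G(V_J)=0$. Countable subadditivity then gives that $\mathfrak B$ is contained in a Borel $\mu_G$-null set. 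This yields $\mu_G(\mathfrak B)=0$ in the completed sense, which sidesteps any question of whether $\mathfrak B$ itself is Borel.

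\textbf{Main obstacle.} The delicate point is Step 1. One must reduce the uncountable family of step sizes $\delta$ (and arbitrary Borel $f$) to countably many \emph{closed} sets. Working with rational subintervals of $[0,\delta)$, rather than with the grids themselves, is what achieves this. One should also check that membership in $V_J$ is stated modulo $\mathbb P\otimes\ud t$-null sets, consistent with $\mathcal{H}_T^2$ being a space of equivalence classes.
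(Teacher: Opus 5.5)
Your proof is correct. For (i) it is essentially the paper's argument, streamlined; for (ii) it takes a genuinely different and simpler route.

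\textbf{Part (i).} The paper also covers $\mathfrak{B}$ by countably many proper closed linear subspaces, namely $M_{a,b}$, the processes equal on $[a,b)$ to a single $\mathcal{F}_a$-measurable vector. It gets there through the sets $\mathfrak{S}_N$ of $N$-step adapted processes, plus a pigeonhole argument that selects a grid interval of length at least $T/N$ and rationals with $b-a>T/(2N)$. You make three simplifications:
\begin{enumerate}
\item You observe that the first grid interval $[0,\delta)$ already suffices.
\item You drop the measurability constraint. Your $V_J$ is larger than $M_{a,b}$ but still proper.
\item You justify why a proper closed subspace is $\mu_G$-null, using a normal vector $v$ with $\langle Qv,v\rangle>0$. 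The paper only asserts this.
\end{enumerate}

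\textbf{Part (ii).} The paper proves directly that each $\mathfrak{S}_N$ is nowhere dense, in two steps:
\begin{enumerate}
\item It shows $\mathfrak{S}_N$ is closed. This requires extracting monotone convergent subsequences of partition times and using continuity of the Brownian filtration so the limiting step values remain adapted.
\item It shows $\mathfrak{S}_N$ has empty interior, by perturbing with $\eta\mathbf{1}_{[c,d)}(t)(W^{(1)}_t-W^{(1)}_c)e_1$ and arguing pathwise about continuity.
\end{enumerate}
You instead reuse the same covering and note that proper closed subspaces are nowhere dense. Both (i) and (ii) then follow from a single inclusion.

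\textbf{What each route buys.} Your route proves more: any process that is a.e.\ constant in time on some nondegenerate interval lies in one fixed meagre, Gaussian-null set. The paper's route additionally yields the structural fact that each $\mathfrak{S}_N$ is closed, which the proposition does not need. Your remark that $\mathfrak{B}$ need not be Borel, so the conclusion is nullity in the completed sense, addresses a point the paper's inequality $\mu_G(\mathfrak{B})\le\sum_n\mu_G(\mathfrak{S}_n)$ passes over silently.
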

\begin{proof}
    See Appendix \ref{app::negative}.
\end{proof}

We now make the following remark. It is important to notice that the previous proposition should be read as a structural limitation of finite-dimensional Markovian parametrizations, instead of a limitation of SDE models in their natural domain. Indeed, the class \(\mathfrak B\) is rich
enough to approximate classical Lipschitz Markovian SDEs in
\(\mathcal H_T^2(\mathbb R^{d_X})\), but it remains negligible in the ambient
space of square-integrable predictable processes. Thus, finite-dimensional
Euler--Maruyama-type representations do not provide a typical processes in
\(\mathcal H_T^2(\mathbb R^{d_X})\). This is precisely the gap addressed by $\operatorname{NeuralChaos}$: instead of enlarging the Markovian state variable, the architecture directly parametrizes predictable processes through finite Brownian sampling, causal lifts, row-wise heads, and deterministic time masks.

\section{Implications in stochastic
optimal control and mathematical finance}
\label{s:MathFin}
This section illustrates the practical use of the proposed $\operatorname{NeuralChaos}$ architecture in a stochastic optimal control example and a standard problem of mathematical finance, namely dynamic hedging.\footnote{The numerical experiments have been implemented in \texttt{Python} using \texttt{PyTorch} and were executed on a Lenovo ThinkPad P14s Gen6 with Processor Intel Core Ultra 7 265H, 2200 Mhz, 16 Cores. The code can be found under the following link: \url{https://github.com/AnastasisKratsios/CausalTransformer}.} Importantly, for these two problems, simple parameterizable, but universal, families of processes which do not violate predictability and square-integrability constraints have far-reaching consequences (provided that our proposed parametrization achieves these properties while remaining easy to train using stochastic gradient descent-type algorithms). In particular, in the former example $\operatorname{NeuralChaos}$ parameterizes the optimal control, while, in the second, a candidate hedging strategy.

Throughout the experiments, the $\operatorname{NeuralChaos}$ output is denoted by $\widehat\alpha=\mathcal{N}\mathcal{C}^{\boldsymbol{t}}(\cdot,W)$, where $\boldsymbol{t}=(t_m)_{m=0}^{M}$ with $0=t_0<t_1<\cdots<t_M=T$ denotes the deterministic sampling grid. We use, as usual, $D$ for the Brownian dimension, $d_{X}$ for the state dimension, and $d_{\alpha}$ for the dimension of the control (or strategy). The number of lifting channels is denoted by $K_{\mathrm{lift}}$, while $N_{\mathrm{pay}}$ denotes the number of calibration instruments. Before proceeding, we remind the following lemma.
\begin{lemma}
\label{lem:control}
Fix $T>0$, $D, d_X, d_\alpha \in\mathbb N_+$, and let $b:[0,T]\times\mathbb R^{d_X}\times\mathbb R^{d_\alpha} \to\mathbb R^{d_X}$, $\Sigma:[0,T]\times\mathbb R^{d_X}\times\mathbb R^{d_\alpha} \to\mathbb R^{d_X\times D}$ be measurable functions. Assume that $b$ and $\Sigma$ are bounded and globally Lipschitz in $(x,a)$, uniformly in $t$. Let $x_0\in L^2(\mathcal F_0;\mathbb R^{d_X})$. Then, for every $\alpha\in \mathcal{H}_T^2(\mathbb R^{d_\alpha})$, the SDE
\begin{equation*}
    X_t^\alpha = x_0 + \int_0^t b(s,X_s^\alpha,\alpha_s)\,\ud s + \int_0^t \Sigma(s,X_s^\alpha,\alpha_s)\,\ud W_s
\end{equation*}
admits a unique strong solution with continuous adapted paths. In particular,
\(X^\alpha\) is predictable and belongs to $\mathcal{H}_T^2(\mathbb R^{d_X})$. Moreover, the control-to-state map $\Gamma:\mathcal{H}_T^2(\mathbb R^{d_\alpha})\to \mathcal{H}_T^2(\mathbb R^{d_X})$ such that $\Gamma(\alpha)\eqdef X^\alpha$, is well-defined and globally Lipschitz. 
\end{lemma}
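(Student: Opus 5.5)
The plan is the classical Picard/contraction argument for SDEs with random, progressively measurable coefficients, followed by a stability estimate for the control-to-state map. Fix $\alpha\in\mathcal{H}_T^2(\mathbb R^{d_\alpha})$ and define the random coefficients
\[
\tilde b(s,\omega,x)\eqdef b(s,x,\alpha_s(\omega)),
\qquad
\tilde\Sigma(s,\omega,x)\eqdef \Sigma(s,x,\alpha_s(\omega)).
\]
Since $\alpha$ is predictable and $b,\Sigma$ are Borel measurable, for each $x$ these are predictable processes. They are bounded, and Lipschitz in $x$ uniformly in $(s,\omega)$ with the common constant $L$.

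\textbf{Existence and uniqueness.} I would work on the Banach space $\mathcal{S}^2_T(\mathbb R^{d_X})$ of continuous adapted processes normed by $\|\cdot\|_{\mathcal S^2_T}$. On it, define
\[
\Psi(Y)_t\eqdef x_0+\int_0^t\tilde b(s,Y_s)\,\ud s+\int_0^t\tilde\Sigma(s,Y_s)\,\ud W_s .
\]
Boundedness of $b,\Sigma$ together with $x_0\in L^2$ gives $\Psi(Y)\in\mathcal S^2_T$: the stochastic integral is a continuous $L^2$-martingale, which Doob's inequality controls. For the contraction estimate, apply Cauchy--Schwarz to the drift and Doob/BDG to the martingale part:
\[
\mathbb E\sup_{r\le t}|\Psi(Y)_r-\Psi(Y')_r|^2\le C_L(t+4)\int_0^t\mathbb E\sup_{r\le s}|Y_r-Y'_r|^2\,\ud s .
\]
Iterating this yields $\|\Psi^n(Y)-\Psi^n(Y')\|^2\le (C T)^n/n!\,\|Y-Y'\|^2$. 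Equivalently, one may use a weighted norm $e^{-\beta t}$. Hence $\Psi$ has a unique fixed point, which is the unique strong solution with continuous adapted paths.

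\textbf{Predictability.} Continuity plus adaptedness makes $X^\alpha$ predictable. Moreover $\int_0^T\mathbb E|X^\alpha_t|^2\,\ud t\le T\,\|X^\alpha\|^2_{\mathcal S^2_T}<\infty$, so $X^\alpha\in\mathcal{H}_T^2(\mathbb R^{d_X})$.

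\textbf{Well-definedness of $\Gamma$.} If $\alpha=\alpha'$ in $\mathcal H^2_T$, they agree $\mathbb P\otimes \ud t$-a.e., so the coefficients agree a.e. The integrals are then indistinguishable and uniqueness gives $X^\alpha=X^{\alpha'}$.

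\textbf{Lipschitz estimate.} For $\alpha,\alpha'$, write $\Delta_t\eqdef X^\alpha_t-X^{\alpha'}_t$ and split
\[
b(s,X^\alpha_s,\alpha_s)-b(s,X^{\alpha'}_s,\alpha'_s)
\]
into $[b(s,X^\alpha_s,\alpha_s)-b(s,X^{\alpha'}_s,\alpha_s)]+[b(s,X^{\alpha'}_s,\alpha_s)-b(s,X^{\alpha'}_s,\alpha'_s)]$, and similarly for $\Sigma$. The joint Lipschitz property in $(x,a)$ bounds the first bracket by $L|\Delta_s|$ and the second by $L|\alpha_s-\alpha'_s|$. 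Itô isometry and Cauchy--Schwarz then give
\[
\mathbb E|\Delta_t|^2\le C\int_0^t\mathbb E|\Delta_s|^2\,\ud s+C\,\mathbb E\int_0^t|\alpha_s-\alpha'_s|^2\,\ud s .
\]
Grönwall yields $\sup_t\mathbb E|\Delta_t|^2\le Ce^{CT}\|\alpha-\alpha'\|^2_{\mathcal H^2_T}$. Integrating over $[0,T]$ gives $\|\Gamma(\alpha)-\Gamma(\alpha')\|_{\mathcal H^2_T}\le \sqrt{CTe^{CT}}\,\|\alpha-\alpha'\|_{\mathcal H^2_T}$.

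\textbf{Main obstacle.} The only point requiring care is measurability. One must check that $(s,\omega)\mapsto b(s,Y_s(\omega),\alpha_s(\omega))$ is progressively measurable whenever $Y$ is continuous adapted, which holds because it is a Borel function composed with predictable processes. One must also ensure the contraction constants do not depend on $\alpha$; they do not, since only the uniform Lipschitz constant and the bounds on $b,\Sigma$ enter.
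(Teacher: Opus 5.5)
Your proposal is correct and follows essentially the same route as the paper: the Lipschitz estimate for $\Gamma$ splits the coefficient differences using the joint Lipschitz property, applies Cauchy--Schwarz, It\^o's isometry and Gr\"onwall, and integrates over $[0,T]$, exactly as the paper does. The only differences are that you prove existence and uniqueness by an explicit Picard/contraction argument on $\mathcal{S}^2_T(\mathbb{R}^{d_X})$, whereas the paper simply invokes the standard theory for SDEs with bounded, Lipschitz, progressively measurable random coefficients, and that you also check that $\Gamma$ respects $\mathbb{P}\otimes\ud t$-a.e.\ equivalence classes.
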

\begin{proof}
    See Appendix~\ref{app:auxiliary-sde-facts}.
\end{proof}

\subsection{A toy stochastic optimal control illustration}\label{s:stoch_opt_control}
In this simple illustration, we consider for $d_\alpha = 1$ and every $\alpha \in \mathcal{H}_T^2(\mathbb{R}^{d_{\alpha}})$ the controlled process $X^\alpha \eqdef (X^{\alpha,1}_t,X^{\alpha,2}_t)_{t \in [0,T]}$ satisfying the SDE
\begin{equation}
    \label{eq:stoch_opt_control}
    \begin{aligned}
        \ud X^{\alpha,1}_t & = \ud W_t, \\
        \ud X^{\alpha,2}_t & = \theta(\alpha_t) \ud t,
    \end{aligned}
\end{equation}
with initial values $X^\alpha = x_0 \eqdef 0 \in \mathbb{R}^2$, where $W$ is a one-dimensional Brownian motion and
\begin{equation}
    \theta(a) \eqdef 
    \begin{cases}
        -2 + e^{a+1} & \text{if } a \in (-\infty,-1), \\
        a & \text{if } a \in [-1,1], \\
        2 - e^{-a+1} & \text{if } a \in (1,\infty). \\
    \end{cases}
\end{equation}
Note that \eqref{eq:stoch_opt_control} satisfies the assumptions of Lemma~\ref{lem:control} with $b(s,x,a) \eqdef (0,\theta(a))$ and $\Sigma(s,x,a) = (1,0)$. Then, we try to find the optimal control process minimizing the cost functional
\begin{equation}
    \label{eq:J_stoch_opt_control}
    \mathcal{J}(\alpha_{\cdot}) = \mathbb{E}\left[ -\sin(X^{\alpha,1}_T) X^{\alpha,2}_T \right] + \frac{\lambda}{2} 
    \Vert \alpha_\cdot \Vert_{\mathcal{H}_T^2},
\end{equation}
where $\lambda > 1$. Our main result (Theorem \ref{thrm:happytimes}) imply that for every error tolerance $\varepsilon>0$ there exists a $\operatorname{NeuralChaos}$ $\widehat{\alpha}_{\cdot}^{\varepsilon}$ that is $\varepsilon$-optimal, i.e.,
\begin{equation*}
    \mathcal{J}(\widehat{\alpha}^{\varepsilon}_{\cdot})
\le 
    \inf_{\alpha_{\cdot} \in \mathcal{H}_T^2}
        \mathcal{J}(\alpha_{\cdot})
    +
    \varepsilon
.
\end{equation*}
Indeed, assuming enough regularity of the cost functional $\mathcal{J}(\cdot)$, the key missing ingredient is the density of our simply parameterized processes $\widehat{\alpha}^{\varepsilon}_{\cdot}$ in $\mathcal{H}_T^2$; this allows us to compute an $\varepsilon$-optimal strategy for~\eqref{eq:J_stoch_opt_control} using one of our easy-to-parameterize processes $\widehat{\alpha}_{\cdot}^{\varepsilon}$.

For the numerical experiment, we choose $T = 1$ and $\lambda = 2$, and apply some martingale arguments to find the optimal control $\alpha^\star \in \mathcal{H}_T^2$ given by
\begin{equation*}
    \alpha^\star_t \eqdef \frac{1}{\lambda} e^{-\frac{1}{2}(T-t)} \sin(W_t), \qquad t \in [0,T].
\end{equation*}
Since $e^{i W_t} = e^{-t/2} \sum_{n=0}^\infty \frac{i^n}{n!} I_n^t(1^{\otimes n})$, the optimal control has a nice chaos expansion, i.e.,
\begin{equation*}
    \alpha^\star_t = \frac{e^{-T/2}}{\lambda} \operatorname{Im}\left( \sum_{n=0}^\infty \frac{i^n}{n!} I^t_n\left( 1^{\otimes n} \right) \right), \qquad t \in [0,T].
\end{equation*}
We generate independent training/test sets, each consisting of $S = 10^4$ samples of $W_{\cdot}$ discretized over an equidistant time grid with $M = 101$ points. Then, the $\operatorname{NeuralChaos}$ $\mathcal{NC}^{\boldsymbol{t}}(\cdot,W)$ with $K_{\mathrm{lift}} = 30$, $\operatorname{Head}_{\operatorname{Width}} = 20$, and $\operatorname{Head}_{\operatorname{Depth}} = 2$ is trained over $200$ epochs with learning rate $10^{-4}$ and batch size $10^3$. Figure~\ref{fig:soc} empirically demonstrates that the $\operatorname{NeuralChaos}$ is able to learn the optimal control of \eqref{eq:J_stoch_opt_control}.
\begin{figure}[h!]
    \centering
    \includegraphics[width=0.9\linewidth]{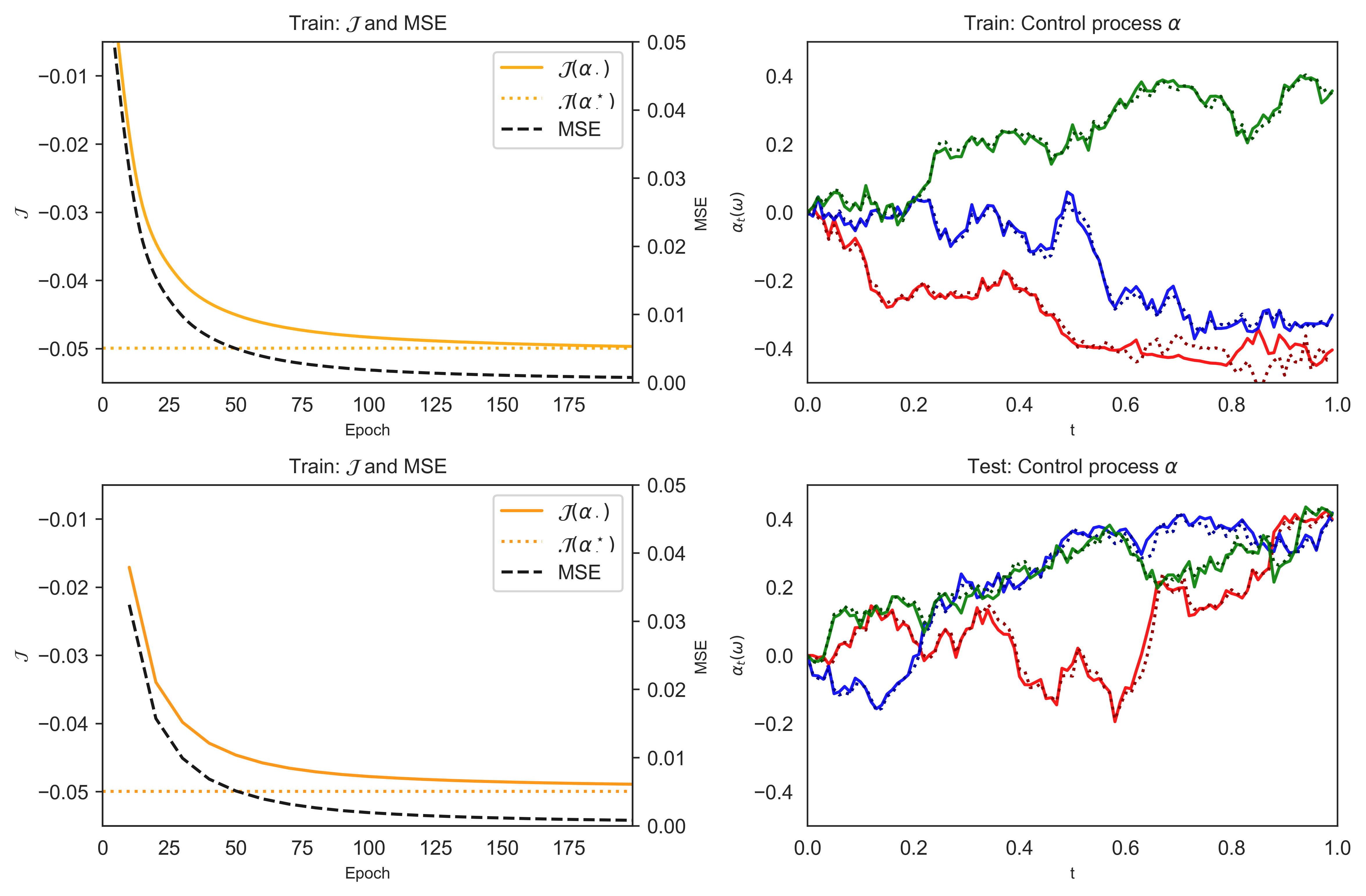}
    \caption{Stochastic optimal control: Learning the optimal control of the cost functional $\mathcal{J}(\alpha_\cdot)$ in \eqref{eq:J_stoch_opt_control}. Left-hand side: Value of $\mathcal{J}(\alpha_\cdot)$ and $\mathcal{J}(\alpha^\star_\cdot)$ as well as mean squared error (MSE) $\frac{1}{SM} \sum_{s=1}^S \sum_{m=1}^M \vert \alpha_{t_k}^\star(\omega_s) - \alpha_{t_k}(\omega_s) \vert^2$ on train/test set. Right-hand side: Comparison of learned control (dotted) and true optimal control (continuous) for three samples of the train/test set.}
    \label{fig:soc}
\end{figure}

\subsection{Dynamic hedging in the Black-Scholes model}\label{s:hedging}
For a coercive and lower semi continuous penalty functional $P:\mathcal{H}^2_T(\mathbb{R}^{d_{\alpha}}) \to [0,+\infty]$ that is proper (i.e., not identical $+\infty$), and a lower semi continuous functional $c:\mathbb{R}^{1+d_{X}+d_{\alpha}}\to [0,+\infty)$, the corresponding hedging functional
\begin{equation}
\label{eq:J_hedging}
    \mathcal{J}(\alpha_{\cdot})
=
    \mathbb{E}\biggl[
        \int_0^T\, 
        c(t,X_t^{\alpha},\alpha_t)
        \,\ud t
    \biggr]
    +
    P(\alpha_{\cdot})
\end{equation}
is coercive and lower semi continuous on $\mathcal{H}_T^2(\mathbb{R}^{d_{\alpha}})$. Then, by Tonelli's direct method (cf.~\cite[Theorem 1.15]{DalMasoGamma1993}), there exists at least one $\alpha^{\star}_{\cdot}\in \mathcal{H}_T^2(\mathbb{R}^{d_{\alpha}})$ achieving the optimal hedging energy; i.e.,
\begin{equation}
\label{eq:hedge_optimal}
    \mathcal{J}(\alpha_{\cdot}^{\star})
=
    \inf_{\alpha_{\cdot} \in \mathcal{H}_T^2(\mathbb{R}^{d_{\alpha}})}
    \mathcal{J}(\alpha_{\cdot}).
\end{equation}
Our main result (Theorem \ref{thrm:happytimes}) and the continuity of $\mathcal{J}(\cdot)$ now imply that for every slack parameter $\varepsilon>0$ there exists an $\widehat{\alpha}_{\cdot}^{\varepsilon}$ of the simple $\operatorname{NeuralChaos}$ ``parametric form'' which is $\varepsilon$-optimal for the hedging problem in Eq.~\eqref{eq:hedge_optimal}, i.e.,
\begin{equation}
\label{eq:chaos_energy}
    \mathcal{J}(\widehat{\alpha}^{\varepsilon}_{\cdot})
\le 
    \inf_{\alpha_{\cdot} \in \mathcal{H}_T^2(\mathbb{R}^{d_{\alpha}})}
        \mathcal{J}(\alpha_{\cdot})
    +
    \varepsilon
.
\end{equation}
For the numerical experiment, we consider a Black-Scholes model $\ud S_t = r S_t \ud t + \sigma_{\rm BS} S_t \ud W_t$, $t \in [0,T]$, with $S_0 = 100$, $r = 0.005$, $\sigma_{\rm BS} = 0.2$, $T = 1$, and learn the hedging strategy of a European call option by using the cost $c(X_t^{\alpha},\alpha_t) = \vert X^\alpha_t \vert^2$ with (discounted) profit-and-loss (PnL) process given by
\begin{equation*}
    X^\alpha_t = C(t,S_t) - \left( p + \int_0^t \alpha_s \,\ud \widetilde{S}_s \right), \qquad t \in [0,T],
\end{equation*}
where $C(t,S_t) \eqdef \mathbb{E}\left[ e^{-rT} (S_T-K_{\rm strike})_+ \big\vert \mathcal{F}_t \right]$ is the price of the European call option at time $t$ with spot $S_t$, and where $\widetilde{S}_s \eqdef e^{-rs} S_s$, $K_{\rm strike}= 100$, and $p = C(0,S_0) = \mathbb{E}[e^{-rT} (S_T-K_{\rm strike})_+] \approx 8.198$. Indeed, by the martingale representation theorem, i.e., $C(t,S_t) = p + \int_0^t \alpha^\star_s \,\ud\widetilde{S}_s$, we observe that
\begin{equation*}
    \begin{aligned}
        \mathbb{E}\left[ \int_0^T \vert X^\alpha_t \vert^2 \,\ud t \right] & = \int_0^T \mathbb{E}\left[ \left\vert \int_0^t (\alpha_s^\star - \alpha_s) \,\ud \widetilde{S}_s \right\vert^2 \right] \,\ud t = \sigma_{\rm BS}^2 \int_0^T \mathbb{E}\left[ \int_0^t (\alpha^\star_s - \alpha_s)^2 \widetilde{S}_s^2 \,\ud s \right] \ud t
    \end{aligned}
\end{equation*}
is minimal if and only if $\alpha=\alpha^\star$ (assuming the non-penalized setting $P(\alpha_\cdot) = 0$). Moreover, we choose the penalty function $P(\alpha_\cdot) = \lambda \Vert \alpha_\cdot \Vert_{\mathcal{H}^2_T}^2$ with $\lambda = 10^{-5}$ and generate independent training/test sets, each consisting of $10^4$ samples of $S_{\cdot}$ discretized over an equidistant time grid with $M = 101$ points. Then, the $\operatorname{NeuralChaos}$ $\mathcal{NC}^{\boldsymbol{t}}(\cdot,W)$ with $K_{\mathrm{lift}} = 30$, $\operatorname{Head}_{\operatorname{Width}} = 20$, and $\operatorname{Head}_{\operatorname{Depth}} = 2$ is trained over $200$ epochs with learning rate $5 \cdot 10^{-4}$ and batch size $10^3$. Figure~\ref{fig:hedg} empirically demonstrates that the $\operatorname{NeuralChaos}$ is indeed able to learn the considered hedging strategy.
\begin{figure}[h!]
    \centering
    \includegraphics[width=0.9\linewidth]{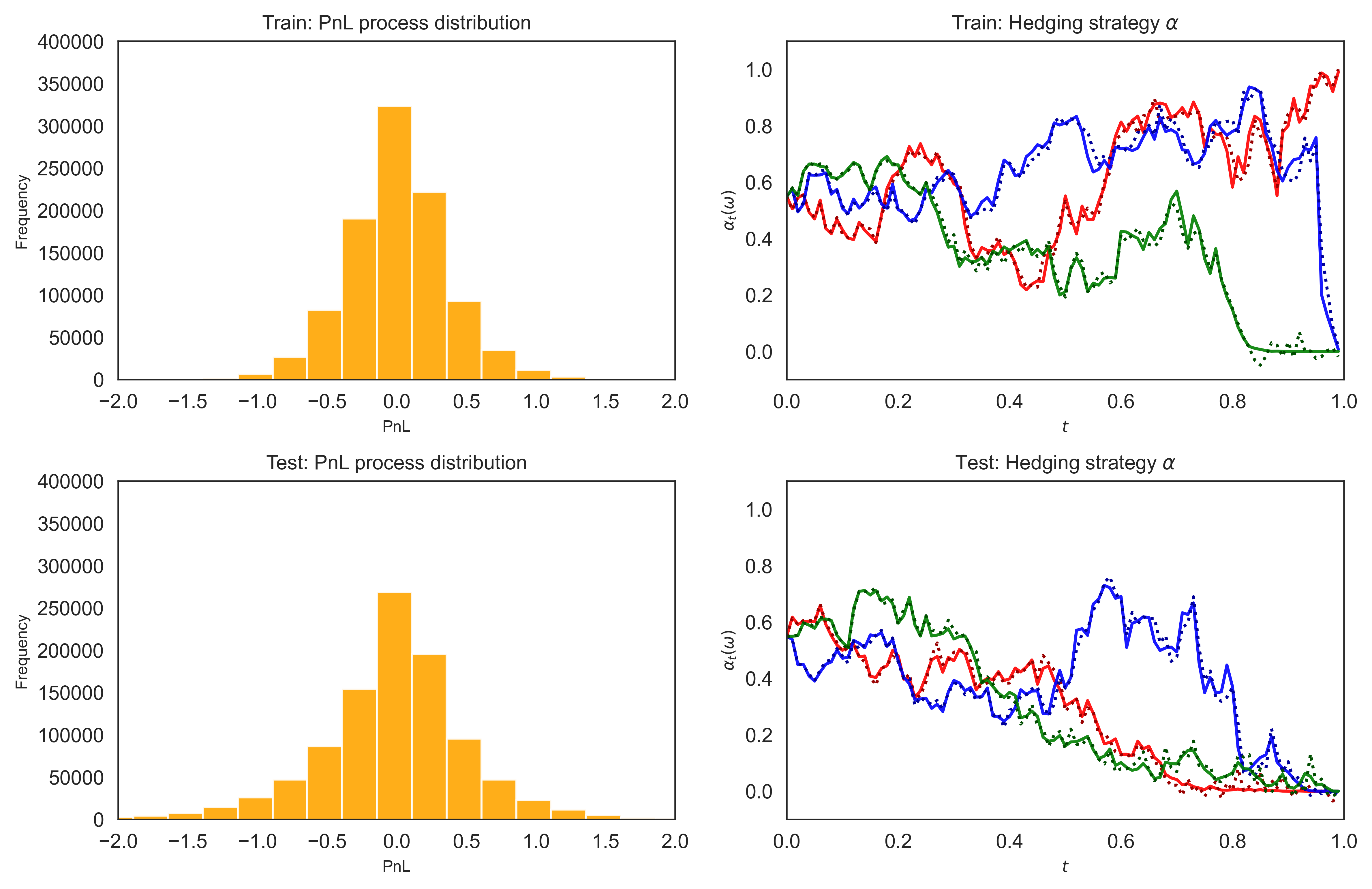}
    \caption{Hedging: Learning the hedging strategy of a European call option in a Black-Scholes model. Left-hand side: Empirical distribution of the profit-and-loss (PnL) process $C(t,S_t) - \big( p + \int_0^t \alpha_s \,\ud \widetilde{S}_s \big)$, $t \in [0,T]$, on train/test set. Right-hand side: Comparison of learned hedging strategy (dotted) and true hedging strategy (continuous) for three samples of the train/test set.}
    \label{fig:hedg}
\end{figure}

\section{Conclusion}
\label{sec:conclusion}
We introduced NeuralChaos, a finite-sampling neural parametrization of square-integrable predictable processes. The construction uses only finitely many evaluations of the driving Brownian motion, lower-triangular causal lifts, row-wise ReLU heads, and deterministic time masks, and therefore preserves predictability by construction. We proved that NeuralChaos is dense in \(\mathcal H_T^2(\mathbb R^d)\) and that, for processes which are both compressible and Malliavin--Sobolev regular, it inherits the corresponding best \(N\)-term chaoslet approximation rates. We also showed that compressibility is natural under random chaoslet-series models, while finite-dimensional Markovian Euler-Maruyama-type parametrizations are meagre and Gaussian-null sets in $\mathcal{H}_T^2$ and so are less likely to match real-world phenomena exactly. The numerical examples illustrate how the same causal architecture can be used in financial tasks where the unknown object is itself a predictable process.

The present results are approximation results, not statistical learning guarantees: they do not quantify finite-sample generalization, optimization error, or the effect of stochastic-gradient training. The quantitative rates are inherited from best \(N\)-term chaoslet approximation and therefore rely on compressibility and Malliavin--Sobolev regularity assumptions. Moreover, the complexity bounds are constructive and may be conservative, especially through the Hermite-head approximation and localization steps. Future work should develop sharper architecture-dependent estimates, statistical risk bounds, adaptive procedures for discovering sparse chaoslet structure, and extensions to more general noises, filtrations, and constrained stochastic control problems.

\section*{Acknowledgements and funding}
\label{s:Acknow}
A.\ Kratsios acknowledges financial support from the Natural Sciences and Engineering Research Council of Canada (NSERC) through Discovery Grant Nos.\ RGPIN-2023-04482 and DGECR-2023-00230.
We further acknowledge that resources used in the preparation of this research were provided, in part, by the Province of Ontario, the Government of Canada through CIFAR, and the industry sponsors of the Vector Institute.\footnote{\href{https://vectorinstitute.ai/partnerships/current-partners/}{https://vectorinstitute.ai/partnerships/current-partners/}} P.\ Schmocker acknowledges financial support from the FinsureTech Hub of ETH Zurich.

\appendix
\section{Proofs of technical results in Section \ref{sec::preliminaries}}\label{app::section_2}
\noindent\emph{Proof of Proposition \ref{prop:sparse_and_smooth}.} In order to prove Proposition \ref{prop:sparse_and_smooth}, we need the following preliminary lemma, which gives the standard best $N$-term consequence of compressibility; see, e.g., the non-linear approximation framework in \cite{alvarez2025neural}.
\begin{lemma}[Compressibility yields best \(N\)-term approximation]
\label{lemma::compressibility}
Let $S>\frac{1}{2}$ and let $X \in \mathcal{H}^2_{T}(\mathbb{R}^{d})$ be $S$-compressible with compressibility constant $C_X$. Then, for every $N \in \mathbb{N}_{+}$, we have
\begin{equation}
    \inf_{\substack{\beta_1,\ldots,\beta_N\in\mathbb R^d\\
\mathfrak{m}^{(1)},\ldots,\mathfrak{m}^{(N)}\in \mathfrak{C}}}
\left\|
X-\sum_{n=1}^{N}\beta_n \mathfrak m^{(n)}
\right\|_{\mathcal{H}_T^2}
\lesssim_X
N^{-(S-\frac12)}.
\end{equation}
\end{lemma}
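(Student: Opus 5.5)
The plan is to choose the competitor in the infimum to be the truncation of the chaoslet expansion to its $N$ largest coefficients, and then to bound the tail using only orthonormality and the polynomial decay in Definition~\ref{defn:compressibility}. Write the expansion \eqref{eq:compressibility__expansion} of $X$. Because $\mathfrak{C}$ is an orthonormal basis of $\mathcal{H}^2_T$, the family $\{\beta\,\mathfrak{m}\}$ with vector coefficients is orthogonal in $\mathcal{H}^2_T(\mathbb{R}^d)$ componentwise. Hence, for any finite set $\Lambda$ of chaoslets,
\begin{equation*}
\Big\|X-\sum_{(k,\mathfrak m)\in\Lambda}\beta_{k,\mathfrak m}\mathfrak m\Big\|^2_{\mathcal{H}^2_T(\mathbb{R}^d)}=\sum_{(k,\mathfrak m)\notin\Lambda}|\beta_{k,\mathfrak m}|^2_{\mathbb{R}^d}.
\end{equation*}
This Parseval identity, applied to each of the $d$ scalar components and then summed, is the first step.

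Next I would take the non-increasing enumeration $(k_r,\mathfrak m_r)_{r\ge1}$ from Definition~\ref{defn:compressibility}, set $\Lambda_N=\{(k_r,\mathfrak m_r):r\le N\}$, and use $\beta_n=\beta_{k_n,\mathfrak m_n}$ and $\mathfrak m^{(n)}=\mathfrak m_n$ as the competitor. The infimum is then bounded by $\big(\sum_{r>N}|\beta_{k_r,\mathfrak m_r}|^2\big)^{1/2}\le C_X\big(\sum_{r>N}r^{-2S}\big)^{1/2}$.

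The final step is the integral comparison $\sum_{r>N}r^{-2S}\le\int_N^\infty x^{-2S}\,dx=N^{1-2S}/(2S-1)$, which is valid because $2S>1$. This yields the bound $C_X(2S-1)^{-1/2}N^{-(S-1/2)}$, where the constant depends only on $X$ through $C_X$ and $S$.

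The only point needing care, which I expect to be the sole mild obstacle, is that the rearrangement must exhaust all nonzero coefficients, so that the tail past $N$ really is the full complement of $\Lambda_N$. Since $\sum|\beta|^2<\infty$, every nonzero coefficient appears at a finite position of any non-increasing enumeration. The zero coefficients contribute nothing to the tail, so this holds. I would also note that the same argument applies if the decay in \eqref{eq:compressibility} holds only for the enumerated sequence: any ordering of a square-summable family by decreasing magnitude lists all nonzero entries.
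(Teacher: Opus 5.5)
Your proposal is correct and follows the paper's proof essentially verbatim: keep the $N$ largest chaoslet coefficients, use Parseval to write the squared error as $\sum_{r>N}|\beta_r^\star|^2\le C_X^2\sum_{r>N}r^{-2S}$, and bound this tail by a multiple of $N^{1-2S}$ since $2S>1$. Your additions are harmless refinements: you use the truncation only as a competitor rather than asserting it is optimal, you obtain the explicit constant $C_X(2S-1)^{-1/2}$ via the integral comparison, and you check that the rearrangement exhausts all nonzero coefficients.
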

\begin{proof}
First, we consider the (adapted) chaoslet orthonormal basis of $\mathcal{H}^2_T(\mathbb{R}^{d})$, graded by homogeneous Wiener-chaos degree $\mathfrak{C}=\bigcup_{k=0}^{\infty} \mathfrak{C}_k$. Then, we consider the chaoslet expansion of $X \in \mathcal{H}^2_T(\mathbb{R}^{d})$: $X = \sum_{k=0}^{\infty}\sum_{\mathfrak{m} \in \mathfrak{C}_k} \beta_{k,m} \mathfrak{m}$, with $\beta_{k,m} \in \mathbb{R}^{d}$, and the non-increasing rearrangement, say $(\beta_r^{\star})_{r \geq 1}$, of the scalar family $(|\beta_{k,m}|_{\mathbb{R}^{d}})_{k \geq 0, \mathfrak{m} \in \mathfrak{C}_k}$. Since $X$ is by assumption $S$-compressible, we have $\beta_r^{\star} \leq C r^{-S}$ for all $r \in \mathbb{N}_{+}$. Hence, by using that the chaoslets form an orthonormal basis of $\mathcal{H}^2_T(\mathbb{R}^{d})$, the best $N$-term approximation is obtained by retaining the $N$ largest coefficients in $\mathbb{R}^{d}$-norm; we denote this $N$-term truncation by $X_{N}$. Thus, we have
\begin{equation*}
    \|X-X_N\|^2_{\mathcal{H}^2_T(\mathbb{R}^{d})}=\sum_{r>N}|\beta_r^{\star}|^2 \leq C_X^2 \sum_{r>N} r^{-2 S},
\end{equation*}
where the last inequality follows from the compressibility bound. Since $S > \frac{1}{2}$ implying $2S > 1$, we have $\sum_{r>N} r^{-2 S} \lesssim_{X} N^{1-2 S}$ and thus $\|X-X_N\|_{\mathcal{H}^2_T(\mathbb{R}^{d})} \lesssim_{X} N^{-\left(S-\frac{1}{2}\right)}$. This concludes the proof.
\end{proof}
We are now ready to prove our main technical abstract non-linear approximation result. Again, we consider the chaoslet expansion $X = \sum_{k=0}^{\infty}\sum_{\mathfrak{m} \in \mathfrak{C}_k} \beta_{k,m} \mathfrak{m}$, with $\beta_{k,m} \in \mathbb{R}^{d}$. Since every $\mathfrak{m} \in \mathfrak{C}_{k}$ belongs to the $k^{th}$ homogeneous Wiener chaos, the orthogonal projection of $X$ onto the first $P \in \mathbb{N}$ Wiener chaoses is $\Pi_{\leq P} X \eqdef \sum_{k=0}^{P} \sum_{\mathfrak{m} \in \mathfrak{C}_k} \beta_{k,\mathfrak{m}}\mathfrak{m}$, and, therefore, its tail is $X-\Pi_{\leq P}X=\sum_{k=P+1}^{\infty}\sum_{\mathfrak{m} \in \mathfrak{C}_k} \beta_{k,\mathfrak{m}}\mathfrak{m}$. By orthonormality of the chaoslets, Parseval's identity, and the fact that $k>P$ (which implies $(1+k)^{s} \geq (1+P)^{s}$), we have
\begin{equation}
\begin{aligned}
        \|X-\Pi_{\le P}X\|_{\mathcal{H}_T^2(\mathbb R^d)}^2
&=
\left\|  \sum_{k=P+1}^{\infty}\sum_{\mathfrak{m} \in \mathfrak{C}_k} \beta_{k,\mathfrak{m}}\mathfrak{m} \right\|_{\mathcal{H}_T^2(\mathbb R^d)}^2 \\
&=
\sum_{k=P+1}^{\infty}\sum_{\mathfrak{m} \in \mathfrak{C}_k} \vert \beta_{k,\mathfrak{m}} \vert_{\mathbb{R}^d}^2 \\
&=
\sum_{k=P+1}^{\infty}
(1+k)^{-s}
(1+k)^s
\sum_{\mathfrak m\in\mathfrak C_k}
|\beta_{k,m}|_{\mathbb R^d}^2 \\
&\le
(1+P)^{-s}
\sum_{k=P+1}^{\infty}
(1+k)^s
\sum_{\mathfrak m\in\mathfrak C_k}
|\beta_{k,m}|_{\mathbb R^d}^2 \\
&\le
(1+P)^{-s}
\|X\|_{\mathbb{D}_T^{s,2:d}}^2.
\end{aligned}
\end{equation}
It remains to prove that we can approximate the low-chaos part $\Pi_{\leq P} X$ by $N$ chaoslets from $\cup_{k=0}^{P}\mathfrak{C}_k$. Since $X$ is $S$-compressible, so is $\Pi_{\leq P} X$ because the coefficients of $\Pi_{\leq P} X$ in the chaoslet expansion are $(\beta_{k,\mathfrak{m}}\mathsf{1}_{\{\mathfrak{m} \in \cup_{k=0}^{P}\mathfrak{C}_k\}})_{s,\mathfrak{m}}$. In particular $\Pi_{\leq P} X$ inherits the same coefficient rearrangement upper bound of $X$, and it has no non-zero coefficients in the orthogonal complement of span $(\cup_{k=0}^{P} \mathfrak{C}_k)$. The result follows by an application of Lemma \ref{lemma::compressibility} to $\Pi_{\leq P} X$ and the triangular inequality.

\begin{proposition}[Polynomial compressibility under
\(\ell^{p_{\mathrm{cmp}}}\)-stable unitary change of basis]
\label{prop:compressible__unitary_lp}
Let $(H,\langle\cdot,\cdot\rangle_H)$ be a separable infinite-dimensional Hilbert space, and let $e_\cdot=(e_n)_{n\in\mathbb N_+}$ be an orthonormal basis of $H$. In addition, let $(\xi_n)_{n\in\mathbb N_+}$ be centered, unit-variance, i.i.d. real-valued sub-Gaussian random variables. Fix $0<p_{\mathrm{cmp}}<2$, assume $\vartheta=(\vartheta_n)_{n\in\mathbb N_+}\in\ell^{p_{\mathrm{cmp}}}$, and  $U:H\to H$ be a unitary operator and define the matrix of $U$ in the basis $e_\cdot$ by $A_U
\eqdef
\left(
\langle Ue_n,e_m\rangle_H
\right)_{m,n\in\mathbb N_+}.
$
Assume that $A_U$ defines a bounded operator $A_U:\ell^{p_{\mathrm{cmp}}}\to\ell^{p_{\mathrm{cmp}}}$. Define $X\eqdef \sum_{n=1}^{\infty}
\vartheta_n\xi_n\,Ue_n$. Then, $X$ is a well-defined $H$-valued random variable, almost surely. Moreover, $X$ is $1/p_{\mathrm{cmp}}$-compressible in the basis
$e_\cdot$. In addition, if $\Pi_N^{e}(X)$ denotes a best $N$-term approximation of $X$ in the basis $e_\cdot$, then there exists an almost surely finite positive
random variable $\mathbf{C}$ such that
\begin{equation*}
    \mathbb P
\left(
\sup_{N\in\mathbb N_+}
N^{\frac1{p_{\mathrm{cmp}}}-\frac12}
\|X-\Pi_N^{e}(X)\|_H
\le
\mathbf{C}
\right)
=
1.
\end{equation*}
\end{proposition}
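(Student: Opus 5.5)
The plan is to write everything in coordinates. Set $c_n\eqdef\vartheta_n\xi_n$. Since $Ue_n=\sum_m \langle Ue_n,e_m\rangle_H e_m$, formally $X=\sum_m a_m e_m$ with $a=A_U c$. The whole argument then reduces to showing two things: that $c\in\ell^{p_{\mathrm{cmp}}}$ almost surely, and that an $\ell^{p}$ sequence has rearrangement decay $r^{-1/p}$.

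\emph{Step 1 (well-definedness).} Since $\sum_n\vartheta_n^2\mathbb E[\xi_n^2]=\|\vartheta\|_{\ell^2}^2\le\|\vartheta\|_{\ell^{p_{\mathrm{cmp}}}}^2<\infty$ (because $p_{\mathrm{cmp}}<2$), the series $\sum_n c_n Ue_n$ is a sum of independent, centred, orthogonal-in-$L^2(\Omega;H)$ terms with summable variances. Hence it converges in $L^2(\Omega;H)$. By the Itô--Nisio theorem, or simply by Kolmogorov's two-series theorem applied to $\|\cdot\|_H$ via the unitary $U$, it also converges almost surely. Because $U$ is unitary and continuous, $X=U\big(\sum_n c_n e_n\big)$, so $\langle X,e_m\rangle_H=(A_Uc)_m$.

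\emph{Step 2 ($c\in\ell^{p_{\mathrm{cmp}}}$ a.s.).} Sub-Gaussianity gives $\mathbb E|\xi_1|^{p_{\mathrm{cmp}}}=:K<\infty$. Then $\mathbb E\sum_n|\vartheta_n\xi_n|^{p_{\mathrm{cmp}}}=K\|\vartheta\|^{p_{\mathrm{cmp}}}_{\ell^{p_{\mathrm{cmp}}}}<\infty$, so $\|c\|_{\ell^{p_{\mathrm{cmp}}}}<\infty$ almost surely. One point needs care here. The identity $(A_Uc)_m=\sum_n\langle Ue_n,e_m\rangle c_n$ holds as an $\ell^2$ statement, while boundedness on $\ell^{p_{\mathrm{cmp}}}$ is an assumption about the same matrix. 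Since $c\in\ell^{p_{\mathrm{cmp}}}\subseteq\ell^2$, I would approximate $c$ by its finite truncations $c^{(K)}$. These converge to $c$ both in $\ell^{p_{\mathrm{cmp}}}$ and in $\ell^2$, and both extensions of $A_U$ agree on finitely supported sequences. Since $\ell^{p}$ convergence implies coordinatewise convergence, the two limits coincide. This gives $a=A_Uc\in\ell^{p_{\mathrm{cmp}}}$ with $\|a\|_{\ell^{p_{\mathrm{cmp}}}}\le\|A_U\|\,\|c\|_{\ell^{p_{\mathrm{cmp}}}}$. I expect this compatibility check to be the only genuinely delicate step, especially for $p_{\mathrm{cmp}}<1$, where $\ell^{p}$ is only a quasi-Banach space. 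The truncation argument still works there because the quasi-norm controls coordinates.

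\emph{Step 3 (compressibility and rates).} Let $a^\star_r$ be the non-increasing rearrangement of $(|a_m|)_m$. Then
\[
r\,(a^\star_r)^{p_{\mathrm{cmp}}}\le\sum_{j\le r}(a^\star_j)^{p_{\mathrm{cmp}}}\le\|a\|^{p_{\mathrm{cmp}}}_{\ell^{p_{\mathrm{cmp}}}},
\]
so $a^\star_r\le\|a\|_{\ell^{p_{\mathrm{cmp}}}}\,r^{-1/p_{\mathrm{cmp}}}$. This is exactly $1/p_{\mathrm{cmp}}$-compressibility in the sense of Definition~\ref{defn:compressibility}, transplanted to the basis $e_\cdot$, with random constant $C_X=\|a\|_{\ell^{p_{\mathrm{cmp}}}}$. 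For the best $N$-term error, argue as in Lemma~\ref{lemma::compressibility}, using orthonormality of $e_\cdot$:
\[
\|X-\Pi_N^e(X)\|_H^2=\sum_{r>N}(a^\star_r)^2\le C_X^2\sum_{r>N}r^{-2/p_{\mathrm{cmp}}}\le C_X^2\,c_p\,N^{1-2/p_{\mathrm{cmp}}}.
\]
This uses $2/p_{\mathrm{cmp}}>1$. It yields the claimed bound with $\mathbf C\eqdef\sqrt{c_p}\,\|A_U\|\,\|c\|_{\ell^{p_{\mathrm{cmp}}}}$, which is almost surely finite by Step 2, uniformly in $N$.
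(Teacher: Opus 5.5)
Your proposal is correct and follows essentially the same route as the paper. In both, one shows $(\vartheta_n\xi_n)_n\in\ell^{p_{\mathrm{cmp}}}$ a.s.\ by taking expectations and identifies the coefficient sequence of $X$ in $e_\cdot$ with $A_U(\vartheta_n\xi_n)_n$ via finite truncations and coordinatewise limits. One then concludes from the rearrangement bound $k\,(b_k^\star)^{p_{\mathrm{cmp}}}\le\|b\|_{\ell^{p_{\mathrm{cmp}}}}^{p_{\mathrm{cmp}}}$ and the tail estimate $\sum_{k>N}k^{-2/p_{\mathrm{cmp}}}\lesssim N^{1-2/p_{\mathrm{cmp}}}$. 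The only difference is minor: you obtain almost-sure convergence of the series through $L^2(\Omega;H)$ convergence and It\^o--Nisio, whereas the paper gets it more cheaply and pathwise from $\ell^{p_{\mathrm{cmp}}}\subseteq\ell^2$ and the orthonormality of $(Ue_n)_n$.
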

\begin{proof}[{Proof of Proposition~\ref{prop:compressible__unitary_lp}}]
Set
$
a_n\eqdef \vartheta_n \xi_n
$
for each $n\in \mathbb{N}_+$.  
Since $(\xi_n)_{n=1}^{\infty}$ are $(C,c)$-sub-Gaussian\footnote{We recall that a Borel probability measure $\mu$ on a separable Hilbert space $(H,\langle\cdot,\cdot\rangle_H)$ is $(C,c)$-\textit{sub-Gaussian} if $\int_H \exp(\langle x,h \rangle_H) \mu(dx) \leq C \exp\left( c \Vert h \Vert^2 \right)$, for all $h \in H$ and some constants $C,c\ge 0$.}, they have finite moments of every positive order, and in particular
$
\mathbb{E}[|\xi_1|^{p_{\mathrm{cmp}}}]<\infty
$.
Thus, by the Fubini-Tonelli Theorem we have
\[
    \mathbb{E}\Big[\sum_{n=1}^{\infty}|a_n|^{p_{\mathrm{cmp}}}\Big]
=
    \sum_{n=1}^{\infty}|\vartheta_n|^{p_{\mathrm{cmp}}}\,
    \mathbb{E}[|\xi_1|^{p_{\mathrm{cmp}}}]
=
    \mathbb{E}[|\xi_1|^{p_{\mathrm{cmp}}}]
    \|\vartheta\|_{\ell^{p_{\mathrm{cmp}}}}^{p_{\mathrm{cmp}}}
<
    \infty
.
\]
Therefore,
$
\sum_{n=1}^{\infty}|a_n|^{p_{\mathrm{cmp}}}<\infty,
$
$\mathbb{P}$-a.s.,
and so
$
a\eqdef (a_n)_{n=1}^{\infty}\in \ell^{p_{\mathrm{cmp}}},
$
$\mathbb{P}$-a.s.
Since $0<p_{\mathrm{cmp}}<2$, one has the continuous embedding
$\ell^{p_{\mathrm{cmp}}}\subseteq \ell^2$. Indeed, if
$a=(a_n)_{n=1}^{\infty}\in \ell^{p_{\mathrm{cmp}}}$, then
$
\{n\in \mathbb{N}_+:\ |a_n|>1\}
$
is finite, and hence
\[
\sum_{n} |a_n|^2
=
\sum_{|a_n|>1} |a_n|^2
+
\sum_{|a_n| \leq 1} |a_n|^2
\leq
\sum_{|a_n|>1} |a_n|^2
+
\sum_{|a_n| \leq 1} |a_n|^{p_{\mathrm{cmp}}}
<\infty
.
\]
Then, $
\sum_{n=1}^{\infty}\|a_nUe_n\|_H^2
=
\sum_{n=1}^{\infty}|a_n|^2
<
\infty
$, $\mathbb{P}$-a.s., as $U$ is unitary. Thus, the series
$
X
=
\sum_{n=1}^{\infty} a_n\,Ue_n
=
\sum_{n=1}^{\infty}\vartheta_n \xi_n\,Ue_n
$
converges in $H$ $\mathbb{P}$-a.s. In particular, $X$ is a well-defined $H$-valued random variable, being the almost sure limit of the measurable partial sums
$
X_M\eqdef \sum_{n=1}^{M}\vartheta_n \xi_n\,Ue_n
$.
Let
$
b_m\eqdef \langle X,e_m\rangle_H
$
for each $m\in \mathbb{N}_+$,
and write
$
b\eqdef (b_m)_{m=1}^{\infty}
$.
Since $(e_m)_{m=1}^{\infty}$ is an orthonormal basis of $H$, $b$ is precisely the coefficient sequence of $X$ in the basis $e_{\cdot}$. For each $M\in \mathbb{N}_+$ and $m\in \mathbb{N}_+$,
$
    \langle X_M,e_m\rangle_H
=
    \sum_{n=1}^{M}\,
        a_n\langle Ue_n,e_m\rangle_H
$.
Since $X_M\to X$ in $H$, $\mathbb{P}$-a.s., we obtain
$
    b_m
=
    \lim_{M\to\infty}\langle X_M,e_m\rangle_H
=
    \lim_{M\to\infty}\sum_{n=1}^{M}a_n\langle Ue_n,e_m\rangle_H
$,
$\mathbb{P}$-a.s.,
for all $m\in \mathbb{N}_+$. As 
$
A_U\eqdef \big(\langle Ue_n,e_m\rangle_H\big)_{m,n=1}^{\infty}
$
defines a bounded linear operator on $\ell^{p_{\mathrm{cmp}}}$, and since
$
a^{(M)}\eqdef (a_1,\dots,a_M,0,0,\dots)
$
converges to $a$ in $\ell^{p_{\mathrm{cmp}}}$, $\mathbb{P}$-a.s., we have
$
A_Ua^{(M)}\to A_Ua
$
in $\ell^{p_{\mathrm{cmp}}}$, $\mathbb{P}$-a.s., and therefore coordinate-wise. Thus
$
b=A_Ua
$, $\mathbb{P}$-a.s.
Therefore, if we write
$
K_{p_{\mathrm{cmp}}}(U)\eqdef \|A_U\|_{\ell^{p_{\mathrm{cmp}}}\to \ell^{p_{\mathrm{cmp}}}},
$
then
$
\|b\|_{\ell^{p_{\mathrm{cmp}}}}
\le
K_{p_{\mathrm{cmp}}}(U)\|a\|_{\ell^{p_{\mathrm{cmp}}}}
$, $\mathbb{P}$-a.s.
Thus, $b\in \ell^{p_{\mathrm{cmp}}}$, $\mathbb{P}$-a.s. 

Now fix $\omega$ in the full-probability event on which both
$a(\omega)\in \ell^{p_{\mathrm{cmp}}}$ and
$b(\omega)\in \ell^{p_{\mathrm{cmp}}}$. Let
$
(b_k^{\star})_{k=1}^{\infty}
$
denote the decreasing rearrangement of
$
(|b_m|)_{m=1}^{\infty}
$.
Since
$
    k\,(b_k^{\star})^{p_{\mathrm{cmp}}}
\le
    \sum_{j=1}^{k}(b_j^{\star})^{p_{\mathrm{cmp}}}
\le
    \sum_{j=1}^{\infty}(b_j^{\star})^{p_{\mathrm{cmp}}}
=
    \|b\|_{\ell^{p_{\mathrm{cmp}}}}^{p_{\mathrm{cmp}}}
$,
it follows that
$
b_k^{\star}
\le
\|b\|_{\ell^{p_{\mathrm{cmp}}}}\,k^{-1/p_{\mathrm{cmp}}}
$
for every $k\in \mathbb{N}_+$.  
Because $e_{\cdot}$ is an orthonormal basis, the best $N$-term approximation error of $X$ in the basis $e_{\cdot}$ is obtained by retaining the $N$ largest coefficients in modulus; equivalently,
$
\|X-\Pi_N(X)\|_H^2
=
\sum_{k>N}(b_k^{\star})^2
$.
Consequently,
$
\|X-\Pi_N(X)\|_H^2
\le
\|b\|_{\ell^{p_{\mathrm{cmp}}}}^2
\sum_{k>N}k^{-2/p_{\mathrm{cmp}}}
$.
Since $0<p_{\mathrm{cmp}}<2$, one has $2/p_{\mathrm{cmp}}>1$, and therefore there exists a constant $C_{p_{\mathrm{cmp}}}>0$, depending only on $p_{\mathrm{cmp}}$, such that
$
\sum_{k>N}k^{-2/p_{\mathrm{cmp}}}
\le
C_{p_{\mathrm{cmp}}}\,N^{1-2/p_{\mathrm{cmp}}},
$
for every $N\in \mathbb{N}_+$.
Taking square roots yields
\begin{equation*}
\|X-\Pi_N(X)\|_H
\le
C_{p_{\mathrm{cmp}}}^{1/2}
\|b\|_{\ell^{p_{\mathrm{cmp}}}}\,
N^{\frac12-\frac1{p_{\mathrm{cmp}}}}
=
C_{p_{\mathrm{cmp}}}^{1/2}
\|b\|_{\ell^{p_{\mathrm{cmp}}}}\,
N^{-S_{\mathrm{app}}},
\end{equation*}
where
$
S_{\mathrm{app}}\eqdef \frac1{p_{\mathrm{cmp}}}-\frac12>0
$.
The bound on $A_U$ gives for every $N\in \mathbb{N}_+$,
\[
    \|X-\Pi_N(X)\|_H
\le
    C_{p_{\mathrm{cmp}}}^{1/2}
    K_{p_{\mathrm{cmp}}}(U)
    \|a\|_{\ell^{p_{\mathrm{cmp}}}}\,
    N^{-S_{\mathrm{app}}}
    \qquad \mathbb{P}\mbox{-a.s.}
\]
We now define
$
    \mathbf{C}
\eqdef
    C_{p_{\mathrm{cmp}}}^{1/2}
    K_{p_{\mathrm{cmp}}}(U)
    \|(a_n)_{n=1}^{\infty}\|_{\ell^{p_{\mathrm{cmp}}}}
=
    C_{p_{\mathrm{cmp}}}^{1/2}
    K_{p_{\mathrm{cmp}}}(U)
\big(\sum_{n=1}^{\infty}|\vartheta_n \xi_n|^{p_{\mathrm{cmp}}}\big)^{1/p_{\mathrm{cmp}}}
$.
Since each
$
\omega\mapsto |\vartheta_n \xi_n(\omega)|^{p_{\mathrm{cmp}}}
$
is measurable, the partial sums
$
(S_M(\omega)\eqdef \sum_{n=1}^{M}|\vartheta_n \xi_n(\omega)|^{p_{\mathrm{cmp}}})_{M=1}^{\infty}
$
are measurable, and
$
\sum_{n=1}^{\infty}|\vartheta_n \xi_n|^{p_{\mathrm{cmp}}}
=
\lim_{M\to\infty}S_M
$
is measurable as monotone limit of measurable functions. Hence $\mathbf{C}$ is measurable and is a bona-fide random variable. Moreover, by the first part of the proof,
$
\mathbf{C}<\infty
$,
$\mathbb{P}$-a.s.
Therefore,
$
\|X-\Pi_N(X)\|_H
\le
\mathbf{C}\,N^{-S_{\mathrm{app}}}
$
for every $N\in \mathbb{N}_+$, 
$\mathbb{P}$-a.s., which is equivalent to
$
\mathbb{P}\big(
    \sup_{N\in \mathbb{N}_+}
        N^{S_{\mathrm{app}}}\|X-\Pi_N(X)\|_H
    \le
        \mathbf{C}
\big)
=
1.
$
This proves the claim.
\end{proof}

\section{Proof of Theorem \ref{thrm:happytimes}} \label{app:Positive_Results}
The proof of Theorem \ref{thrm:happytimes} involves several steps. The first step, in Subsection \ref{s:Proofs__ss:MainThrm___sss:Lifting}, connects the chaoslet representation from Subsection \ref{subsec::special_chaos} with the finite-sampling architecture of Section~\ref{sec::section_3}. A chaoslet contains stochastic factors of the form (cf.~\eqref{eq:reshuffled}) $h_{\alpha_r^{i,k}}\big(Z_{i,k}^r\big)$, where each $Z_{i,k}^{r}$ is a Brownian Haar coordinate, which are (in fact) deterministic linear combinations of finitely many Brownian samples. In particular, we need to show that our $\operatorname{NeuralChaos}$ architecture can produce the required Gaussian inputs $Z_{i,k}^{r}$ from the sampled values of the driving Brownian motion. This is the role of the causal lift in Subsection \ref{s:Proofs__ss:MainThrm___sss:Lifting}. The second step, in Subsection \ref{subsec::step_2}, proves that the stochastic Hermite-Haar factor of a chaoslet can be approximated by a MLP with ReLU activation
functions. With the latter result at hand, the third step in Subsection \ref{subsubsec:time-masking} proves that it is possible to approximate the elementary chaoslet in Eq.~\eqref{eq:reshuffled} by using $\operatorname{ReLU-MLPs}$ of $O(1)$ complexity in the time variable $t$ and the same complexity as with the Wiener chaos variable $\omega$. Finally, Step 4, assembles the preceding steps and completes the proof of Theorem \ref{thrm:happytimes}. 
\subsection{\emph{Step 1 -- causal lift}}\label{s:Proofs__ss:MainThrm___sss:Lifting}
Step 1 comprises three lemmas. Lemma \ref{lem:emulation} records that only finitely many dyadic sampling times are required. Instead, Lemma \ref{lem:triangular_filters} shows that lower-triangular linear maps preserve adaptedness. Finally, Lemma \ref{lem:construt_my_buddies_please__WtoX} combines the previous two facts to realize the finitely many Brownian Haar coordinates appearing in a chaoslet as a sparse (causal) linear transformation of Brownian samples.
\begin{lemma}[Counting the size of time-indexing sets]\label{lem:emulation}
Let $T>0$ be a (fixed) time-horizon, $I_{\rm H}, J_{\rm H} \in \mathbb{N}$, and $i_{0} \in \mathbb{N}$ such that $J_{\rm H}\leq 2^{i_0}-1$. In addition, let $\Lambda_{I_{\rm H}, J_{\rm H}, i_0}\eqdef\{(i,k)\,:\,i_0 \leq i \leq i_0 + I_{\rm H},\,\,0 \leq k \leq J_{\rm H}\}$; in particular, $0 \leq k < 2^{i}$ for every $(i,k) \in \Lambda_{I_{\rm H}, J_{\rm H}, i_0}$. Let $\mathcal{S}_{I_{\rm H}, J_{\rm H}, i_0}^T\eqdef \bigcup_{i=i_0}^{i_0+I_{\rm H}}\left\{\frac{j T}{2^{i+1}}\,:\,j=0,\ldots,2 J_{\rm H}+2\right\}$ and $0=s_0<s_1<\ldots<s_{n_{\rm grid}} \leq T$ be the increasing enumeration of the elements in $S_{I_{\rm H}, J_{\rm H}, i_0}^T$.  Then $n_{\rm grid}=(J_{\rm H}+1)(I_{\rm H}+2)$. Moreover, for every Brownian component $r \in [D]_{+}$ the normalized Gaussian coordinate in Eq.~\eqref{eq:stochastic_haar_coordinate} can be computed from the Brownian samples on this finite grid. In particular, if $\mathbf{W}^{I_{\rm H}, J_{\rm H}, i_0} \eqdef [W_{s_0}^{\top},\ldots,W_{n_{\rm grid}}^{\top}]^{\top}$, then $\mathbf{W}^{I_{\rm H}, J_{\rm H}, i_0} \in \mathbb{R}^{(n_{\rm grid}+1)\times D}$. 
\end{lemma}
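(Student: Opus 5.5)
The plan is to prove the two claims of Lemma~\ref{lem:emulation} separately: first count the elements of $\mathcal{S}^T_{I_{\rm H},J_{\rm H},i_0}$, then check that every Haar coordinate indexed by $\Lambda_{I_{\rm H},J_{\rm H},i_0}$ only uses sampling times from this set.

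\textbf{Counting the grid.} Write $\mathcal{S}_i\eqdef\{jT/2^{i+1}:j=0,\ldots,2J_{\rm H}+2\}$. This is an equispaced grid of mesh $T/2^{i+1}$ on $[0,(J_{\rm H}+1)T/2^{i}]$, with $2J_{\rm H}+3$ points. Since $J_{\rm H}+1\le 2^{i_0}\le 2^i$, every point lies in $[0,T]$, which gives $s_{n_{\rm grid}}\le T$.

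I will count the union level by level, going from the finest level $i=i_0+I_{\rm H}$ to the coarsest $i=i_0$, and show that each coarser level adds exactly $J_{\rm H}+1$ new points.
\begin{itemize}
\item The key observation is $jT/2^{i+1}=(2j)T/2^{i+2}$. Hence the points of $\mathcal{S}_i$ with $j\le J_{\rm H}+1$ already lie in $\mathcal{S}_{i+1}$, because $2j\le 2J_{\rm H}+2$.
\item The remaining points, those with $J_{\rm H}+2\le j\le 2J_{\rm H}+2$, are new. They exceed $(J_{\rm H}+1)T/2^{i+1}$, which is the right endpoint of $\mathcal{S}_{i+1}$.
\item Since the right endpoints $(J_{\rm H}+1)T/2^{i'}$ increase as $i'$ decreases, every finer level $\mathcal{S}_{i'}$ with $i'>i$ lies inside $[0,(J_{\rm H}+1)T/2^{i+1}]$. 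So these new points also cannot belong to any finer level.
\end{itemize}
This gives exactly $J_{\rm H}+1$ new points per step.

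Adding up, the union has $(2J_{\rm H}+3)+I_{\rm H}(J_{\rm H}+1)=(J_{\rm H}+1)(I_{\rm H}+2)+1$ elements. With the enumeration $s_0<\cdots<s_{n_{\rm grid}}$ indexed from $0$, this is $n_{\rm grid}=(J_{\rm H}+1)(I_{\rm H}+2)$. Consequently $\mathbf{W}^{I_{\rm H},J_{\rm H},i_0}$ has $n_{\rm grid}+1$ rows of length $D$, i.e.\ it lies in $\mathbb{R}^{(n_{\rm grid}+1)\times D}$.

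\textbf{Computability of the Haar coordinates.} Fix $(i,k)\in\Lambda_{I_{\rm H},J_{\rm H},i_0}$ and $r\in[D]_+$. By \eqref{eq:stochastic_haar_coordinate}, $Z^r_{i,k}$ is a fixed linear combination, with coefficients $\tfrac{2^{i/2}}{\sqrt T},-\tfrac{2^{i/2+1}}{\sqrt T},\tfrac{2^{i/2}}{\sqrt T}$, of $W^r$ evaluated at three times. These times are
\[
\frac{(2k)T}{2^{i+1}},\qquad \frac{(2k+1)T}{2^{i+1}},\qquad \frac{(2k+2)T}{2^{i+1}}.
\]
Since $k\le J_{\rm H}$, the indices satisfy $2k+2\le 2J_{\rm H}+2$, so all three times belong to $\mathcal{S}_i\subseteq\mathcal{S}^T_{I_{\rm H},J_{\rm H},i_0}$. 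Therefore $Z^r_{i,k}$ is a deterministic linear functional of the $r$-th column of $\mathbf{W}^{I_{\rm H},J_{\rm H},i_0}$, with at most three non-zero coefficients.

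\textbf{Main obstacle.} The only delicate step is the overlap bookkeeping in the count. One must make sure that points of a coarse level which are not covered by the next finer level are also not covered by any even finer level. The monotonicity of the right endpoints $(J_{\rm H}+1)T/2^{i}$ is what settles this. The rest is direct verification.
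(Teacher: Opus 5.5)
Your proof is correct. The computability part and the final tally $(2J_{\rm H}+3)+I_{\rm H}(J_{\rm H}+1)$ match the paper's, but you reach the count through a different disjoint decomposition.

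The paper sweeps from the coarsest level $i_0$ to the finest. At each finer level $i$, the even-numerator points $2a\,T/2^{i+1}=aT/2^{i}$ with $a\le J_{\rm H}+1$ were already present at level $i-1$. The odd-numerator points $(2a+1)T/2^{i+1}$, $a=0,\ldots,J_{\rm H}$, are reduced dyadic rationals with denominator exactly $2^{i+1}$, so they cannot occur at any coarser level.

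You sweep from the finest level to the coarsest and separate new points from old ones by magnitude rather than by $2$-adic valuation. This uses the fact that all levels start at $0$ and have nested ranges $[0,(J_{\rm H}+1)T/2^{i}]$.

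The paper's split matches how the grid is used by the Haar coordinates. The new points at level $i$ are exactly the midpoints $(2k+1)T/2^{i+1}$ of the $J_{\rm H}+1$ Haar supports at that level, while their endpoints are inherited from level $i-1$.

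Your split partitions $[0,(J_{\rm H}+1)T/2^{i_0}]$ into the range of the finest level together with the shells $((J_{\rm H}+1)T/2^{i+1},(J_{\rm H}+1)T/2^{i}]$, $i_0\le i<i_0+I_{\rm H}$. Each shell carries $J_{\rm H}+1$ equispaced points. This gives an explicit description of the increasing enumeration $s_0<\cdots<s_{n_{\rm grid}}$: the finest full level first, then one shell per coarser level. That is convenient when locating the column indices of the three sampling times in the subsequent causal-lift lemma.

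In both routes the ``absent from every other level'' check is settled in one stroke: by reduced denominators in the paper, and by your monotonicity of the right endpoints.
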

\begin{proof}
First, we count the number of (distinct) dyadic sampling times. For every $i=i_0,\ldots,i_0+I_{\rm H}$, we set $S_i^T\eqdef\left\{\frac{j T}{2^{i+1}}\,:\,j=0,\ldots,2 J_{\rm H} + 2\right\}$; whence, the set in the statement can be written as $\mathcal{S}_{I_{\rm H}, J_{\rm H}, i_0}^T = \bigcup_{i=i_0}^{i_0+I_{\mathrm H}} S_i^T$. Notice that, since $\frac{2 J_{\rm H}+2}{2^{i+1}} \leq \frac{2 J_{\rm H}+2}{2^{i_0+1}}=\frac{J_{\rm H}+1}{2^{i_0}}\leq 1$, all points of $\mathcal{S}_{I_{\rm H}, J_{\rm H}, i_0}^T$ belong to $[0,T]$. Since multiplication by $T>0$ is injective, it is enough to count the number of distinct points in the set $\mathcal S_{I_{\mathrm H},J_{\mathrm H},i_0}^1\eqdef \bigcup_{i=i_0}^{i_0+I_{\mathrm H}}
\left\{ \frac{j}{2^{i+1}}: j=0,\ldots,2J_{\mathrm H}+2\right\}$. Now, set $S_{i_0}^{1}\eqdef\left\{\frac{j}{2^{i_0+1}}\,:\,j=0,\ldots,2J_{\mathrm H}+2\right\}$; it contains exactly $2 J_{\mathrm H}+3$ points. For every finer level $i=i_0+1,\ldots,i_0+I_{\mathrm H}$, the points with  even numerator, already appeared at the previous level. Therefore, the only new points are
those with odd numerator $\frac{2 a + 1}{2^{i+1}}$, $a=0,\ldots,J_{\mathrm{H}}$. Equivalently, if we write $r=i+1$, the new reduced dyadic rationals are $\frac{q}{2^{r}}$, $ r=i_0+2,\ldots,i_0+I_{\mathrm H}+1$ and $q\in\{1,3,\ldots,2J_{\mathrm H}+1\}$. Therefore, the following disjoint decomposition holds true:
\begin{equation*}
    \mathcal S_{I_{\mathrm H},J_{\mathrm H},i_0}^{1} = S_{i_0}^{1} \sqcup \left\{
\frac{q}{2^r}:
r=i_0+2,\ldots,i_0+I_{\mathrm H}+1,\ 
q\in\{1,3,\ldots,2J_{\mathrm H}+1\}
\right\}.
\end{equation*}
Therefore, $\#\mathcal S_{I_{\mathrm H},J_{\mathrm H},i_0}^{1}=
(2J_{\mathrm H}+3)+ I_{\mathrm H}(J_{\mathrm H}+1)=(J_{\mathrm H}+1)(I_{\rm H}+2)+1$. In particular, if $0=s_0<s_1<\cdots<s_{n_{\mathrm{grid}}}\le T$ is the increasing enumeration of the distinct elements of $\mathcal S_{I_{\mathrm H},J_{\mathrm H},i_0}^T$, then $n_{\mathrm{grid}}+1 = (J_{\mathrm H}+1)(I_{\mathrm H}+2)+1$. We need to verify that for every Brownian component $r \in [D]_{+}$, the normalized Gaussian coordinate in Eq.~\eqref{eq:stochastic_haar_coordinate} can be computed from the Brownian samples on this finite grid. In order to do so, it is sufficient to rewrite the sampling times appearing in Eq.~\eqref{eq:stochastic_haar_coordinate} with the common denominator $2^{i+1}$ and notice that, since $0 \leq k \leq J_{\rm H}$, the resulting numerators all belong to $\{0,\ldots,2 J_{H}+2\}$. Whence, all three sampling times belong to $S_i^{T} \subset S_{I_{\mathrm H},J_{\mathrm H},i_0}^{T}$. Finally, each $W_{s_j}$ is a $D$-dimensional Brownian vector. Therefore, the
Brownian sample matrix $\mathbf{W}^{I_{\rm H}, J_{\rm H}, i_0} \eqdef [W_{s_0}^{\top},\ldots,W_{n_{\rm grid}}^{\top}]^{\top} \in \mathbb{R}^{(n_{\rm grid}+1)\times D}$. This concludes the proof. 
\end{proof}
\begin{lemma}[Adaptedness of linear combinations]\label{lem:triangular_filters}
Let $0=s_0\leq s_1 \leq \cdots \leq s_{n_{\mathrm{grid}}}\le T$ be a deterministic time-grid. Let $q \in \mathbb{N}_{+}$, suppose that $\xi_{s_j} \in L^{2}(\mathcal{F}_{s_j};\mathbb{R}^{q})$, $j=0,\ldots,n_{\mathrm{grid}}$, and consider the random matrix $\Xi\eqdef[\xi_{s_0}^{\top},\ldots,\xi_{s_{n_{\mathrm{grid}}}}^{\top}]^{\top} \in \mathbb{R}^{(n_{\mathrm{grid}}+1)\times q}$. Now, let $R \in \mathbb{N}_{+}$, with $R \leq n_{\mathrm{grid}}+1$, and let $A \in \mathbb{R}^{R \times (n_{\rm grid}+1)}$ be lower triangular in the rectangular sense, i.e., $A_{j,i}=0$ for every $i>j$. In particular, for every $j \in [R]_{+}$, $(A\Xi)_{j:} \in L^2(\mathcal F_{s_{j-1}};\mathbb R^q)$.
\end{lemma}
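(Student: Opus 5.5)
The plan is a direct computation of the $j$-th row of $A\Xi$, followed by two elementary facts: monotonicity of the filtration and stability of $L^2$ under finite linear combinations. For $j\in[R]_+$ write
\begin{equation*}
(A\Xi)_{j:}=\sum_{i=1}^{n_{\mathrm{grid}}+1}A_{j,i}\,\xi_{s_{i-1}}^{\top},
\end{equation*}
using that the $i$-th row of $\Xi$ is $\xi_{s_{i-1}}^{\top}$, since the rows of $\Xi$ are indexed from $s_0$. The rectangular lower-triangularity $A_{j,i}=0$ for $i>j$ truncates this to $\sum_{i=1}^{j}A_{j,i}\,\xi_{s_{i-1}}^{\top}$. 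Every sample time that remains satisfies $s_{i-1}\le s_{j-1}$. The standing assumption $R\le n_{\mathrm{grid}}+1$ ensures that $s_{j-1}$ is a genuine grid point, so the index bookkeeping is consistent.

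Next, since the grid is non-decreasing and $(\mathcal F_t)$ is a filtration, we have $\mathcal F_{s_{i-1}}\subseteq\mathcal F_{s_{j-1}}$ for each $i\le j$. Hence each $\xi_{s_{i-1}}$ is $\mathcal F_{s_{j-1}}$-measurable, so it lies in $L^2(\mathcal F_{s_{j-1}};\mathbb R^q)$. The second moments are unchanged because the probability measure is the same and only the $\sigma$-algebra is enlarged. A finite linear combination with deterministic real coefficients $A_{j,i}$ remains measurable with respect to the same $\sigma$-algebra. It is also square integrable, by the triangle inequality in $L^2$, or equivalently by $|\sum_i a_i x_i|^2\le j\sum_i a_i^2|x_i|^2$. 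This gives $(A\Xi)_{j:}\in L^2(\mathcal F_{s_{j-1}};\mathbb R^q)$.

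There is essentially no analytic obstacle. The only point needing care is the index shift between row $j$ of $A\Xi$ and time $s_{j-1}$, namely the fact that the rows of $\Xi$ start at $s_0$. I will state this convention explicitly so that the claimed measurability with respect to $\mathcal F_{s_{j-1}}$, and not $\mathcal F_{s_j}$, comes out exactly. If the grid has repeated times ($s_{i}=s_{i+1}$, which is allowed by the non-strict inequalities), nothing changes, since the filtration inclusion is still valid.
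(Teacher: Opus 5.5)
Your proposal is correct and follows the paper's proof essentially verbatim: you truncate the $j$-th row to $\sum_{i=1}^{j}A_{j,i}\xi_{s_{i-1}}$ using $A_{j,i}=0$ for $i>j$, you use the inclusion $\mathcal F_{s_{i-1}}\subseteq\mathcal F_{s_{j-1}}$, and you conclude from finiteness of the sum and deterministic coefficients. Your explicit bookkeeping of the index shift (row $j$ corresponds to time $s_{j-1}$, which is a grid point because $R\le n_{\mathrm{grid}}+1$) and your Cauchy--Schwarz bound for square integrability spell out details that the paper leaves implicit.
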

\begin{proof}
Fix any $j \in [R]_{+}$. Since $A$ is lower triangular, $A_{j,i}=0$ for every $i>j$ and, therefore, $(A \Xi)_{j:}=\sum_{i=1}^{n_{\rm grid}+1} A_{j,i} \xi_{s_{i-1}}=\sum_{i=1}^{j} A_{j,i} \xi_{s_{i-1}}$. Now, for each $i \leq j$, since $s_{i-1} \leq s_{j-1}$ and $\mathcal{F}_{\cdot}=(\mathcal{F}_{t})_{0 \leq t \leq T}$ is a filtration, we have $\mathcal{F}_{s_{i-1}} \subseteq \mathcal{F}_{s_{j-1}}$. Hence, $\xi_{s_{i-1}} \in L^2(\mathcal{F}_{s_{j-1}};\mathbb{R}^{q})$, for every $i \in [j]_{+}$. Since $A$ is deterministic and the sum is finite, it follows that $(A \Xi)_{j:} \in L^2(\mathcal{F}_{s_{j-1}};\mathbb{R}^{q})$. Since $j \in [R]_{+}$ is arbitrary, this concludes the proof.
\end{proof}
Now, we observe (cf.~the discussion above) that one can always find a lower-triangular matrix mapping a path-wise random noise sample.
\begin{lemma}[Adapted linear realization of the arguments in the Hermite polynomials.]
\label{lem:construt_my_buddies_please__WtoX}
Let $T>0$ be a (fixed) time-horizon, $I_{\rm H}, J_{\rm H} \in \mathbb{N}$, and $i_{0} \in \mathbb{N}$ such that $J_{\rm H}\leq 2^{i_0}-1$. In addition, let $\Lambda_{I_{\rm H}, J_{\rm H}, i_0}\eqdef\{(i,k)\,:\,i_0 \leq i \leq i_0 + I_{\rm H},\,\,0 \leq k \leq J_{\rm H}\}$; in particular, $\# \Lambda_{I_{\rm H}, J_{\rm H}, i_0}=(I_{\rm H}+1)(J_{\rm H}+1)$ (cf.~Lemma \ref{lem:emulation}). Let $0<s_0<s_1<\ldots<s_{n_{\rm grid}} \leq T$ be the ordered dyadic sampling grid of Lemma \ref{lem:emulation}, where $n_{\rm grid}=(J_{\rm H}+1)(I_{\rm H}+2)$. Define the Brownian sample matrix  $\mathbf{W}^{I_{\rm H}, J_{\rm H}, i_0} \eqdef [W_{s_0}^{\top},\ldots,W_{n_{\rm grid}}^{\top}]^{\top} \in \mathbb{R}^{(n_{\rm grid}+1)\times D}$, and, for every $(i,k)\in\Lambda_{I_{\rm H}, J_{\rm H}, i_0}$ and $r \in [D]_{+}$
\begin{equation}\label{eq::gaussian_coordinates}
 Z_{i,k}^r= \tfrac{2^{i/2}}{\sqrt{T}} \,W^r_{\frac{Tk}{2^i}}
    -\tfrac{2^{i/2+1}}{\sqrt{T}}\,W^r_{\frac{T(1+2k)}{2^{i+1}}}
    +\tfrac{2^{i/2}}{\sqrt{T}}\,W^r_{\frac{T(k+1)}{2^i}}.
\end{equation}
Enumerate $\Lambda_{I_{\mathrm H},J_{\mathrm H},i_0} =
\{(i_\ell,k_\ell):\ell\in [R_{\mathrm H}]_{+}\}$ so that the terminal information times $\theta_\ell\eqdef \frac{(k_\ell+1)T}{2^{i_\ell}}$ are non-decreasing, i.e., $\theta_1\le\theta_2\le\cdots\le\theta_{R_{\mathrm H}}$, with arbitrary ordering among ties. If  $\mathbf{Z}^{I_{\mathrm H},J_{\mathrm H},i_0}\eqdef [Z_{i_1,k_1}^{\top},Z_{i_2,k_2}^{\top},\ldots,Z_{i_{R_{\mathrm H}},k_{R_{\mathrm H}}}^{\top}]^{\top}$, then there exists a deterministic matrix $A^{I_{\mathrm H},J_{\mathrm H},i_0} \in \mathbb R^{R_{\mathrm H}\times(n_{\mathrm{grid}}+1)}$ such that $A^{I_{\mathrm H},J_{\mathrm H},i_0}
W^{I_{\mathrm H},J_{\mathrm H},i_0}=
Z^{I_{\mathrm H},J_{\mathrm H},i_0}$. Moreover, $\left\|A^{I_{\mathrm H},J_{\mathrm H},i_0}\right\|_0 = 3R_{\mathrm H} = 3(I_{\mathrm H}+1)(J_{\mathrm H}+1)$.
\end{lemma}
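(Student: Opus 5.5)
The construction is explicit: each row of $A^{I_{\mathrm H},J_{\mathrm H},i_0}$ will be the three-term Haar stencil of Eq.~\eqref{eq::gaussian_coordinates}, written in the column indices of the sampling grid of Lemma~\ref{lem:emulation}.

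First, fix $\ell\in[R_{\mathrm H}]_+$ and the corresponding $(i_\ell,k_\ell)\in\Lambda_{I_{\mathrm H},J_{\mathrm H},i_0}$. The three times in Eq.~\eqref{eq::gaussian_coordinates} are
\[
\tfrac{Tk_\ell}{2^{i_\ell}},\qquad \tfrac{T(2k_\ell+1)}{2^{i_\ell+1}},\qquad \tfrac{T(k_\ell+1)}{2^{i_\ell}} .
\]
Over the common denominator $2^{i_\ell+1}$ their numerators are $2k_\ell$, $2k_\ell+1$ and $2k_\ell+2$, which all lie in $\{0,\dots,2J_{\mathrm H}+2\}$. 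So, as shown in the proof of Lemma~\ref{lem:emulation}, all three times belong to $\mathcal S^T_{I_{\mathrm H},J_{\mathrm H},i_0}$. Since that grid is enumerated without repetition, there are unique column indices $a_\ell<b_\ell<c_\ell$ in $\{0,\dots,n_{\mathrm{grid}}\}$ with
\[
s_{a_\ell}=\tfrac{Tk_\ell}{2^{i_\ell}},\qquad s_{b_\ell}=\tfrac{T(2k_\ell+1)}{2^{i_\ell+1}},\qquad s_{c_\ell}=\tfrac{T(k_\ell+1)}{2^{i_\ell}} .
\]
These three points are distinct, and the strict ordering follows from the monotone enumeration.

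Second, define the $\ell$-th row of $A^{I_{\mathrm H},J_{\mathrm H},i_0}$ by
\[
A_{\ell,a_\ell+1}=\tfrac{2^{i_\ell/2}}{\sqrt T},\qquad A_{\ell,b_\ell+1}=-\tfrac{2^{i_\ell/2+1}}{\sqrt T},\qquad A_{\ell,c_\ell+1}=\tfrac{2^{i_\ell/2}}{\sqrt T},
\]
with all other entries of the row equal to zero. The shift by one comes from row $j$ of $\mathbf W$ being $W_{s_{j-1}}$. Then
\[
(A\mathbf W)_{\ell:}=\tfrac{2^{i_\ell/2}}{\sqrt T}W_{s_{a_\ell}}-\tfrac{2^{i_\ell/2+1}}{\sqrt T}W_{s_{b_\ell}}+\tfrac{2^{i_\ell/2}}{\sqrt T}W_{s_{c_\ell}}
\]
as a row vector in $\mathbb R^D$. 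Its $r$-th component is exactly $Z^r_{i_\ell,k_\ell}$ for every $r\in[D]_+$, because a single matrix acts on all $D$ Brownian columns simultaneously. Stacking the rows in the prescribed order gives $A\mathbf W=\mathbf Z$.

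Third, for the sparsity count, each row has exactly three entries. None of them vanishes, since each coefficient is $\pm 2^{i_\ell/2}/\sqrt T$ or $-2^{i_\ell/2+1}/\sqrt T$, and the three indices are distinct. Rows are counted separately, so $\|A\|_0=3R_{\mathrm H}$. Moreover $R_{\mathrm H}=\#\Lambda=(I_{\mathrm H}+1)(J_{\mathrm H}+1)$, since $\Lambda$ is a product of $I_{\mathrm H}+1$ levels and $J_{\mathrm H}+1$ shifts. The requirement $J_{\mathrm H}\le 2^{i_0}-1$ guarantees $k<2^i$, so each pair $(i,k)$ is a genuine Haar index and no two pairs coincide.

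The main obstacle is bookkeeping rather than analysis. One must check that each sampling time of $Z_{i,k}$ lands in the grid, which the numerator argument above handles. One must also handle the grid's indexing conventions, which are stated inconsistently: the lemma writes $0<s_0$ here but $s_0=0$ in Lemma~\ref{lem:emulation}, and the count is $n_{\mathrm{grid}}$ versus $n_{\mathrm{grid}}+1$. I would fix the convention $s_0=0$ with $n_{\mathrm{grid}}+1$ rows and note that the construction does not depend on it.

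The non-decreasing ordering by $\theta_\ell$ is not needed for the identity $A\mathbf W=\mathbf Z$. It matters only later, when this $A$ is combined with Lemma~\ref{lem:triangular_filters} to ensure that row $\ell$ uses only samples up to $\theta_\ell=s_{c_\ell}$. In that sense the matrix is causal, and the rightmost nonzero entry of row $\ell$ is nondecreasing in $\ell$.
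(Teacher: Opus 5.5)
Your proposal is correct and is essentially the paper's own proof: locate the three sampling times of each $Z_{i_\ell,k_\ell}$ in the grid of Lemma~\ref{lem:emulation}, place the three-term Haar stencil in the corresponding columns, and count exactly three nonzero entries per row. Your remark that the $\theta_\ell$-ordering matters only for causality also matches the paper. Reading $s_0=0$ is the right choice, though the construction does depend on it slightly: rows with $k_\ell=0$ use the column of $W_0$, and the exact count $3R_{\mathrm H}$ relies on that column being present.
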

\begin{proof}
    We write the rows of $\mathbf{Z}^{I_{\mathrm H},J_{\mathrm H},i_0}$ as $(Z_{i_\ell,k_\ell})_{\ell=1}^{R_{\rm H}}$ ordered so that $\frac{(k_\ell+1)T}{2^{i_\ell}}$ is non-decreasing in $\ell$. Fix $\ell \in [R_{\rm H}]_{+}$. By Lemma \ref{lem:emulation}, $\frac{k_\ell T}{2^{i_\ell}}, \frac{(2k_\ell+1)T}{2^{i_\ell+1}}, \frac{(k_\ell+1)T}{2^{i_\ell}}$ belong to $\{s_0,\ldots,s_{n_{\mathrm{grid}}}\}$. Therefore, there exist unique indices $a_\ell,b_\ell,c_\ell\in\{0,\ldots,n_{\mathrm{grid}}\}$ such that $s_{a_\ell}=\frac{k_\ell T}{2^{i_\ell}}, s_{b_\ell}= \frac{(2k_\ell+1)T}{2^{i_\ell+1}}, s_{c_\ell}=\frac{(k_\ell+1)T}{2^{i_\ell}}$. Define $A^{I_{\mathrm H},J_{\mathrm H},i_0} \in \mathbb R^{R_{\mathrm H}\times(n_{\mathrm{grid}}+1)}$ row by row as follows $A^{I_{\mathrm H},J_{\mathrm H},i_0}_{\ell,m}\eqdef\frac{2^{i_\ell/2}}{\sqrt T}\mathbf 1_{\{m=a_\ell\}}-\frac{2^{i_\ell/2+1}}{\sqrt T}\mathbf 1_{\{m=b_\ell\}}+ \frac{2^{i_\ell/2}}{\sqrt T}\mathbf 1_{\{m=c_\ell\}}$ for $m=0,\ldots,n_{\mathrm{grid}}$. Then, for each \(\ell=1,\ldots,R_{\mathrm H}\),
    \begin{equation*}
        \begin{aligned}
\left(
A^{I_{\mathrm H},J_{\mathrm H},i_0}
\mathbf{W}^{I_{\mathrm H},J_{\mathrm H},i_0}
\right)_{\ell:}
&=
\tfrac{2^{i_\ell/2}}{\sqrt T}
W_{s_{a_\ell}}
-
\tfrac{2^{i_\ell/2+1}}{\sqrt T}
W_{s_{b_\ell}}
+
\tfrac{2^{i_\ell/2}}{\sqrt T}
W_{s_{c_\ell}} \\
&=
\tfrac{2^{i_\ell/2}}{\sqrt T}
W_{\frac{k_\ell T}{2^{i_\ell}}}
-
\tfrac{2^{i_\ell/2+1}}{\sqrt T}
W_{\frac{(2k_\ell+1)T}{2^{i_\ell+1}}}
+
\tfrac{2^{i_\ell/2}}{\sqrt T}
W_{\frac{(k_\ell+1)T}{2^{i_\ell}}} \\
&=
Z_{i_\ell,k_\ell}.,
\end{aligned}
    \end{equation*}
and, therefore, $A^{I_{\mathrm H},J_{\mathrm H},i_0}
\mathbf{W}^{I_{\mathrm H},J_{\mathrm H},i_0}=\mathbf{Z}^{I_{\mathrm H},J_{\mathrm H},i_0}$.  Moreover, by construction, every non-zero entry in the $\ell^{th}$ row of $A^{I_{\mathrm H},J_{\mathrm H},i_0}$ occurs in a column indexed by a time no later than $\frac{k_{\ell}+1}{2^{i}\ell}$, so $A^{I_{\mathrm H},J_{\mathrm H},i_0}$ is causal with respect to the chosen ordering. Finally, since $\frac{k_\ell T}{2^{i_\ell}}<\frac{(2k_\ell+1)T}{2^{i_\ell+1}}<\frac{(k_\ell+1)T}{2^{i_\ell}}$, the indices $a_\ell, b_\ell, c_\ell$ are pairwise distinct. Hence the $\ell^{th}$ row of $A^{I_{\mathrm H},J_{\mathrm H},i_0}$ has exactly three non-zero entries. Since the matrix has $R_{\mathrm H} =(I_{\mathrm H}+1)(J_{\mathrm H}+1)$ rows, it follows that $\left\|A^{I_{\mathrm H},J_{\mathrm H},i_0}\right\|_0 =3R_{\mathrm H}=3(I_{\mathrm H}+1)(J_{\mathrm H}+1)$. This concludes the proof. 
\end{proof}

\subsection{\textbf{\emph{Step 2 -- Chaos-coding: coding the NeuralChaos into MLPs}}}\label{subsec::step_2}
The goal of Step 2 is to prove that once the Gaussian coordinates are available as finite-dimensional inputs (cf.~Step 1), the stochastic Hermite-Haar factor of a chaoslet can be approximated by a MLP with ReLU activation functions. To this end (cf.~the notation around Eq.~\eqref{eq:ith_WienerChaos}), let $\Lambda_n\eqdef\{(i_1,k_1),\ldots, (i_{J_n},k_{J_n})\}$ be the finite family of Haar coordinate retained at chaos degree $n$; $|\Lambda_n|=J_n$. For every\footnote{Notice that in Step 1 the Haar coordinates retained in the finite degree-$n$ Hermite representation are realized by applying the causal lift to a finite dyadic sampling grid. In particular, we have to choose $I_{\mathrm H},J_{\mathrm H},i_0$ so that the retained set $\Lambda_n$ is contained in $\Lambda_{I_{\mathrm H},J_{\mathrm H},i_0}$; cf.~Lemma \ref{lem:emulation}.} $j \in [J_n]_{+}$ and $r \in [D]_{+}$, we define $Z_{j,r}\eqdef Z_{i_j, k_j}^{r}$; in particular, $\mathbf{Z} \eqdef (Z_{j,r})_{j \in [J_n]_{+}, r \in [D]_{+}}$ and $h_{\boldsymbol{\alpha}}(\mathbf{Z}) \eqdef \prod_{r=1}^{D}\prod_{j=1}^{J_n} h_{\alpha_{j,r}}(Z_{j,r})$, where $\boldsymbol{\alpha} \in \mathbb{N}^{J_n \times D}$ and  $|\boldsymbol{\alpha}|=\sum_{r=1}^{D}\sum_{j=1}^{J_n}\alpha_{j,r}=n$. Our first main step is to solve the problem at any (arbitrary) fixed intermediate time $0 \leq t \leq T$.
\begin{proposition}[{Single-time adapted $L^2(\mathcal{F}_t)$-bound}]\label{prop:ChaosMode}
Let $n, D, J_n \in \mathbb{N}$ and $\boldsymbol{\alpha} \in \mathbb{N}^{J_n \times D}$ as above. Then, for every $\varepsilon>0$, there exists a $\operatorname{MLP}$ with $\operatorname{ReLU}$ activation function $\Phi_{\boldsymbol{\alpha},\varepsilon}:\mathbb{R}^{J_n \times D} \rightarrow \mathbb{R}$ such that 
\begin{equation}
\label{eq:main_lp_estimate}
        \mathbb{E}\left[ \left\vert h_{\boldsymbol\alpha}(\mathbf{Z}) - \Phi_{\boldsymbol\alpha,\varepsilon}(\mathbf{Z}) \right\vert^2 \right]^\frac{1}{2} \leq \varepsilon;
\end{equation}
$\Phi_{\boldsymbol\alpha,\varepsilon}(\mathbf{Z}) \in L^2(\mathcal{F}_t)$ when $\mathbf{Z}$ is $\mathcal{F}_t$-measurable. Moreover, the complexity of $\Phi_{\boldsymbol{\alpha},\varepsilon}(\cdot)$ can be chosen in the following way. We set $Q_n \eqdef J_n D$, write $h_m(x)=\sum_{\ell=0}^{m} c_{m,\ell} x^{\ell}$, and define $A_{m,B}\eqdef \sum_{\ell=0}^{m}|c_{m,\ell}|B^{\ell}$ and $A_{\boldsymbol{\alpha},B}\eqdef 1 + \max_{j \in [J_n]_{+} \\ r \in [D]_{+}} A_{\alpha_{j,r},B}$, for $B > 0$. In addition, we define $\mathcal{C}_{\boldsymbol{\alpha},B}\eqdef 1 + \sup_{z \in [-B,B]^{J_n \times D}}|h_{\boldsymbol{\alpha}}(z)|$ and choose $B_{\boldsymbol{\alpha},\varepsilon}$ such that
\begin{equation*}
\left(
\mathbb E\left[
\mathbf 1_{\{\|\mathbf{Z}\|_\infty>B_{\boldsymbol{\alpha},\varepsilon}\}}
|h_{\boldsymbol{\alpha}}(\mathbf{Z})|^2\right]\right)^{1/2}\le\frac{\varepsilon}{3},\quad\text{and}\quad\mathcal C_{\boldsymbol{\alpha},B_{\boldsymbol{\alpha},\varepsilon}}\,
\mathbb P\left(\|\mathbf{Z}\|_\infty>B_{\boldsymbol{\alpha},\varepsilon}\right)^{1/2}\le
\frac{\varepsilon}{3}.    
\end{equation*}
Finally, we set $\varepsilon_{\mathrm{loc}}\eqdef
\min\left\{\frac12,\frac{\varepsilon}{3}\right\}$ and $\ell_{Q_n}\eqdef
\left\lceil\log_2(Q_n\vee2)\right\rceil$.

Then, $\Phi_{\boldsymbol\alpha,\varepsilon}(\cdot)$ may be chosen with $\operatorname{width}(\Phi_{\boldsymbol\alpha,\varepsilon}(\cdot)) \leq C(Q_n+n+1)$,
\begin{equation*}
\begin{aligned}
    \operatorname{depth}(\Phi_{\boldsymbol\alpha,\varepsilon}(\cdot)) \leq C\Bigg[
& 1+
\max_{\substack{j \in [J_n]_{+}\\ r \in [D]_{+}}}
\log_2\left(
1+
\frac{
2 Q_n\,\alpha_{j,r}\,
A_{\alpha_{j,r},B_{\boldsymbol{\alpha},\varepsilon}} A_{\boldsymbol{\alpha},B_{\boldsymbol{\alpha},\varepsilon}}^{Q_n-1}}{\varepsilon_{\mathrm{loc}}}
\right) \\
&\qquad\quad
+
\ell_{Q_n}
\left(
1+
\log_2\left(
1+
\frac{
2^{Q_n+1}Q_n
A_{\boldsymbol{\alpha},B_{\boldsymbol{\alpha},\varepsilon}}^{\,Q_n}
}{
\varepsilon_{\mathrm{loc}}
}
\right)
\right)
\Bigg]
\end{aligned}
\end{equation*}
and number of non-zero parameters
\begin{equation*}
\begin{aligned}
\operatorname{size}(\Phi_{\boldsymbol\alpha,\varepsilon}(\cdot)) \leq C\Bigg[
&(Q_n+n)
\left(
1+
\max_{\substack{j \in [J_n]_{+}\\ r \in [D]_{+}}}
\log_2\left(
1+
\frac{
2Q_n\,\alpha_{j,r}\,
A_{\alpha_{j,r},B_{\boldsymbol{\alpha},\varepsilon}}\,
A_{\boldsymbol{\alpha},B_{\boldsymbol{\alpha},\varepsilon}}^{\,Q_n-1}
}{
\varepsilon_{\mathrm{loc}}
}
\right)
\right)
\\
&\qquad\qquad
+
Q_n
\left(
1+
\log_2\left(
1+
\frac{
2^{Q_n+1}Q_n
A_{\boldsymbol{\alpha},B_{\boldsymbol{\alpha},\varepsilon}}^{\,Q_n}
}{
\varepsilon_{\mathrm{loc}}
}
\right)
\right)
+1
\Bigg],
\end{aligned}
\end{equation*}
where $C>0$ is a universal constant independent of $n,J_n,D,\boldsymbol{\alpha},\varepsilon$.
\end{proposition}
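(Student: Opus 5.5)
The plan is a truncation-plus-localization argument: approximate $h_{\boldsymbol\alpha}$ uniformly on the cube $[-B,B]^{J_n\times D}$ with $B\eqdef B_{\boldsymbol\alpha,\varepsilon}$, and control the Gaussian tails separately. Such a $B$ exists. $\mathbf Z$ is a finite Gaussian vector, so $h_{\boldsymbol\alpha}(\mathbf Z)\in L^2$. Hence both $\mathbb E[\mathbf 1_{\{\|\mathbf Z\|_\infty>B\}}|h_{\boldsymbol\alpha}(\mathbf Z)|^2]\to0$ (dominated convergence) and $\mathcal C_{\boldsymbol\alpha,B}\,\mathbb P(\|\mathbf Z\|_\infty>B)^{1/2}\to0$, because $\mathcal C_{\boldsymbol\alpha,B}$ grows polynomially in $B$ while Gaussian tails decay like $e^{-cB^2}$.

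I would build a ReLU network $\Phi$ with two properties. First, $\sup_{[-B,B]^{Q_n}}|\Phi-h_{\boldsymbol\alpha}|\le\varepsilon_{\rm loc}$. Second, $\sup_{\mathbb R^{Q_n}}|\Phi|\le\mathcal C_{\boldsymbol\alpha,B}$. For the second property, first clip the inputs through $x\mapsto\max\{-B,\min\{B,x\}\}$, which is two ReLUs per coordinate and adds $O(Q_n)$ width and $O(1)$ depth. The approximant then only ever sees points of the cube. Its output is automatically bounded by $\mathcal C_{\boldsymbol\alpha,B}-1+\varepsilon_{\rm loc}\le\mathcal C_{\boldsymbol\alpha,B}$, since $\varepsilon_{\rm loc}\le 1/2$.

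The error then splits into three terms:
\[
\|h_{\boldsymbol\alpha}(\mathbf Z)-\Phi(\mathbf Z)\|_{L^2}\le \varepsilon_{\rm loc}+\|\mathbf 1_{\{\|\mathbf Z\|_\infty>B\}}h_{\boldsymbol\alpha}(\mathbf Z)\|_{L^2}+\|\mathbf 1_{\{\|\mathbf Z\|_\infty>B\}}\Phi(\mathbf Z)\|_{L^2}.
\]
By the choice of $B$ and $\varepsilon_{\rm loc}$, each term is at most $\varepsilon/3$. Measurability is automatic: $\Phi$ is continuous, so $\Phi(\mathbf Z)$ is $\mathcal F_t$-measurable whenever $\mathbf Z$ is, and it is bounded, hence in $L^2(\mathcal F_t)$.

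The uniform approximation on the cube has two layers.

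\textbf{Univariate factors.} For each $(j,r)$, approximate $h_{\alpha_{j,r}}$ on $[-B,B]$ to accuracy $\delta_{j,r}$. I would do this by expanding $h_m(x)=\sum_\ell c_{m,\ell}x^\ell$ and realizing the monomials via Yarotsky-type ReLU squaring and multiplication. Computing powers up to degree $\alpha_{j,r}$ by repeated multiplication costs depth logarithmic in $\alpha_{j,r}A_{\alpha_{j,r},B}/\delta_{j,r}$. All coordinates run in parallel, which accounts for the $(Q_n+n)$ width and size factor.

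\textbf{Product.} Multiply the $Q_n$ factors along a binary tree of depth $\ell_{Q_n}$, using approximate products on $[-A_{\boldsymbol\alpha,B},A_{\boldsymbol\alpha,B}]^2$. Each factor's output is bounded by $A_{\alpha_{j,r},B}+\delta\le A_{\boldsymbol\alpha,B}$.

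The main obstacle is the error propagation through this product tree. A telescoping bound $|\prod a_i-\prod b_i|\le\sum_i|a_i-b_i|\prod_{k\ne i}\max(|a_k|,|b_k|)$ shows that an input error $\delta_{j,r}$ is amplified by $A_{\boldsymbol\alpha,B}^{Q_n-1}$. So I take
\[
\delta_{j,r}\asymp\frac{\varepsilon_{\rm loc}}{2Q_n\,\alpha_{j,r}A_{\alpha_{j,r},B}A_{\boldsymbol\alpha,B}^{Q_n-1}},
\]
which reproduces the first logarithm in the depth bound. The multiplier errors across the $\ell_{Q_n}$ tree levels accumulate to at most roughly $2^{Q_n}Q_nA_{\boldsymbol\alpha,B}^{Q_n}$ times the per-gate accuracy. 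Choosing the per-gate accuracy to make this at most $\varepsilon_{\rm loc}/2$ gives the second logarithmic term, multiplied by $\ell_{Q_n}$. Summing the depths, widths and sizes of the clipping, univariate and tree stages then yields the stated complexity bounds with a universal constant $C$.
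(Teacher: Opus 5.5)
Your proof is correct. The uniform approximation on the cube is essentially the paper's Lemma~\ref{lem:tensorized_Hermite_poly_approx_on_compact}: univariate Hermite emulation, a $Q_n$-fold approximate product, and the same telescoping bound with $\delta\asymp\varepsilon_{\mathrm{loc}}/(2Q_nA_{\boldsymbol\alpha,B}^{Q_n-1})$. The real difference is how you control the network off the cube, and there your route is simpler than the paper's.

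The paper (Lemma~\ref{lem:clipped-local-hermite}) works as follows:
\begin{itemize}
\item it approximates $h_{\boldsymbol\alpha}$ on the enlarged cube $[-B-1,B+1]^{Q_n}$ and clips the output;
\item it multiplies by the coordinate hat functions $\theta_B(z_i)$ through a second, $(Q_n+1)$-fold approximate product network, which gives the pointwise growth bound $|\Phi_{\boldsymbol\alpha,\varepsilon}|\le 1+|h_{\boldsymbol\alpha}|$;
\item it absorbs the tail by choosing $B$ with $\|\mathbf 1_{\{\|\mathbf Z\|_\infty>B\}}(1+|h_{\boldsymbol\alpha}(\mathbf Z)|)\|_{L^2}\le\varepsilon/4$, so the error splits as $\varepsilon/2+\varepsilon/4+\varepsilon/4$ with $\varepsilon_{\mathrm{loc}}=\min\{1/2,\varepsilon/2\}$.
\end{itemize}

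Your input clamp $x\mapsto\operatorname{ReLU}(x+B)-\operatorname{ReLU}(x-B)-B$ costs only one hidden layer of width $2Q_n$. It makes $\Phi$ uniformly bounded by $\mathcal C_{\boldsymbol\alpha,B}$, so the tail is handled exactly by the second condition the statement imposes on $B_{\boldsymbol\alpha,\varepsilon}$. Your $\varepsilon/3$ split and $\varepsilon_{\mathrm{loc}}=\min\{1/2,\varepsilon/3\}$ are in fact the constants the statement is written with, whereas the paper's proof uses different ones. You also avoid the extra product stage and the enlargement $B\to B+1$. Your complexity bounds therefore come out directly in terms of $A_{\cdot,B_{\boldsymbol\alpha,\varepsilon}}$, as stated. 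The one thing the paper's construction buys, a network that essentially vanishes away from the cube, is not needed for this proposition.

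Three details of your sketch need tightening.
\begin{itemize}
\item The gates of your binary product tree cannot all act on $[-A_{\boldsymbol\alpha,B},A_{\boldsymbol\alpha,B}]^2$, since intermediate products grow like $A_{\boldsymbol\alpha,B}^{2^k}$. Divide each factor by $2A_{\boldsymbol\alpha,B}$, multiply inside $[-1,1]$, and rescale by $(2A_{\boldsymbol\alpha,B})^{Q_n}$ at the end. That rescaling is exactly where $2^{Q_n+1}Q_nA_{\boldsymbol\alpha,B}^{Q_n}/\varepsilon_{\mathrm{loc}}$ comes from. Alternatively, invoke Lemma~\ref{lem:localized-product} as the paper does.
\item Computing powers by sequential repeated multiplication gives depth growing linearly in $\alpha_{j,r}$. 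To get the single logarithm in the statement you need the polynomial-emulation result the paper imports from \cite[Lemma~7.5]{petersen2024mathematical}.
\item The factor $\alpha_{j,r}A_{\alpha_{j,r},B}$ enters that logarithm through the univariate cost, not through $\delta_{j,r}$ itself. Your smaller choice of $\delta_{j,r}$ is harmless, since it only changes the universal constant $C$.
\end{itemize}
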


The proof of the previous proposition hinges on a number of intermediate steps. The first one is the content of the next subsection, and it is a non-random version of the previous proposition (uniformly on compact sets).    
\subsubsection{The non-random local-uniform sub-case}
Here, we show the non-random version of Proposition \ref{prop:ChaosMode}, namely that Hermite polynomials evaluated at a deterministic point $\mathbf{z}=(z_{j,\ell})_{j \in [J_n]_{+},\ r \in [D]_{+}}$ -- rather than at an integral of a symmetric function over an arbitrarily large hypercube -- can be approximated arbitrarily well by a $\operatorname{ReLU-MLP}$ evaluated at the same point. Once this result is in place, we apply it to the stochastic integral of a tensorized Haar wavelet, which amounts to sampling a Brownian path at finitely many (time-instants) points  and then applying a simple linear transformation.   

\begin{lemma}[Local-uniform ReLU approximation of (normalized) univariate Hermite polynomials]\label{prop:Hermite_poly_approx_on_compact} There exists a strictly positive constant $C$ such that for every $m \in \mathbb{N}$, $B>0$, and every $\varepsilon \in (0,1)$ there exists a $\operatorname{ReLU-MLP}$ $\Phi_{m, B, \varepsilon}^{\rm Herm}:\mathbb{R} \rightarrow \mathbb{R}$ satisfying 
\begin{equation*}
    \sup_{x \in [-B,B]}|h_m(x)-\Phi_{m, B, \varepsilon}^{\rm Herm}(x)| \leq \varepsilon.
\end{equation*}
Moreover, $\Phi_{m, B, \varepsilon}^{\rm Herm}$ satisfies the bounds $\operatorname{width}(\Phi_{m, B, \varepsilon}^{\rm Herm}) \leq C m$, $\operatorname{depth}(\Phi_{m, B, \varepsilon}^{\rm Herm}) \leq C\left(1+\log_{2}(\frac{m A_{m,B}}{\varepsilon})\right)$, and $\operatorname{size}(\Phi_{m, B, \varepsilon}^{\rm Herm}) \leq C m\left(1 +\log_2\left(
\frac{m A_{m,B}}{\varepsilon}
\right)\right)$.
\end{lemma}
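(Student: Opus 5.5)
We will approximate $h_m$ on $[-B,B]$ by writing it in the monomial basis, $h_m(x)=\sum_{\ell=0}^m c_{m,\ell}x^\ell$, and emulating each monomial with the classical Yarotsky-type ReLU construction. The first step rescales the input: with $y\eqdef x/B\in[-1,1]$ we have $x^\ell=B^\ell y^\ell$, so it suffices to approximate the monomials $y^\ell$, $\ell\le m$, uniformly on $[-1,1]$. Each will be given accuracy $\delta_\ell$, chosen so that $\sum_\ell |c_{m,\ell}|B^\ell\delta_\ell\le\varepsilon$. The simplest choice is the uniform one, $\delta_\ell=\delta\eqdef\varepsilon/A_{m,B}$, which makes the total error at most $\delta\sum_\ell|c_{m,\ell}|B^\ell=\varepsilon$.

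The second step builds the monomials with a single sequential network rather than $m+1$ separate ones, so that the width stays of order $m$. The basic tool is the approximate squaring map $\widetilde{\mathrm{sq}}_\eta$ on $[-1,1]$, which has depth and size $O(\log_2(1/\eta))$ and uniform error $\eta$ (the sawtooth construction). Polarization, $xy=\tfrac12((x+y)^2-x^2-y^2)$, turns it into an approximate product $\widetilde{\times}_\eta$ on $[-1,1]^2$ with the same complexity.

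We then compute $\widetilde{y^{\ell+1}}\eqdef\widetilde\times_\eta(\widetilde{y^\ell},y)$ recursively, keeping at most $m+1$ channels alive: the input $y$ is carried along by the identity $y=\operatorname{ReLU}(y)-\operatorname{ReLU}(-y)$, and each $\widetilde{y^{\ell}}$ already produced is likewise carried forward. A final affine layer forms $\sum_\ell c_{m,\ell}B^\ell\widetilde{y^\ell}$. Two things need checking:

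\begin{itemize}
\item The error recursion is $e_{\ell+1}\le e_\ell+\eta$, because multiplication by $y\in[-1,1]$ is $1$-Lipschitz in the first argument. This gives $e_\ell\le \ell\eta$, so taking $\eta=\delta/m=\varepsilon/(mA_{m,B})$ suffices.
\item The intermediate values must stay inside $[-1,1]$ (or a slightly larger box). This is ensured by clipping with $\min\{1,\max\{-1,\cdot\}\}$, which is itself a ReLU expression and can only reduce the error, since the true value lies in $[-1,1]$.
\end{itemize}

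As written, the sequential composition would have depth $m\cdot O(\log(m A_{m,B}/\varepsilon))$. This is the main obstacle, since the claimed depth bound has no factor $m$. We would first try to fix it by arranging the products as a binary tree of depth $\lceil\log_2 m\rceil$, using $y^{\ell}=y^{\lceil \ell/2\rceil}y^{\lfloor \ell/2\rfloor}$ and computing all monomials in parallel at each level. The errors then only add along each path of the tree, with a bounded Lipschitz factor per level because everything is clipped to $[-1,1]$. However, this still yields an extra $\log m$ factor in depth.

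To meet the stated depth $C(1+\log_2(mA_{m,B}/\varepsilon))$, we would reallocate accuracy by level: give the product gates at tree level $j$ accuracy $\eta_j=\eta\,2^{-j}$. The depths then sum to $\sum_j O(\log(1/\eta)+j)$, and we would absorb the resulting $O(\log^2 m)$ terms into the constant using the crude bound $\log m\le \log(m A_{m,B}/\varepsilon)$. Since $A_{m,B}\ge 1$ and $\varepsilon<1$, this may require a squared-log, so if this absorption fails we would accept a slightly weaker constant structure. The width is $O(m)$ at every level because at most $m+1$ monomial channels, each with $O(1)$ auxiliary neurons, are active simultaneously. The size is then width times depth, giving $Cm(1+\log_2(mA_{m,B}/\varepsilon))$.
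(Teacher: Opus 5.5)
Your proposal has a genuine gap: it does not reach the stated depth bound, and the proposed repair cannot work. The balanced tree has $L=\lceil\log_2 m\rceil$ \emph{sequential} levels of width-$O(1)$ product gadgets, and a gadget of accuracy $\eta_j$ has depth of order $\log_2(1/\eta_j)$. Your own sum shows the problem: $\sum_{j\le L}O(\log_2(1/\eta)+j)=O(\log_2 m\cdot\log_2(1/\eta)+\log_2^2 m)$. The term $L\log_2(1/\eta)$, and not only the $\log_2^2 m$ term, carries a multiplicative $\log m$. Neither term can be absorbed into a universal $C$; for instance, for $B\le m^{-1/2}$ and $\varepsilon=\tfrac12$ one has $\log_2(mA_{m,B}/\varepsilon)=O(\log_2 m)$.

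No allocation of accuracies helps. After clipping, errors double from one level to the next ($e_j\le 2e_{j-1}+\eta_j$), so your error analysis requires $\eta_j\lesssim 2^{j-L}\varepsilon/A_{m,B}$ at every level. Within this scheme the total depth is therefore at least of order $\log_2 m\cdot\log_2(A_{m,B}/\varepsilon)$, whatever the choice. Taking $\eta_j=\eta 2^{-j}$ only makes the later levels deeper.

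What your construction does prove is width $\le Cm$, depth $\le C(1+\log_2 m)(1+\log_2(mA_{m,B}/\varepsilon))$, and size $\le Cm(1+\log_2(mA_{m,B}/\varepsilon))$. The size bound holds only if you count level by level (level $j$ has $2^{j-1}$ new products and $O(2^j)$ carried channels). Multiplying width by depth, as you propose, would reintroduce the $\log m$.

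The paper does not build the network at all. It uses the same rescaling $q_{m,B}(u)=h_m(Bu)$, whose coefficient $\ell^1$-norm is at most $A_{m,B}$. It then invokes \cite[Lemma~7.5]{petersen2024mathematical}, quoted with uniform error $C_0\delta A_{m,B}$ on $[-1,1]$, depth $C_0\log_2(m/\delta)$, width $C_0m$ and size $C_0m\log_2(m/\delta)$. Finally it takes $\delta=\min\{1,\varepsilon/(C_0A_{m,B})\}$ and absorbs $x\mapsto x/B$ into the first layer.

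So the missing ingredient is precisely a polynomial-emulation result with no $\log m$ factor in the depth. You must either import such a result, or find a construction in which the $\log_2 m$ multiplication levels are not each paid at full depth $\log_2(1/\eta)$ (e.g.\ by trading the width that is idle at the early levels for depth). Redistributing accuracy does not achieve this. Such emulation results are usually proved by exactly this kind of tree construction, so it is also worth checking that the cited estimate really is $O(\log_2(m/\delta))$ rather than $O(\log_2 m\cdot\log_2(m/\delta))$.

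Finally, both you and the paper use $A_{m,B}\ge 1$, which fails for small $B$ (e.g.\ $A_{1,B}=B$ and $A_{2,B}=(1+B^2)/\sqrt2$). The bounds should be read with $\max\{1,A_{m,B}\}$ in place of $A_{m,B}$.
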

\begin{proof}
If $m \in \{0,1\}$, then $h_m(\cdot)$ is affine; whence, it can be represented by a $\operatorname{ReLU-MLP}$ with a bounded number of parameters and (hidden) layers. In particular,  after increasing the universal constant $C$ if necessary, we obtain the claimed bound in Lemma \ref{prop:Hermite_poly_approx_on_compact}. Therefore, we assume $m \geq 2$, and define the rescaled polynomial $q_{m,B}(u)\eqdef h_m(Bu)=\sum_{r=0}^{m}h_{m,r}B^r u^r$, for every $u \in [-1,1]$. Whence, the coefficient $\ell^{1}$-norm of $q_{m,B}(\cdot)$ is bounded by $\sum_{r=0}^{m}|h_{m,r}|B^r\leq A_{m,B}$. By~\cite[Lemma 7.5]{petersen2024mathematical}, there exists a universal constant $C_0>0$ such that,  after setting $\delta \eqdef \min\Big\{1,\,\frac{\varepsilon}{C_0A_{m,B}}\Big\}$, we deduce that there exists a $\operatorname{ReLU-MLP}$ $\Psi_{m,B,\delta}:\mathbb R\to\mathbb R$ such that 
\begin{equation*}
\sup_{u\in[-1,1]}
\left|q_{m,B}(u)-\Psi_{m,B,\delta}(u)\right|\le C_0\delta A_{m,B}\le
\varepsilon. 
\end{equation*}
Moreover\footnote{We notice that the width can be traced back to the proof of~\cite[Proposition 7.4]{petersen2024mathematical}.}, $\operatorname{depth}(\Psi_{m,B,\delta}) \le
C_0\log_2(m/\delta)$, $\operatorname{width}(\Psi_{m,B,\delta})
\le C_0 m$ and $\operatorname{size}(\Psi_{m,B,\delta})\le
C_0m\log_2(m/\delta)$. We set $\Phi^{\mathrm{Herm}}_{m,B,\varepsilon}(x)\eqdef \Psi_{m,B,\delta}(x/B)$. Since $x\in[-B,B]$ implies $x/B\in[-1,1]$, we obtain
\begin{equation*}
    \sup_{x\in[-B,B]} \left|
h_m(x)
-
\Phi^{\mathrm{Herm}}_{m,B,\varepsilon}(x)
\right|
\le
\varepsilon.
\end{equation*}
Importantly, the affine map $x\mapsto x/B$ can be absorbed into the first layer of the network. Whence, it does not change the depth, width, or number of non-zero parameters, up to the universal constant. Finally, since $\delta
=\min\left\{1,\frac{\varepsilon}{C_0A_{m,B}}\right\}$, and since $A_{m,B}\ge 1$, $m\ge2$, and $0<\varepsilon<1$, we have, after increasing $C$, $\log_2(m/\delta)\le
C\left(1+\log_2\left(\frac{mA_{m,B}}{\varepsilon}\right)
\right)$. This gives the stated depth and size bounds. The width bound is $C m$. This concludes the proof. 
\end{proof}
In order to obtain the multivariate tensorized version, we recall the following lemma of~\cite{yarotsky2018optimal} as formulated in~\cite{petersen2024mathematical}; with a small renormalization which we explain in the following now.  We also tweak it using the trick in~\cite[Corollary 5.2]{hong2024bridging} for the map to be compactly supported.  We will use the following variant of that ``localization trick''.
\begin{lemma}[Localization trick]\label{lem:main_localization_trick}
Let $q_{\mathrm{in}},q_{\mathrm{out}}\in\mathbb N_{+}$, $\beta>0$, and $f\,:\,\mathbb{R}^{q_{\mathrm{in}}}\to\mathbb{R}^{q_{\mathrm{out}}}$ be realized by a $\operatorname{ReLU-MLP}$ $\Phi$ with $\operatorname{depth}(\Phi) \leq L$, $\operatorname{width}(\Phi) \leq W$, and $\operatorname{size}(\Phi) \leq S$. Let $S_{\rm in}(\Phi) \leq q_{\mathrm{in}} B$ denote the number of non-zero weights in the first weight matrix of $\Phi$. Then, there exists a $\operatorname{ReLU-MLP}$ $\bar{\Phi}_{\beta}$ realizing a map $\bar{f}\,:\,\mathbb{R}^{q_{\mathrm{in}}}\to\mathbb{R}^{q_{\mathrm{out}}}$ satisfying:
\begin{itemize}
    \item \textbf{\emph{Extension:}} $\bar{f}(x)=f(x)$ for every $x \in [-1,1]^{q_{\mathrm{in}}}$;
    \item \textbf{\emph{Cutoff:}} $\bar{f}(x)=0$ for every $x \in \mathbb{R}^{q_{\mathrm{in}}}$ with $\|x\|_{\infty} \geq 1+\frac{1}{\beta}$.
\end{itemize}
Moreover, with $\ell_{q_{\mathrm{in}}}\eqdef\left\lceil
\log_2(q_{\mathrm{in}}\vee2)\right\rceil$, the network $\bar{\Phi}_{\beta}$ may be chosen so that $\operatorname{width}(\bar{\Phi}_{\beta}) \leq W + D q_{\rm in } + 2 q_{\rm out} + 2$, $\operatorname{depth}(\bar{\Phi}_{\beta}) \leq \max\{L + 2,\ell_{q_{\mathrm{in}}}+2\}+1$, and $\operatorname{size}(\bar{\Phi}_{\beta}) \leq S + C (S_{\rm in}(\Phi) + q_{\rm in } + q_{\rm out} \ell_{q_{\rm in }} + L + q_{\rm out})$, where $C$ is a strictly positive universal constant. 
\end{lemma}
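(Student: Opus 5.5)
We build $\bar{\Phi}_\beta$ as the product of $f$ with a compactly supported ReLU bump, and implement that product exactly using ReLU-only operations. The approach avoids approximate multiplication altogether, since the bump takes values in $[0,1]$ and $f$ is continuous. Write $M_f\eqdef\sup_{\|x\|_\infty\le 1+1/\beta}|f(x)|_\infty<\infty$, which is finite because $f$ is piecewise affine.

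The first step is the one-dimensional trapezoid
\[
\tau_\beta(u)\eqdef \operatorname{ReLU}\big(1+\beta-\beta|u|\big)-\operatorname{ReLU}\big(\beta-\beta|u|\big), \qquad |u|=\operatorname{ReLU}(u)+\operatorname{ReLU}(-u).
\]
It equals $1$ on $[-1,1]$, vanishes for $|u|\ge 1+1/\beta$, and takes values in $[0,1]$. The multivariate bump is $\chi_\beta(x)\eqdef\min_{1\le i\le q_{\mathrm{in}}}\tau_\beta(x_i)$. We compute the minimum by a binary tree of pairwise minima, using $\min\{a,b\}=b-\operatorname{ReLU}(b-a)$ together with identity channels $a=\operatorname{ReLU}(a)-\operatorname{ReLU}(-a)$. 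This gives depth $\ell_{q_{\mathrm{in}}}+2$ and $O(q_{\mathrm{in}})$ parameters. The resulting $\chi_\beta$ equals $1$ on $[-1,1]^{q_{\mathrm{in}}}$ and $0$ outside the $\ell^\infty$-ball of radius $1+1/\beta$.

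The second step is the gating. For each output coordinate $k$ we set
\[
\bar f_k(x)\eqdef \operatorname{ReLU}\big(f_k(x)-M_f(1-\chi_\beta(x))\big)-\operatorname{ReLU}\big(-f_k(x)-M_f(1-\chi_\beta(x))\big).
\]
When $\chi_\beta=1$ this returns $\operatorname{ReLU}(f_k)-\operatorname{ReLU}(-f_k)=f_k$, which gives the extension property. When $\chi_\beta=0$ and $\|x\|_\infty\ge 1+1/\beta$, both arguments are $\le0$ and the output is zero. This needs $|f_k|\le M_f$ at the relevant $x$, and that is the main obstacle: the bound holds only on the ball, while $f$ may be unbounded outside it.

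We fix this by precomposing $f$ with the coordinatewise clip $x_i\mapsto \operatorname{ReLU}(x_i+c)-\operatorname{ReLU}(x_i-c)-c$, with $c=1+1/\beta$, which projects onto the closed ball. The clip is the identity on $[-1,1]^{q_{\mathrm{in}}}$, so extension is preserved. After clipping, $|f_k|\le M_f$ everywhere, and on the boundary and outside we have $\chi_\beta=0$, so the cutoff holds. The clip is one extra ReLU layer with $O(q_{\mathrm{in}})$ weights, and it can be merged with the first affine layer of $\Phi$. This merge is where $S_{\mathrm{in}}(\Phi)$ enters the size bound.

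The last step is the complexity bookkeeping. Run the clipped $\Phi$ (depth $L+1$) in parallel with the bump tree (depth $\ell_{q_{\mathrm{in}}}+2$). Pad the shorter branch with identity channels of two neurons per carried scalar, costing $O(q_{\mathrm{out}}L)$ or $O(q_{\mathrm{out}}\ell_{q_{\mathrm{in}}})$ parameters. Then add one final gating layer of $2q_{\mathrm{out}}$ neurons. The depth is $\max\{L+2,\ell_{q_{\mathrm{in}}}+2\}+1$. The width is $W$ plus $O(q_{\mathrm{in}})$ for the bump channels plus $2q_{\mathrm{out}}+2$ for the carried outputs and bump value. The size is $S+C(S_{\mathrm{in}}(\Phi)+q_{\mathrm{in}}+q_{\mathrm{out}}\ell_{q_{\mathrm{in}}}+L+q_{\mathrm{out}})$, as stated.
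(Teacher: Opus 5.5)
Your proposal is correct and is essentially the paper's construction. Both arguments precompose $f$ with a coordinatewise bounded map that is the identity on $[-1,1]$: you use a clip onto $[-1-\frac1\beta,1+\frac1\beta]$, the paper uses $\mathcal{L}^{\uparrow}$. Both compute a scalar cutoff signal by a binary min/max tree of depth $\ell_{q_{\rm in}}+2$: your $\min_i\tau_\beta(x_i)$ plays the role of the paper's $\max_i\{\operatorname{ReLU}(x_i-1),\operatorname{ReLU}(-x_i-1)\}$. Both finish, after padding the two branches to equal depth, with one layer of $2q_{\rm out}$ ReLUs whose thresholds are shifted by that signal.

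Two cosmetic points. First, the final layer is a soft-threshold that coincides with $f\chi_\beta$ only where $\chi_\beta\in\{0,1\}$, not an exact product as your opening sentence suggests; that is all the statement needs. Second, padding the bump branch carries only the single scalar $\chi_\beta\ge0$, so it costs $O(L)$ rather than $O(q_{\rm out}L)$; this is consistent with the final size bound you state.
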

\begin{proof}
First, we construct the one-dimensional cutoff
map $\mathcal{L}:\mathbb{R}\to\mathbb{R}$, and then parallelize it coordinate-wise.
For later use in the lemma, we will specialize the parameter $\eta$ in
\eqref{eq:pwlin_guy} to $\eta=1$.
\begin{equation}
\label{eq:pwlin_guy}
\begin{aligned}
\mathcal{L}(x)
&=
\begin{cases}
0, & x\le -\eta-\eta/\beta,\\
-\eta-\beta(x+\eta), & -\eta-\eta/\beta \le x\le -\eta,\\
x, & -\eta\le x\le \eta,\\
\eta-\beta(x-\eta), & \eta\le x\le \eta+\eta/\beta,\\
0, & x\ge \eta+\eta/\beta.
\end{cases}
\end{aligned}
\qquad
\vcenter{\hbox{
\begin{tikzpicture}[scale=0.85]
    \pgfmathsetmacro{\etaV}{1.2}
    \pgfmathsetmacro{\betaV}{4}
    \pgfmathsetmacro{\cutV}{\etaV + \etaV/\betaV}

    \draw[->] (-\cutV-0.55,0) -- (\cutV+0.65,0) node[right] {$x$};
    \draw[->] (0,-\etaV-0.55) -- (0,\etaV+0.65) node[above] {$\mathcal{L}(x)$};

    \draw[dashed,black,opacity=0.5]
        (-\cutV-0.25,-\cutV-0.25) -- (\cutV+0.25,\cutV+0.25);

    \draw[dashed,gray] (-\etaV,0) -- (-\etaV,-\etaV);
    \draw[dashed,gray] (\etaV,0) -- (\etaV,\etaV);

    \draw[very thick,blue]
        (-\cutV-0.35,0)
        -- (-\cutV,0)
        -- (-\etaV,-\etaV)
        -- (\etaV,\etaV)
        -- (\cutV,0)
        -- (\cutV+0.35,0);

    \draw (-\cutV,0.05) -- (-\cutV,-0.05)
        node[above=3pt] {{\fontsize{5}{6}\selectfont $-\big(\eta+\tfrac{1}{\beta}\big)$}};
    \draw (-\etaV,0.05) -- (-\etaV,-0.05);
    \draw (\etaV,0.05) -- (\etaV,-0.05);
    \draw (\cutV,0.05) -- (\cutV,-0.05)
        node[below=3pt] {{\fontsize{5}{6}\selectfont $\eta+\tfrac{1}{\beta}$}};

    \draw (0.05,-\etaV) -- (-0.05,-\etaV)
        node[left=3pt] {{\fontsize{5}{6}\selectfont $-\eta$}};
    \draw (0.05,\etaV) -- (-0.05,\etaV)
        node[left=3pt] {{\fontsize{5}{6}\selectfont $\eta$}};
\end{tikzpicture}
}}
\end{equation}
We compute as a two-layer $\operatorname{ReLU}$-MLP as $\mathcal{L}=\mathcal{L}_2\circ\mathcal{L}_1$, where
\begin{equation}
\label{eq:MLP}
\begin{aligned}
    \mathcal{L}_1(x)
 & \eqdef
    \left(
        \begin{pmatrix}
            1 & -1  & 0 & 0\\
            0 & 0 & -\beta & 0 \\
            0 & 0 & 0 & -\beta
        \end{pmatrix}
        \operatorname{ReLU}
        \bullet
            \left(
                \begin{pmatrix}
                    x + 1 \\
                    x - 1 \\
                    x - 1 \\
                    -x - 1
                \end{pmatrix}
            \right)
    \right)
    +
        \begin{pmatrix}
            -1 \\
            0 \\
            0
        \end{pmatrix}
\\
    \mathcal{L}_2(u_1,u_2,u_3)
 & \eqdef
    (1,-1)^{\top}
        \operatorname{ReLU}
        \bullet
        \left(
            \begin{pmatrix}
                1 & 1 & 0 \\
                -1 & 0 & 1
            \end{pmatrix}
            \begin{pmatrix}
                u_1\\
                u_2 \\
                u_3
            \end{pmatrix}
        \right)
.
\end{aligned}
\end{equation}
Thus $\mathcal{L}(x)=x$ on $[-1,1]$, $\mathcal{L}(x)=0$ on
$(-\infty,-1-1/\beta]\cup[1+1/\beta,\infty)$, and
$\mathcal{L}(\mathbb{R})\subseteq[-1,1]$.
Observe that $\mathcal{L}_1$ has width $4$, depth $1$, and $13$ non-zero parameters; while $\mathcal{L}_2$ has width $3$, depth $1$, and $9$ non-zero parameters. Together, $\mathcal{L}$ thus has width $4$, depth $2$, and can be expressed using $22$ non-zero parameters

Now, we parallelize this construction over the $q_{\rm in }$ coordinates; define
$\mathcal{L}^{\uparrow}:\mathbb{R}^{q_{\rm in }}\to\mathbb{R}^{q_{\rm in }}$ by
\[
        \mathcal{L}^{\uparrow}(x)
    \eqdef
        \bigl(
            \mathcal{L}(x_1),\dots,\mathcal{L}(x_{q_{\rm in }})
        \bigr).
\]
Then $\mathcal{L}^{\uparrow}(x)=x$ for every $x\in[-1,1]^{q_{\rm in }}$, and
$\mathcal{L}^{\uparrow}(\mathbb{R}^{q_{\rm in }})\subseteq[-1,1]^{q_{\rm in }}$.  In particular,
$\mathcal{L}^{\uparrow}(x)=x$ for every $x\in[0,1]^{q_{\rm in }}$.
Observe also that $\mathcal{L}^{\uparrow}$ has width $4 q_{\rm in }$, depth $2$, and can be expressed using at most $22 q_{\rm in }$ non-zero parameters; it implements $\mathcal{L}$ in~\eqref{eq:pwlin_guy} component-wise.

By setting $
    g
    \eqdef
    f\circ \mathcal{L}^{\uparrow}
$ and using that $\mathcal{L}^{\uparrow}(\mathbb{R}^{q_{\rm in }})\subseteq[-1,1]^{q_{\rm in }}$, and that $f$ is continuous,
the constant
$
    C_{f}
\eqdef
    \max\big\{
    1,
    \sup_{z\in[-1,1]^d}\|f(z)\|_{\infty}
    \big\}
$ is finite. The map $g$ agrees with $f$ on $[0,1]^{q_{\rm in }}$, but it need not vanish outside the localization region.  We therefore add one scalar cutoff gate.
Define
$
    r(x)
\eqdef
    \max_{i\in [q_{\rm in }]_{+}}
    \,\{
        \operatorname{ReLU}(x_i-1),\,\operatorname{ReLU}(-x_i-1)
    \}
$.  
Then $r(x)=0$ for every $x\in[-1,1]^{q_{\rm in }}$, while $r(x)\ge 1/\beta$ whenever
$\|x\|_{\infty}\ge 1+1/\beta$.  Set
$
    q(x)
\eqdef
    2 C_{f}\beta\, r(x)
$.  
Finally, we define the components of $\bar{f}$, for $j\in[q_{\rm out}]_{+}$, by
\begin{equation}
\label{eq:def_bar_f}
    \bar f_j(x)
\eqdef
    \frac{1}{2}\,
    \operatorname{ReLU}\bigl(g_j(x)+C_{f}-q(x)\bigr)
    -
    \frac{1}{2}\,
    \operatorname{ReLU}\bigl(-g_j(x)+C_{f}-q(x)\bigr)
.
\end{equation}
We now verify the two desired properties.  If $x\in[0,1]^{q_{\rm in}}$, then
$\mathcal{L}^{\uparrow}(x)=x$, and hence $g(x)=f(x)$.  Moreover, $r(x)=0$, so
$q(x)=0$.  Since $|g_j(x)|=|f_j(x)|\le C_{f}$, we have
$
    \bar f_j(x)
=
    \frac{1}{2}\bigl(g_j(x)+C_{f}\bigr)
    -
    \frac{1}{2}\bigl(-g_j(x)+C_{f}\bigr)
=
    g_j(x)
=
    f_j(x)
$.
Thus $\bar f(x)=f(x)$ on $[0,1]^{q_{\rm in}}$.
Conversely, suppose that $\|x\|_{\infty}\ge 1+1/\beta$.  Then $r(x)\ge 1/\beta$,
and therefore $q(x)\ge 2C_{f}$.  Since
$\mathcal{L}^{\uparrow}(x)\in[-1,1]^{q_{\rm in}}$, the definition of $C_{f}$ gives
$|g_j(x)|\le C_{f}$ for every $j\in[q_{\rm out}]_{+}$.  Hence
$
    g_j(x)+C_{f}-q(x)\le 0
$ and 
    $-g_j(x)+C_{f}-q(x)\le 0
$.  
Both ReLU terms in the definition of $\bar f_j(x)$ therefore vanish, and so
$\bar f_j(x)=0$ for every $j\in[q_{\rm out}]_{+}$.  Thus $\bar f(x)=0$ whenever
$\|x\|_{\infty}\ge 1+1/\beta$.

It remains to finish recording the network complexity.  Composing the first affine layer of
$\Phi$ with the output of $\mathcal{L}^{\uparrow}$ costs at most an additional
$S_{\operatorname{in}}(\Phi)$ non-zero parameters, since each non-zero
input weight of $\Phi$ is applied to a coordinate produced by the localization block.
Consequently, the branch computing
$
    g=f\circ\mathcal{L}^{\uparrow}
$
satisfies $\operatorname{depth}(g) \leq L+2$, $\operatorname{width}(g) \leq W+4q_{\rm in}$, and $\operatorname{size}(g) \leq S + S_{\operatorname{in}}(\Phi) + C \,q_{\rm in}$.
The scalar branch computing $r$ uses the standard ReLU maximum identity
$
    \max\{a,b\}
=
    b+\operatorname{ReLU}(a-b)
$, 
iterated as a binary tree over the $2 q_{\rm in}$ non-negative quantities
$\operatorname{ReLU}(x_i-1)$ and $\operatorname{ReLU}(-x_i-1)$, $i\in[q_{\rm in}]_{+}$.  This gives depth at most
$\ell_{q_{\rm in}}+2$ and size $O(q_{\rm in})$, after increasing the universal constant if necessary.
The affine rescaling $r\mapsto q=2C_{f}\beta r$ costs one additional non-zero parameter.
Let $L_g$ and $L_q$ denote the depths of the branches realizing $g$ and $q$, respectively.  We choose the above realizations so that
$L_g=L+2$ and $L_q\le \ell_{q_{\rm in}}+2$.  Set
$
    m\eqdef \max\{L+2,\ell_{q_{\rm in}}+2\}
$, $n_g\eqdef m-L_g$, and $n_q\eqdef m-L_q$.  
Then $n_g,n_q\in\mathbb{N}_{+}$, $n_g\le \ell_{q_{\rm in}}$, and $n_q\le L+2$.

We now equalize the depths of the two branches.  For the $g$-branch, use the exact ReLU identity
\[
    \operatorname{Id}_{\mathbb{R}^q_{\rm in}}(z)
    =
    \begin{pmatrix}
        I_{q_{\rm out}} & -I_{q_{\rm out}}
    \end{pmatrix}
    \operatorname{ReLU}
    \bullet
    \begin{pmatrix}
        I_{q_{\rm out}}\\
        -I_{q_{\rm out}}
    \end{pmatrix}
    z,
    \qquad z\in\mathbb{R}^{_{q_{\rm out}}}.
\]
Each such identity block has width $2 q_{\rm out}$ and exactly $4 q_{\rm out}$ non-zero weights.  Composing the $g$-branch with $n_g$ such blocks preserves the realized map exactly and increases its depth to $m$.  The resulting additional size is at most
$
    4 q_{\rm out} n_g
\le
    4 q_{\rm out}\ell_d
$.

For the $q$-branch, observe that $q(x)\ge0$ for every $x\in\mathbb{R}^{q_{\rm in}}$.  Hence, on the range of the $q$-branch, the scalar identity is realized by
$
    \operatorname{Id}_{[0,\infty)}(s)
    =
    \operatorname{ReLU}(s)
$ for $s\ge0$.
Each such padding block has width $1$ and one non-zero weight.  Composing the $q$-branch with $n_q$ such blocks preserves the realized map exactly and increases its depth to $m$.  The additional size is at most
$
    n_q
\le
L+2
$.
Thus, after padding, the two branches realize the same functions $g$ and $q$, respectively, and both outputs are available at depth $m$.  The padding increases the width by at most $2 q_{\rm out} +1$ and the size by at most
$
    4 q_{\rm out} \ell_{q_{\rm in}} + L + 2
\le
    C(q_{\rm out} \ell_{q_{\rm in}} + L)
$
for a universal strictly positive constant $C>0$.

It remains to implement the terminal gate.  Let $(z,s)\in\mathbb{R}^{q_{\rm out}}\times[0,\infty)$ represent the output of the padded branches, where $z=g(x)$ and $s=q(x)$.  Define the affine map
$
    A:\mathbb{R}^{q_{\rm out}+1}\to\mathbb{R}^{2 q_{\rm out}}
$
by
$
    (A(z,s))_{2j-1}=z_j+C_{f}-s,
$ and $
    (A(z,s))_{2j}=-z_j+C_{f}-s
$ for each $j\in[q_{\rm out}]_+$.
and all $2 q_{\rm out}$ biases are equal to $C_{f}$.  Therefore $A$ has $4 q_{\rm out}$ non-zero weights and $2 q_{\rm out}$ non-zero biases.  Let
$
    R(z,s)\eqdef \operatorname{ReLU}\bullet A(z,s)\in\mathbb{R}^{2 q_{\rm out}}.
$
Finally, define the linear map $T:\mathbb{R}^{2 q_{\rm out}}\to\mathbb{R}^{q_{\rm out}}$ by
$
    (Ty)_j
\eqdef
    \frac{1}{2}y_{2j-1}-\frac{1}{2}y_{2j}
$ for each $j\in[q_{\rm out}]_+$.
The map $T$ has exactly $2 q_{\rm out}$ non-zero weights.  Hence $T\circ R$ is realized by one ReLU layer of width $2 q_{\rm out}$ and has at most $
    4 q_{\rm out}+2q_{\rm out}+2q_{\rm out}=8q_{\rm out}
$
non-zero parameters.  Moreover, for every $x\in\mathbb{R}^{q_{\rm in}}$ and every $j\in[q_{\rm out}]_{+}$,
\[
    (T\circ R)_j(g(x),q(x))
    =
    \frac{1}{2}
    \operatorname{ReLU}\bigl(g_j(x)+C_{f}-q(x)\bigr)
    -
    \frac{1}{2}
    \operatorname{ReLU}\bigl(-g_j(x)+C_{f}-q(x)\bigr)
    =
    \bar f_j(x).
\]
Therefore, the terminal gate realizes $\bar f$ exactly, increases the depth by one, increases the width by at most $2 q_{\rm out}$, and increases the size by at most $8 q_{\rm out}$. Combining the padding and terminal-gate estimates, the additional size beyond the two branches is bounded by
$
    4 q_{\rm out} \ell_{q_{\rm in}}+L+2+8 q_{\rm out}
    \le
    C(q_{\rm out}\ell_{q_{\rm in}}+L+q_{\rm out}),
$
after increasing the universal constant $C>0$ if necessary.
Combining the estimates yields
$
\operatorname{depth}(\bar{\Phi}_{\beta})
    \le
    \max\{L+2,\ell_{q_{\rm in}}+2\}+1
$, $
\operatorname{width}(\bar{\Phi}_{\beta})
    \le
    W+4 q_{\rm in}+2q_{\rm out}+2
$, 
and
$
    \operatorname{size}(\bar{\Phi}_{\beta})
    \le
    S
    +
    S_{\operatorname{in}}(\Phi)
    +
    C\bigl(q_{\rm in}+q_{\rm out}\ell_{q_{\rm in}}+L+q_{\rm out}\bigr)
$, 
after increasing $C>0$ if necessary.  This concludes the proof.
\end{proof}
Our first usage usage of the previous lemma is in the following corollary.

\begin{corollary}[Coordinate-wise ReLU localization]
\label{cor:coordinate-wise-localization}
Let $q \in \mathbb{N}_{+}$, $R>0$, and $\beta > \frac{1}{R}$. Then, there exists a $\operatorname{ReLU-MLP}$ $P^{(q)}_{R,\beta}\,:\,\mathbb{R}^{q} \to [-R,R]^{q}$ with depth $1$, width $4 q$, and with no-more than $12 q$ non-zero parameters such that for every $x \in [-R+\frac{1}{\beta},R-\frac{1}{\beta}]^{q}$ we have $P^{(q)}_{R,\beta}(x)=x$. Moreover, for every $i \in [q]_{+}$, if $x_i \notin (-R,R)$, then $(P^{(q)}_{R,\beta}(x))_{i}=0$. 
\end{corollary}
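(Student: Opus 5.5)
The plan is to build a one-dimensional piecewise-linear ``trapezoidal identity'' explicitly as a one-hidden-layer ReLU network, and then stack $q$ copies in parallel. This is a simplified, single-layer variant of the cutoff map $\mathcal{L}$ from the proof of Lemma~\ref{lem:main_localization_trick}. The difference is that the descending ramps now end exactly at $\pm R$, so the map can be written as a single sum of four ReLUs rather than as a composition of two layers.

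Set $a\eqdef R-\frac{1}{\beta}$, which is strictly positive because $\beta>1/R$. The target scalar map $\ell:\mathbb{R}\to\mathbb{R}$ is continuous and piecewise linear with breakpoints $-R<-a<a<R$:
\begin{itemize}
\item $\ell=0$ on $(-\infty,-R]\cup[R,\infty)$;
\item $\ell(x)=x$ on $[-a,a]$;
\item $\ell$ is linear on $[-R,-a]$ and on $[a,R]$, joining the points $(\pm R,0)$ to $(\pm a,\pm a)$. Both ramps have slope $-a\beta$.
\end{itemize}
Since $\ell$ vanishes near $-\infty$, it can be written as $\sum_{j=1}^4 c_j\operatorname{ReLU}(x-b_j)$ with $b_j\in\{-R,-a,a,R\}$. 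The coefficients $c_j$ are the jumps in slope at the breakpoints:
\begin{equation*}
c_1=-a\beta,\qquad c_2=1+a\beta,\qquad c_3=-(1+a\beta),\qquad c_4=a\beta .
\end{equation*}
So the candidate is
\begin{equation*}
\ell(x)= -a\beta\operatorname{ReLU}(x+R)+(1+a\beta)\operatorname{ReLU}(x+a)-(1+a\beta)\operatorname{ReLU}(x-a)+a\beta\operatorname{ReLU}(x-R).
\end{equation*}

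I will verify the candidate in three short checks.
\begin{enumerate}
\item The coefficients sum to zero, so $\ell$ has slope $0$ beyond $R$.
\item The value at $x=R$ is $-2a\beta R+2a(1+a\beta)=2a(1-\beta(R-a))=0$, using $\beta(R-a)=1$. Hence $\ell\equiv0$ on $[R,\infty)$, and trivially on $(-\infty,-R]$.
\item On $[-a,a]$ the slope is $c_1+c_2=1$. The value at $-a$ is $-a\beta(R-a)=-a$, so $\ell(x)=x$ there.
\end{enumerate}
Since $\ell$ is piecewise linear, its extreme values occur at the breakpoints, where $|\ell|\le a<R$. Therefore $\ell(\mathbb{R})\subseteq[-R,R]$.

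For the parameter count, the hidden layer has width $4$, with $4$ input weights (all equal to $1$) and $4$ biases. The output layer has $4$ weights and no bias. This gives $12$ non-zero parameters at depth $1$.

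Finally, define $P^{(q)}_{R,\beta}(x)\eqdef(\ell(x_1),\dots,\ell(x_q))$, realized by block-diagonal weight matrices. It has depth $1$, width $4q$, and at most $12q$ non-zero parameters. It takes values in $[-R,R]^q$ and is the identity on $[-a,a]^q=[-R+\frac1\beta,R-\frac1\beta]^q$. Each coordinate with $x_i\notin(-R,R)$ is sent to $0$ because $\ell$ vanishes off $(-R,R)$. No step is a genuine obstacle. The only point needing care is the boundary value computation at $x=\pm R$, which uses the choice $a=R-1/\beta$ and the hypothesis $\beta>1/R$ (the latter guarantees $a>0$).
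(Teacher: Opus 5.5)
Your construction is correct. The slope jumps $c_1,\dots,c_4$ are right, and $\ell(\pm R)=0$ and $\ell(-a)=-a$ both follow from $\beta(R-a)=1$. Your count (one hidden layer of four ReLUs as depth $1$, with $4+4+4=12$ parameters) is the same convention the paper uses for the six-ReLU time mask in Lemma~\ref{lem:relu-time-haar}.

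Your route is, however, genuinely different from the paper's. The paper's proof is a one-line appeal: localize the identity map on $[-R,R]$ with transition width $1/\beta$ using the cutoff from Lemma~\ref{lem:main_localization_trick}, apply it coordinate-wise, and read the complexity off that lemma. (It also cites Lemma~\ref{prop:Hermite_poly_approx_on_compact} in the identity case, which contributes nothing beyond an exact affine network.) Taken literally, that cutoff $\mathcal{L}=\mathcal{L}_2\circ\mathcal{L}_1$ is a depth-$2$ network with $22$ non-zero parameters. It is normalized to be the identity on $[-1,1]$, so one still needs the rescaling $x\mapsto a\,\mathcal{L}(x/a)$ with slope parameter $a\beta$ in place of $\beta$. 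The bounds stated in Lemma~\ref{lem:main_localization_trick} itself are larger still. Consequently, the advertised depth $1$ and $12q$ parameters do not actually follow from what the paper cites; only an $O(1)$-per-coordinate bound does.

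You instead let the descending ramps end exactly at $\pm R$ and write the trapezoid as a single sum of four ReLUs. This collapses the two layers and gives exactly the stated depth $1$, width $4q$ and $12q$ parameters, together with the range bound $|\ell|\le a<R$. The paper's route buys brevity and reuse of an existing block; yours gives a self-contained verification of the exact constants in the statement.
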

\begin{proof}
It is sufficient to apply Lemma \ref{prop:Hermite_poly_approx_on_compact} only in the scalar identity case; i.e., by localizing the identity map on the interval $[-R,R]$ with transition width $1/\beta$, and then applying the resulting scalar map coordinate-wise. The asserted complexity bounds follow directly from Lemma \ref{lem:main_localization_trick}.
\end{proof}

\begin{lemma}[Efficient localized approximate multiplication]
\label{lem:localized-product}
There exists a universal constant $C>0$, such that for every $q_{\times}\in\mathbb N_+ \cap [2,\infty)$, $R_{\times}>0$ with $\beta_{\times}>\frac1{R_{\times}}$, and $\varepsilon_{\times}>0$ there exists a $\operatorname{ReLU-MLP}$  $\Phi^{\times,R_{\times},\beta_{\times}}_{q_{\times},\varepsilon_{\times}}:
\mathbb R^{q_{\times}}\to\mathbb R$ such that
\begin{equation*}
 \sup_{
x\in
\left[
-R_{\times}+\frac1{\beta_{\times}},
R_{\times}-\frac1{\beta_{\times}}
\right]^{q_{\times}}
}
\left|
\prod_{i=1}^{q_{\times}}x_i
-
\Phi^{\times,R_{\times},\beta_{\times}}_{q_{\times},\varepsilon_{\times}}(\mathbf{x})
\right|
\le
\varepsilon_{\times}   
\end{equation*}
Moreover, $\Phi^{\times,R_{\times},\beta_{\times}}_{q_{\times},\varepsilon_{\times}}
(\mathbb R^{q_{\times}}) \subseteq [-R_{\times}^{q_{\times}},R_{\times}^{q_{\times}}]$, and $\sup_{
x\in\mathbb R^{q_{\times}}
\setminus
(-R_{\times},R_{\times})^{q_{\times}}
}
\left|
\Phi^{\times,R_{\times},\beta_{\times}}_{q_{\times},\varepsilon_{\times}}(\mathbf{x})
\right|
\le
\varepsilon_{\times}.
$ Finally, the network may be chosen with $\operatorname{depth}(\Phi^{\times,R_{\times},\beta_{\times}}_{q_{\times},\varepsilon_{\times}}) \leq C \log(q_{\times})\left(
1+ \Big|\log_2\left(\frac{\varepsilon_{\times}}{q_{\times} R_{\times}^{q_{\times}}}\right)\Big|
\right)$, $\operatorname{width}(\Phi^{\times,R_{\times},\beta_{\times}}_{q_{\times},\varepsilon_{\times}}) \leq C q_{\times}$, and $\operatorname{size}(\Phi^{\times,R_{\times},\beta_{\times}}_{q_{\times},\varepsilon_{\times}}) \leq C q_{\times}$. 
\end{lemma}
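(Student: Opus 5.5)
The plan is to build the network in three stages: clip the inputs coordinate-wise, multiply them with Yarotsky's binary-tree multiplication, and cap the output. Fix $q_{\times}\ge 2$, $R_{\times}$, $\beta_{\times}$, $\varepsilon_{\times}$.

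\textbf{Step 1 (clipping).} First apply the coordinate-wise localization $P^{(q_{\times})}_{R_{\times},\beta_{\times}}$ from Corollary~\ref{cor:coordinate-wise-localization}. This map is the identity on $[-R_{\times}+\frac1{\beta_{\times}},R_{\times}-\frac1{\beta_{\times}}]^{q_{\times}}$ and takes values in $[-R_{\times},R_{\times}]^{q_{\times}}$. It also zeroes every coordinate lying outside $(-R_{\times},R_{\times})$. As a result, all later stages only ever see inputs in the compact cube $[-R_{\times},R_{\times}]^{q_{\times}}$. Moreover, whenever $x\notin(-R_{\times},R_{\times})^{q_{\times}}$, at least one clipped coordinate equals $0$, so the exact product of the clipped vector vanishes.

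\textbf{Step 2 (rescaled tree product).} Rescale by $1/R_{\times}$ so the clipped inputs lie in $[-1,1]^{q_{\times}}$. Then multiply them with a balanced binary tree of depth $\lceil\log_2 q_{\times}\rceil$. Each node of the tree is the Yarotsky approximate multiplication $\tilde\times_\delta$ on $[-1,1]^2$ (as formulated in~\cite{petersen2024mathematical}), with three properties:
\begin{itemize}
\item it has error $\delta$ and depth $O(\log(1/\delta))$;
\item it satisfies $\tilde\times_\delta(a,0)=\tilde\times_\delta(0,b)=0$ exactly;
\item its output is clipped back to $[-1,1]$ by a two-ReLU clamp.
\end{itemize}
Because each factor has modulus at most $1$, errors propagate additively along the tree. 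The total error is therefore at most $(q_{\times}-1)\delta$. Multiplying the result by $R_{\times}^{q_{\times}}$ gives uniform error $R_{\times}^{q_{\times}}q_{\times}\delta$ on the clipped cube. Choosing $\delta=\varepsilon_{\times}/(q_{\times}R_{\times}^{q_{\times}})$ yields the stated accuracy on the inner cube and the stated depth $C\log(q_{\times})(1+|\log_2(\varepsilon_{\times}/(q_{\times}R_{\times}^{q_{\times}}))|)$. The width of the tree is $O(q_{\times})$, since each of the $O(q_{\times})$ leaves uses constant width.

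\textbf{Step 3 (range and cutoff).} The range bound $[-R_{\times}^{q_{\times}},R_{\times}^{q_{\times}}]$ follows from the clamps: the final node's output lies in $[-1,1]$ before the rescaling by $R_{\times}^{q_{\times}}$. The cutoff estimate outside $(-R_{\times},R_{\times})^{q_{\times}}$ comes from Step 1: some clipped coordinate is exactly $0$, so the exact product is $0$. The approximation error bound of Step 2 holds on the whole clipped cube, hence the network output is at most $\varepsilon_{\times}$ in modulus there. It is even exactly $0$ if the zero-preservation property propagates through the tree.

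\textbf{Main obstacle.} The hardest part is the size bound $\operatorname{size}\le Cq_{\times}$, which claims no dependence on $\varepsilon_{\times}$. A naive count multiplies the $O(\log(1/\delta))$ depth of each multiplication node by the number of nodes. That count gives size $O(q_{\times}\log(1/\delta))$, not $O(q_{\times})$. I would first try to reinterpret the bound as counting only distinct parameters, since the sawtooth layers of the multiplication network are weight-shared. Failing that, I would accept an extra logarithmic factor and flag the discrepancy, because Proposition~\ref{prop:ChaosMode} only uses the depth and width bounds at the level of logarithmic terms anyway.
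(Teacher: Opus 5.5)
Your construction is essentially the paper's. You clip with Corollary~\ref{cor:coordinate-wise-localization}, feed the result to a $q_{\times}$-fold product network that is accurate on the whole cube $[-R_{\times},R_{\times}]^{q_{\times}}$, and get the cutoff from the zeroed coordinate. The only difference is that you build the product network explicitly, as a rescaled, clamped binary tree of two-factor multipliers, where the paper cites \cite[Proposition 7.4]{petersen2024mathematical}. Your clamps make the range bound on all of $\mathbb{R}^{q_{\times}}$ immediate, whereas the paper just asserts it for the cited network.

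Your suspicion about the size bound is correct: $\operatorname{size}\le Cq_{\times}$ independently of $\varepsilon_{\times}$ is actually false. A ReLU network with at most $Cq_{\times}$ nonzero parameters is, along any line, piecewise affine with a number of pieces $K$ bounded in terms of $Cq_{\times}$ alone. Take the segment $x=(t,t,c,\ldots,c)$ with $c=R_{\times}-1/\beta_{\times}$ and $|t|\le c$, where the target is $c^{q_{\times}-2}t^{2}$. On the longest piece the error is at least $c^{q_{\times}}/(2K^{2})$, so the claimed accuracy fails for small $\varepsilon_{\times}$.

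The paper's own proof only delivers $Cq_{\times}\bigl(1+|\log_2(\varepsilon_{\times}/(q_{\times}R_{\times}^{q_{\times}}))|\bigr)$. That is exactly the bound it later uses in Lemma~\ref{lem:tensorized_Hermite_poly_approx_on_compact}. So take your fallback with the logarithmic factor. Drop the weight-sharing reinterpretation, since size here counts nonzero entries, not distinct values.
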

\begin{proof}
By~\cite[Proposition 7.4]{petersen2024mathematical} there exists a $\operatorname{ReLU-MLP}$ $\tilde{\Phi}^{\times,R_{\times},\beta_{\times}}_{q_{\times},\varepsilon_{\times}}:\mathbb{R}^{q_{\times}}\to\mathbb{R}$ supported in $[-R_{\times}+\frac{1}{\beta_{\times}}, R_{\times}-\frac{1}{\beta_{\times}}]^{q_{\times}}$ and sending $[-R_{\times},R_{\times}]^{q_{\times}}$ to $[-R_{\times}^{q_{\times}},R_{\times}^{q_{\times}}]$
such that
\begin{equation*}
        \sup_{x_j\in[-R_{\times},R_{\times}]}
    \,
    \Big|
        \prod_{j=1}^{q_{\times}} x_j
        -
        \tilde{\Phi}^{\times,R_{\times},\beta_{\times}}_{q_{\times},\varepsilon_{\times}}(x_1,\dots,x_{q_{\times}})
    \Big|
\le 
    \varepsilon_{\times}.
\end{equation*}
Moreover, we have $\operatorname{depth}(\tilde{\Phi}^{\times,R_{\times},\beta_{\times}}_{q_{\times},\varepsilon_{\times}}) \leq C \log(q_{\times})\left(
1+ \Big|\log_2\left(\frac{\varepsilon_{\times}}{q_{\times} R_{\times}^{q_{\times}}}\right)\Big|
\right)$, $\operatorname{width}(\tilde{\Phi}^{\times,R_{\times},\beta_{\times}}_{q_{\times},\varepsilon_{\times}}) \leq C q_{\times}$, and $\operatorname{size}(\tilde{\Phi}^{\times,R_{\times},\beta_{\times}}_{q_{\times},\varepsilon_{\times}}) \leq C q_{\times}\left(1+ \Big|\log_2\left(\frac{\varepsilon_{\times}}{q_{\times} R_{\times}^{q_{\times}}}\right)\Big|
\right)$. Now, let $p_{R_{\times},\beta_{\times}}:\mathbb{R}^{q_{\times}} \to \mathbb{R}^{q_{\times}}$ be the depth-$1$ $\operatorname{ReLU-MLP}$ 
Corollary~\ref{cor:coordinate-wise-localization}, and define the $\operatorname{ReLU-MLP}$ in the statement of the lemma as $\Phi^{\times,R_{\times},\beta_{\times}}_{q_{\times}}\eqdef \tilde{\Phi}^{\times,R_{\times},\beta_{\times}}_{q_{\times},\varepsilon_{\times}} \circ p_{R_{\times},\beta_{\times}}$. Then, for every $x\in [-R_{\times}-\frac{1}{\beta_{\times}}, R_{\times}+\frac{1}{\beta_{\times}}]^{q_{\times}}$, since $p_{R_{\times},\beta_{\times}}(x)=x$, we have 
\begin{equation*}
   \Big|
        \prod_{j=1}^{q_{\times}} x_j
        -
        \Phi^{\times,-R_{\times},\beta_{\times}}_{q_{\times},\varepsilon_{\times}}(\mathbf{x})
    \Big|
=
    \Big|
        \prod_{j=1}^{q_{\times}} x_j
        -
        \widetilde{\Phi}^{\times,-R_{\times},\beta_{\times}}_{q_{\times},\varepsilon_{\times}}(\big(p_{R_{\times},\beta_{\times}}(\mathbf{x})\big)
    \Big|
=
    \Big|
        \prod_{j=1}^{q_{\times}} x_j
        -
        \widetilde{\Phi}^{\times,-R_{\times},\beta_{\times}}_{q_{\times},\varepsilon_{\times}}(\mathbf{x}\big)
    \Big|\leq \varepsilon_{\times}. 
\end{equation*}

Taking the supremum over $\mathbf{x}\in [-R_{\times}-\frac{1}{\beta_{\times}}, R_{\times}+\frac{1}{\beta_{\times}}]^{q_{\times}}$ yields the desired estimate. Since $p_{R_{\times},\beta_{\times}}(\mathbb{R}^{q_{\times}})\subseteq [-R_{\times}-\frac{1}{\beta_{\times}}, R_{\times}+\frac{1}{\beta_{\times}}]^{q_{\times}}$ sends $[-R_{\times}-\frac{1}{\beta_{\times}}, R_{\times}+\frac{1}{\beta_{\times}}]^{q_{\times}}$ to $[-R_{\times}^{q_{\times}}, R_{\times}^{q_{\times}}]$, it follows that $\Phi^{\times,R_{\times},\beta_{\times}}_{q_{\times},\varepsilon_{\times}}
(\mathbb R^{q_{\times}}) \subseteq [-R_{\times},R_{\times}]^{q_{\times}}$. Now, let $\mathbf{x}\in \mathbb{R}^{q_{\times}}\setminus (-R_{\times},R_{\times})^{q_{\times}}$.  Then there exists $i\in [q_{\times}]_{+}$ such that $x_i\not\in (-R_{\times},R_{\times})$.  By Corollary~\ref{cor:coordinate-wise-localization}, $p_{R_{\times},\beta_{\times}}(x)_{i}=0$ and, therefore, $\prod_{j=1}^{q_{\times}} 
p_{R_{\times},\beta_{\times}}(x)_{j}=0$. Since $p_{R_{\times},\beta_{\times}}(\mathbf{x})\in [-R_{\times}^{q_{\times}},R_{\times}^{q_{\times}}]^{q_{\times}}$, the approximation property of $\widetilde{\Phi}^{\times,R_{\times}}_{q_{\times},\varepsilon_{\times}}$ gives
\[
\begin{aligned}
    \big|
        \Phi^{\times,R_{\times},\beta_{\times}}_{q_{\times},\varepsilon_{\times}}(\mathbf{x})
    \big|
&=
    \Big|
        \widetilde{ \Phi}^{\times,R_{\times}}_{q_{\times},\varepsilon_{\times}}\big(p_{R_{\times},\beta_{\times}}(\mathbf{x})\big)
        -
        \prod_{j=1}^{q_{\times}} p_{R_{\times},\beta_{\times}}(x)_{j}
    \Big|
\le
    \varepsilon.
\end{aligned}
\]
Finally, composition with the depth $1$ width $4 q_{\times}$ network $p_{R_{\times},\beta_{\times}}$ increases the depth by $1$, while the width remains of order $q_{\times}$ and the number of non-zero parameters increases by only $O(q_{\times})$.  After enlarging the absolute constant $C$ if necessary, the claimed complexity bounds follow.
\end{proof}
We are now ready to approximate the tensorized Hermite polynomials rather efficiently while also controlling the support and clipping the range of our approximated versions. The notation is as the beginning of Step 2, where now the input is $\boldsymbol{x}=(x_{j,r})_{j \in [J_n]_{+}, r \in [D]_{+}} \in \mathbb{R}^{J_{n} \times D}$. We recall that $Q_n \eqdef J_n D$. We have the following lemma. 
\begin{lemma}[Local-uniform ReLU approximation of tensorized Hermite polynomials]
\label{lem:tensorized_Hermite_poly_approx_on_compact}
There exists a universal constant $C>0$ such that for every $D, J_n \in \mathbb{N}_{+}$, $\boldsymbol{\alpha} \in \mathbb{R}^{J_{n} \times D}$ with $|\boldsymbol{\alpha}|=\sum_{r=1}^{D}\sum_{j=1}^{J_n}\alpha_{j,r}=n$, $B>0$, and $\varepsilon \in (0,1)$ there exists a $\operatorname{ReLU-MLP}$ $\Phi_{\boldsymbol{\alpha}, B, \varepsilon}^{\rm Herm}:\mathbb{R}^{J_{n} \times D}\to \mathbb{R}$ satisfying
\begin{equation*}
    \sup_{\boldsymbol{x}\in[-B,B]^{J_n\times D}}
\left|
h_{\boldsymbol\alpha}(\boldsymbol{x})
-
\Phi^{\rm Herm}_{\boldsymbol\alpha,B,\varepsilon}(\boldsymbol{x})
\right|
\le
\varepsilon.
\end{equation*}
Moreover, the width, depth, and number of non-zero parameters can be chosen as in (the second part of) Proposition \ref{prop:ChaosMode}
\end{lemma}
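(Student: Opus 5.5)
The plan is a compose-and-multiply construction: approximate each univariate Hermite factor $h_{\alpha_{j,r}}(x_{j,r})$ on $[-B,B]$ with Lemma~\ref{prop:Hermite_poly_approx_on_compact}, run these $Q_n=J_nD$ networks in parallel, and feed their outputs into the localized product network of Lemma~\ref{lem:localized-product}. Factors with $\alpha_{j,r}=0$ are identically $1$. We can either drop them, which reduces $q_\times$ to the number of active coordinates (at most $n$), or keep them as constant outputs; either way the number of factors is at most $Q_n$. The first layer selects coordinates and then applies the univariate networks blockwise. Each of these has width $\le C\alpha_{j,r}$, so the total width is $\le C(Q_n+n)$ once the identity/constant channels for the $\alpha_{j,r}\in\{0,1\}$ factors are included. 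The univariate networks have different depths, and we equalize them with ReLU identity padding ($\operatorname{ReLU}(z)-\operatorname{ReLU}(-z)$), exactly as in the proof of Lemma~\ref{lem:main_localization_trick}.

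The error analysis is a telescoping product estimate. Write $y_{j,r}=h_{\alpha_{j,r}}(x_{j,r})$, which satisfies $|y_{j,r}|\le A_{\alpha_{j,r},B}\le A_{\boldsymbol\alpha,B}-1$ on $[-B,B]$, and let $\tilde y_{j,r}$ be the network output with $|y_{j,r}-\tilde y_{j,r}|\le\delta_{j,r}$. Provided $\delta_{j,r}\le 1$, every $|\tilde y_{j,r}|\le A_{\boldsymbol\alpha,B}$. Then
\[
\Big|\prod y-\prod\tilde y\Big|\le\sum_{j,r}\delta_{j,r}\,A_{\boldsymbol\alpha,B}^{Q_n-1}.
\]
We choose $\delta_{j,r}=\varepsilon_{\mathrm{loc}}/\bigl(2Q_nA_{\boldsymbol\alpha,B}^{Q_n-1}\bigr)$, which makes this at most $\varepsilon_{\mathrm{loc}}/2$. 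Substituting $\delta_{j,r}$ into the depth bound $C(1+\log_2(\alpha_{j,r}A_{\alpha_{j,r},B}/\delta_{j,r}))$ of Lemma~\ref{prop:Hermite_poly_approx_on_compact} gives exactly the first max–log term in Proposition~\ref{prop:ChaosMode}. The size is the sum over $(j,r)$ of $C\alpha_{j,r}(1+\log)$, which is at most $C(Q_n+n)(1+\max\log)$, using $\sum\alpha_{j,r}=n$.

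For the product stage we apply Lemma~\ref{lem:localized-product} with $q_\times=Q_n$ (padding by $1$ if $Q_n=1$), radius $R_\times=A_{\boldsymbol\alpha,B}+1$, and $\beta_\times=1$. These choices put $[-A_{\boldsymbol\alpha,B},A_{\boldsymbol\alpha,B}]^{Q_n}$ inside the exact region $[-R_\times+1/\beta_\times,R_\times-1/\beta_\times]^{Q_n}$. We take accuracy $\varepsilon_\times=\varepsilon_{\mathrm{loc}}/2$. The depth is then $C\log(Q_n)\bigl(1+\log_2(2Q_nR_\times^{Q_n}/\varepsilon_{\mathrm{loc}})\bigr)$. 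Since $R_\times\le 2A_{\boldsymbol\alpha,B}$ (because $A\ge1$), we have $R_\times^{Q_n}\le 2^{Q_n}A^{Q_n}_{\boldsymbol\alpha,B}$, which reproduces the second term $\ell_{Q_n}(1+\log_2(2^{Q_n+1}Q_nA^{Q_n}/\varepsilon_{\mathrm{loc}}))$. By the triangle inequality the total error is at most $\varepsilon_{\mathrm{loc}}\le\varepsilon$, with the convention $\varepsilon_{\mathrm{loc}}=\min\{1/2,\varepsilon/3\}$. Depths add, widths take the maximum, and sizes add. The concatenation costs only an $O(1)$ depth overhead and $O(Q_n)$ parameters at the interface.

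The main obstacle is matching the size bound. As stated, Lemma~\ref{lem:localized-product} claims size $\le Cq_\times$, but the underlying \cite[Proposition 7.4]{petersen2024mathematical} carries a logarithmic factor. I would use the honest bound $CQ_n(1+\log_2(\cdots))$, which is exactly the second summand in the size estimate of Proposition~\ref{prop:ChaosMode}, so the target bound is consistent with it. A secondary point requiring care is that Lemma~\ref{prop:Hermite_poly_approx_on_compact} needs $\delta_{j,r}<1$, which holds because $\varepsilon_{\mathrm{loc}}\le 1/2$.
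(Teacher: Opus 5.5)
Your proposal is correct and follows the paper's proof essentially step for step: parallel univariate approximants from Lemma~\ref{prop:Hermite_poly_approx_on_compact} at accuracy of order $\varepsilon/(Q_nA_{\boldsymbol\alpha,B}^{Q_n-1})$, the telescoping product estimate, and the localized product network of Lemma~\ref{lem:localized-product}, with depths added, widths maximized and sizes added. Your remark on the size bound is also what the paper actually does, since it uses $CQ_n(1+\log_2(\cdots))$ for the product stage despite that lemma's stated $Cq_\times$. The remaining differences are cosmetic. You take $R_\times=A_{\boldsymbol\alpha,B}+1$, $\beta_\times=1$ where the paper takes $R_\times=2A_{\boldsymbol\alpha,B}$, $\beta_\times=A_{\boldsymbol\alpha,B}^{-1}$. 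Your bound $|h_{\alpha_{j,r}}|\le A_{\alpha_{j,r},B}\le A_{\boldsymbol\alpha,B}-1$ is the correct version of the paper's misstated $|h_{\alpha_{j,r}}|\le A_{\alpha_{j,r},B}-1$.
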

\begin{proof}
If $n=0$, then $h_{\boldsymbol\alpha}\equiv1$ and the claim is represented exactly by a constant $\operatorname{ReLU-MLP}$, and the stated bounds hold after increasing the positive constant $C>0$. We, therefore, assume that $n \geq 1$. Now, if $Q_n=1$, then the tensorized polynomial is univariate and the claim follows directly from Lemma \ref{prop:Hermite_poly_approx_on_compact}, again after
increasing the universal constant $C$. Whence, hereafter, we assume $Q_n \geq 2$; for every pair $(j,r)_{j \in [J_n]_{+}, r \in [D]_{+}}$, we construct a uni-variate $\operatorname{ReLU-MLP}$ $\Phi_{j,r}:\mathbb{R}\rightarrow\mathbb{R}$. If $\alpha_{j,r}=0$, then $h_{\alpha_{j,r}} \equiv 1$, and we let $\Phi_{j,r}\equiv 1$. Otherwise, Lemma \ref{prop:Hermite_poly_approx_on_compact} yields a $\operatorname{ReLU-MLP}$ $\Phi_{j,r}:\mathbb{R}\rightarrow\mathbb{R}$ such that $\sup_{x \in [-B,B]}|h_{\alpha_{j,r}}(x)-\Phi_{j,r}(x)| \leq \delta$, where $\delta=\frac{\varepsilon}{2 Q_n A_{\boldsymbol{\alpha},B}^{Q_n-1}}$ ($< 1$). Moreover, for every $x \in [-B,B]$, $|h_{\alpha_{j,r}}(x)| \le A_{\alpha_{j,r},B}-1 \le A_{\boldsymbol\alpha,B}-1$. Since $\delta<1$, it follows that $|\Phi_{j,r}(x)|\le A_{\boldsymbol\alpha,B}$ for every $x \in [-B,B]$. In particular, setting, $\Psi(\boldsymbol{x}) \eqdef \Phi_{j,r}(x_{j,r})_{j \in [J_n]_{+}, r \in [D]_{+}} \in \mathbb{R}^{Q_n}$, we have, as desired, that $\Psi([-B,B]^{J_n \times D}) \subseteq [-A_{\boldsymbol\alpha,B},A_{\boldsymbol\alpha,B}]^{Q_n}$. Now, apply Lemma \ref{lem:localized-product} with (the admissible choice of) $q_{\times}=Q_n, R_{\times}=2A_{\boldsymbol\alpha,B}, \beta_{\times}=A_{\boldsymbol\alpha,B}^{-1}, \varepsilon_{\times}=\frac{\varepsilon}{2}$, yields a $\operatorname{ReLU-MLP}$ $\Psi_{Q_n}^{\times}:\mathbb{R}^{Q_n}\to\mathbb{R}$ such that 
\begin{equation*}
    \sup_{\boldsymbol{y}\in[-A_{\boldsymbol\alpha,B},A_{\boldsymbol\alpha,B}]^{Q_n}}
\left|
\prod_{i=1}^{Q_n}y_i
-
\Phi^\times_{Q_n}(\boldsymbol{y})
\right|
\le
\frac{\varepsilon}{2}.
\end{equation*}
We now consider the composition $\Phi^{\rm Herm}_{\boldsymbol{\alpha}, B, \varepsilon} \eqdef \Psi_{Q_n}^{\times} \circ \Psi$, and fix $\boldsymbol{x} \in [-B,B]^{J_n \times D}$. Moreover, we choose any enumeration $\{(j_i, r_i)\,:\,i=1,\ldots,Q_n\}$ of $[J_n]_{+} \times [D]_{+}$, and we set $u_i\eqdef h_{\alpha_{j_i, r_i}}(x_{j_i, r_i})$ and $v_i \eqdef \Phi_{j_i, r_i}(x_{j_i, r_i})$, $i \in [Q_n]_{+}$; in particular, $h_{\boldsymbol{\alpha}}(\boldsymbol{x})=\prod_{i=1}^{Q_n} u_i$ and $\Phi^{\rm Herm}_{\boldsymbol{\alpha}, B, \varepsilon}(\boldsymbol{x})=\Phi^{\times}_{Q_n}(v_1,\ldots,v_{Q_n})$. Whence, 
\begin{equation*}
    |h_{\boldsymbol{\alpha}}(\boldsymbol{x})-\Phi^{\rm Herm}_{\boldsymbol{\alpha}, B, \varepsilon}(\boldsymbol{x})| \leq \left|
\prod_{i=1}^{Q_n}u_i
-
\prod_{i=1}^{Q_n}v_i
\right|
+
\left|
\prod_{i=1}^{Q_n}v_i
-
\Phi^\times_{Q_n}(v_1,\ldots,v_{Q_n})
\right| \leq \left|
\prod_{i=1}^{Q_n}u_i
-
\prod_{i=1}^{Q_n}v_i
\right| + \frac{\varepsilon}{2}.
\end{equation*}
In order to estimate the first term, we use the telescoping identity
\begin{equation*}
\prod_{i=1}^{Q_n}u_i-\prod_{i=1}^{Q_n}v_i
=
\sum_{m=1}^{Q_n}
\left(\prod_{i=1}^{m-1}v_i\right)
(u_m-v_m)
\left(\prod_{i=m+1}^{Q_n}u_i\right),\,\,\text{and}\,\,\left|
\prod_{i=1}^{Q_n}u_i-\prod_{i=1}^{Q_n}v_i
\right|
\le
Q_n\delta A_{\boldsymbol\alpha,B}^{Q_n-1}
=
\frac{\varepsilon}{2},
\end{equation*}
where we use the fact that $|u_i|\le A_{\boldsymbol\alpha,B}$, $|v_i|\le A_{\boldsymbol\alpha,B}$, and $|u_i-v_i| \leq \delta$. Combining the two estimates yields
$\left|h_{\boldsymbol\alpha}(z)-\Phi^{\rm Herm}_{\boldsymbol\alpha,B,\varepsilon}(z)
\right|\le \varepsilon$. Taking the supremum over $\boldsymbol{x} \in[-B,B]^{J_n\times D}$ proves the approximation claim. Finally, we verify the complexity bounds. For every $(j,r)$ with $\alpha_{j,r}\ge1$, Lemma~\ref{prop:Hermite_poly_approx_on_compact} and the
choice of $\delta$ give a univariate network with $\operatorname{width}(\Phi^{\rm Herm}_{\boldsymbol{\alpha}, B, \varepsilon}) \leq C\alpha_{j,r}$, $\operatorname{depth}(\Phi^{\rm Herm}_{\boldsymbol{\alpha}, B, \varepsilon}) \leq C\left[
1 \!+\!
\log_2\left(
1 \!+\!
\frac{
2Q_n\alpha_{j,r}
A_{\alpha_{j,r},B}
A_{\boldsymbol\alpha,B}^{Q_n-1}
}{
\varepsilon
}
\right)
\right]
$, and $\operatorname{size}(\Phi^{\rm Herm}_{\boldsymbol{\alpha}, B, \varepsilon}) \leq C\alpha_{j,r}
\left[
1 \!+\!
\log_2\left(
1 \!+\!
\frac{
2Q_n\alpha_{j,r}
A_{\alpha_{j,r},B}
A_{\boldsymbol\alpha,B}^{Q_n-1}
}{
\varepsilon
}
\right)
\right]
$.
Notice that for entries with $\alpha_{j,r}=0$, the constant network contributes only
bounded complexity. After parallelizing all $Q_n$ univariate networks and
padding shallower networks to a common depth, the univariate stage $\Psi$
has $\operatorname{depth}(\Psi) \leq C\left[
1+
\max_{\substack{j \in [J_n]_{+}\\ r\ \in [D]_{+}}}
\log_2\left(
1+
\frac{
2Q_n\alpha_{j,r}
A_{\alpha_{j,r},B}
A_{\boldsymbol\alpha,B}^{Q_n-1}
}{
\varepsilon
}
\right)
\right]$,
$\operatorname{width}(\Psi) \leq C(Q_n+n)$, and
$\operatorname{size}(\Psi) \leq C(Q_n+n)
\left[
1+
\max_{\substack{j \in [J_n]_{+}\\ r\ \in [D]_{+}}}
\log_2\left(
1+
\frac{
2Q_n\alpha_{j,r}
A_{\alpha_{j,r},B}
A_{\boldsymbol\alpha,B}^{Q_n-1}
}{
\varepsilon
}
\right)
\right]$.
The product stage is controlled by Lemma~\ref{lem:localized-product}. For $q_{\times}=Q_n$, $R_{\times}=2A_{\boldsymbol\alpha,B}$, and $\varepsilon_{\times}=\varepsilon/2$, we have $\frac{q_{\times}R_{\times}^{q_{\times}}}{\varepsilon_{\times}}=
\frac{
2^{Q_n+1}Q_nA_{\boldsymbol\alpha,B}^{Q_n}
}{
\varepsilon
}$. Thus, $\Phi^\times_{Q_n}$ has $\operatorname{depth}(\Phi^\times_{Q_n}) \leq C L_{Q_n}
\left[
1+
\log_2\left(
\frac{
2^{Q_n+1}Q_nA_{\boldsymbol\alpha,B}^{Q_n}
}{
\varepsilon
}
\right)
\right]
$, $\operatorname{width}(\Phi^\times_{Q_n}) \leq CQ_n$, and $\operatorname{size}(\Phi^\times_{Q_n}) \leq  CQ_n
\left[
1+
\log_2\left(
\frac{
2^{Q_n+1}Q_nA_{\boldsymbol\alpha,B}^{Q_n}
}{
\varepsilon
}
\right)
\right].
$
Since, $\Phi^{\rm Herm}_{\boldsymbol\alpha,B,\varepsilon}=\Phi^\times_{Q_n}\circ\Psi$, its
depth is bounded by the sum of the depths of the two stages, the width by
the maximum of the widths, and the number of non-zero parameters by the sum of
the numbers of non-zero parameters. After increasing the universal constant
$C$, this gives exactly the stated bounds.
\end{proof}
\subsubsection{From local-uniform to global $L^{2}(\mathcal{F}_t)$ approximation rate}
In order to complete the proof of Proposition \ref{prop:ChaosMode}, we use the following tool, which transfers the compact approximation of the previous section to an $L^{2}(\mathcal{F}_t)$-approximation when the input is the Gaussian vector of Brownian Haar coordinates.  Admittedly, the argument is standard, in the sense that we first approximate on a large cube, then we clip the output of the approximating network, and, finally, we use Gaussian tail estimate on the complement of the cube. Importantly, we show that the local Hermite approximating network has, for fixed compact radius $B>0$, multi-index $\boldsymbol{\alpha}$, and input dimension $Q_n=J_n D$, depth and a number of non-zero parameters growing at most logarithmically in the reciprocal of the approximation error, whereas its width is independent. We now state and prove the following lemma.  
\begin{lemma}[Clipped $\operatorname{ReLU-MLP}$]
\label{lem:clipped-local-hermite}
Let $n, D, J_n \in \mathbb{N}_{+}$, and $\boldsymbol{\alpha} \in \mathbb{N}^{J_n \times D}$ with $|\boldsymbol{\alpha}|=\sum_{r=1}^{D}\sum_{j=1}^{J_n}\alpha_{j,r}=n$ (cf.~also Lemma \ref{lem:tensorized_Hermite_poly_approx_on_compact}). Then, for every $\varepsilon \in (0,1)$, positive constant $B>0$, there exists a $\operatorname{ReLU-MLP}$ $\Phi_{\boldsymbol{\alpha}, B, \varepsilon}:\mathbb{R}^{J_n \times D} \to \mathbb{R}$ satisfying:
\begin{itemize}
    \item \textbf{Effective uniform approximation:} $\sup_{\boldsymbol{z} \in[-B,B]^{J_n\times D}}\left| h_{\boldsymbol\alpha}(\boldsymbol{z})-\Phi_{\boldsymbol{\alpha}, B, \varepsilon}(\boldsymbol{z})\right| \le \varepsilon$.
    \item \textbf{Global growth:} $\left|\Phi_{\boldsymbol{\alpha}, B, \varepsilon}(\boldsymbol{z})\right| \le |h_{\boldsymbol\alpha}(\boldsymbol{z})|+1$, for every $\boldsymbol{z}\in\mathbb R^{J_n\times D}$.
\end{itemize}
Moreover, for fixed $B, J_n, D$ and $\boldsymbol{\alpha}$, the $\operatorname{ReLU-MLP}$ $\Phi_{\boldsymbol{\alpha}, B, \varepsilon}$ has $\operatorname{depth}(\Phi_{\boldsymbol{\alpha}, B, \varepsilon}) \in \mathcal{O}(\log_2(Q_n+1)\log(\varepsilon^{-1}))$, $\operatorname{width}(\Phi_{\boldsymbol{\alpha}, B, \varepsilon}) \in \mathcal{O}(Q_n + n)$, and $\operatorname{size}(\Phi_{\boldsymbol{\alpha}, B, \varepsilon}) \in \mathcal{O}((Q_n + n)\log_2(Q_n \vee 2)\log(\varepsilon^{-1}))$. 
\end{lemma}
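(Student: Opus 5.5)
Start from the local-uniform approximant of Lemma~\ref{lem:tensorized_Hermite_poly_approx_on_compact}: for the given $B$ and accuracy $\varepsilon\in(0,1)$ it yields a $\operatorname{ReLU-MLP}$ $\Phi^{\rm Herm}_{\boldsymbol\alpha,B,\varepsilon}$ with $\sup_{[-B,B]^{J_n\times D}}|h_{\boldsymbol\alpha}-\Phi^{\rm Herm}_{\boldsymbol\alpha,B,\varepsilon}|\le\varepsilon$, and depth, width and size as in the second part of Proposition~\ref{prop:ChaosMode}. For fixed $B,J_n,D,\boldsymbol\alpha$, the constants $A_{\boldsymbol\alpha,B}$ and $A_{\alpha_{j,r},B}$ are fixed. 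Hence every logarithm in those bounds is $\mathcal O(\log(\varepsilon^{-1}))$, and the product stage carries the extra factor $\ell_{Q_n}=\mathcal O(\log_2(Q_n+1))$. This already gives the stated orders of depth, width and size. The remaining task is to modify the network so that it has the global growth bound, without losing the approximation on the cube.

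The localization is supplied by Lemma~\ref{lem:localized-product}. That lemma is applied with $R_\times=2A_{\boldsymbol\alpha,B}$, and its output range is contained in $[-R_\times^{Q_n},R_\times^{Q_n}]$. So $\Phi^{\rm Herm}_{\boldsymbol\alpha,B,\varepsilon}$ is globally bounded by a constant $M_{B}$, but $M_B$ is not comparable to $|h_{\boldsymbol\alpha}(\boldsymbol z)|+1$ everywhere. I would fix this in one of two ways.

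The first option is to enforce a cutoff. Precompose the first layer with the coordinatewise localization of Corollary~\ref{cor:coordinate-wise-localization}, or apply Lemma~\ref{lem:main_localization_trick} after rescaling the input by $1/B$, with transition width $1/\beta$ small. The resulting network then equals $\Phi^{\rm Herm}_{\boldsymbol\alpha,B,\varepsilon}$ on $[-B,B]^{J_n\times D}$ and vanishes once $\|\boldsymbol z\|_\infty\ge B(1+1/\beta)$. This costs only $\mathcal O(Q_n+\ell_{Q_n}+L)$ extra parameters and a constant extra depth, so the orders are unchanged.

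With the cutoff in place, the growth bound has to be checked in three regions:
\begin{itemize}
\item on the cube, $|\Phi|\le |h_{\boldsymbol\alpha}|+\varepsilon\le|h_{\boldsymbol\alpha}|+1$;
\item outside the enlarged cube, $\Phi=0$;
\item in the thin transition shell, this is the main obstacle.
\end{itemize}

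In the shell I would use the structure of the terminal gate~\eqref{eq:def_bar_f}. It gives $|\bar f_j|\le|g_j|$, where $g=f\circ\mathcal L^{\uparrow}$ evaluates the local approximant at the clipped point $\mathcal L^{\uparrow}(\boldsymbol z)\in[-B,B]^{J_n\times D}$. There $|g|\le|h_{\boldsymbol\alpha}(\mathcal L^{\uparrow}\boldsymbol z)|+\varepsilon$. The issue is that $h_{\boldsymbol\alpha}$ at the clipped point may exceed $h_{\boldsymbol\alpha}(\boldsymbol z)$, because Hermite polynomials have zeros.

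The fix I would try first is an output clamp that does not use the clipped point. Assemble a second branch computing the ReLU envelope $E(\boldsymbol z)\eqdef$ a network approximation of $|h_{\boldsymbol\alpha}(\boldsymbol z)|+1$ from below, obtained on the enlarged cube via Lemma~\ref{lem:tensorized_Hermite_poly_approx_on_compact} at accuracy $\tfrac12$. Then output $\min\{\max\{\Phi,-E\},E\}$ using $\max\{a,b\}=b+\operatorname{ReLU}(a-b)$. This is exact on the cube, since there $|\Phi|\le|h_{\boldsymbol\alpha}|+\varepsilon\le E$ once the $\tfrac12$-accuracy is tightened to $1-\varepsilon$. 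It is zero outside the enlarged cube if $E$ is also localized, and it is bounded by $|h_{\boldsymbol\alpha}|+1$ everywhere. The clamp at most doubles width and size and adds $\mathcal O(1)$ depth, preserving the stated orders.

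If this construction fails on the shell, the fallback is to take $\beta$ so large that the shell is negligible, and rely only on the weaker bound $|\Phi|\le C_{\boldsymbol\alpha,B}$. That weaker bound is all that the subsequent Gaussian-tail step in the proof of Proposition~\ref{prop:ChaosMode} actually uses, via $\mathcal C_{\boldsymbol\alpha,B}\,\mathbb P(\|\mathbf Z\|_\infty>B)^{1/2}$.
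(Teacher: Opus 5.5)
Your argument is correct, but it handles the transition shell by a different mechanism than the paper.

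\emph{The paper's route.} The paper does not use the folding gate of Lemma~\ref{lem:main_localization_trick} here. It approximates $h_{\boldsymbol\alpha}$ to accuracy $\delta$ on the \emph{enlarged} cube $[-B-1,B+1]^{Q_n}$ and clips that scalar output to a bounded range. It then multiplies the result by hat functions $\theta_B(z_1),\dots,\theta_B(z_{Q_n})\in[0,1]$, using the product network of Lemma~\ref{lem:localized-product} with $Q_n+1$ inputs and accuracy $\delta_\times$; the hats equal $1$ on $[-B,B]$ and vanish off $(-B-1,B+1)$. In the shell the output is then, up to $\delta_\times$, a $[0,1]$-multiple of a quantity within $\delta$ of $h_{\boldsymbol\alpha}(\boldsymbol z)$ at the true point. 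So $|\Phi|\le|h_{\boldsymbol\alpha}(\boldsymbol z)|+\delta+\delta_\times$ there, and $|\Phi|\le\delta_\times$ off the enlarged cube.

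\emph{Your route.} You keep the folding gate, and you correctly diagnose its defect: it evaluates the approximant at $\mathcal L^{\uparrow}(\boldsymbol z)$, which is unrelated to $h_{\boldsymbol\alpha}(\boldsymbol z)$ near the zero hyperplanes of $h_{\boldsymbol\alpha}$. You repair the shell with an exact clamp to $[-E,E]$, where $E$ is an $\varepsilon$-independent lower envelope of $|h_{\boldsymbol\alpha}|+1$ valid on the enlarged cube.

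\emph{Comparison.} Both proofs rest on the same point: in the shell, the growth bound must be certified by an approximation of $h_{\boldsymbol\alpha}$ at $\boldsymbol z$ itself, valid on a cube strictly larger than $[-B,B]^{Q_n}$.
\begin{itemize}
\item The paper gets this from its single $\delta$-accurate branch. The cost is an extra approximate $(Q_n+1)$-fold multiplication and only approximate vanishing outside the enlarged cube.
\item Your route keeps the $\varepsilon$-dependent branch on $[-B,B]^{Q_n}$ and shifts the enlarged-cube accuracy to a cheap fixed-accuracy branch. It gets exact vanishing and exact min/max operations. The cost is a second Hermite network plus the localization gate.
\end{itemize}

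A few details should be tidied.
\begin{itemize}
\item Drop the alternative of precomposing with Corollary~\ref{cor:coordinate-wise-localization}. Zeroing one coordinate does not kill $\Phi^{\rm Herm}_{\boldsymbol\alpha,B,\varepsilon}$ (e.g.\ $h_0\equiv 1$ and $h_2(0)=-1/\sqrt2$). Only the gate of Lemma~\ref{lem:main_localization_trick} gives vanishing for $\|\boldsymbol z\|_\infty\ge B(1+1/\beta)$.
\item What makes the clamped output vanish there is $E\ge 0$, not localization of $E$: clamping $0$ to $[-E,E]$ returns $0$. Take $E\eqdef|\Psi_2|+1-\eta$, with $|x|=\operatorname{ReLU}(x)+\operatorname{ReLU}(-x)$ and $\Psi_2$ from Lemma~\ref{lem:tensorized_Hermite_poly_approx_on_compact} at accuracy $\eta\le\frac12$ on $[-B(1+\frac1\beta),B(1+\frac1\beta)]^{Q_n}$. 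Then $E\ge 1-\eta>0$ everywhere, and $|h_{\boldsymbol\alpha}|+1-2\eta\le E\le|h_{\boldsymbol\alpha}|+1$ on that cube.
\item If you did localize $E$ by the same gate, it would have to be on a strictly larger cube, since $E$ would suffer the same folding in its own shell.
\item Exactness of the clamp on $[-B,B]^{Q_n}$ is unnecessary. There $E\ge|h_{\boldsymbol\alpha}|$, so $h_{\boldsymbol\alpha}(\boldsymbol z)\in[-E,E]$ and clamping cannot increase the error.
\end{itemize}
With these fixes the fallback is not needed; it would not prove the lemma as stated anyway.
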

\begin{proof}
Morally, the proof hinges on Lemmas \ref{lem:main_localization_trick} and \ref{lem:tensorized_Hermite_poly_approx_on_compact}. Precisely, we identify $\mathbb{R}^{J_n \times D}$ with $\mathbb{R}^{Q_n}$ and write $h_{\boldsymbol{\alpha}}(\boldsymbol{z})=\prod_{i=1}^{Q_n}h_{\alpha_i}(z_i)$. In addition, we set $B_{+} \eqdef B+1$ and $\mathcal{C}_{\boldsymbol{\alpha}, B} \eqdef 1 + \sup_{\boldsymbol{z} \in [-B_{+},B_{+}]^{Q_n}} |h_{\boldsymbol{\alpha}}(\boldsymbol{z})|$. By Lemma \ref{lem:tensorized_Hermite_poly_approx_on_compact}, for every $\delta\in(0,1)$, there exists a $\operatorname{ReLU-MLP}$, say $\Psi_{\boldsymbol{\alpha}, B_{+}, \delta}$ (with $\delta$ chosen below) such that $\sup_{\boldsymbol{z}\in[-B_+,B_+]^{Q_n}}
\left|
h_{\boldsymbol\alpha}(\boldsymbol z)
-
\Psi_{\boldsymbol\alpha,B_+,\delta}(\boldsymbol z)
\right|
\le
\delta$. For every coordinate, we define the $\operatorname{ReLU-MLP}$ -realizable hat function
\begin{equation*}
    \theta_B(t)\eqdef 1-\operatorname{ReLU}(|t|-B)
+
\operatorname{ReLU}(|t|-B-1),
\qquad
|t|=\operatorname{ReLU}(t)+\operatorname{ReLU}(-t).
\end{equation*}
In particular, we have $0\le\theta_B(t)\le1$, $\theta_B(t)=1$  for $t \in [-B,B]$, and $\theta_B(t)=0$ for $t \not\in (-B_{+},B_{+})$. As a consequence, $\prod_{i=1}^{Q_n}\theta_B(z_i)=1$ on $[-B, B]^{Q_n}$, while this product vanishes outside $(-B_+,B_+)^{J_n\times D}$. At this point, we apply Corollary~\ref{cor:coordinate-wise-localization} in dimension one with (localization) radius $\mathcal{C}_{\boldsymbol{\alpha},B}+2$ and margin $1$. This gives a $\operatorname{ReLU-MLP}$ $P^{(1)}_{B_{+},1}:\mathbb{R}\rightarrow [-(\mathcal{C}_{\boldsymbol{\alpha}, B}+2),(\mathcal{C}_{\boldsymbol{\alpha}, B}+2)]$ which is the identity on $[-(\mathcal{C}_{\boldsymbol{\alpha}, B}+1), (\mathcal{C}_{\boldsymbol{\alpha}, B}+1)]$ and vanishes outside $[-(\mathcal{C}_{\boldsymbol{\alpha}, B}+2),(\mathcal{C}_{\boldsymbol{\alpha}, B}+2)]$. Since, provided that $\delta \leq 1$, $|\Psi_{\boldsymbol\alpha,B_+,\delta}(\boldsymbol{z})|\leq \mathcal{C}_{\boldsymbol{\alpha}, B}+1$ on $[-(B_{+}+1),(B_{+}+1)]^{Q_n}$, we have that $P^{(1)}_{B_{+},1}(\Psi_{\boldsymbol\alpha,B_+,\delta}(\boldsymbol{z}))=\Psi_{\boldsymbol\alpha,B_+,\delta}(\boldsymbol{z})$ for every $\boldsymbol{z} \in [-(B_{+}+1),(B_{+}+1)]^{Q_n}$. Now, we use Lemma~\ref{lem:localized-product} with $q_{\times}=Q_n+1$, $R_{\times}=\mathcal{C}_{\boldsymbol{\alpha}, B}+3$, $\beta_{\times}=1$, and $\delta_{\times}\in (0,1)$, and apply the resulting product network to the $Q_n+1$ scalar inputs $P^{(1)}_{B_{+},1}(\Psi_{\boldsymbol\alpha,B_+,\delta}(\boldsymbol{z})), \theta_B(z_1),\ldots,\theta_B(z_{Q_n})$. By choosing the multiplication accuracy $\delta_{\times}>0$ sufficiently small, we obtain a $\operatorname{ReLU-MLP}$ $\Phi_{\boldsymbol{\alpha}, B, \varepsilon}(z)\eqdef \Phi_{Q_n+1, \delta_{\times}}^{\times, R_{\times},1}(P^{(1)}_{B_{+},1}(\Psi_{\boldsymbol\alpha,B_+,\delta}(\boldsymbol{z})), \theta_B(z_1),\ldots,\theta_B(z_{Q_n}))$. If we choose $\delta,\delta_{\times}>0$ with $\delta+\delta_{\times}\le\varepsilon$, we can obtain the local-uniform estimate in the following way. Let $\boldsymbol{z} \in [-B,B]^{Q_n}$, then $\prod_{i=1}^{Q_n}\theta_B(z_i)=1$ and $P^{(1)}_{B_{+},1}(\Psi_{\boldsymbol\alpha,B_+,\delta}(\boldsymbol{z}))=\Psi_{\boldsymbol\alpha,B_+,\delta}(\boldsymbol{z})$; whence, 
\begin{equation*}
    \left|
h_{\boldsymbol\alpha}(z)
-
\Phi_{\boldsymbol\alpha,B,\varepsilon}(\boldsymbol z)
\right| \le
\left|
h_{\boldsymbol\alpha}(\boldsymbol z)
-
\Psi_{\boldsymbol\alpha,B_+,\delta}(\boldsymbol z)
\right|
+
\delta_{\times}
\le
\delta+\delta_{\times}
\le
\varepsilon.
\end{equation*}
\noindent It remains to verify the global growth bound. If $\boldsymbol{z}\in[-B_+,B_+]^{J_n\times D}$, then $P^{(1)}_{B_{+},1} (\Psi_{\boldsymbol\alpha,B_+,\delta}(\boldsymbol{z}))=\Psi_{\boldsymbol\alpha,B_+,\delta}(\boldsymbol{z})$ and $0 \leq \prod_{i=1}^{Q_n} \theta_{B}(x_i) \leq 1$, so 
\begin{equation*}
    |\Phi_{\boldsymbol\alpha,B_+,\delta}(\boldsymbol{z})| \leq |\Psi_{\boldsymbol\alpha,B_+,\delta}(\boldsymbol z)|+\delta_{\times} \leq |h_{\boldsymbol{\alpha}}(\boldsymbol{z})| + \delta+\delta_{\times} \leq |h_{\boldsymbol{\alpha}}(\boldsymbol{z})| + 1. 
\end{equation*}
Instead, if $\boldsymbol z \notin [-B, B]^{Q_n}$, then $\theta_{Q_n}(x_i)=0$ for at least one coordinate $i$, and hence 
\begin{equation*}
    |\Phi_{\boldsymbol\alpha,B_+,\delta}(\boldsymbol{z})| \leq \delta_{\times} \leq 1 \leq |h_{\boldsymbol{\alpha}}(\boldsymbol{z})| + 1. 
\end{equation*}
This proves the  global growth bound for every $\boldsymbol{z} \in \mathbb{R}^{J_n \times D}$.\\
\noindent Finally, we need to record the complexity. To this end, we remind and write that $h_m(z)=\sum_{\ell=0}^{m} c_{m,\ell} x^{\ell}$, $A_{m,B}=\sum_{\ell=0}^{m}|c_{m,\ell}|B^{\ell}$, and $A_{\boldsymbol{\alpha},B} = 1 + \max_{j\in [J_n]_{+}, r \in [S]_{+}} A_{\alpha_{j,r},B}$. Moreover, define $\ell_{Q_n} \eqdef \left\lceil
\log_2(Q_n\vee2)\right\rceil$, 
\begin{equation*}
L_{\boldsymbol\alpha,B,\varepsilon_{\mathrm{loc}}}
\eqdef
1+
\max_{\substack{j \in [J_n]_{+}, r \in [D]_{+}}}
\log_2\left(
1+
\frac{
4 Q_n\alpha_{j,r}
A_{\alpha_{j,r},B_+}
A_{\boldsymbol\alpha,B_+}^{Q_n-1}
}{
\varepsilon
}
\right)    +
L_{Q_n}
\left(
1+
\log_2\left(
\frac{
2^{Q_n+2}Q_n
A_{\boldsymbol\alpha,B_+}^{Q_n}
}{
\varepsilon
}
\right)
\right)
\end{equation*}
and
\begin{equation*}
    M_{\boldsymbol\alpha,B,\varepsilon}
\eqdef
1+
\log_2\left(
\frac{
4(Q_n+1)
(A_{\boldsymbol\alpha,B_+}+3)^{Q_n+1}
}{
\varepsilon_{\mathrm{loc}}
}
\right)
\end{equation*}
The local Hermite approximant $\Phi_{\boldsymbol\alpha,B_+,\delta}(\cdot)$ has the complexity given by Lemma \ref{lem:tensorized_Hermite_poly_approx_on_compact} evaluated at radius $B_+$ and accuracy $\delta$. The coordinate masks
$\theta_B(z_i)$ add $O(Q_n)$ neurons and non-zero parameters. The scalar
localization map $P_{B_+,1}^{(1)}$ has $O(1)$ complexity. Therefore, the final product network has $Q_n+1$ inputs and contributes depth of order $L_{Q_n+1}M_{\boldsymbol\alpha,B,\varepsilon}$, width of order $Q_n+1$, and size of order $(Q_n+1)M_{\boldsymbol\alpha,B,\varepsilon}$. Combining these costs and increasing the universal constant $C$ gives the stated complexity in the statement of the lemma. This completes the proof. 
\end{proof}
We are now ready to prove Proposition~\ref{prop:ChaosMode}. 
\begin{proof}[Proof of Proposition~\ref{prop:ChaosMode}]
First, for $j \in [J_n]_{+}$ and $r \in [D]_{+}$, we observe $Z_{j,r} = Z_{i_j,k_j}^{r}=\int_0^T \psi_{i_j,k_j}(s)\,\ud W_s^{r}$. In particular, 
\begin{equation*}
\mathbb E[Z_{j,r}Z_{j',r'}]=\mathbf 1_{\{r=r'\}}\int_0^T
\psi_{i_j,k_j}(s)\psi_{i_{j'},k_{j'}}(s)\,\ud s = \mathbf 1_{\{r=r'\}}\mathbf 1_{\{j=j'\}};
\end{equation*}
whence $\boldsymbol{Z}=(Z_{j,r})_{j \in [J_n]_{+},r \in [D]_{+}}$ is a standard Gaussian vector in  $\mathbb{R}^{J_n \times D}$. In particular, $h_{\boldsymbol\alpha}(\boldsymbol Z)\in L^2(\Omega)$ because $h_{\boldsymbol\alpha}$ is a polynomial. Now, as in the proof of Lemma \ref{lem:tensorized_Hermite_poly_approx_on_compact}, we can consider the case $n \geq 1$. Fix $\varepsilon>0$. Since $1+|h_{\boldsymbol\alpha}(\boldsymbol Z)|\in L^2(\Omega)$, there exists $B_{\boldsymbol{\alpha}, \varepsilon}>0$ such that 
\begin{equation*}
\left\|
\mathbf 1_{\{\|\boldsymbol Z\|_\infty>B_{\boldsymbol\alpha,\varepsilon}\}}
h_{\boldsymbol\alpha}(\boldsymbol{Z})\right\|_{L^2(\Omega)}\le \frac{\varepsilon}{4},\quad\text{and}\quad  \left\|
\mathbf 1_{\{\|\boldsymbol Z\|_\infty>B_{\boldsymbol\alpha,\varepsilon}\}}
\left(1+|h_{\boldsymbol\alpha}(\boldsymbol Z)|\right)
\right\|_{L^2(\Omega)}
\le
\frac{\varepsilon}{4}  
\end{equation*}
Now, we set $\varepsilon_{\mathrm{loc}} \eqdef \min\{\frac{1}{2},\frac{\varepsilon}{2}\}$. By Lemma \ref{lem:clipped-local-hermite} applied with $B=B_{\boldsymbol\alpha,\varepsilon}$ there exists a $\operatorname{ReLU-MLP}$ $\Phi_{\boldsymbol\alpha,\varepsilon}:\mathbb R^{J_n\times D}\to\mathbb R$ such that
\begin{equation*}
\begin{aligned}
    & \sup_{\boldsymbol z\in[-B_{\boldsymbol\alpha,\rho},
B_{\boldsymbol\alpha,\varepsilon}]^{J_n\times D}}
\left|
h_{\boldsymbol\alpha}(\boldsymbol z)
-
\Phi_{\boldsymbol\alpha,\varepsilon}(\boldsymbol{z})
\right|
\le
\varepsilon_{\mathrm{loc}}, \quad \text{and} \\
& |\Phi_{\boldsymbol\alpha,\varepsilon}(\boldsymbol{z})|
\le
1+|h_{\boldsymbol\alpha}(\boldsymbol z)|,
\quad \text{ for all }
\boldsymbol z\in\mathbb R^{J_n\times D}
\end{aligned}
\end{equation*}
By setting $B_\varepsilon \eqdef B_{\boldsymbol\alpha,\varepsilon}$, we use that $|\Phi_{\boldsymbol\alpha,\varepsilon}(\boldsymbol{z})|
\le 1+|h_{\boldsymbol\alpha}(\boldsymbol Z)|$ to obtain
\begin{equation*}
\begin{aligned}
\left\|
h_{\boldsymbol\alpha}(\boldsymbol Z)
-
\Phi_{\boldsymbol\alpha,\varepsilon}(\boldsymbol{Z})
\right\|_{L^2(\Omega)} &\le
\left\|
\mathbf 1_{\{\|\boldsymbol Z\|_\infty\le B_\varepsilon\}}
\left(
h_{\boldsymbol\alpha}(\boldsymbol Z)
-
\Phi_{\boldsymbol\alpha,\varepsilon}(\boldsymbol{Z})
\right)
\right\|_{L^2(\Omega)}
\\
&\quad+
\left\|
\mathbf 1_{\{\|\boldsymbol Z\|_\infty>B_\varepsilon\}}
h_{\boldsymbol\alpha}(\boldsymbol Z)
\right\|_{L^2(\Omega)}
\\
&\quad+
\left\|
\mathbf 1_{\{\|\boldsymbol Z\|_\infty>B_\varepsilon\}}
\Phi_{\boldsymbol\alpha,\varepsilon}(\boldsymbol{Z})
\right\|_{L^2(\Omega)} \\
& \leq \varepsilon_{\mathrm{loc}} + \frac{\varepsilon}{4} + \left\|
\mathbf 1_{\{\|\boldsymbol Z\|_\infty>B_\varepsilon\}} (1+|h_{\boldsymbol\alpha}(\boldsymbol Z)|) \right\|_{L^2(\Omega)} \\
& \leq \frac{\varepsilon}{2} + \frac{\varepsilon}{4} + \frac{\varepsilon}{4} = \varepsilon.
\end{aligned}
\end{equation*}
Finally, since every coordinate of $\boldsymbol{Z}$ is \(\mathcal F_i\)-measurable and $\Phi_{\boldsymbol\alpha,\varepsilon}(\cdot)$ is deterministic and Borel measurable, we have $\Phi_{\boldsymbol\alpha,\varepsilon}(\boldsymbol{Z})\in \mathcal F_i$. Moreover, $|\Phi_{\boldsymbol\alpha,\varepsilon}(\boldsymbol{Z})|
\le 1+|h_{\boldsymbol\alpha}(\boldsymbol Z)|\in L^2(\Omega)$ and so $\Phi_{\boldsymbol\alpha,\varepsilon}(\boldsymbol{Z})\in L^2(\mathcal F_t)$ if $\boldsymbol{Z}$ is $\mathcal{F}_t$-measurable . The complexity is the one supplied by Lemma~\ref{lem:clipped-local-hermite}, evaluated at
$B=B_{\boldsymbol\alpha,\varepsilon}$ and  $\varepsilon_{\mathrm{loc}}=
\min\left\{\frac12,\frac{\varepsilon}{2}\right\}$. This concludes the proof. 
\end{proof}

\subsection{\textbf{Step 3 -- Time masking}}
\label{subsubsec:time-masking}
This subsection approximates the elementary chaoslet in Eq.~\eqref{eq:reshuffled} by using $\operatorname{ReLU-MLPs}$ of $O(1)$ complexity in the time variable $t$ and the same complexity as with the Wiener chaos variable $\omega$. This is accomplished via the subsequent proposition, which is stated for scalar processes only for notational simplicity; the same statement holds component-wise for $\mathbb{R}^{d}$-valued predictable processes in $\mathcal{S}^2_T(\mathbb{R}^{d})$ with the Euclidean norm. We notice that the current Step 3 uses $(p,q)$ to denote the time-Haar index and $(i,k)$ for the stochastic Brownian Haar coordinates $Z^r_{i,k}$.
\begin{proposition}[Predictable ReLU time masking]
\label{prop:predictable-relu-time-mask}
Let $p \in \mathbb{N}$, $q \in \{0,\ldots,2^{p}-1\}$, $0<\eta<2^{-(p+2)}$, and consider, for $0 \leq t \leq T$, the following normalized Haar (wavelet) system
\begin{equation}
\label{eq:basic_Haar}
    \psi_{p,q}(t) \eqdef 
 \tfrac{2^{p/2}}{\sqrt{T}}\left(
        -\,\mathbf{1}_{[T\frac{q}{2^p},T\frac{1+2q}{2^{p+1}})}(t)
        +
            \mathbf{1}_{[T\frac{1+2q}{2^{p+1}},T\frac{q+1}{2^p})}(t)
    \right).
\end{equation}
Then, there exists a $\operatorname{ReLU-MLP}$ $\Phi_{p,q;\eta}:\mathbb{R}\rightarrow \mathbb{R}$ of depth $1$, width $6$, and with $18$ non-zero parameters such that for any $\mathcal{F}_{\cdot}$-predictable process $u_{\cdot} \in \mathcal{H}^{2}_T$ with $\|u_{\cdot}\|_{\mathcal{S}^2_T}<\infty$, the process $u^{p,q}_{\cdot} \eqdef (u_t \psi_{p,q}(t))_{0 \leq t \leq T}$ and $u_{\cdot}^{p,q;\eta} \eqdef (u_t \Phi_{p,q;\eta}(t))_{0 \leq t \leq T}$ satisfy the following:
\begin{itemize}
    \item \emph{\textbf{Predictable masking:}} $u_{\cdot}^{p,q;\eta}$ is $\mathcal{F}_{\cdot}$-predictable and $\operatorname{supp}(t \to u_t^{p,q;\eta}) \subseteq [T \frac{q}{2^{p}}, T \frac{(q+1)}{2^p}]$, $\mathbb{P}$-a.s.
    \item  \emph{$O(1)$-\textbf{approximation}}: $\|u_{\cdot}^{p,q;\eta}-u_{\cdot}^{p,q}\|_{\mathcal{H}^2_T} \leq \frac{\|u_{\cdot}\|_{\mathcal{S}^2_T} 2^{1+\frac{p}{2}}}{\sqrt{3}}\sqrt{\eta}$.
\end{itemize}
Moreover, the conclusion holds for any $u_{\cdot} \in \mathcal{H}^2_T$ of the form $u_t=\psi_{i,j}(t) U$ with $U \in L^2(\mathcal{F}_{\frac{T q}{2^{p}}})$.
\end{proposition}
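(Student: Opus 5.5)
The plan is to build $\Phi_{p,q;\eta}$ explicitly as a one-hidden-layer ReLU network that is a continuous piecewise-linear surrogate of $\psi_{p,q}$. Write $a\eqdef Tq2^{-p}$, $m\eqdef T(2q+1)2^{-(p+1)}$, $b\eqdef T(q+1)2^{-p}$ and $c_p\eqdef 2^{p/2}/\sqrt T$. The surrogate $g_\eta$ will vanish on $(-\infty,a]$, ramp linearly down to $-c_p$ on $[a,a+\eta]$, and stay at $-c_p$ until $m-\eta$. It then ramps linearly up to $+c_p$ on $[m-\eta,m+\eta]$, stays at $c_p$ until $b-\eta$, returns to $0$ on $[b-\eta,b]$, and vanishes afterwards. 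This function has six breakpoints, $a,\,a+\eta,\,m-\eta,\,m+\eta,\,b-\eta,\,b$, and equals zero to the left of the first. It is therefore realized exactly by $t\mapsto\sum_{\ell=1}^6 w_\ell\operatorname{ReLU}(t-\tau_\ell)$, where the $\tau_\ell$ are the breakpoints and the $w_\ell$ are the successive slope changes. The slopes are $-c_p/\eta$, $0$, $c_p/\eta$, $0$, $-c_p/\eta$, $0$.

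This network has depth $1$ and width $6$. Counting one input weight, one bias and one output weight per neuron gives $18$ non-zero parameters. The output bias is zero, since the function vanishes to the left of $a$, and all the slopes are nonzero. The hypothesis $\eta<2^{-(p+2)}$ is what makes the ramps disjoint and correctly ordered: taking $T=1$ for the scaling, we have $m-a=b-m=2^{-(p+1)}>2\eta$, so $a+\eta<m-\eta<m+\eta<b-\eta$. The same inequality gives $|g_\eta|\le c_p$ everywhere and $\operatorname{supp}g_\eta\subseteq[a,b]$.

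Predictability of $u^{p,q;\eta}$ follows because $\Phi_{p,q;\eta}$ is a deterministic continuous, hence Borel, function of $t$. The product $(t,\omega)\mapsto\Phi_{p,q;\eta}(t)u_t(\omega)$ is then $\mathcal P$-measurable, as a product of a $\mathcal P$-measurable process with a deterministic Borel function of time. The support statement is inherited pathwise from $\operatorname{supp}\Phi_{p,q;\eta}\subseteq[a,b]$.

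For the error bound, I would write
\[
\|u^{p,q;\eta}-u^{p,q}\|_{\mathcal H^2_T}^2=\mathbb E\int_0^T|u_t|^2|g_\eta(t)-\psi_{p,q}(t)|^2\,\ud t\le\|u\|_{\mathcal S^2_T}^2\int_0^T|g_\eta-\psi_{p,q}|^2\,\ud t .
\]
The difference $g_\eta-\psi_{p,q}$ is supported on the three ramp intervals. On each ramp it is a linear function whose absolute value runs from $0$ to $c_p$, or from $0$ to $2c_p$ on two half-intervals of length $\eta$ in the middle ramp. Integrating $(c_p s/\eta)^2$ over $s\in[0,\eta]$ gives $c_p^2\eta/3$ per unit-height piece. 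The middle ramp, with jump $2c_p$ split into two pieces of length $\eta$ each of height $c_p$, contributes $2c_p^2\eta/3$. The total is then $c_p^2\eta\cdot 4/3=4\cdot2^{p}\eta/(3T)$.

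The square root gives $2^{1+p/2}\sqrt{\eta}/\sqrt{3T}$. This matches the claimed constant up to the factor $T^{-1/2}$, which I expect is suppressed by the normalization $T=1$, or absorbed if $\eta$ is measured in units of $T$. I would state the bound with the exact factor and, if necessary, rescale by taking ramps of width $T\eta$ instead. This bookkeeping of constants, together with the slope accounting, is the only delicate point. The remaining difficulty is the parameter count of $18$, which must be met precisely, and it is met because each neuron carries three parameters.

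For the final claim, with $u_t=\psi_{i,j}(t)U$ and $U\in L^2(\mathcal F_{Tq/2^p})$, we may not have $\|u\|_{\mathcal S^2_T}<\infty$ in the usual sense. However, $|u_t|\le c_i|U|$ holds uniformly in $t$, so $\mathbb E\sup_t|u_t|^2\le c_i^2\mathbb E|U|^2<\infty$ and the same estimate applies. Predictability follows because on $\operatorname{supp}\Phi_{p,q;\eta}\subseteq[Tq2^{-p},T]$ the random factor $U$ is already $\mathcal F_{Tq2^{-p}}$-measurable. The product $\Phi_{p,q;\eta}(t)\psi_{i,j}(t)U$ is then a deterministic Borel function times $\mathbf 1_{(Tq2^{-p},T]}(t)U$, which is a predictable elementary process. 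The endpoint $t=Tq2^{-p}$ is harmless because $\Phi_{p,q;\eta}$ vanishes there.
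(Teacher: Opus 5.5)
Your proposal is correct and follows the paper's proof essentially step for step. It uses the same six-breakpoint depth-one ReLU surrogate of $\psi_{p,q}$, the same four-triangle computation $\|\Phi_{p,q;\eta}-\psi_{p,q}\|_{L^2([0,T])}^2=4\cdot\tfrac{A_p^2\delta}{3}$, and the same estimate $\mathbb E\int_0^T|u_t|^2|\Phi_{p,q;\eta}(t)-\psi_{p,q}(t)|^2\,\ud t\le\|u\|_{\mathcal S^2_T}^2\|\Phi_{p,q;\eta}-\psi_{p,q}\|_{L^2([0,T])}^2$. Two small differences are worth noting. The paper fixes the ramp width as $\delta=T\eta$ from the start; this is exactly your rescaling, and it is also what makes $\eta<2^{-(p+2)}$ the correct disjointness condition for general $T$. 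For the final clause, the paper takes $u_t=U$ and uses Fubini to get the identity $\mathbb E|U|^2\|\Phi_{p,q;\eta}-\psi_{p,q}\|_{L^2([0,T])}^2$, whereas your sup-bound and predictable-elementary-process argument for $\psi_{i,j}(t)U$ works equally well.
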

In order to prove the previous proposition, we first turn our attention to the piecewise linear approximation, in $L^2([0,T])$, of the Haar wavelet $\psi_{p,q}(\cdot)$ in Eq.~\eqref{eq:basic_Haar}. Before doing so, to simplify the notation, we set $a_{p,q}\eqdef\frac{qT}{2^p}$, $c_{p,q}\eqdef\frac{(2q+1)T}{2^{p+1}}$, and $b_{p,q}\eqdef\frac{(q+1)T}{2^p}$; in particular, $\psi_{p,q}(t)
=
A_p
\left(
-\mathsf 1_{[a_{p,q},c_{p,q})}(t)
+
\mathsf 1_{[c_{p,q},b_{p,q})}(t)
\right)$ with $A_p \eqdef \frac{2^{p/2}}{T^{1/2}}$. Then, we consider our piecewise-linear ``building block" 
\begin{equation*}
\mathfrak b(t) \eqdef
\begin{cases}
0, & t<0,\\
t, & 0\le t<1,\\
1, & t\ge1.
\end{cases}    
\end{equation*}
Besides, for $0<\eta_t<2^{-(p+1)}$, we define $h_t \eqdef T \eta_t$, and consider the piecewise-affine approximation of the time-Haar atom by 
\begin{equation*}
    \psi^{(\eta_{\mathrm t})}_{p,q}(t)\eqdef A_p
\left[
-\mathfrak b\left(\frac{t-a_{p,q}}{h_{\mathrm t}}\right)
+
2\mathfrak b\left(\frac{t-(c_{p,q}-h_{\mathrm t})}{2h_{\mathrm t}}\right)
-
\mathfrak b\left(\frac{t-(b_{p,q}-h_{\mathrm t})}{h_{\mathrm t}}\right)
\right].
\end{equation*}
Now, in $L^2([0,T])$, we may use $\psi^{(\eta_{\mathrm t})}_{p,q}(t)$ to approximate $\psi_{p,q}(\cdot)$. Critically, $\eta_t$ is a free parameter which does not add additional linear pieces. Instead, it simply modulates slopes for small positive values thereof; thus, it will not increase the complexity of these functions, as we will see below.

\begin{lemma}[Piecewise-linear approximation of time-Haar atoms]
\label{lem:piecewise-affine-time-haar}
Let $p \in \mathbb{N}_{+}$ and let $q \in \{0,\ldots,2^{p}-1\}$. If $0<\eta_t<2^{-(p+2)}$, then 
\begin{equation*}
\left\|\psi_{p,q}-\psi^{(\eta_{\mathrm t})}_{p,q}
\right\|_{L^2([0,T])}=
\frac{2^{1+p/2}}{\sqrt3}
\sqrt{\eta_{\mathrm t}}
\end{equation*}
\end{lemma}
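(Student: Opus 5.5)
The plan is a direct piecewise computation. Write $A_p=2^{p/2}T^{-1/2}$, $h\eqdef h_{\mathrm t}=T\eta_{\mathrm t}$, and $a=a_{p,q}$, $c=c_{p,q}$, $b=b_{p,q}$, so that $c-a=b-c=T2^{-(p+1)}$.

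\textbf{Step 1 (the breakpoints are ordered).} I first check that the hypothesis $0<\eta_{\mathrm t}<2^{-(p+2)}$ gives $2h<T2^{-(p+1)}$. This yields the ordering
\[
a< a+h< c-h< c+h< b-h< b .
\]
Hence the three ramps $t\mapsto\mathfrak b((t-a)/h)$, $t\mapsto\mathfrak b((t-(c-h))/(2h))$ and $t\mapsto\mathfrak b((t-(b-h))/h)$ have pairwise disjoint transition intervals $[a,a+h]$, $[c-h,c+h]$ and $[b-h,b]$. This disjointness is the only place the smallness of $\eta_{\mathrm t}$ enters.

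\textbf{Step 2 (agreement off the transition intervals).} On each of the five complementary regions, every ramp is constant (equal to $0$ or $1$), and I evaluate $\psi^{(\eta_{\mathrm t})}_{p,q}$ directly.
\begin{itemize}
\item For $t<a$, all ramps vanish, so $\psi^{(\eta_{\mathrm t})}_{p,q}=0$.
\item On $[a+h,c-h]$, only the first ramp equals $1$, so $\psi^{(\eta_{\mathrm t})}_{p,q}=-A_p$.
\item On $[c+h,b-h]$, the first two ramps equal $1$, so $\psi^{(\eta_{\mathrm t})}_{p,q}=A_p(-1+2)=A_p$.
\item For $t\ge b$, all ramps equal $1$, so $\psi^{(\eta_{\mathrm t})}_{p,q}=A_p(-1+2-1)=0$.
\end{itemize}
In each case this coincides with $\psi_{p,q}$, so the difference is supported on the three transition intervals.

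\textbf{Step 3 (the three transition intervals).} On each interval I substitute the linear formula and compute $\int(\psi_{p,q}-\psi^{(\eta_{\mathrm t})}_{p,q})^2$.
\begin{itemize}
\item On $[a,a+h]$, I have $\psi^{(\eta_{\mathrm t})}_{p,q}(t)=-A_p(t-a)/h$ while $\psi_{p,q}=-A_p$. The difference is $A_p(1-s)$ with $s=(t-a)/h\in[0,1]$, giving $A_p^2h\int_0^1(1-s)^2\,\ud s=A_p^2h/3$.
\item On $[c-h,c+h]$, I have $\psi^{(\eta_{\mathrm t})}_{p,q}(t)=A_p(t-c)/h$, a line from $-A_p$ to $A_p$, while $\psi_{p,q}$ jumps from $-A_p$ to $A_p$ at $c$. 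On each half the difference is $A_p(1-u)$ with $u=|t-c|/h$, contributing $2A_p^2h/3$ in total.
\item On $[b-h,b]$, I have $\psi^{(\eta_{\mathrm t})}_{p,q}(t)=A_p(b-t)/h$ while $\psi_{p,q}=A_p$. By the mirror of the first case this contributes $A_p^2h/3$.
\end{itemize}

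\textbf{Step 4 (summation).} Summing the three contributions gives
\[
\|\psi_{p,q}-\psi^{(\eta_{\mathrm t})}_{p,q}\|_{L^2([0,T])}^2=\tfrac{4}{3}A_p^2h=\tfrac43\cdot\tfrac{2^p}{T}\cdot T\eta_{\mathrm t}=\tfrac{2^{p+2}}{3}\eta_{\mathrm t}.
\]
Taking square roots yields exactly $\frac{2^{1+p/2}}{\sqrt3}\sqrt{\eta_{\mathrm t}}$.

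There is no real obstacle here. The only point needing care is Step 1, since overlapping ramps would break the piecewise identification, together with tracking the factor $2$ in the middle ramp, whose slope is $1/(2h)$.
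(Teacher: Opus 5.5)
Your proof is correct and follows essentially the same route as the paper: you use $2h<T2^{-(p+1)}$ to separate the transition regions, note agreement elsewhere, and integrate the affine error on each region. The only differences are that the paper splits your middle interval $[c-h,c+h]$ into $[c-h,c]$ and $[c,c+h]$ (four pieces of $A_p^2h/3$ each), and that your explicit check of the five complementary regions fills in what the paper only asserts ``by construction.''
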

\begin{proof}
For the sake of notation, we write $a \eqdef a_{p,q}$, $c\eqdef c_{p,q}$, $b \eqdef b_{p,q}$, $\delta \eqdef \delta_{\mathrm t}$, $A\eqdef A_p$. In particular, $c-a$ coincides with $b-c$ and it is equal to $\frac{T}{2^{p+1}}$. Since $\eta_{\mathrm t}<2^{-(p+2)}$, we have that $2 \delta < c-a=b-c$. Therefore, the four intervals $[a,a+\delta]$, $[c-\delta,c]$, $[c,c+\delta]$, $[b-\delta,b]$ are pairwise disjoint (up to endpoints). By construction $\psi^{(\eta_{\mathrm t})}_{p,q} = \psi_{p,q}$ on the complement of the union of the previous intervals. Moreover, on each of the four transition intervals, the error is affine, with maximum absolute value $A$. More
precisely,
\begin{equation*}
    |\psi_{p,q}(t)-\psi^{(\eta_{\mathrm t})}_{p,q}(t)|=
    \begin{cases}
        A\frac{a+\delta-t}{\delta},
\qquad
\,\,\, t\in[a,a+\delta]\\
A\frac{t-(c-\delta)}{\delta},
\qquad
t\in[c-\delta,c]\\
A\frac{c+\delta-t}{\delta},
\qquad
\,\,\, t\in[c,c+\delta]\\
A\frac{t-(b-\delta)}{\delta},
\qquad
t\in[b-\delta,b]
    \end{cases}
\end{equation*}
Thus, each transition interval contributes $\int_0^\delta A^2\left(\frac{s}{\delta}\right)^2\,\ud s =
\frac{A^2\delta}{3}$ to the squared $L^2([0,T])$-error. Hence $\left\|
\psi_{p,q}
-
\psi^{(\eta_{\mathrm t})}_{p,q}
\right\|_{L^2([0,T])}^2
=
4\frac{A^2\delta}{3}$.
Since $A^2=\frac{2^p}{T}$, $\delta=T\eta_{\mathrm t}$, we obtain
\begin{equation*}
    \left\|
\psi_{p,q}
-
\psi^{(\eta_{\mathrm t})}_{p,q}
\right\|_{L^2([0,T])}^2
=
\frac{4}{3}2^p\eta_{\mathrm t}
=
\frac{2^{p+2}}{3}\eta_{\mathrm t}. 
\end{equation*}
Taking square roots gives the desired result. 
\end{proof}
We now return to our neural argument by first observing that $\mathfrak{b}$ can be realized as a $\operatorname{ReLU-MLP}$ of depth $1$, width $2$, and $6$ non-zero parameters
\begin{equation*}
    \mathfrak b(x)=\operatorname{ReLU}(x)-\operatorname{ReLU}(x-1).
\end{equation*}
Combining the definition of $\psi^{(\eta_{\mathrm t})}_{p,q}$ with the $\operatorname{ReLU}$ representation of $\mathfrak{b}$, we obtain
\begin{equation}\label{eq::representation}
    \begin{aligned}
\psi^{(\eta_{\mathrm t})}_{p,q}(t)
=
A_p\Bigg(
&-\operatorname{ReLU}\left(\frac{t-a_{p,q}}{\delta_{\mathrm t}}\right)
+\operatorname{ReLU}\left(\frac{t-a_{p,q}-\delta_{\mathrm t}}{\delta_{\mathrm t}}\right)
\\
&+2\operatorname{ReLU}\left(\frac{t-c_{p,q}+\delta_{\mathrm t}}{2\delta_{\mathrm t}}\right)
-2\operatorname{ReLU}\left(\frac{t-c_{p,q}-\delta_{\mathrm t}}{2\delta_{\mathrm t}}\right)
\\
&-\operatorname{ReLU}\left(\frac{t-b_{p,q}+\delta_{\mathrm t}}{\delta_{\mathrm t}}\right)
+\operatorname{ReLU}\left(\frac{t-b_{p,q}}{\delta_{\mathrm t}}\right)
\Bigg).
\end{aligned}
\end{equation}
Thus $\psi^{(\eta_{\mathrm t})}_{p,q}$ is represented exactly by a
depth-one ReLU network with width $6$ and $18$ non-zero parameters. Upon applying Lemma~\ref{lem:piecewise-affine-time-haar} we have deduced the following result.
\begin{lemma}[Arbitrarily-Accurate ReLU MLP Approximation of Haar Basis of $O(1)$ complexity] \label{lem:relu-time-haar}
Let $p \in \mathbb{N}_{+}$ and $q\in\{0,\ldots,2^p-1\}$. For every $0<\eta_{\mathrm t}<2^{-(p+2)}$ there exists a $\operatorname{ReLU-MLP}$  $\Phi_{p,q;\eta_{\mathrm t}}:\mathbb R\to\mathbb R$ of depth $1$, width $6$, and at most $18$ non-zero parameters such that
\begin{equation*}
    \left\|
\psi_{p,q}
-
\Phi_{p,q;\eta_{\mathrm t}}
\right\|_{L^2([0,T])}
=
\frac{2^{1+p/2}}{\sqrt3}
\sqrt{\eta_{\mathrm t}}
\end{equation*}
Moreover, $\operatorname{supp}(\Phi_{p,q;\eta_{\mathrm t}}) = \operatorname{supp}(\psi_{p,q}) = [a_{p,q},b_{p,q}]$.
\end{lemma}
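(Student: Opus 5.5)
The plan is to take $\Phi_{p,q;\eta_{\mathrm t}} \eqdef \psi^{(\eta_{\mathrm t})}_{p,q}$, so that the lemma is essentially a repackaging of Lemma~\ref{lem:piecewise-affine-time-haar} together with the exact ReLU representation~\eqref{eq::representation}. The argument splits into three checks: the architecture count, the $L^2$ identity, and the support claim.

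\emph{Architecture count.} By~\eqref{eq::representation}, the function $\psi^{(\eta_{\mathrm t})}_{p,q}$ is a linear combination, with output weights $A_p\cdot(-1,1,2,-2,-1,1)$, of six neurons $\operatorname{ReLU}(w_i t + b_i)$. The inner weights are
\begin{equation*}
w_i\in\{\delta_{\mathrm t}^{-1},(2\delta_{\mathrm t})^{-1}\}.
\end{equation*}
The biases are the shifted endpoints divided by $\delta_{\mathrm t}$ or $2\delta_{\mathrm t}$. Counting $6$ inner weights, $6$ biases and $6$ output weights gives depth $1$, width $6$ and at most $18$ non-zero parameters. The output bias is zero, and some biases may vanish (e.g.\ when $a_{p,q}=0$), which only lowers the count. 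This justifies ``at most''.

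\emph{The $L^2$ identity.} The displayed equality is then exactly Lemma~\ref{lem:piecewise-affine-time-haar}. I would only check that $\delta_{\mathrm t}$ in~\eqref{eq::representation} is the same quantity as $h_{\mathrm t}=T\eta_{\mathrm t}$ in the definition of $\psi^{(\eta_{\mathrm t})}_{p,q}$. The hypotheses $p\ge1$ and $\eta_{\mathrm t}<2^{-(p+2)}$ are inherited verbatim from that lemma.

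\emph{Support claim.} This is the only genuinely new piece and the step I expect to need care. I would evaluate the three $\mathfrak b$ terms piecewise.
\begin{itemize}
\item For $t\le a_{p,q}$, all three arguments are nonpositive, because $a<c-\delta<b-\delta$. Hence the function is $0$.
\item For $t\ge b_{p,q}$, each $\mathfrak b$ equals $1$, so the sum is $A_p(-1+2-1)=0$.
\item On $(a,b)$, the function equals $\mp A_p$ on the plateaux and is affine on the transition intervals.
\end{itemize}
It vanishes in the interior only at the single point $t=c_{p,q}$. This follows from the slope-$1/(2\delta)$ ramp centred at $c$ with value $-1+2\cdot\tfrac12=0$.

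Hence $\{\Phi\neq0\}=(a,b)\setminus\{c\}$, whose closure is $[a_{p,q},b_{p,q}]$. The same holds for $\psi_{p,q}$, so the closed supports coincide.

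The main obstacle is interpreting ``$\operatorname{supp}$'' as the closed support, not the set of points where the function is non-zero. I would state this convention explicitly, since the pointwise zero sets differ slightly (at $c$ and at the endpoints).
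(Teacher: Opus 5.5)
Your proposal is correct and follows the paper's own proof. Both define $\Phi_{p,q;\eta_{\mathrm t}}\eqdef\psi^{(\eta_{\mathrm t})}_{p,q}$, read the architecture count off \eqref{eq::representation} and the $L^2$ identity off Lemma~\ref{lem:piecewise-affine-time-haar}, and get the support by showing the function vanishes for $t\le a_{p,q}$ and $t\ge b_{p,q}$. Your identification of the non-zero set as $(a_{p,q},b_{p,q})\setminus\{c_{p,q}\}$, together with the explicit closed-support convention, just makes precise the paper's ``non-zero arbitrarily close to every point of the support'' step.
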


\begin{proof}
Define $\Phi_{p,q;\eta_{\mathrm t}} \eqdef \psi^{(\eta_{\mathrm t})}_{p,q}$. The first claim follows from Lemma \ref{lem:piecewise-affine-time-haar} and the representation in Eq.~\eqref{eq::representation}; whence, it remains only to prove the support identity. If $t<a_{p,q}$, then all arguments of the clipped ramp in the definition of $\psi^{(\eta_{\mathrm t})}_{p,q}$ are negative; whence $\Phi_{p,q;\eta_{\mathrm t}}(t)=0$.  If $t\ge b_{p,q}$, then all three ramp terms are equal to $1$, and therefore $\Phi_{p,q;\eta_{\mathrm t}}(t)
= A_p(-1+2-1) = 0$. Thus, $\operatorname{supp}(\Phi_{p,q;\eta_{\mathrm t}})
\subseteq [a_{p,q},b_{p,q}]$. On the other hand, $\Phi_{p,q;\eta_{\mathrm t}}$ is non-zero on subintervals of $(a_{p,q},b_{p,q})$ arbitrarily close to every point of the support. Hence its closed support is exactly $[a_{p,q},b_{p,q}]$. The same is true for the Haar atom $\psi_{p,q}$. This concludes the proof. 
\end{proof}

We are now prepared to prove the main result of this subsection; namely, Proposition \ref{prop:predictable-relu-time-mask}. 

\begin{proof}[Proof of Proposition~\ref{prop:predictable-relu-time-mask}]
By Lemma~\ref{lem:relu-time-haar}, there is a ReLU MLP $\Phi_{p,q;\eta_{\mathrm t}}:\mathbb R\to\mathbb R$ of depth $1$, width $6$, and with \(18\) non-zero parameters such that $\operatorname{supp}(\Phi_{p,q;\eta_{\mathrm t}})=\operatorname{supp}(\psi_{p,q})=
[a_{p,q},b_{p,q}]$ and $\left\|\psi_{p,q} \!-\! \Phi_{p,q;\eta_{\mathrm t}}\right\|_{L^2([0,T])}=\frac{2^{1+p/2}}{\sqrt3}\sqrt{\eta_{\mathrm t}}$. This proves the existence of the deterministic $\operatorname{ReLU}$ time network. Now, we first prove the predictability and support statement. Since $t\mapsto \Phi_{p,q;\eta_{\mathrm t}}(t)$ is deterministic and continuous, it is Borel measurable. Therefore, if $u\in\mathbb S_T^2(\mathbb R^d)$ is $\mathcal{F}_{\cdot}$-predictable, then $u^{p,q;\eta_{\mathrm t}}_t = u_t\Phi_{p,q;\eta_{\mathrm t}}(t)$ is predictable as the product of a predictable process and a deterministic
Borel function. Moreover, because $\Phi_{p,q;\eta_{\mathrm t}}(t)=0$ for $t\notin[a_{p,q},b_{p,q}]$, $u^{p,q;\eta_{\mathrm t}}_t=0$ for every such $t$.  Therefore, $\operatorname{supp}\left(t\mapsto u^{p,q;\eta_{\mathrm t}}_t\right)
\subseteq
[a_{p,q},b_{p,q}]$ almost surely. We also have $u^{p,q;\eta_{\mathrm t}}\in \mathcal{H}_T^2$. Indeed, \(\Phi_{p,q;\eta_{\mathrm t}}\) is bounded and supported in $[a_{p,q},b_{p,q}]$, and therefore
\begin{equation*}
    \mathbb E\left[\int_0^T
|u_t\Phi_{p,q;\eta_{\mathrm t}}(t)|^2\,\ud t\right]
\le
\|\Phi_{p,q;\eta_{\mathrm t}}\|_{L^\infty([0,T])}^2
T
\mathbb E\left[
\sup_{0\le s\le T}|u_s|^2
\right]
<\infty
\end{equation*}
We next prove the approximation estimate. By the definition of the $\mathcal{H}_T^2$-norm,
\begin{equation*}
    \left\|
u^{p,q;\eta_{\mathrm t}}
-
u^{p,q}
\right\|_{\mathcal{H}_T^2}^2 =
\mathbb E\left[\int_0^T
|u_t|_{\mathbb R^d}^2
\left|
\Phi_{p,q;\eta_{\mathrm t}}(t)-\psi_{p,q}(t)
\right|^2\right]\,\ud t .
\end{equation*}
Since $u\in\mathbb S_T^2$, we obtain
\begin{equation*}
\left\|
u^{p,q;\eta_{\mathrm t}}
-
u^{p,q}
\right\|_{\mathcal{H}_T^2}^2 \leq \|u\|_{\mathbb S_T^2}^2
\left\|
\Phi_{p,q;\eta_{\mathrm t}}
-
\psi_{p,q}
\right\|_{L^2([0,T])}^2 .
\end{equation*}
By applying Lemma~\ref{lem:relu-time-haar} and taking square roots gives that the latter quantity is less than or equal to $\|u\|_{\mathbb S_T^2} \frac{2^{1+p/2}}{\sqrt{3}} \sqrt{\eta_t}$. Now, it remains to prove the final statement for a random coefficient. Let $U\in L^2(\mathcal F_{a_{p,q}})$. The process $t\mapsto U\Phi_{p,q;\eta_{\mathrm t}}(t)$ is adapted because $\Phi_{p,q;\eta_{\mathrm t}}$ vanishes before $a_{p,q}$, while for $t\ge a_{p,q}$ the random variable $U$ is $\mathcal F_t$-measurable. Since
$\Phi_{p,q;\eta_{\mathrm t}}$ is continuous, this process is predictable. Similarly, $t\mapsto U\psi_{p,q}(t)$ is predictable. Finally, by Fubini's theorem,
\begin{equation*}
    \begin{aligned}
&
\left\|
U\Phi_{p,q;\eta_{\mathrm t}}
-
U\psi_{p,q}
\right\|_{\mathcal{H}_T^2}^2=
\mathbb E\left[\int_0^T
|U|^2
\left|
\Phi_{p,q;\eta_{\mathrm t}}(t)-\psi_{p,q}(t)
\right|^2\,\ud t\right]=\mathbb E\left[|U|^2\right]
\left\|
\Phi_{p,q;\eta_{\mathrm t}}
-
\psi_{p,q}
\right\|_{L^2([0,T])}^2
\end{aligned}
\end{equation*}
Taking square roots and applying Lemma~\ref{lem:relu-time-haar} completes the proof. 
\end{proof}
\subsection{\textbf{\emph{Step 4 -- Assembly}}}
\begin{proof}[Proof of Theorem \ref{thrm:happytimes}]
We first prove the density statement in the scalar case. The vector-valued case
will then follow component-wise by stacking the corresponding scalar
$\operatorname{NeuralChaos}$ blocks. Let $\mathfrak{C}^{\rm all}$ be the family in Eq.~\eqref{eq:chaosless_disorganized_ungraded}, which is an orthonormal basis of $\mathcal{H}_T^2$. Whence, every finite linear combinations of chaoslets are dense in $\mathcal{H}_T^2$. In particular, for a fixed $\varepsilon>0$, we may choose a finite set $\overline{\mathcal{F}} \subset \mathfrak{C}^{\rm all}$ and coefficients $(\beta_m)_{m \in \overline{\mathcal{F}}}$ such that the quantity $\overline{Y}_{\cdot} \eqdef \sum_{m \in \overline{\mathcal{F}}} \beta_m m $ satisfies $\|X_{\cdot}-\overline{Y}_{\cdot}\|_{\mathcal{H}^2_T} \leq \frac{\varepsilon}{2}$. Whence, we need to prove that we can approximate $\overline{Y}_{\cdot}$ by a $\operatorname{NeuralChaos}$ model (cf.~Section \ref{sec::section_3}). To this end, we consider a non-constant scalar chaoslet $m=\mathfrak{C}^{\boldsymbol{\alpha_{(p,q)}}}$. By definition (cf.~Eq.~\eqref{eq:reshuffled}), $\mathfrak{C}_{(p,q)}^{\boldsymbol{\alpha}}
    (t,\omega)=\psi_{p,q}(t)u^{(T)}_{\boldsymbol\alpha;p,q}(\omega)$, where $u^{(T)}_{\boldsymbol\alpha;p,q}(\omega)= \prod_{(i,k)\in \mathcal{I}_{p,q}}
        \,
        \prod_{r=1}^{D}
        \,
        h_{\alpha_r^{i,k}}\big(
            Z_{i,k}^r(\omega)
        \big)$. Now, set, again, $a_{p,q}\eqdef\frac{qT}{2^p}$. By the definition of $\mathcal{I}_{p,q}$, every Brownian Haar coordinate entering the quantity $u^{(T)}_{\boldsymbol\alpha;p,q}(\omega)$ is measurable no later than $a_{p,q}$; whence $u^{(T)}_{\boldsymbol\alpha;p,q}\in L^2(\mathcal F_{a_{p,q}})$. Besides, $\|u^{(T)}_{\boldsymbol\alpha;p,q}\|_{L^2(\Omega)}=1$. Now, we observe that since $\boldsymbol \alpha$ is finitely supported, only finitely many Brownian Haar coordinates appear in $u^{(T)}_{\boldsymbol \alpha;p,q}$; we let $\Lambda_{\boldsymbol \alpha;p,q}
\eqdef
\left\{
(i,k)\in \mathcal{I}_{p,q}:
\exists r\in[D]_{+}\ \text{such that}\ \alpha^r_{i,k}>0
\right\}$ and define $\boldsymbol{Z}_{\boldsymbol\alpha;p,q} \eqdef \left(
Z^r_{i_j,k_j}\right)_{j \in J_{\boldsymbol{\alpha}}, r \in D_{+}}$, $\widetilde{\alpha}_{j,r} \eqdef \alpha^r_{i_j,k_j}$, so $u^{(T)}_{\boldsymbol \alpha;p,q}=h_{\widetilde{\boldsymbol\alpha}}(\boldsymbol{Z}_{\boldsymbol\alpha;p,q})$, where $h_{\widetilde{\boldsymbol\alpha}}(\boldsymbol{z})\eqdef \prod_{r=1}^{D}
\prod_{j=1}^{J_{\boldsymbol\alpha}} h_{\widetilde{\boldsymbol\alpha}}(z_{j,r})$. Applying Proposition~\ref{prop:ChaosMode}, for every $\rho>0$, there exists a $\operatorname{ReLU-MLP}$ $\Phi_{\boldsymbol{\alpha},\rho}:\mathbb{R}^{J_{\boldsymbol{\alpha}} \times D} \rightarrow \mathbb{R}$ such that 
\begin{equation*}
    \mathbb{E}\left[|u^{(T)}_{\boldsymbol\alpha;p,q}-\Phi_{\boldsymbol{\alpha},\rho}(\boldsymbol{Z}_{\boldsymbol\alpha;p,q})|^2\right]^{\frac{1}{2}} \leq \rho.
\end{equation*}
Next, by Lemma \ref{lem:relu-time-haar}, for every sufficiently small $\eta_{\mathrm t}>0$, there exists a $\operatorname{ReLU-MLP}$ $\Phi_{p,q;\eta_{\mathrm t}}:\mathbb{R} \rightarrow \mathbb{R}$ of depth $1$, width $6$, and at most $18$ non-zero parameters such that
\begin{equation*}
    \|\psi_{p,q}-\Phi_{p,q;\eta_{\mathrm t}}\|_{L^2([0,T])}
=
\frac{2^{1+p/2}}{\sqrt3}\sqrt{\eta_{\mathrm t}}
\end{equation*}
and $\operatorname{supp}(\Phi_{p,q;\eta_{\mathrm t}}) = \operatorname{supp}(\psi_{p,q}) = [a_{p,q},b_{p,q}]$. We thus define the single-chaoslet neural approximation as $\widehat{\mathfrak{C}}^{\boldsymbol{\alpha},\rho,\eta_{\mathrm t}}_{(p,q)}(t,\omega) \eqdef \Phi_{p,q;\eta_{\mathrm t}}(t)\,\Phi_{\boldsymbol{\alpha},\rho}(\boldsymbol{Z}_{\boldsymbol\alpha;p,q})$, which is predictable and square-integrable. Moreover, 
\begin{equation*}
 \left\|
\mathfrak C^{\boldsymbol{\alpha}}_{(p,q)}
-
\widehat{\mathfrak C}^{\boldsymbol{\alpha},\rho,\eta_{\mathrm t}}_{(p,q)}
\right\|_{\mathcal{H}_T^2}  \le
\left\|
(\psi_{p,q}-\Phi_{p,q;\eta_{\mathrm t}})
u^{(T)}_{\boldsymbol\alpha;p,q}
\right\|_{\mathcal{H}_T^2} + \left\|
\Phi_{p,q;\eta_{\mathrm t}}
\left(
u^{(T)}_{\boldsymbol\alpha;p,q}
-\Phi_{\boldsymbol{\alpha},\rho}(\boldsymbol{Z}_{\boldsymbol\alpha;p,q}) \right)\right\|_{\mathcal{H}_T^2}
\end{equation*}
The first term on the right-hand side of the inequality if finite and, since $\|u^{(T)}_{\boldsymbol\alpha;p,q}\|_{L^2(\Omega)}=1$, it is equal to $\frac{2^{1+p/2}}{\sqrt3}\sqrt{\eta_{\mathrm t}}$ (see above).  The second term, instead is easily bounded by $\rho \|\Phi_{p,q;\eta_{\mathrm t}}\|_{L^2([0,T])}$, which can be bounded by $1+
\|\Phi_{p,q;\eta_{\mathrm t}}-\psi_{p,q}\|_{L^2([0,T])}
=
1+
\frac{2^{1+p/2}}{\sqrt3}\sqrt{\eta_{\mathrm t}}$. In particular, 
\begin{equation*}
 \left\|
\mathfrak C^{\boldsymbol{\alpha}}_{(p,q)}
-
\widehat{\mathfrak C}^{\boldsymbol{\alpha},\rho,\eta_{\mathrm t}}_{(p,q)}
\right\|_{\mathcal{H}_T^2}  \le
\frac{2^{1+p/2}}{\sqrt3}\sqrt{\eta_{\mathrm t}}
+
\rho
\left(
1+
\frac{2^{1+p/2}}{\sqrt3}\sqrt{\eta_{\mathrm t}}
\right)
\end{equation*}
As a consequence, given $\delta>0$, we can opportunely choose $\eta_{\mathrm t}$ and then $\rho>0$ such that the previous quantity is bounded by $\delta$.  At this point, we need to prove that $\widehat{\mathfrak C}^{\boldsymbol{\alpha},\rho,\eta_{\mathrm t}}_{(p,q)}$ is produced by the $\operatorname{NeuralChaos}$ architecture in Section \ref{sec::section_3}. We therefore choose a deterministic grid $=(t_i)_{i=1}^{M}$, $0=t_0<t_1<\cdots<t_M=T$
$a_{p,q}$ and all Brownian sampling times needed to compute
are at-most evaluated only at these grid times only.
Step 1 in Subsection \ref{s:Proofs__ss:MainThrm___sss:Lifting} gives a
lower-triangular linear lift that computes these Brownian Haar coordinates from the sampled Brownian values. The row-wise head at information time $a_{p,q}$ is chosen to be the $\operatorname{ReLU-MLP}$ $\Phi_{\boldsymbol{\alpha}; p, q, \rho}$. Instead, the
deterministic time network is chosen to be $\Phi_{p,q;\eta_{\mathrm t}}$.
Therefore, one row/head/mask block of $\operatorname{NeuralChaos}$ realizes $(t,\omega)\to \Phi_{p,q;\eta_{\mathrm t}}\Phi_{\boldsymbol{\alpha}; p, q, \rho}$. Notice that if the literal Section~\ref{sec::section_3} mask convention includes the additional threshold factor $\sigma\left(\frac{t-a_{p,q}}{\lambda}\right)$, we replace the time factor by $\Phi_{p,q;\eta_{\mathrm t}}(t)
\sigma\left(\frac{t-a_{p,q}}{\lambda}\right)$. Since $\Phi_{p,q;\eta_{\mathrm t}}$ is bounded and the modification is
supported on a deterministic interval of length $O(\lambda)$, the additional $\mathcal{H}_T^2$-error can be made arbitrarily small by taking $\lambda>0$ sufficiently small; this error is absorbed into the one-chaoslet tolerance. We now approximate the finite scalar chaoslet sum
$\overline{Y}=\sum_{m\in \overline{F}}\beta_m m$. We set $B_{\overline{F}} \eqdef \sum_{m\in F}|\beta_m$, and for every non-constant $m\in \overline{F}$, construct a $\operatorname{NeuralChaos}$ block $\widehat{\mathfrak C}^{\boldsymbol{\alpha},\rho,\eta_{\mathrm t}}_{(p,q)}$ as above such that\footnote{The constant basis element $T^{-1/2}$ is represented exactly if the architecture includes a constant output bias. Otherwise, it can be approximated
arbitrarily well by a deterministic head and a deterministic time mask; this
error is also absorbed into the same tolerance.} $\left\|
\mathfrak C^{\boldsymbol{\alpha}}_{(p,q)}
-
\widehat{\mathfrak C}^{\boldsymbol{\alpha},\rho,\eta_{\mathrm t}}_{(p,q)}
\right\|_{\mathcal{H}_T^2} \leq \frac{\varepsilon}{2 (1+B_{\overline{F}})}$. Now, we define $\widehat{\overline{Y}} \eqdef \sum_{\overline{\mathcal{F}}} \beta_m \widehat{\mathfrak C}^{\boldsymbol{\alpha},\rho,\eta_{\mathrm t}}_{(p,q)}$. The $\operatorname{NeuralChaos}$ class is closed under finite parallelization: we use one block for each $\widehat{\mathfrak C}^{\boldsymbol{\alpha},\rho,\eta_{\mathrm t}}_{(p,q)}$ and sum the resulting scalar outputs in the final
affine layer. Whence $\widehat{\overline{Y}} \in \mathfrak{N}\mathfrak{C}_{T,D,1}$, and together with $\|X_{\cdot}-Y_{\cdot}\|_{\mathcal{H}^2_T}\le \frac{\varepsilon}{2}$, we obtain the density in the scalar case. 

We now prove the quantitative statement. Assume that $X_{\cdot}\in\mathcal H_T^2(\mathbb R^d)$ is $S$-compressible, with $S>1/2$, and belongs to $\mathbb D_T^{s,2:d}$, with $s>0$. By Proposition~\ref{prop:sparse_and_smooth}, for every $N\in\mathbb N_{+}$ and every $P\in\mathbb N$, there exist distinct chaoslets $\mathfrak m^{(1)},\ldots,\mathfrak m^{(N)}\in\cup_{k=0}^{P}\mathfrak C_k$ and coefficients $a_1,\ldots,a_N\in\mathbb R^d$ such that
\begin{equation*}
\left\|
X_{\cdot}
-
\sum_{\nu=1}^{N}a_\nu \mathfrak m^{(\nu)}
\right\|_{\mathcal H_T^2(\mathbb R^d)}
\lesssim_X
N^{-\left(S-\frac12\right)}
+
(1+P)^{-s/2}.
\end{equation*}
Set $Y_{N,P}
\eqdef
\sum_{\nu=1}^{N}a_\nu \mathfrak m^{(\nu)}$. Let $C_X^{\rm app}>0$ be an $X$-dependent constant for the preceding estimate. Since $N\ge1$ and $(1+P)^{-s/2}\le1$, there exists a constant $A_X>0$, depending only on $X$, such that $\|Y_{N,P}\|_{\mathcal H_T^2(\mathbb R^d)} \le
A_X$ for all $N$ and $P$. For example, one may take $A_X
\eqdef
1+\|X_{\cdot}\|_{\mathcal H_T^2(\mathbb R^d)}+2C_X^{\rm app}$. Since the chaoslets are orthonormal and the retained chaoslets are distinct,
\begin{equation*}
\sum_{\nu=1}^{N}|a_\nu|_{\mathbb R^d}
\le
\sqrt{N}
\left(
\sum_{\nu=1}^{N}|a_\nu|_{\mathbb R^d}^2
\right)^{1/2}
=
\sqrt{N}\,
\|Y_{N,P}\|_{\mathcal H_T^2(\mathbb R^d)}
\le
\sqrt{N}A_X.
\end{equation*}
Now, we define, $\rho_{N,X,\varepsilon}\eqdef
\frac{\varepsilon}{8(1+\sqrt{N}A_X)}.
$
For each retained chaoslet $\mathfrak m^{(\nu)}$, construct a scalar $\operatorname{NeuralChaos}$ approximant $\widehat{\mathfrak m}^{(\nu)}$ as above with one-chaoslet error at most $\rho_{N,X,\varepsilon}$. This is achieved by using stochastic-head accuracy $\rho_{N,X,\varepsilon}$ and by choosing the corresponding time-mask parameter $\eta_{\mathrm t}^{(\nu)}$ sufficiently small. We define $\widehat X_{N,P,\varepsilon}
\eqdef
\sum_{\nu=1}^{N}a_\nu \widehat{\mathfrak m}^{(\nu)}$. By finite parallelization and final affine summation, $\widehat X_{N,P,\varepsilon}\in\mathfrak N\mathfrak C_{T,D,d}$. Moreover,
\begin{equation*}
\begin{aligned}
\|Y_{N,P}-\widehat X_{N,P,\varepsilon}\|_{\mathcal H_T^2(\mathbb R^d)}
&\le
\sum_{\nu=1}^{N}
|a_\nu|_{\mathbb R^d}
\|\mathfrak m^{(\nu)}-\widehat{\mathfrak m}^{(\nu)}\|_{\mathcal H_T^2}
\\
&\le
\sqrt{N}A_X\,\rho_{N,X,\varepsilon}
\le
\varepsilon.
\end{aligned}
\end{equation*}
Therefore,
\begin{equation*}
\begin{aligned}
\|X_{\cdot}-\widehat X_{N,P,\varepsilon}\|_{\mathcal H_T^2(\mathbb R^d)}
&\le
\|X_{\cdot}-Y_{N,P}\|_{\mathcal H_T^2(\mathbb R^d)}
+
\|Y_{N,P}-\widehat X_{N,P,\varepsilon}\|_{\mathcal H_T^2(\mathbb R^d)}
\\
&\lesssim_X
N^{-\left(S-\frac12\right)}
+
(1+P)^{-s/2}
+
\varepsilon.
\end{aligned}
\end{equation*}
This proves the quantitative approximation estimate.

It remains to record the complexity. Since each retained chaoslet belongs to $\cup_{k=0}^{P}\mathfrak C_k$, its Wiener-chaos degree is at most $P$. Hence its stochastic Hermite-Haar factor contains at most $P$ active scalar Brownian Haar coordinates. Each such coordinate is a linear combination of three Brownian samples. Thus the causal lifting stage contributes at most $3P$ non-zero linear coefficients per retained chaoslet, up to a universal constant. The deterministic time mask has depth $1$, width $6$, and at most $18$ non-zero parameters. By the definition of $D_{\mathrm H}(P,\rho)$, $W_{\mathrm H}(P,\rho)$, and $S_{\mathrm H}(P,\rho)$, the stochastic head for each retained chaoslet has depth, width, and number of non-zero parameters bounded by $D_{\mathrm H}(P,\rho_{N,X,\varepsilon})$, $W_{\mathrm H}(P,\rho_{N,X,\varepsilon})$, $S_{\mathrm H}(P,\rho_{N,X,\varepsilon})$, respectively.

After parallelizing the $N$ retained chaoslet blocks, the depth is the maximum of the block depths, while widths and numbers of non-zero parameters add. The final affine layer implements the vector coefficients $a_\nu\in\mathbb R^d$, $\nu=1,\ldots,N$, and contributes at most $Nd$ additional non-zero parameters. Therefore, for a universal constant $C>0$,
\begin{equation*}
\operatorname{depth}
\left(
\widehat X_{N,P,\varepsilon}
\right)
\le
C
\left[
1+
D_{\mathrm H}
\left(
P,\rho_{N,X,\varepsilon}
\right)
\right],
\end{equation*}
\begin{equation*}
\operatorname{width}
\left(
\widehat X_{N,P,\varepsilon}
\right)
\le
C
\left[
d+
N
\left(
1+
W_{\mathrm H}
\left(
P,\rho_{N,X,\varepsilon}
\right)
\right)
\right],
\end{equation*}
and
\begin{equation*}
\operatorname{size}
\left(
\widehat X_{N,P,\varepsilon}
\right)
\le
C\,N
\left[
P+d+1+
S_{\mathrm H}
\left(
P,\rho_{N,X,\varepsilon}
\right)
\right].
\end{equation*}
Finally, the finite sampling grid is obtained by collecting all Brownian sampling times required by the retained chaoslets, together with the corresponding information times $a_{p,q}$. Since each retained chaoslet uses at most $3P$ Brownian sample times and one information time, the grid can be chosen with $M_{\mathrm{grid}} \le C\,N(P+1)$. This completes the proof of Theorem~\ref{thrm:happytimes}.
\end{proof}

\section{Proof of the results in Subsection \ref{subsec:negative-results}}\label{app::negative}
\begin{proof}[{Proof of Proposition~\ref{prop:goodnews}}]
    Fix $\delta\in (0,T)$, set
$
N_\delta\eqdef \big\lceil \tfrac{T}{\delta}\big\rceil
$,
and set $t_i\eqdef (i\delta)\wedge T$ for each $i=0,\dots,N_{\delta}$.  Define the ``Euler-Maruyama update-rule'' as the map
\[
f_\delta:[0,\infty)\times \mathbb{R}^{d_X+2D}\to \mathbb{R}^{d_X},
\qquad
f_\delta(t,x,w,u)
\eqdef
x+b(t,x)\bigl(((t+\delta)\wedge T)-t\bigr)+\Sigma(t,x)(u-w)
.
\]
Since $b$ and $\Sigma$ are Lipschitz, they are Borel-measurable; thus, so is $f_\delta$.  
Now define recursively
$
X^{(0)}\eqdef \xi
$
and for each $i=0,\dots,N_{\delta}-1$ define
$
X^{(i+1)}
\eqdef
f_\delta(t_i,X^{(i)},W_{t_i},W_{t_{i+1}})$.
A simple induction using Gr\"{o}nwall's inequality shows that, for each $i=0,\dots,N_{\delta}$, we have 
$
X^{(i)}\in L^2(\mathcal{F}_{t_i};\mathbb{R}^{d_X})
$.
Now, define the piecewise-constant process by
\[
\hat{X}_t^{\delta}
\eqdef
\sum_{i=0}^{N_\delta-1}
\mathbf{1}_{[t_i,t_{i+1})}(t)\,
\mathbb{E}\big[X^{(i)}\mid \mathcal{F}_{t_i}\big]
\]
where $0\le t\le T$.
Since $X^{(i)}$ is $\mathcal{F}_{t_i}$-measurable, one has
$
\mathbb{E}[X^{(i)}\mid \mathcal{F}_{t_i}]
=
X^{(i)}
$ 
$\mathbb{P}$-a.s.\ for each $i=0,\dots,N_{\delta}-1$; 
therefore
$
\hat{X}_t^{\delta}
=
\sum_{i=0}^{N_\delta-1}
\mathbf{1}_{[t_i,t_{i+1})}(t)\,
X^{(i)}
$
and so $\hat X^{\delta}\in \mathfrak{B}$.

It remains to show that the family of processes $(\hat{X}^{\delta}_\cdot)_\delta$ converges to $X_{\cdot}$ in $\mathcal{H}_T^2(\mathbb R^{d_X})$.  Consider the classical continuous-time Euler-Maruyama approximations $(\bar{X}^{\delta}_{\cdot})_{\delta>0}$ to $X_{\cdot}$; namely,
\[
    \bar{X}_t^{\delta}
\eqdef
    X^{(i)}
    +
    b(t_i,X^{(i)})(t-t_i)
    +
    \Sigma(t_i,X^{(i)})(W_t-W_{t_i})
\] 
for $t\in [t_i,t_{i+1})$ and each $i=0,\dots,N_{\delta}-1$.
The main result of~\cite{maruyama1955continuous} implies $\mathcal{S}_T^2(\mathbb R^{d_X})$-norm convergence, i.e.,
\begin{equation*}
    \lim_{\delta\downarrow 0}
    \,
    \mathbb{E}\Big[\sup_{0\le t\le T}\|X_t-\bar{X}_t^{\delta}\|^2\Big]=0.
\end{equation*}
Hence, by the usual inequalities between the $\mathcal{H}_T^2$ and $\mathcal{S}_T^2$ norms, we have
\begin{equation}
\label{eq:convergence_EM_approx}
    0
\le 
    \lim_{\delta\downarrow 0}
        \,
    \mathbb{E}\biggl[
        \int_0^T\, \|X_t-\bar{X}_t^{\delta}\|^2\,dt
    \biggr]
\le 
    \lim_{\delta\downarrow 0}
        \,
    T
    \mathbb{E}\biggl[
        \sup_{0\le t\le T}\|X_t-\bar{X}_t^{\delta}\|^2
    \biggr]
=
    0
.
\end{equation}
We only need to show that $\hat{X}^{\delta}_{\cdot}$ converges to $\bar{X}_{\cdot}^{\delta}$ to obtain the conclusion.  
For any $i=0,\dots,N_{\delta}-1$ and any $t\in [t_i,t_{i+1})$ we have
\[
\bar{X}_t^{\delta}-\hat{X}_t^{\delta}
=
b(t_i,X^{(i)})(t-t_i)
+
\Sigma(t_i,X^{(i)})(W_t-W_{t_i}).
\]
Therefore, using the basic inequality $(a+b)^2\le 2a^2+2b^2$,
$$
\begin{aligned}
\mathbb{E}\int_{t_i}^{t_{i+1}}
\|\bar{X}_t^{\delta}-\hat{X}_t^{\delta}\|^2\,dt
\le
2\,
\mathbb{E}\int_{t_i}^{t_{i+1}}
\|b(t_i,X^{(i)})\|^2 (t-t_i)^2\,dt
+
2
\mathbb{E}\int_{t_i}^{t_{i+1}}
\|\Sigma(t_i,X^{(i)})(W_t-W_{t_i})\|^2\,dt.
\end{aligned}
$$
Since
$
\int_{t_i}^{t_{i+1}} (t-t_i)^2\,\ud t\le \frac{\delta^3}{3}
$; while
$
    \mathbb{E}\big[\|\Sigma(t_i,X^{(i)})(W_t-W_{t_i})\|^2\mid \mathcal{F}_{t_i}\big]
=
    (t-t_i)\|\Sigma(t_i,X^{(i)})\|_{HS}^2
$; then
\begin{equation}
\label{eq:bounding_nicething}
    \mathbb{E}\int_{t_i}^{t_{i+1}}
    \|\bar{X}_t^{\delta}-\hat{X}_t^{\delta}\|^2\,dt
\le
    C\delta^2\,
\mathbb{E}\big[1+\|X^{(i)}\|^2\big]
.
\end{equation}
Using the standard uniform second-moment bound for the Euler scheme, there exists some finite constant $C_T>0$ such that, after summing over $i=0,\dots,N_\delta-1$ in~\eqref{eq:bounding_nicething}, we find that
\begin{equation}
\label{eq:bound_me_bby}
\|\bar{X}^{\delta}-\hat X^{\delta}\|_{\mathcal{H}_T^2(\mathbb R^{d_X})}^2
=
\mathbb{E}\int_0^T \|\bar{X}_t^{\delta}-\hat{X}_t^{\delta}\|^2\,dt
\le
C_T\,\delta
.
\end{equation}
Upon setting $\delta>0$ small enough, together~\eqref{eq:convergence_EM_approx} and~\eqref{eq:bound_me_bby} yield
\[
    \lim_{\delta\downarrow 0}
        \,
    \mathbb{E}\biggl[
        \int_0^T\, \|\hat{X}^{\delta}_t-X_t\|^2\,dt
    \biggr]
\le 
    \lim_{\delta\downarrow 0}
        \,
    \mathbb{E}\biggl[
        \int_0^T\, \|X_t-\bar{X}_t^{\delta}\|^2\,dt
    \biggr]
    +
    \lim_{\delta\downarrow 0}
        \,
    \|\bar{X}^{\delta}-\hat X^{\delta}\|_{\mathcal{H}_T^2(\mathbb R^{d_X})}^2
=
    0,
\]
which completes the proof.
\end{proof}

\begin{proof}[{Proof of Proposition~\ref{prop:badnews}}]
We first prove the negative topological result, namely (i), before using some of the tools developed therein to conclude the negative measure-theoretic result (ii).
\hfill\\
\noindent
\textbf{Proof of (ii) - Topological Non-Representativeness:} 
Let us begin by simplifying our problem as follows.  
For each $N\in\mathbb N_+$, define $\mathfrak{S}_N$ to denote the set of all processes $X\in \mathcal{H}_T^2(\mathbb R^{d_X})$ for which there exist deterministic times
$
0=t_0\leq t_1\leq \cdots \leq t_N=T
$
and random variables $\xi_i\in L^2(\mathcal F_{t_i};\mathbb R^{d_X})$, $i=0,\dots,N-1$, such that
\[
    X_t
=
    \sum_{i=0}^{N-1}
    \mathbf{1}_{[t_i,t_{i+1})}(t)\,\xi_i
    \qquad
    \ud t\otimes d\mathbb P\text{-a.e.}
\]
By construction,
$
\mathfrak{B} \subseteq \bigcup_{N=1}^{\infty}\mathfrak{S}_N
$.
By~\eqref{eq:badguys_by_name}, for any $X_{\cdot}\in\mathfrak{B}$, there exists some ``frequency'' $0<\delta <T$ such that $X$ is constant on the deterministic grid intervals
$
[t_i,t_{i+1})\cap[0,T],
$
so $X\in \mathfrak{S}_N$ for $N\eqdef \lceil T/\delta\rceil$.

Since a set is meagre if it is contained in a countable union of nowhere dense sets, it is enough to show that each $\mathfrak{S}_N$ is nowhere dense in $\mathcal{H}_T^2(\mathbb R^{d_X})$.  To this end, we show that each $\mathfrak{S}_N$ is a closed subset of $\mathcal{H}_T^2(\mathbb R^{d_X})$ with empty interior.
To see that each $\mathfrak{S}_N$ is closed, let $(X^n)_{n\geq 1}\subseteq \mathfrak{S}_N$ converge in $\mathcal{H}_T^2(\mathbb R^{d_X})$ to some $X$. For each $n$, choose a representation
$$
X^n_t
=
\sum_{i=0}^{N-1}
\mathbf{1}_{[t_i^n,t_{i+1}^n)}(t)\,\xi_i^n,
\qquad
\xi_i^n\in L^2(\mathcal F_{t_i^n};\mathbb R^{d_X}),
$$
with
$
0=t_0^n\leq t_1^n\leq \cdots \leq t_N^n=T.
$
Passing to a subsequence, we may assume that, for every $i=0,\dots,N$, the sequence $(t_i^n)_n$ converges to some $t_i\in[0,T]$, and moreover is monotone in $n$; consequently
$
0=t_0\leq t_1\leq \cdots \leq t_N=T.
$
Now, fix $i\in\{0,\dots,N-1\}$ such that $t_i<t_{i+1}$. Let $I\subset (t_i,t_{i+1})$ be a compact sub-interval. Then, for all sufficiently large $n$, one has
$
I\subset [t_i^n,t_{i+1}^n),
$
and therefore
$
X_t^n=\xi_i^n
$
$\ud t\otimes d\mathbb{P}$-a.e.\ on $I\times\Omega$.  
Hence, for $n,m\in \mathbb{N}_+$ large enough,
\[
        |I|\,
        \mathbb{E}\big[\|\xi_i^n-\xi_i^m\|^2\big]
    =
        \mathbb{E}\biggl[
            \int_I \|X_t^n-X_t^m\|^2\,\ud t
        \biggr]
    \le
        \|X^n-X^m\|_{\mathcal{H}_T^2(\mathbb R^{d_X})}^2
\]
where $|I|$ denotes the Lebesgue measure/length of the interval $I$.
Since $(X^n)_n$ is Cauchy in $\mathcal{H}_T^2(\mathbb R^{d_X})$, it follows that $(\xi_i^n)_n$ is Cauchy in $L^2$, hence converges in $L^2$ to some $\xi_i\in L^2(\Omega;\mathbb R^{d_X})$.
The $\mathbb{P}$-a.s.\ continuity of $W_{\cdot}$ implies that, since each $\xi_i^n$ is $\mathcal F_{t_i^n}$-measurable and $(t_i^n)_n$ is monotone with limit $t_i$, the continuity of the Brownian filtration implies that $\xi_i$ is $\mathcal F_{t_i}$-measurable; and thus
$
X_t=\xi_i
$
$\ud t\otimes d\mathbb{P}$-a.e.\ on $I\times \Omega$.
Since $I\subset (t_i,t_{i+1})$ was arbitrary, we upgrade this deduction to the entire sub-interval $(t_i,t_{i+1})$; i.e.\
$
X_t=\xi_i
$
$\ud t\otimes d\mathbb{P}$-a.e.\ on $(t_i,t_{i+1})\times \Omega$.
For indices $i$ with $t_i=t_{i+1}$, set $\xi_i\eqdef 0$. 
We thus conclude that
\begin{equation}
\label{eq:nice_representation}
    X_t
=
    \sum_{i=0}^{N-1}
    \,
        \mathbf{1}_{[t_i,t_{i+1})}(t)\,\xi_i
    \qquad
    \ud t\otimes \ud \mathbb P\text{-a.e.}
,
\end{equation}
with $\xi_i\in L^2(\mathcal F_{t_i};\mathbb R^{d_X})$. Hence $X_{\cdot}\in \mathfrak{S}_N$, and so $\mathfrak{S}_N$ is closed.

We now only need to show that it has empty interior.  Again, fix an arbitrary $X\in \mathfrak{S}_N$, $\varepsilon>0$, and choose any representation of $X_{\cdot}$ as in~\eqref{eq:nice_representation}.  
Then there must exist some $j\in \{0,\dots,N-1\}$ such that $t_j<t_{j+1}$, and hence some
$
t_j<c<d<t_{j+1}
$.  
Let $B_t\eqdef W_t^{(1)}$ be the first coordinate of $W$, let $e_1^{d_X}$ be the first canonical vector of $\mathbb R^{d_X}$, fix some $\eta>0$, and consider the ``bridge-type'' process
$
    R_t^{\eta}
\eqdef
    \eta\,\mathbf{1}_{[c,d)}(t)\,(B_t-B_c)e_1^{d_X}
$.
Then, for each $\eta>0$, the process $R^{\eta}_{\cdot}$ belongs to $\mathcal{H}_T^2(\mathbb R^{d_X})$ and its norm can be estimated as
\begin{equation}
\label{eq:norme_me_up}
        \|R_{\cdot}^{\eta}\|_{\mathcal{H}_T^2(\mathbb R^{d_X})}^2
    =
        \eta^2\int_c^d \,
        \mathbb E[(B_t-B_c)^2]\,\ud t
    =
        \eta^2\int_c^d (t-c)\,\ud t
.
\end{equation}
Now, fix $\varepsilon>0$; and note that by~\eqref{eq:norme_me_up} we may choose $\eta>0$ small enough so that
$
    \|R_{\cdot}^{\eta}\|_{\mathcal{H}_T^2(\mathbb R^{d_X})}<\varepsilon.
$
We claim that $X_{\cdot}+R_{\cdot}^{\eta} \notin \mathfrak{S}_N$ for any $\eta>0$.  This would conclude our argument, since it would show that for any $\varepsilon>0$ the $\varepsilon$-ball about $X_{\cdot}$ in $\mathcal{H}_T^2(\mathbb R^{d_X})$ must contain a point outside of $\mathfrak{S}_N$; meaning that $\mathfrak{S}_N$ has \textit{empty interior} (since every point in an open subset of a metric space must contain all sufficiently small balls about that point).
\hfill\\
\noindent
Indeed, on the interval $[c,d)$, the process $X$ is equal to the constant random variable $\xi_j$, whereas $t\mapsto R_t^{\eta}(\omega)$ is continuous and non-constant on $[c,d)$ for $\mathbb P$-a.e.\ $\omega$. Therefore, for $\mathbb P$-a.e.\ $\omega$, the path
$$
t\mapsto (X_t+R_t^{\eta})(\omega)
$$
is continuous and non-constant on $[c,d)$. On the other hand, any element of $\mathfrak{S}_N$ has, for $\mathbb P$-a.e.\ $\omega$, a path which is piecewise constant on a deterministic partition with at most $N$ intervals. Such a path, if continuous on $[c,d)$, must be constant there; which cannot happen for $X_{\cdot}+R_{\cdot}^{\eta}$.
Hence $X_{\cdot}+R_{\cdot}^{\eta}$ does not belong to $\mathfrak{S}_N$ for any $\eta>0$.
Thus, $\mathfrak{B}$ is contained in a countable union of the nowhere dense sets $(\mathfrak{S}_N)_{N=1}^{\infty}$; whence $\mathfrak{B}$ is meagre in $\mathcal{H}_T^2(\mathbb R^{d_X})$.

\noindent
\textbf{Proof of (i) - Probabilistic non-representativeness:}  
Our approach is simple, using the inclusion $\mathfrak{B} \subseteq \bigcup_{N=1}^{\infty}\mathfrak{S}_N$ we have
\begin{equation}
\label{eq:measure_me_this}
    \mu_G(\mathfrak{B})
\le 
    \sum_{n=1}^{\infty}\,
        \mu_G(\mathfrak{S}_n)
.
\end{equation}
Our strategy is then simply to show that each $\mu_G(\mathfrak{S}_n)$ on the right-hand side of~\eqref{eq:measure_me_this} has $\mu_G$-measure zero; which we do by showing that each $\mathfrak{S}_n$ is contained in a countable union of proper closed linear subspaces; whence each of which must have $\mu_G$-measure $0$ since $Q$ has non-singular covariance operator, and thus so must $\mathfrak{S}_n$.  

More specifically, fix an arbitrary $N\in \mathbb{N}_+$. 
For each pair of rational numbers $a,b\in \mathbb Q\cap[0,T]$ with
\begin{equation}
\label{eq:big_bad_indices}
    0\le a<b\le T
\,\,\mbox{ and }\,\,
    \frac{T}{2N}<b-a
.
\end{equation}
Define the linear subspace
$
M_{a,b}
\eqdef
\big\{
    X\in \mathcal{H}_T^2(\mathbb R^{d_X}):
    \exists\ \xi\in L^2(\mathcal F_a;\mathbb R^{d_X})
    \text{ s.t.}
    X_t=\xi
    \quad
    \ud t\otimes \ud \mathbb P\text{-a.e. on }[a,b)\times\Omega
\big\}
$.  We claim $M_{a,b}$ is closed in $\mathcal{H}_T^2(\mathbb R^{d_X})$; indeed, if $(X_{\cdot}^{(n)})_{n=1}^{\infty}$ is a sequence in $M_{a,b}$ converging to some $X_{\cdot}\in \mathcal{H}_T^2(\mathbb R^{d_X})$ then, for each $n$, we may choose $\xi_n\in L^2(\mathcal{F}_a;\mathbb R^{d_X})$ such that
\[
X_t^n=\xi_n
\qquad
\ud t\otimes \ud \mathbb P\text{-a.e.\ on }[a,b)\times\Omega
.
\]
Then
$
(b-a)\,\mathbb E[\|\xi_n-\xi_m\|^2]
=
\mathbb E\Big[\int_a^b \|X_t^n-X_t^m\|^2\,\ud t\Big]
\le
\|X^n-X^m\|_{\mathcal{H}_T^2(\mathbb R^{d_X})}^2
$.
Thus $(\xi_n)_n$ is Cauchy in $L^2(\mathcal F_a;\mathbb R^{d_X})$, hence converges to some $\xi\in L^2(\mathcal F_a;\mathbb R^{d_X})$. Passing to the limit gives
$
X_t=\xi
$ $
\ud t\otimes \ud \mathbb P\text{-a.e. on }[a,b)\times\Omega
$; whence the limiting process $X_{\cdot}$ also belongs to $M_{a,b}$.
Next, we now show that each $M_{a,b}$ has positive co-dimension in $\mathcal{H}_T^2(\mathbb R^{d_X})$.  
Let $B_t\eqdef W_t^{(1)}$ be the first coordinate of the Brownian motion, let $e_1^{d_X}$ be the first canonical vector of $\mathbb R^{d_X}$, and define
$
Y_t\eqdef \mathbf 1_{[a,b)}(t)\,(B_t-B_a)e_1^{d_X}
$.
Then $Y_{\cdot}\in \mathcal{H}_T^2(\mathbb R^{d_X})$, but $Y_{\cdot}\notin M_{a,b}$; since, on $[a,b)$, the process $t\mapsto (B_t-B_a)e_1^{d_X}$ is not $\ud t\otimes \ud \mathbb P$-a.e.\ equal to any $\mathcal F_a$-measurable random variable. Hence $M_{a,b}\subsetneq \mathcal{H}_T^2(\mathbb R^{d_X})$.  Thus, $\mu_G(M_{a,b})=0$ for every $(a,b)$ satisfying~\eqref{eq:big_bad_indices}.  

In order to conclude that $\mathfrak{S}_N$ has $\mu_G$-measure zero, we will capture it in the union of countably many of these ``bad'' $M_{a,b}$.  Indeed, let  
$\Gamma_N$ denote the countable set of $(a,b)\in \mathbb{Q}^2$ satisfying~\eqref{eq:big_bad_indices}.
Clearly $\Gamma_N$ is countable; thus it remains to show that
\begin{equation}
\label{eq:capture_me_this}
    \mathfrak{S}_N 
\subseteq 
    \bigcup_{(a,b)\in \Gamma_N} M_{a,b}
.
\end{equation}
Indeed, let $X\in \mathfrak{S}_N$. Then, again, by~\eqref{eq:badguys_by_name} there exists a deterministic partition
$
0=t_0\le t_1\le \cdots \le t_N=T
$
such that $X$ is constant on each interval $[t_i,t_{i+1})$. Since
$
\sum_{i=0}^{N-1}(t_{i+1}-t_i)=T,
$
there must exist some $j\in [N-1]$ such that
$
t_{j+1}-t_j\ge \frac{T}{N}.
$
By density of the rationals in the reals, there exists some $a,b\in \mathbb{Q}$ such that
$
t_j<a<b<t_{j+1}
$ and $b-a>\frac{T}{2N}$.  
Since $X$ is constant on $[t_j,t_{j+1})$, it is in particular constant on $[a,b)$. Moreover the corresponding constant random variable is $\mathcal F_{t_j}$-measurable, hence also $\mathcal F_a$-measurable because $\mathcal F_{t_j}\subseteq \mathcal F_a$, thus $X\in M_{a,b}$, which proves~\eqref{eq:capture_me_this}. Hence,
\[
    \mu_G(\mathfrak{S}_N)
\le
    \sum_{(a,b)\in\Gamma_N}\mu_G(M_{a,b})
=
    0.
\]
Since $N\in\mathbb N_+$ was arbitrary, using~\eqref{eq:measure_me_this} gives
\[
    \mu_G(\mathfrak{B})
=
    0,
\]
which completes the proof.
\end{proof}

\section{Auxiliary SDE facts}\label{app:auxiliary-sde-facts}
This section collects a result which we were not able to find in the literature and that we use in Section \ref{s:MathFin}.
\begin{proof}[Proof of Lemma~\ref{lem:control}]
Let $L \in [0,+\infty)$ be a common Lipschitz constant for $b$ and  $\Sigma$ in the variables $(x,a)$, uniformly in $t$. Set $M_b\eqdef\sup_{(t,x,a)}
|b(t,x,a)|_{\mathbb R^{d_X}}$ and $M_\Sigma\eqdef\sup_{(t,x,a)}
\|\Sigma(t,x,a)\|_{\mathrm F}$. For a fixed $\alpha\in \mathcal{H}_T^2(\mathbb R^{d_\alpha})$, the coefficients $(t,\omega,x)\mapsto b(t,x,\alpha_t(\omega))$ and $(t,\omega,x)\mapsto \Sigma(t,x,\alpha_t(\omega))$ are progressively measurable, globally Lipschitz in $x$, and bounded. Hence the SDE admits a unique strong solution with continuous adapted paths. In particular, $X^\alpha$ is predictable. We next show that $X^\alpha\in \mathcal{H}_T^2(\mathbb R^{d_X})$. By the elementary inequality $|u+v+w|^2\le3(|u|^2+|v|^2+|w|^2)$, Cauchy--Schwarz, and It\^o isometry, for every $t\in[0,T]$,
\begin{equation*}
 \begin{aligned}
\mathbb E\left[|X_t^\alpha|_{\mathbb R^{d_X}}^2\right]
&\le
3\mathbb E\left[|x_0|_{\mathbb R^{d_X}}^2\right]
+
3\mathbb E\left[
\left|
\int_0^t b(s,X_s^\alpha,\alpha_s)\,ds
\right|_{\mathbb R^{d_X}}^2
\right] +
3\mathbb E\left[
\left|
\int_0^t \Sigma(s,X_s^\alpha,\alpha_s)\,\ud W_s
\right|_{\mathbb R^{d_X}}^2
\right]
\\
&\le
3\mathbb E\left[|x_0|_{\mathbb R^{d_X}}^2\right]
+
3T^2M_b^2
+
3TM_\Sigma^2.
\end{aligned}   
\end{equation*}
Integrating over $t\in[0,T]$ gives $X^\alpha\in \mathcal{H}_T^2(\mathbb R^{d_X})$. It remains to prove Lipschitz continuity of $\Gamma$. In order to do so, we let $\alpha,\beta\in \mathcal{H}_T^2(\mathbb R^{d_\alpha})$, and write
$\Delta X_t\eqdef X_t^\alpha-X_t^\beta$ and $\Delta\alpha_t\eqdef\alpha_t-\beta_t$. Then
\begin{equation*}
    \Delta X_t
=
\int_0^t
\left[
b(s,X_s^\alpha,\alpha_s)
-
b(s,X_s^\beta,\beta_s)
\right]\ud s
+
\int_0^t
\left[
\Sigma(s,X_s^\alpha,\alpha_s)
-
\Sigma(s,X_s^\beta,\beta_s)
\right]\ud W_s.
\end{equation*}
Using Cauchy--Schwarz, It\^o isometry, and the Lipschitz property, we obtain
\begin{equation*}
\begin{aligned}
\mathbb E\left[|\Delta X_t|_{\mathbb R^{d_X}}^2\right]
&\le
2T
\int_0^t
\mathbb E\left[
\left|
b(s,X_s^\alpha,\alpha_s)
-
b(s,X_s^\beta,\beta_s)
\right|_{\mathbb R^{d_X}}^2
\right]\ud s
\\
&\quad+
2
\int_0^t
\mathbb E\left[
\left\|
\Sigma(s,X_s^\alpha,\alpha_s)
-
\Sigma(s,X_s^\beta,\beta_s)
\right\|_{\mathrm F}^2
\right]\ud s
\\
&\le
2(T+1)L^2
\int_0^t
\mathbb E\left[
|\Delta X_s|_{\mathbb R^{d_X}}^2
+
|\Delta\alpha_s|_{\mathbb R^{d_\alpha}}^2
\right]\ud s.
\end{aligned}
\end{equation*}
Set $f(t)\eqdef\mathbb E\left[|\Delta X_t|_{\mathbb R^{d_X}}^2\right]$, $C_0\eqdef2(T+1)L^2$. Then $f(t)
\le
C_0\int_0^t f(s)\,\ud s
+
C_0\int_0^t
\mathbb E\left[
|\Delta\alpha_s|_{\mathbb R^{d_\alpha}}^2
\right]\ud s$. By Gr\"{o}nwall's lemma, $f(t)
\le
C_0e^{C_0T}
\int_0^T
\mathbb E\left[
|\Delta\alpha_s|_{\mathbb R^{d_\alpha}}^2
\right]\ud s$. Integrating again over $t\in[0,T]$, we obtain
\begin{equation*}
    \|X^\alpha-X^\beta\|_{\mathcal{H}_T^2(\mathbb R^{d_X})}^2
\le
T C_0 e^{C_0T}
\|\alpha-\beta\|_{\mathcal{H}_T^2(\mathbb R^{d_\alpha})}^2.
\end{equation*}
This concludes the proof. 
\end{proof}

\bibliographystyle{acm}
\bibliography{Bookeaping/References}

\end{document}